\newtheorem{theorem}{Theorem}[section]
\newtheorem{lemma}[theorem]{Lemma}
\newtheorem{proposition}[theorem]{Proposition}
\newtheorem*{lemma*}{Lemma}
\theoremstyle{definition}
\newtheorem{definition}[theorem]{Definition}
\newtheorem{example}[theorem]{Example}
\newtheorem{notation}[theorem]{Notation}
\newtheorem{assumptions}[theorem]{Assumptions}
\theoremstyle{remark}
\newtheorem{remark}[theorem]{Remark}
\newtheorem*{questions}{Questions}
\newcommand{\Opw}{\mathrm{Op}^{\mathrm{w}}} % Weyl-quantized operator
\newcommand{\Optau}{\mathrm{Op}^\tau} % $\tau$-quantized operator
\newcommand{\Op}{\mathrm{Op}} %Kohn-Nirenberg-quantized operator
\newcommand{\Ker}{\mathsf{Ker}^\tau} % integral kernel associated to tau-quantization
\newcommand{\cv}{\mathrm{cv}^\tau} % change of variables cv^tau
\newcommand{\cvi}{(\mathrm{cv}^\tau)^{-1}} % the inverse map of cv^tau
\newcommand{\CV}{\mathrm{CV}^\tau} % the pullback of cv^tau
\newcommand{\CVI}{(\mathrm{CV}^\tau)^{-1}} % the pullback of cv^tau inverse
\renewcommand{\k}{\kappa_{\sigma_1}}
\newcommand{\kk}{\kappa_{\sigma_2}}
\newcommand{\s}{\sigma_1}
\renewcommand{\ss}{\sigma_2}
\renewcommand{\H}{\mathbb{H}_n} % Heisenberg group Hn
\newcommand{\h}{\mathfrak{h}_n} % Heisenberg Lie algebra hn
\newcommand{\Hhat}{\widehat{\mathbb{H}}_n} % unitary dual of Hn
\newcommand{\Ghat}{\widehat{G}} % unitary dual of G
\newcommand{\Rhat}{\widehat{\mathbb{R}}} % unitary dual of R
\newcommand{\RS}{\mathcal{H}_\pi} % representation space of $\pi$
\newcommand{\R}{\mathbb{R}} % real numbers
\newcommand{\C}{\mathbb{C}} % complex numbers
\newcommand{\N}{\mathbb{N}} % natural numbers = {1, 2, ...}
\renewcommand{\Re}{\mathrm{Re}}
\newcommand{\dimG}{n} % topological dimension of G
\newcommand{\hdim}{Q} % homogeneous dimension of G
\newcommand{\RO}{\mathcal{R}} % Rockland operator
\newcommand{\hdeg}{\nu} % homogeneous degree of the Rockland operator
\newcommand{\Sp}{\mathrm{Sp}(2n, \mathbb{R})} % symplectic group
\newcommand{\SC}{\mathscr{S}} % Schwartz class/space
\newcommand{\TD}{\mathscr{S}'} % space of tempered distributions
\newcommand{\Tr}{\mathrm{Tr}} % trace
\newcommand{\F}{\mathcal{F}} % Fourier transform
\newcommand{\Lie}[1]{\mathfrak{#1}}
\newcommand{\BL}{\mathscr{L}}
\newcommand{\esssup}{\mathop{\mathrm{ess\,sup}}}
\newcommand{\subbracket}[3]{{\left\langle #2, #3\right\rangle}_{ #1}}	 % bra-ket with sub-index
\newcommand{\TF}{\mathscr{D}} % space of test functions, i.e., compactly supported smooth functions
\newcommand{\D}{\mathscr{D}'} % its topological dual space
\newcommand{\sL}{\mathcal{L}_G} % generic sub-Laplacian on stratified G
\newcommand{\aut}{\mathcal{A}} % generic Lie group automorphism
\newcommand\blfootnote[1]{%
  \begingroup
  \renewcommand\thefootnote{}\footnote{#1}%
  \addtocounter{footnote}{-1}%
  \endgroup
}
\numberwithin{equation}{section}
\dedicatory{Dedicated to the memory of Marius M\u{a}ntoiu}
\title{Weyl Calculus on Graded Groups}
\author[S. Federico]{Serena Federico}
\address{
Department of Mathematics, \\
University of Bologna \\
piazza di Porta San Donato 5 \\
40126 Bologna \\
Italy}
\email{serena.federico2@unibo.it}
\author[D. Rottensteiner]{David Rottensteiner}
\address{
Department of Mathematics: Analysis,
Logic and Discrete Mathematics \\
Ghent University \\
Krijgslaan 281, S8 \\
9000 Gent \\
Belgium}
\email{david.rottensteiner@ugent.be}
\author[M. Ruzhansky]{Michael Ruzhansky}
\address{
Department of Mathematics: Analysis,
Logic and Discrete Mathematics \\
Ghent University \\
Krijgslaan 281, S8 \\
9000 Gent \\
Belgium \\
and \\
School of Mathematical Sciences \\
Queen Mary University of London \\
Mile End Road \\
London E1 4NS \\
United Kingdom}
\email{michael.ruzhansky@ugent.be}
\subjclass[2020]{Primary 35S05, 42B20, 20F18; Secondary 43A15, 43A30, 43A80}
\keywords{Weyl, Kohn-Nirenberg, calculus, quantization, nilpotent, graded, Lie group, Heisenberg group, symmetry function, group Fourier transform, symbolic calculus, asymptotic expansion, Poisson bracket}
\begin{document}

\begin{abstract}
The aim of this paper is to establish a pseudo-differential Weyl calculus on graded nilpotent Lie groups $G$ which extends the celebrated Weyl calculus on $\mathbb{R}^n$. To reach this goal, we develop a symbolic calculus for a very general class of quantization schemes, following~\cite{MR1}, using the H\"{o}rmander symbol classes $S^m_{\rho, \delta}(G)$ introduced in \cite{FR}. We particularly focus on the so-called symmetric calculi, for which quantizing and taking the adjoint commute, among them the Euclidean Weyl calculus, but we also recover the (non-symmetric) Kohn-Nirenberg calculus, on $\mathbb{R}^n$ and on general graded groups~\cite{FR}.
Several interesting applications follow directly from our calculus: expected mapping properties on Sobolev spaces, the existence of one-sided parametrices and the G\r{a}rding inequality for elliptic operators, and a generalization of the Poisson bracket for symmetric quantizations on stratified groups.

In the particular case of the Heisenberg group $\mathbb{H}_n$, we are able to answer the fundamental questions of this paper: which, among all the admissible quantizations, is the natural Weyl quantization on $\H$? 

Among other things, we discuss and investigate an analogue of the symplectic invariance property of the Weyl quantization in the setting of graded groups, as well as a notion of noncommutative Poisson bracket for symbols in the setting of stratified groups.
\end{abstract}

\maketitle

\blfootnote{Declaration of interest: none.}

\tableofcontents

\section{Introduction} \label{sec:intro}

The Weyl quantization of pseudo-differential operators on $\R^n$,
\begin{align}
	\Opw(\sigma)f(x) = \frac{1}{(2 \pi)^n} \iint\limits_{\R^n \times \R^n} e^{i (x-y) \xi} \, \sigma \Bigl (\frac{x+y}{2}, \xi \Bigr ) \, f(y) \, dy \, d\xi, \label{eq:Weyl_quant}
\end{align}
has many remarkable properties that are appreciated by mathematicians and physicists alike. Two particularly noteworthy properties are the
\begin{itemize}
    \item[-] Preservation of involution:
    \begin{align}
    \Opw(\overline{\sigma}) = \Opw(\sigma)^*; \label{eq:sym_WQ}
    \end{align}
    \item[-] Symplectic invariance:\\
    for each symplectic map $S \in \Sp$ there exists an up to a factor $\pm 1$ uniquely determined unitary operator $U_S \in \mathcal{U}\bigl ( L^2(\R^n) \bigr )$ such that
    \begin{align}
        \Opw(\sigma \circ S) = U^{-1}_S \Opw(\sigma) U_S \label{eq:symp_inv_WQ}.
    \end{align}
\end{itemize}
These properties hold true for very general types of symbols $\sigma$ on $\R^n \times \R^n$, in particular for the H\"{o}rmander symbol classes $S^m_{\rho, \delta}(\R^n)$, $m \in \R$, $0 \leq \delta \leq \rho \leq 1$, $\delta < 1$, as was shown in  H\"{o}rmander's seminal paper on the Weyl calculus of pseudo-differential operators~\cite{H}. Among the family of so-called $\tau$-quantizations
\begin{align}
    \Optau(\sigma)f(x) = \frac{1}{(2 \pi)^n} \iint\limits_{\R^n \times \R^n} e^{i (x-y) \xi} \, \sigma \Bigl (x - \tau(x-y), \xi \Bigr ) \, f(y) \, dy \, d\xi, \hspace{10pt} \tau \in [0, 1], \label{eq:tau_quant}
\end{align}
all of which give rise to a full-fledged calculus for the classes $S^m_{\rho, \delta}(\R^n)$, especially the Kohn-Nirenberg calculus for $\Op^0 \equiv \Op \equiv \Op^{\mathrm{KN}}$,
\begin{align}
    \Op(\sigma)f(x) = \frac{1}{(2 \pi)^n} \iint\limits_{\R^n \times \R^n} e^{i (x-y) \xi} \, \sigma (x, \xi) \, f(y) \, dy \, d\xi, \label{eq:KN_quant}
\end{align}
the properties \eqref{eq:sym_WQ} and \eqref{eq:symp_inv_WQ} hold true for precisely one quantization scheme: the Weyl quantization $\Opw = \Op^{\frac{1}{2}}$.

Several notable papers have developed Weyl quantizations for nilpotent groups, especially the Heisenberg group $\H$, which by construction satisfy \eqref{eq:sym_WQ} (see, e.g., \cite{D1, D2, F2, R2, HRW, C, P1, P2, B2, B1}). Dynin's early works~\cite{D1, D2} are based on the fact that for a given nilpotent group $G$ one can find another nilpotent group $N_G \geq G$ whose generic unitary irreducible representations $\pi \in \widehat{N_G}_{\mathrm{gen}}$ act on $\RS = L^2(G)$. For complex-valued symbols defined on the corresponding co-adjoint orbits $\mathcal{O}_\pi \subseteq \mathfrak{n}_G^*$, the representation then gives rise to a  Weyl-type quantization of operators a priori defined on $\SC(G) \subseteq L^2(G)$. The papers~\cite{D1, D2} establish this procedure for $G = \H$ and a group $N_{\H}$ given as a semi-direct product $\R^{2n+2} \rtimes \H$. Folland~\cite{F2} later called $\R^{2n+2} \rtimes \H$ the meta-Heisenberg group $H(\H)$ and studied it as a special case of meta-Heisenberg groups $H(G)$ of general $2$-step nilpotent groups $G$. As Folland showed, one obtains invariance of the Weyl quantization for the automorphisms of $H(\H)$ and it was conjectured that this might hold true for all meta-Heisenberg groups $H(G)$ up to degenerate examples. For Dynin's Weyl quantization on $\H$, this can in fact be considered as the adequate analog of \eqref{eq:symp_inv_WQ}. The papers \cite{D1, D2}, moreover, give an outlook on a symbolic calculus for $G = \H$, for which \cite{F2} establishes a connection with Beals and Greiner's calculus on Heisenberg manifolds~\cite{BG}. A symbolic calculus in the sense of \cite{H} for this setting is, however, only partially developed in \cite{D1, D2}. The composition of two operators with symbols in Dynin's H\"{o}rmander-type classes, for example, is only available when one of them is a differential operator. This restriction can essentially not be removed because the asymptotic series of the Dynin-Weyl product generally features infinitely many summands of growing order, as was shown in \cite[\SS~5.6.1]{R}. Other notable works of this type also employ a quantization procedure via some representation $\pi \in \widehat{N}_{\mathrm{gen}}$ of a nilpotent group $N$ for symbols $\sigma: \mathcal{O}_\pi \to \mathbb{C}$, but restrict to classes of classical (e.g., \cite{C}) or even non-differentiable symbols (e.g., \cite{B2, B1}) or do not make an attempt at a full-fledged symbolic calculus in the first place (see, e.g., \cite{P1, P2, R2, HRW}).

A very different type of calculus was first mentioned in Taylor~\cite{MT} in order to study pseudo-differential operators on $\H$ and contact manifolds. Its novel use of a generalized version of Kohn-Nirenberg quantization on Lie groups is based on the generally operator-valued group Fourier transform and appropriate operator-valued symbols. One of its main ingredients for the analysis on $\H$ is the fundamental connection between the operator-valued symbols on $\H \times \Hhat$ and Weyl-quantized operators on $\R^n$, which consequently permits the use of scalar-valued symbols on an extended phase space. The purely kernel based pseudo-differential calculus on homogeneous groups~\cite{CGGP} by Christ, Geller, G\l owacki and Polin, which is essentially equivalent to the one in \cite{MT} in the case $G = \H$, provides explicit formulas and expansions for adjoints and compositions as well as one-sided parametrices, but refrains from using symbols and essentially focuses on classical pseudo-differential operators.
Much later, however, the connection between the operator-valued symbols on $\H \times \Hhat$ and the H\"{o}rmander-Weyl calculus on $\R^n$ was further explored by Bahouri, Fermanian-Kammerer and Gallagher~ \cite{BFG} to develop a comprehensive symbolic pseudo-differential calculus on $\H$, which provides asymptotic expansions for the (sclalar-valued) adjoint and composite symbols. The use of scalar-valued symbols on an extended phase space, a feature which is very specific of $\H$, was eventually dropped by Fischer and the third author~\cite{FR} in order to develop a full-fledged Kohn-Nirenberg-type calculus on general graded nilpotent Lie groups $G$ for appropriately generalized operator-valued H\"{o}rmander classes $S^m_{\rho, \delta}(G)$. The successful use of operator-valued symbols throughout the calculus is made possible by an extensive use of harmonic analysis on graded groups and novel tools and function spaces. A profound comprehension of the properties of the associated kernels, however, remains one of the key ingredients. The work~\cite{FR} provided a parallel theory to the one developed earlier in the setting of compact Lie groups in \cite{RT1, RT2}. (For further developments see, e.g., \cite{RTW, RW, F_Int, DR, F_Loc_Glo, CR_Sub}.)

The existence of a well-functioning Kohn-Nirenberg calculus on graded groups naturally opens up the following questions.

\begin{questions}
Is it possible to find a class of $\tau$-quantizations on general graded groups $G$ which
\begin{itemize}
    \item[(Q1)] give rise to substantial pseudo-differential calculi for the symbol classes $S^m_{\rho, \delta}(G)$,
    \item[(Q2)] include the classical $\tau$-calculi \eqref{eq:tau_quant} on $G = \R^n$ as special cases,
    \item[(Q3)] include the Kohn-Nirenberg calculus on graded groups~\cite{FR} as a special case,
    \item[(Q4)] include at least one viable Weyl-type calculus that satisfies appropriate versions of \eqref{eq:sym_WQ} and \eqref{eq:symp_inv_WQ} on any graded $G$?
\end{itemize}
\end{questions}

This paper aims at positively answering all these questions. In order to treat these questions in a rigorous way, we recall one of the main tools in~\cite{FR}, the operator-valued group Fourier transform. It is determined by the generally infinite-dimensional irreducible unitary representations $\pi \in \widehat{G}$, the noncommutative analogues of the exponential functions $x \mapsto e^{i x \xi}$ on $G = \R^n$, and accordingly defined by
\begin{align}
    \widehat{f}(\pi) \equiv (\F f)(\pi) := \int_G f(x) \pi(x)^* \, dx = \int_G f(x) \pi(x^{-1}) \, dx. \label{eq:GFT}
\end{align}
Despite being an operator-valued transform, its values $\widehat{f}(\pi)$ are generally well-behaved operators on the representation space $\RS \cong L^2(\R^k)$, for some $k = k(\pi) \in \N$, e.g, compact, Hilbert-Schmidt, trace-class, etc., if the function $f$ lies in nice function spaces like $L^1(G)$, $L^2(G)$, $\SC(G)$, etc, and in these specific cases actually the whole field of operators $\widehat{f} = \{ \widehat{f}(\pi) \}_{\pi \in \widehat{G}}$ has desirable properties. In particular, for Schwartz functions $f \in \SC(G)$, one gets the Fourier inversion formula
\begin{align}
    f(x) = \int_{\Ghat} \Tr \bigl ( \pi(x) \widehat{f}(\pi) \bigr ) \, d\mu(\pi) = \int_{\Ghat} \Tr \Bigl ( \int_G \pi(y^{-1} x) f(y) \, dy \Bigr ) \, d\mu(\pi), \label{eq:IF_GFT}
\end{align}
a generalization of
\begin{align*}
    f(x) = \int_{\widehat{\R}^n} e^{ix \xi} \widehat{f}(\xi) \, \frac{d\xi}{(2 \pi)^n} =  \int_{\Rhat^n} \int_{\R^n} e^{i(x-y) \xi} f(y) \, dy \, \frac{d\xi}{(2 \pi)^n},
\end{align*}
which, in \cite{FR}, motivated the definition of the Kohn-Nirenberg-type quantization
\begin{align}
    \Op(\sigma)f(x) &= \int_{\Ghat} \Tr \bigl ( \pi(x) \sigma(x, \pi) \widehat{f}(\pi) \bigr ) \, d\mu(\pi) \\
    &= \int_{\Ghat} \Tr \Bigl ( \int_G \pi(y^{-1} x) \sigma(x, \pi) f(y) \, dy \Bigr ) \, d\mu(\pi) \label{eq:KN_quant_G}
\end{align}
for fields of operator-valued symbols $\sigma = \{ \sigma(x, \pi) \mid x \in G, \pi \in \widehat{G} \}$ with $\sigma(x, \pi): \RS^\infty \to \RS^{-\infty}$ for all $x \in G$ and almost every $\pi \in \widehat{G}$. For the special case $G = \R^n$ this quantization coincides with \eqref{eq:KN_quant}.

Starting from \eqref{eq:KN_quant_G}, M\u{a}ntoiu and the third author~\cite{MR1} investigated the question whether interesting $\tau$-quantiza\-tions would arise if one replaced the point $x \in G$ in $\sigma(x, \pi)$ by a suitable generalization of $x - \tau(x-y) \in \R^n$, say the points $x\tau(y^{-1}x)^{-1} \in G$ determined by a measurable function $\tau: G \to G$, called the \textit{quantizing function}. The resulting quantizations
\begin{align}
    \Optau(\sigma)f(x) = \int_{\Ghat} \Tr \Bigl ( \int_G \pi(y^{-1} x) \sigma(x\tau(y^{-1}x)^{-1}, \pi) f(y) \, dy \Bigr ) \, d\mu(\pi), \label{eq:tau_quant_G}
\end{align}
were subsequently studied for the much wider class of second countable unimodular type I locally compact groups by means of $C^*$-algebra theory, producing surprising noncommutative generalizations of well-known concepts like, e.g., the Wigner function and the short time Fourier transform.\footnote{For a subsequent extension including non-unimdoular type I groups we refer to M\u{a}ntoiu and Sandoval~\cite{MS}.} Although the generality of the setting necessitated a restriction of $\Optau(\sigma)$ to the class of Hilbert-Schmidt operators on $L^2(G)$, the approach pointed in the right direction: choosing the constant function $\tau : x \mapsto e_G$ for the unit element $e_G \in G$, one recovers the Kohn-Nirenberg quantization~\eqref{eq:KN_quant_G}, while for the functions $\tau(x) := \tau x$, $\tau \in [0, 1]$, on $G = \R^n$ one recovers the quantizations~\eqref{eq:tau_quant}.

A particularly valuable insight was gained from investigating those quantizing functions $\tau$ which satisfy
\begin{align}
	\Optau(\sigma)^*=\Optau(\sigma^*) \label{eq:sym_quant}
\end{align}
for all Hilbert-Schmidt operators $\Optau(\sigma)$ on $L^2(G)$. The authors of \cite{MR1} named them \textit{symmetry functions} and every quantization~\eqref{eq:tau_quant_G} arising from a symmetry function is called a \textit{symmetric quantization} on $G$. (Equivalently, we will often simply say that $\tau$ is symmetric.)
Although \eqref{eq:sym_quant} seems to be the appropriate generalization of \eqref{eq:sym_WQ}, it may in practice be hard to check for operator-valued symbols. The authors could show, however, that a quantizing function $\tau$ is symmetric if and only if it satisfies
\begin{align}
	\tau(x)= \tau(x^{-1})x \label{eq:char_sym_fun}
\end{align}
for almost all $x \in G$ (cf.~\cite[Cor.~4.2]{MR1}), a condition that can often be checked in a straight-forward manner for a whole class of groups. It is, for example, obviously satisfied by the quantizing function $\tau(x) := \frac{1}{2} x$ on $G = \R^n$, which defines the Weyl quantization, but by no other $\tau(x) := \tau x$ with $\tau \in [0, 1]$.

Clearly, the class of Hilbert-Schmidt operators $\Optau(\sigma)$ on $L^2(G)$ is a rather restricted class of pseudo-differential operators since, equivalently, its integral kernel $\mathsf{Ker}^\tau_\sigma$ must lie in $L^2(G \times G)$. Moreover, the lack of smooth structure on generic second countable unimodular type I locally compact groups makes this setting too general to extend the quantizations $\sigma \mapsto \Optau(\sigma)$ to full-fledged pseudo-differential calculi for differentiable symbol classes.
Focusing on graded nilpotent groups, however, one can study the symmetry condition \eqref{eq:sym_quant} for the H\"{o}rmander-type classes $S^m_{\rho, \delta}(G)$ introduced in \cite{FR}. The explicit description and asymptotic expansion of the symbol of the adjoint $\Op(\sigma)^*$ of a Kohn-Nirenberg-quantized symbol $\sigma \in S^m_{\rho, \delta}(G)$, $m \in \R$, $0 \leq \delta < \rho \leq 1$, provided in \cite[\SS~5.5.3]{FR}, indicates that this is also a good setting to study symmetric calculi on graded groups. It turns out that we can indeed extend the symmetry condition to the symbol classes $S^m_{\rho, \delta}(G)$, as shown in Theorem~\ref{thm:asym_exp_adjoint_tau}.

At this point the question arises whether symmetry functions are available for a given group or class of groups. For the setting of this paper, namely graded groups, this is always the case.
In fact, there are at least two canonical examples of symmetry functions available on every exponential Lie group $G$. If we denote by $\log: G \to \mathfrak{g}$ the inverse of the exponential map $\exp: \mathfrak{g} \to G$, then the first example, due to \cite[Prop.~4.3~(3)]{MR1}, is given by
\begin{align}
	\tau(x) = \int_0^1 \exp \bigl ( s \log (x) \bigr ) \, ds. \label{eq:MR_tau}
\end{align}
The second example, which seems to be new, is defined by
\begin{align}
	\tau(x) := \exp \Big( \frac{1}{2} \log(x) \Bigr ). \label{eq:onehalf_tau}
\end{align}
By the Taylor-Campbell-Hausdorff formula and the anti-symmetry of the Lie bracket, this function is easily seen to satisfy \eqref{eq:char_sym_fun}. If we express the elements $x \in G$ in exponential coordinates with respect to a basis $X_1, \ldots, X_\dimG$ of $\mathfrak{g}$, i.e., $x = \exp \bigl( x_1 X_1 + \ldots + x_\dimG X_\dimG \bigr ) = (x_1, \ldots, x_\dimG)$ for uniquely determined $x_1, \ldots, x_\dimG \in \R$, then $\tau$ takes the form
\begin{align}
	\tau(x_1, \ldots, x_\dimG) = \exp \bigl( \frac{x_1}{2} X_1 + \ldots + \frac{x_\dimG}{2} X_\dimG \bigr ) = \bigl( \frac{x_1}{2}, \ldots, \frac{x_\dimG}{2} \bigr ).
\end{align}
Since on $G = \R^n$ the exponential function coincides with the identity function, one also gets
\begin{align*}
	\int_0^1 \exp \bigl ( s \log (x) \bigr ) \, ds = \frac{x}{2},
\end{align*}
so the two symmetry functions $\tau$ defined by \eqref{eq:MR_tau} and \eqref{eq:onehalf_tau} coincide.

On $G = \H$, however, these two functions are utterly distinct. Using the standard upper triangular matrix representation of $\H$, the integral \eqref{eq:MR_tau} is easily seen to give\footnote{The factor $\frac{1}{24}$ corrects the erroneous $\frac{1}{6}$ in \cite[(A.4)]{ER}.}
\begin{align}
	\tau(x) = \tau(x_1, \ldots, x_{2n+1}) = \Bigl ( \frac{x_1}{2}, \frac{x_2}{2}, \ldots,\frac{x_{2n+1}}{2} + \sum_{j = 1}^n \frac{x_j x_{j+n}}{24} \Bigr ). \label{eq:MR_tau_Hn}
\end{align}
The symmetry function \eqref{eq:onehalf_tau}, on the other hand, is given by
\begin{align}
	\tau(x) = \tau(x_1, \ldots, x_{2n+1}) = \bigl( \frac{x_1}{2}, \ldots, \frac{x_{2n+1}}{2} \bigr ), \label{eq:onehalf_tau_Hn}
\end{align}
thus clearly differs from \eqref{eq:MR_tau_Hn}. Interestingly, both functions are members of the infinite family of symmetry functions
\begin{align}
	\tau(x) = \Bigl ( \frac{x_1}{2}, \ldots, \frac{x_{2n}}{2}, \frac{x_{2n+1}}{2}+ \sum_{j, k = 1}^{2n} c_{j, k} \, x_j x_k \Bigr ) \label{eq:fam_tau_Hn}
\end{align}
with $c_{j, k} \in \R$, $j, k = 1, \ldots, 2n$ (cf.~Example~\ref{ex:fam_sym_fun_Hn}), and in fact one can easily come up with infinitely many symmetry functions with non-linear terms on any graded group.

\medskip

In view of the abundant supply of (especially symmetric) quantizing functions, this paper sets out to develop a versatile pseudo-differential $\tau$-calculus on general graded groups $G$ that gives positive answers to (Q1), (Q2), (Q3) and (Q4). To achieve this goal, however, we have to deal with a few serious difficulties due to the generally noncommutative nature of $G$: 
\begin{itemize}
    \item[--] In contrast to $\R^n \times \widehat{\R}^n \cong T^*\R^n$, the set $G \times \Ghat$ is not a symplectic manifold in any obvious fashion. So, we cannot rely on a type of phase space analysis akin to the one $T^*\R^n$ but instead have to work with the integral kernels of $\Optau(\sigma)$. However, the convolution-based kernel identities from the Kohn-Nirenberg calculus~\cite{FR} are not available for general $\tau$.\footnote{Already the Weyl calculus on $\R^n$ has to make due without them.} This has to be compensated by a more intricate kernel analysis.
    \item[--] Moreover, the quantizing function $\tau: G \to G$ cannot be assumed to be a Lie group homomorphism since generally $\tau(xy) \neq \tau(x) \tau(y)$ for given $x, y \in G$. This complicates the whole calculus substantially.
    \item[--] In order to control all the oscillatory integrals, one has to impose reasonably restrictive conditions on $\tau$, which, however, should not rule out any of the relevant well-known or new examples of $\tau$.
    \item[--] Since there is no immediate generalization of the symplectomorphisms of $\R^n \times \widehat{\R}^n \cong T^*\R^n$ for $G \times \Ghat$, one has to find an appropriate version of condition~\eqref{eq:symp_inv_WQ}.
\end{itemize}
In what follows we develop such a $\tau$-calculus on graded groups for symbols in the H\"{o}rmander classes $S^{m}_{\rho,\delta}(G)$. Any given graded group $G$, whose topological dimension we denote by $\dimG \in \N$, is understood to be equipped with a fixed family of automorphic dilations, characterized by a set of weights $1 = v_1 \leq \ldots \leq v_\dimG \in \N$. The main results of our calculus are valid for all symbol classes $S^{m}_{\rho,\delta}(G)$, $m \in \R$, which satisfy $0\leq \delta<\frac{\rho}{v_n}\leq \rho\leq 1$. 
This peculiar restriction of the usual condition $0\leq \delta<\rho\leq 1$ appears throughout the calculus to ensure the convergence of crucial oscillatory integrals. Essentially, it arises from left and right-invariant differentiation of the quantizing function $\tau$ and the fact that the latter generally \textit{fails to be} a group homomorphism. It is therefore not surprising that the restriction disappears when $\tau = e_G$, the most trivial group homomorphism possible, and our calculus agrees with the Kohn-Nirenberg calculus~\cite{FR}.
Clearly, the restriction is also absent in the Euclidean case, where the canonical dilations are assumed to be isotropic, that is, when all weights $v_1, \ldots, v_\dimG$ equal to $1$. So the restriction, in a sense, reflects the noncommutative nature of the group and the fact that $\tau$ does not respect the group structure. We also note that the analysis of this paper covers completely the case of the symbol classes $S^{m}_{1,0}(G)$, $m \in \R$. This analysis will appear elsewhere.

However, in the case of the $G = \H$ we show in a forthcoming paper that the restriction on the gap between the optimal range $0\leq \delta<\rho\leq 1$ and $0\leq \delta<\frac{\rho}{v_n}\leq \rho\leq 1$ can be removed due to the particular structure of two-step nilpotent groups.

\medskip

We conclude this introduction by describing the organization of the paper:
Section~\ref{sec:prelim} recalls some necessary basics from harmonic analysis on graded groups and the theory of Rockland operators. Section~\ref{sec:sym_Sob} gives a quick review of the H\"{o}rmander-type symbol classes $S^m_{\rho, \delta}(G)$ and the inhomogeneous Sobolev spaces $L^p_s(G)$ developed in \cite{FR}, highlighting their well-known special cases on $\R^n$. Section~\ref{sec:sym_calc}, the main section of this paper, we give positive answers to the questions (Q1) - (Q3) by developing a symbolic $\tau$-calculus for a very wide range of quantizing functions $\tau$ and for symbols in $S^m_{\rho,\delta}(G)$ classes with $0\leq \delta<\frac{\rho}{v_n}\leq \rho\leq 1$. This includes asymptotic expansions for the symbols of adjoint and composite operators as well as a novel notion of $G$-homogeneous Poisson bracket for symmetric quantizations on stratified groups, which coincides with the classical Poisson bracket on $\R^n$. All the corresponding results for the Weyl and Kohn-Nirenberg calculi on $\R^n$ and the Kohn-Nirenberg calculus on graded groups~\cite{FR} are recovered as special cases. In Section~\ref{sec:symp_inv_G} we give a sufficiently affirmative answer to (Q4) by providing an appropriate version of the symplectic invariance~\eqref{eq:symp_inv_WQ} for any general graded group $G$ and explain why the special case $G = \H$ singles out the symmetric quantization defined by \eqref{eq:onehalf_tau_Hn} as the most viable generalization of the Weyl quantization on $\R^n$. Section~\ref{sec:applications} concludes the paper with several applications of the calculus such as the continuity of $\tau$-quantized operators on Lebesgue, Sobolev and Besov spaces, or the existence of left parametrices and the G\r{a}rding inequality for elliptic pseudo-differential operators.

\section{Preliminaries} \label{sec:prelim}

In this section we recall the notions and tools from harmonic analysis on graded groups that we consider essential for a proper definition of $\tau$-quantized operators on $G$ with symbols in the H\"{o}rmander-type classes introduced in \cite{FR}. Although we focus on the most necessary material, we want to provide enough details and references in order not to lose among our readers the interested experts on pseudo-differential theory on $\R^n$. Readers who are familiar with harmonic analysis on nilpotent Lie groups may well skip this section.

\subsection{Homogeneous groups} \label{subs:hom_gr}

A Lie algebra $\mathfrak{g} \cong \R^{\dimG}$ is called \textit{homogeneous} if it admits a family of automorphic dilations
\begin{align}
	D_r = \exp \bigl (\log (r) A \bigr ), r > 0, \label{eq:dil}
\end{align}
for some diagonalizable $A$ on $\mathfrak{g}$ with positive eigenvalues $0< v_1 \leq \ldots \leq v_\dimG$, called the dilations' weights, i.e., each $D_r$ is a vector space isomorphism which satisfies
\begin{align*}
    D_r \bigl ( [ X, Y] \bigr ) = [D_r(X), D_r(Y)].
\end{align*}
The existence of a family of dilations on a given Lie algebra $\mathfrak{g}$ implies that the Lie algebra is nilpotent. Any connected, simply connected nilpotent Lie group $G$ given by $G = \exp(\mathfrak{g})$ is called \textit{homogeneous} if its Lie algebra $\mathfrak{g}$ is homogeneous.

Given an eigenbasis $\{ X_1, \ldots, X_{\dimG} \}$ of $A = \log(D_1)$ with eigenvalues $0 < v_1 \leq \ldots \leq v_{\dimG}$, the group $G$ can be equipped with exponential coordinates with respect to this basis. Since the group is nilpotent, the group multiplication is given by
\begin{align}
	x y &= \exp(x_1 X_1 + \ldots + x_{\dimG} X_{\dimG}) \exp(y_1 X_1 + \ldots + y_{\dimG} X_{\dimG}) \nonumber \\
	&= \exp(R_1(x, y) X_1 + \ldots + R_{\dimG}(x, y) X_{\dimG}) \label{eq:exp_coord}
\end{align}
for some finite order polynomials $R_j(x, y)$, $j = 1, \ldots, \dimG$. Due to the nilpotent structure of $G$ the polynomials $R_j$ only depend on the first $j$-many variables and can be written in the simpler form
\begin{align}
    R_j(x, y) = x_j + y_j + \tilde{R}_j(x_1, \ldots, x_{j-1}, y_1, \ldots, y_{j-1}), \label{eq:pol_exp_coord1}
\end{align}
for some polynomials $\tilde{R}_j$, while the homogeneous structure of $G$ yields the additional useful identities
\begin{align} \label{eq:pol_exp_coord2}
    R_j(x, y) &= \underset{[\alpha] + [\beta] = v_j}{\sum} c_{j, \alpha, \beta} \, x^\alpha y^\beta = x_j + y_j + \underset{\substack{[\alpha] + [\beta] = v_j, \\ [\alpha] \neq 0 \neq [\beta]}}{\sum} c_{j, \alpha, \beta} \, x^\alpha y^\beta.
\end{align}
Often we will denote a given element $x \in G$ simply by the $\dimG$-tuple of coordinates $(x_1, \dots, x_\dimG) \in \R^n$ with respect the basis $\{ X_1, \ldots, X_{\dimG} \}$.
Since the coordinate expression of the unit element $e_G$ equals $(0, \ldots, 0) \in \R^\dimG$, one has $R_j(x, x^{-1}) = 0$ for all $x \in G$.

This choice of exponential coordinates also permits to express the dilations' action on group elements $x \in G$ and the bi-invariant Haar measure on $G$ very explicitly:
\begin{align*}
	D_r (x) &= \exp(r^{v_1}x_1 X_1 + \cdots + r^{v_{\dimG}} x_{\dimG} X_\dimG) = (r^{v_1}x_1, \ldots, r^{v_{\dimG}} x_{\dimG}), \\
	d(D_r(x)) &= r^\hdim dx,
\end{align*}
where $\hdim := v_1 + \ldots + v_\dimG$, the so-called homogeneous dimension of $G$. A standard convention is that the weights $v_1, \ldots, v_\dimG$ are jointly rescaled so that the lowest weight $v_1 = 1$. This also implies that $\hdim \geq \dimG$.

Note that all the Lebesgue spaces $L^p(G)$, $p \in [1, \infty]$, are defined with respect to the bi-invariant Haar measure and that the test function space $\TF(G)$ can be defined as the pullback $\log^* (\TF(\R^\dimG))$ for the global chart map $\log := \exp^{-1}: G \to \Lie{g} \cong \R^\dimG$.

Another crucial feature of homogeneous groups is the existence of so-called homogeneous quasi-norms (cf.~ \cite[\S~1.A]{FS}). 
A \textit{homogeneous quasi-norm} on $G$ is a continuous function $x \mapsto |x|:G \to [0, \infty)$ which is
\begin{itemize}
    \item[--] definite: $|x| = 0$ if and only if $x = e_G$,
    \item[--] symmetric: $|x^{-1}| = |x|$,
    \item[--] $1$-homogeneous: $|D_r x | = r|x|$.
\end{itemize}
Consequently, the open balls $B_r(x) := \{ y \in G : |y^{-1} x| < r \}$
satisfy
\begin{align*}
    B_r(x) &= D_r \bigl ( B_1(r^{-1}x) \bigr ), \\
    |B_r(x)| &= r^\hdim |B_1(x)| = r^\hdim |B_1(0)|.
\end{align*}
Moreover, every quasi-norm satisfies a quasi-triangle inequality
\begin{align} \label{eq:quasi_tri}
 |xy| \leq C (|x| + |y|) \quad \mbox{ for all } x, y \in G,
 \end{align}
for some $C \geq 1$. Note that for any homogeneous group $G$ and $p \in [1, \infty)$ the functions
\begin{align}
    |x|_p :=& \bigl ( |x_1|^{\frac{p}{v_1}} + \ldots + |x_\dimG|^{\frac{p}{v_\dimG}} \bigr )^{\frac{1}{p}}, \label{eq:p_qn} \\
    |x|_\infty :=& \max_{j = 1, \ldots, 
    \dimG} |x_j|^{\frac{1}{v_j}}, \label{eq:inf_qn}
\end{align}
for example, are homogeneous quasi-norms and that any two given homogeneous quasi-norms on $G$ are equivalent. On $G = \R^n$, equipped with the canonical isotropic dilations, clearly each of the quasi-norms \eqref{eq:p_qn} and \eqref{eq:inf_qn} is a norm, satisfying \eqref{eq:quasi_tri} for $C = 1$. Note that in fact any homogeneous group possesses a homogeneous norm (cf.~\cite{HS} and \cite[Thm.~3.1.39]{FR}).

\subsection{Graded and stratified homogeneous groups} \label{subs:gr_strat_gr}

Families of dilations naturally arise on graded and stratified Lie algebras. A Lie algebra $\mathfrak{g}$ is called \textit{graded} if it is equipped with a vector space decomposition
$\mathfrak{g}=\bigoplus_{i =1}^{\infty} \mathfrak{g}_i$, for which all but finitely many of the $\mathfrak{g}_{i}$'s are $\{0\}$
and satisfying $[\mathfrak{g}_{i}, \mathfrak{g}_{i'}]\subseteq \mathfrak{g}_{i + i'}$ for all $i, i' \in \N$. Such a decomposition is called a gradation.

Since the sequence of subspaces $\mathfrak{h}_k := \bigoplus_{i = k}^\infty\mathfrak{g}_i$ forms a (finite) nested sequence of ideals in $\mathfrak{g}$, any basis $\{X_1, \ldots, X_\dimG \}$ given as the union of bases $\{ X_{1_i}, \ldots, X_{\dimG_i}\}$ is necessarily a strong Malcev basis of $\mathfrak{g}$, passing through the ideals $\mathfrak{h}_k$ (cf.~\cite[Thm.~1.1.13]{CG} and, e.g., \cite[Lem.~4.16]{GR}).
Given such a basis, a graded Lie algebra can be equipped with a canonical family of dilations arising from the matrix $A$ defined by $A X_j := i X_j$ if $X_j \in \mathfrak{g}_i$, that is, with $D_r(X_j) = r^i X_j$. However, in general graded Lie algebras admit infinitely many possible families of dilations, even if one rescales the weights so that $v_1 = 1$.

So, every graded nilpotent Lie algebra can be equipped with a family of dilations, but let us recall that the converse is also true. That is, a Lie algebra $\mathfrak{g}$ which admits a family of dilations also admits a gradation (cf.~\cite[Prop.~1.1]{M}). Note, however, that the given dilations need not coincide with the dilations that canonically arise from the gradation.

A graded Lie algebra is called \textit{stratified} if the gradation is such that $[\mathfrak{g}_1, \mathfrak{g}_i] = \mathfrak{g}_{i + 1}$ for all $i \in \N$. Such a gradation is called a stratification of $\Lie{g}$ and the direct summand $\mathfrak{g}_i$, $i \in \N$, is referred to as the $i$-th stratum.

Any connected, simply connected nilpotent Lie group $G$ given by $G = \exp_G(\mathfrak{g})$ is called \textit{graded} or \textit{stratified} if its Lie algebra $\mathfrak{g}$ is graded or stratified, respectively.

The vector space $\Lie{g} := \R^n$ equipped with the trivial Lie algebra $[\, \cdot \,, \, \cdot \, ] = 0$ is a nilpotent Lie algebra, which admits the trivial stratification $\Lie{g} = \Lie{g}_1 = \R^n$, and $G$ can be identified with the Abelian group $(\R^n, +)$.

\subsection{Invariant derivatives and homogeneous polynomials} \label{subs:inv_der_hom_pol}

Let $G$ be a homogeneous Lie group of topological dimension $\dimG \in \N$ and fix a basis $\{ X_1,\ldots, X_\dimG \}$ of its Lie algebra $\mathfrak{g}$ consisting of eigenvectors of the matrix $A$ in \eqref{eq:dil}. Since the space of left-invariant (respectively right-invariant) vector fields can be identified with $G \times \Lie{g}$, we will, by an abuse of notation, denote by $X_j$, $j = 1, \ldots, \dimG$, the left-invariant vector field on $G$ associated to the basis vector $X_j \in \Lie{g}$,
\begin{align*}
	(X_j f)(x) := \frac{d}{dt}|_{t = 0} f \bigl (x \exp(t X_j) \bigr ) \hspace{5pt} x \in G,
\end{align*}
and by $\tilde{X}_j$ the right-invariant vector field
\begin{align*}
	(\tilde{X}_j f)(x) := \frac{d}{dt}|_{t = 0} f \bigl (\exp(t X_j) x \bigr ) \hspace{5pt} x \in G.
\end{align*}
These invariant derivatives and their higher order iterates $X^\alpha := X_1^{\alpha_1} \cdots X_n^{\alpha_\dimG}$ and $\tilde{X}^\alpha := \tilde{X_1}^{\alpha_1} \cdots \tilde{X}_n^{\alpha_\dimG}$, $\alpha \in \mathbb{N}_0^\dimG$, are the natural generalizations of the higher order partial derivatives $\partial^\alpha = \partial^{\alpha_1}_1 \cdots \partial^{\alpha_n}_n$ on the Abelian group $\R^n$ equipped with the canonical isotropic dilations.
As their Abelian counterparts, the operators $X^\alpha$ and $\tilde{X}^\alpha$ are homogeneous of degree
\begin{align*}
	[\alpha] := v_1 \alpha_1 + \ldots + v_\dimG \alpha_\dimG \in \R,
\end{align*}
in the sense that a linear operator $T: \TF(G) \to \D(G)$ is said to be homogeneous of degree $\hdeg_T \in \C$ if
\begin{align*}
	T (f \circ D_r) = r^{\hdeg_T} (Tf) \circ D_r \hspace{5pt} \mbox{ for all } \hspace{5pt} f \in \TF(G), \, r > 0.
\end{align*}
Although these higher order derivatives do not commute in general, one can express an iterate $X^\alpha X^\beta$, $\alpha,\beta\in\mathbb{N}_0^n$, as a linear combination of $X^\gamma$ with $\gamma\in\mathbb{N}_0^n, [\gamma]=[\alpha]+[\beta]$, namely
\begin{equation}\label{l.i.prod}
   X^\alpha X^\beta=\sum_{\substack{\gamma\in\mathbb{N}_0^n, |\gamma|\leq |\alpha|+|\beta|\\
   [\gamma]=[\alpha]+[\beta]}}c'_{\alpha,\beta,\gamma}X^\gamma.
\end{equation}
An analogous formula is valid for the right-invariant differential operators $\tilde{X}^\alpha \tilde{X}^\beta$.

Moreover, it is crucial to be able to rewrite left-invariant derivatives in terms of right-invariant and standard partial derivatives, and vice versa. This requires the use of homogeneous polynomials. We recall that a function $f: G \to \C$ is called a polynomial if $f \circ \exp_G$ is a polynomial on the vector space $\Lie{g} \cong \R^\dimG$ and that any distribution $f \in \D(G)$ is said to be homogeneous of degree $\hdeg_f \in \C$ if 
\begin{align*}
	f \circ D_r = r^{\hdeg_f} f \hspace{5pt} \mbox{ for all } \hspace{5pt} r > 0.
\end{align*}
Now, by \cite[Prop.~1.29]{FS}, for every $\alpha \in \mathbb{N}_0^n$ one can write
\begin{align}
    \begin{array}{rclcl}
     X^\alpha &=& \sum\limits_{\substack{|\beta| \leq |\alpha|, \\ [\beta] \geq [\alpha]}} P_{\alpha, \beta} \, \tilde{X}^\beta &=& \sum\limits_{\substack{|\beta| \leq |\alpha|, \\ [\beta] \geq [\alpha]}} R_{\alpha, \beta} \, \partial^\beta, \\
     \tilde{X}^\alpha &=& \sum\limits_{\substack{|\beta| \leq |\alpha|, \\ [\beta] \geq [\alpha]}} Q_{\alpha, \beta} \, X^\beta, && \\
     \partial^\alpha &=& \sum\limits_{\substack{|\beta| \leq |\alpha|, \\ [\beta] \geq [\alpha]}} S_{\alpha, \beta} \, X^\beta, &&
    \end{array} \label{RelationsVFs1}
\end{align}
for some homogeneous polynomials $P_{\alpha, \beta}$, $Q_{\alpha, \beta}$, $R_{\alpha, \beta}$ and $S_{\alpha, \beta}$ of homogeneous degree $[\beta] - [\alpha]$. These identities will be important tools for the symbolic calculus.

Left-invariant derivatives of right translates of functions will be a frequent inconvenience throughout the paper since the two operations do not commute. To circumvent this, one can use \eqref{RelationsVFs1} to rewrite such an expression as
\begin{align}
    X_x^\alpha f (xy) = \tilde{X}_y^\alpha f(xy) = \sum\limits_{\substack{|\beta| \leq |\alpha|, \\ [\beta] \geq [\alpha]}} Q_{\alpha, \beta}(y) \, X_y^\beta f(xy) = \sum\limits_{\substack{|\beta| \leq |\alpha|, \\ [\beta] \geq [\alpha]}} Q_{\alpha, \beta}(y) \, (X^\beta f)(xy) \label{RelationsVFs2}
\end{align}
for any sufficiently differentiable function $f:G \to \mathbb{C}$.

\subsection{Taylor expansions}

We now recall Taylor's formula on homogeneous groups~\cite[\S~1.C]{FS}, which is crucially needed for the symbolic $\tau$-calculus on graded groups.

Given a homogeneous Lie group $G$, equipped with a quasi-norm $| \, \cdot \, |$, the left Taylor polynomial of homogeneous order $M \in \N$ of a function $f: G \to \C$ at a point $x \in G$ is given by
\begin{align}
	P_{x,M}^{f}(y)=\sum_{[\alpha] \leq M}q_\alpha(y) (X^\alpha f)(x), \label{eq:Taylor_expansion}
\end{align}
where $q_\alpha$ is the homogeneous polynomial of order $[\alpha]$ uniquely determined by the condition
  \begin{align}
	\bigl ( X^\beta q_\alpha \bigr )(e_G) = \delta_{\beta, \alpha} \hspace{5pt} \text{for all} \hspace{5pt} \beta \in \N^\dimG_0. \label{eq:can_hom_pol}
\end{align}
The remainder of order $M$ is given by
\begin{align*}
	R_{x,M}^{f}(y):=f(xy) - P_{x,M}^{(f)}(y).
\end{align*}
This remainder is controlled uniformly if $f\in C^{\lceil M\rfloor+1}(G)$, with
\begin{align*}
	\lceil M\rfloor:=\max\{|\alpha|: \alpha\in \mathbb{N}_0^n\,\, \text{with}\,\, [\alpha]\leq M\}.
\end{align*}
In particular, there exists a constants $C_M>0$ and $\eta>1$ such that
\begin{align}
	|R_{x,M}^{f}(y)| \leq C_M \sum_{M< [\alpha] \leq M + v_\dimG} |y|^{[\alpha]}\sup_{|z|\leq \eta^{\lceil M\rfloor+1} |yx^{-1}|} |(X^\alpha f) (xz)|, \label{eq:est_Taylor_rem}
\end{align}
where $C_M$ is given by the mean value theorem~\cite[Th~1.33]{FS} and $\eta$ is as at the end of the proof of ~\cite[Prop~3.1.46]{FR}.\\
Moreover, for any $M\in\mathbb{N}_0$ and 
$\alpha\in\mathbb{N}_0^n$, and for $f\in C^\infty (G)$, one has
\begin{equation}
\begin{array}{rclcrcl} \label{Prop.Rem-vf}
    X^\alpha_y P_{x, M}^{f}(y) & = & P_{x,M-[\alpha]}^{X^\alpha f}(y)& \quad \text{and}\quad &
    X^\alpha_y R_{x, M}^{f}(y) & = & R_{x, M-[\alpha]}^{X^\alpha f}(y), \\
    \tilde{X}^\alpha_y P_{x,M}^{f}(y) & = & P_{x_1 = 0, M-[\alpha]}^{X^\alpha_x f(x x_1)}(y)& \quad \text{and}\quad &
    \tilde{X}^\alpha_y R_{x,M}^{f}(y) & = & R_{x_1 = 0, M-[\alpha]}^{X^\alpha_x f(x x_1)}(y).
\end{array}
\end{equation}
Note that the homogeneous polynomials $q_\alpha$, $\alpha \in \N^\dimG_0$, determined by \eqref{eq:can_hom_pol}, play an important role for the pseudo-differential calculus in \cite{FR} as well as for the one developed in this paper for several reasons: apart from their crucial role in the Taylor expansion, which will be a key tool in the symbolic calculus, the polynomials $q_\alpha$, $\alpha \in \N^\dimG_0$, form a basis of the vector space of all polynomials on $G$, and for every possible homogeneous order $M$ in 
\begin{align*}
	\mathcal{W} := \{ v_1 \alpha_1 + \ldots + v_\dimG v_\dimG \mid \alpha_1, \ldots, \alpha_\dimG \in \N_0 \},
\end{align*}
the polynomials $q_\alpha$ with $[\alpha] = M$ form a basis of the finite-dimensional subspace of polynomials of homogeneous degree equal to $M$. This property also allows us to rewrite
 \begin{align}
    q_\alpha(xy) = \sum_{[\alpha_1] + [\alpha_2] = [\alpha]} c_{\alpha_1, \alpha_2} \, q_{\alpha_1}(x) \, q_{\alpha_2}(y) \label{HomPol1}
 \end{align}
for any $\alpha \in \mathbb{N}_0^n$ and any $x, y \in G$, with coefficients $c_{\alpha_1, \alpha_2} \in \mathbb{R}$ which are independent of $x, y$.

\subsection{The Schwartz class} \label{subs:SC}

On any connected, simply connected nilpotent group $G$ the Schwartz class $\SC(G)$ is defined as the image of $\SC(\Lie{g}) \cong \SC(\R^\dimG)$, $\dimG := \mathrm{dim}(G) \in \N$, under the pullback of the global chart map $\log := \exp^{-1}: G \to \Lie{g}$ or, equivalently, of any other global polynomial chart map (cf.~\cite[\S~A.2]{CG}). Now if $G$ is homogeneous, one can show (cf.~\cite[Cor.~A.2.3]{CG}, \cite[\S~1.D]{FS} and \cite[Lem.~3.1.56]{FR}) that for any quasi-norm $| \, \cdot \, |$ on $G$ and arbitrary but fixed $p_1, p_2, p_3 \in [1, \infty]$ each of the families of seminorms
\begin{itemize}
	\item[(1)] $\bigl \{ f \mapsto \| x^\alpha X^\beta f \|_{L^{p_1}(G)} \mid \alpha, \beta \in \N^\dimG_0 \bigr \}$, \\
	\item[(2)] $\bigl \{ f \mapsto \| q_\alpha X^\beta f \|_{L^{p_2}(G)} \mid \alpha, \beta \in \N^\dimG_0 \bigr \}$, \\
	\item[(3)] $\bigl \{ f \mapsto \max_{[\beta] \leq N} \|(1 + |x|)^N X^\beta f \|_{L^{p_3}(G)} \mid N \in \N_0 \bigr \}$,
\end{itemize}
and each of the corresponding families for which $X^\beta$ is replaced by $\tilde{X}^\beta$, generates the same topology on $\SC(G)$ as the standard induced seminorms
\begin{itemize}
	\item[(0)] $\bigl \{ f \mapsto \| x^\alpha \partial^\beta (f \circ \exp) \|_{L^\infty(\Lie{g})} \mid \alpha, \beta \in \N^\dimG_0 \bigr \}$.
\end{itemize}
Without loss of generality we will from now on use the family of seminorms (2) with $p_2 = \infty$ and the abbreviation
\begin{align}
	\| f \|_{\SC, N} := \max_{[\alpha], [\beta] \leq N} \| q_\alpha X^\beta f \|_{L^\infty(G)}.\nonumber
\end{align}

The space of tempered distributions $\TD(G)$ is the topological dual of $\SC(G)$.

\subsection{Unitary group representations} \label{subs: unirrep}

In this subsection we recall some fundamental facts about representations of Lie groups necessary to develop our calculus on graded Lie groups. For more details we refer to the monograph~\cite{CG}.

A unitary irreducible representation is a group homomorphism
$\pi: G \to \mathcal{U}(\RS)$, for some complex Hilbert space $\RS$, which is
\begin{itemize}
	\item[--] strongly continuous: $\pi(x_j)v \to \pi(x)v$ for each $v \in \RS$ whenever $x_j \to x$ in $G$;
	\item[--] irreducible: $\pi(G)V \subseteq V$ for any subspace $V \subseteq \RS \Rightarrow$ $V = \RS$ or $V = \{0\}$.
\end{itemize}
The unitary dual $\Ghat$ of a Lie group $G$ is the set of equivalence classes $[\pi]$ of unitary irreducible representations of $G$, where any two representations $\pi: G \to \mathcal{U}(\RS)$ and $\pi': G \to \mathcal{U}(\RS')$ are called unitarily equivalent if there exists a $U \in \mathcal{U}(\RS, \RS')$ such that $U^{-1} \pi'(x) U = \pi(x)$ holds for all $x \in G$.
By a common abuse of notation, we will denote the equivalence classes by $\pi$ instead of $[\pi]$ when there is no risk of confusion.

A vector $v \in \RS$ is called smooth if the Hilbert space-valued function $x \mapsto \pi(x)v: G \to \RS$ is infinitely Fr\'{e}chet-differentiable on $G$\footnote{Equivalently, if $\pi \bigl (\log(\, \cdot \,) \bigr )v$ is infinitely Fr\'{e}chet-differentiable on $\Lie{g} \cong \R^\dimG$.}, and we denote the space of smooth vectors by $\RS^\infty$. We recall that if $G$ is nilpotent and $\pi \in \Ghat$ is realized to act on the Hilbert space $\RS = L^2(\R^d)$, then $\RS^\infty$ can be identified with $\SC(\R^d)$.

For a given $\pi \in \Ghat$, its infinitesimal representation $d\pi$ is the skew-adjoint representation of the Lie algebra $\Lie{g}$ defined by
\begin{align}
    d\pi(X)v := \frac{d}{dt}|_{t = 0} \Bigl ( \pi(\exp(tX))v - v \Bigr ) \label{eq:inf_rep}
\end{align}
for any $X \in \Lie{g}$ and $v \in \mathrm{dom}(d\pi(X)) = \{v \in \RS \mid \eqref{eq:inf_rep} \text{ converges in } \RS \} \supseteq \RS^\infty$.

This representation can naturally be extended to $\Lie{U}(\Lie{g})$, the universal enveloping algebra of the complexified Lie algebra $\Lie{g}_\C$, which by the Poincar\'{e}-Birkhoff-Witt theorem can be identified with the algebra $\mathrm{Diff}(G)$ of left-invariant (respectively right-invariant) differential operators on $G$ with complex coefficients. This identification allows us to consider the symbol
\begin{align}
    \pi(T) := d\pi(T)
\end{align}
 in the representation $\pi \in \Ghat$ of any given $T \in \mathrm{Diff}(G)$, e.g., the higher order left and right derivatives $X^\alpha$ and $\tilde{X}^\alpha \in \mathrm{Diff}(G)$, $\alpha \in \N_0^\dimG$, from Subsection~\ref{subs:inv_der_hom_pol}.
 
In the special case of the Abelian nilpotent Lie group $G = \R^n$ the unitary dual is given by the Abelian group
 \begin{align*}
    \widehat{\R}^n = \{x \mapsto e^{i x \xi} \mid \xi \in \R^n \},
 \end{align*}
which is isomorphic (as a Lie group) to $\R^n$. For each $\pi_\xi \in \widehat{\R}^n$ the natural representation space is $\RS = \C$ and $\pi_\xi(x) v = e^{i x \xi} v$ for all $x \in \R^n$, $v \in \C$. The corresponding infinitesimal represention is given by $d\pi_\xi(X)v = i (\sum_{j = 1}^n x_j \xi_j) v$ for all $X = x_1 e_1 + \ldots + x_n e_n \in \R^n \cong \mathrm{Lie}(\R^n)$ and $v \in \C = \RS^\infty$. Consequently, the symbol in $\pi_\xi$ of any $k$-th order partial differential operator with complex coefficients $T = \sum_{|\alpha| \leq k} c_\alpha \partial^\alpha \in \mathrm{Diff}(\R^n)$ is given by
\begin{align*}
    \pi_\xi(T) = \sum_{|\alpha| \leq k} c_\alpha i^{|\alpha|} \xi^\alpha = \sigma_T(x, \xi),
\end{align*}
which is the symbol of $T$ in any of the $\tau$-quantizations \eqref{eq:tau_quant} on $\R^n$.

\subsection{Rockland operators} \label{subs:RO}

In the setting of homogeneous Lie groups the notion of a Rockland operator is the natural generalization of the Laplace operator on $\R^n$ and of the canonical sub-Laplacians on stratified groups. The name goes back to Rockland's conjecture~\cite{RO} that for any given positive, homogeneous, left-invariant operator $\RO \in \mathrm{Diff}(G)$ on a homogeneous group $G$ the condition
\begin{align}
    \text{ (RO) } \forall \pi \in \Ghat \setminus \{ 1 \}, \forall v \in \RS^\infty: \pi(\RO)v = 0 \Rightarrow v = 0,
\end{align}
nowadays known as the Rockland condition, is equivalent to
\begin{align}
    \RO f \in C^\infty(G) \Rightarrow f \in C^\infty(G),
\end{align}
the hypoellipticity of $\RO$, a fact that was proved in \cite{RO} for $G = \H$. After a proof that hypoellipticity implies (RO) in \cite{B}, the conjecture was finally settled in \cite{HN}.

Throughout this paper any operator $\RO \in \mathrm{Diff}(G)$ with the above properties will be called a \textit{Rockland operator}. We will denote its homogeneous degree by $\hdeg \in \N$.

\medskip

We recall that a family of homogeneous dilations on a Lie group admits a Rockland operator if and only if there exists some $r_o \in \R$ such that $r_o v_1, \ldots, r_o v_\dimG \in \N$, which in turn holds if and only if the dilations are the canonical dilations of some gradation on $G$ (cf.~\cite{M,tERo}). So, although every homogeneous group $G$ can be equipped with a gradation (cf.~Subsection~\ref{subs:gr_strat_gr}), there are many families of homogeneous dilations that do not admit Rockland operators.

However, note that any graded group $G$ in fact possesses an infinite family of positive Rockland operators, e.g., the operators given by
\begin{align}
		\RO = \sum_{j=1}^\dimG (-1)^{\frac{\hdeg_0}{v_j}} c_j \hspace{1pt} X_j^{\frac{2 \hdeg_0}{v_j}} \hspace{5pt} \text{ with } \hspace{5pt} c_1, \ldots, c_\dimG > 0, \label{eq:gen_RO}
	\end{align}
for any strong Malcev basis $\{ X_1, \ldots, X_\dimG \}$ passing through the gradation (cf.~Subsection~\ref{subs:gr_strat_gr}) and any common multiple $\hdeg_o$ of $v_1, \ldots, v_\dimG$, are positive Rockland operators of homogeneous degree $\hdeg = 2 \hdeg_o$.

If $G$ is stratified, equipped with the canonical dilation structure, and $\{ X_1, \ldots, X_{\dimG_1} \}$ forms a basis of the first stratum $\Lie{g}_1$, then the sub-Laplacian
\begin{align}
    \sL = X_1^2 + \ldots + X_\dimG^2 \label{eq:gen_sL}
\end{align}
is a (negative) Rockland operator of homogeneous degree $\hdeg = 2$.

For the trivial stratification of $\Lie{g} = \R^n$, given by $\Lie{g} = \Lie{g}_1$, the Laplace operator $\partial_1^2 + \ldots + \partial_n^2$ on the corresponding stratified group $G = \exp(\Lie{g}) = (\R^n, +)$ is a particular case of sub-Laplacian if we choose the canonical homogeneous structure, whose dilations are given by 
$D_r(x) = r x$, $r > 0$. For any other family of isotropic or non-isotropic family of dilations, whose weights $(v_1, \ldots, v_\dimG) \in \R^\dimG \setminus \{ 1, \ldots, 1 \}$ may be assumed to be determined by the Euclidean standard basis $\{e_1, \ldots, e_n \}$ of $\Lie{g}$, any differential operator of the form
\eqref{eq:gen_RO} is a hypoelliptic operator on $G = \R^n$ if and only $r_o v_1, \ldots, r_o v_\dimG \in \N$ for some $r_o \in \R$.

\begin{assumptions}
Since the Rockland operators $\RO$ and $-\sL$ from \eqref{eq:gen_RO} and \eqref{eq:gen_sL}, respectively, are positive and essentially self-adjoint on $L^2(G)$, we will henceforth suppose that any given Rockland operator $\RO$ satisfies these properties.
\end{assumptions}

\subsection{The group Fourier transform} \label{subs:GFT}

In the following we recall a few essential properties of the Fourier transform as well as some crucially related notions. Among them are two noncommutative $L^p$-spaces on the unitary dual $\Ghat$, which are of particular importance for the Kohn-Nirenberg calculus~\cite{FR} and, a fortiori, for the $\tau$-calculus developed in this paper: the Hilbert space $L^2(\Ghat)$ and the von Neumann algebra $L^\infty(\Ghat)$.

To discuss these spaces, however, we need to equip $\Ghat$ with a suitable topology, namely one that generates the $\sigma$-algebra of Borel sets which is used to prove the Fourier inversion and Plancherel theorems, the so-called Mackey Borel structure. Following the mainstream convention, we always assume that $\Ghat$ is equipped with the so-called Fell topology. The resulting topological space $\Ghat$ is generally $(T_0)$ but not Hausdorff. (More more details we refer to, e.g., \cite[Ch.~18]{Di77} and \cite[Ch.~7]{FollAHA}.)

Thus, viewing $\Ghat$ as a topological space, the space $L^2(\Ghat)$ can be defined as a direct integral Hilbert space which arises from the Fourier inversion formula for Schwartz functions: for every $f \in \SC(G)$ the Fourier transform $\widehat{f}$ is a field of Hilbert-Schmidt operators $\widehat{f}(\pi)$ on $\RS$ which is measurable with respect to the Mackey-Borel structure. Moreover, there exists a measure $\mu$ on $\Ghat$, the so-called Plancherel measure, such that the inversion formula~\eqref{eq:IF_GFT} holds for all $x \in G$ and such that, for $f \in \SC(G)$, the Plancherel formula
\begin{align}
	\int_G |f(x)|^2 \, dx &= \int_{\Ghat} \| \widehat{f}(\pi) \|_{\mathtt{HS}(\RS)}^2 \, d\mu(\pi), \label{eq:Plancherel}
\end{align}
holds.
This identity is crucial for a rigorous version of the quantization formula~\eqref{eq:tau_quant_G} once we know that the integral converges for certain symbol classes. In addition, the restriction of $\F$ to $\SC(G) \subseteq L^1(G) \cap L^2(G)$ extends uniquely to a unitary isomorphism from $L^2(G)$ onto the direct integral Hilbert space of measurable fields of Hilbert-Schmidt operators
\begin{align*}
	L^2(\Ghat) := \int_{\Ghat}^\oplus \mathtt{HS}(\RS) \, d\mu(\pi) \hspace{5pt} \text{with norm} \hspace{5pt} \| \sigma \|_{L^2(\Ghat)} = \Bigl ( \int_{\Ghat} \| \sigma_\pi \|_{\mathtt{HS}(\RS)}^2 \, d\mu(\pi) \Bigr )^{1/2}.
\end{align*}
Note that its elements are the equivalence classes of such fields determined up to measure zero and unitary equivalence of $\pi \in [\pi] \in \Ghat$. The unitary isomorphism from $L^2(G)$ to $L^2(\Ghat)$ is called the Plancherel transform and its existence implies that the measure $\mu$, called the Plancherel measure on $\Ghat$, is uniquely determined.

The space $L^\infty(\Ghat)$ is called the group von Neumann algebra (see, e.g. \cite{FR}) and is defined as the space of all measurable fields $\sigma = \{ \sigma_\pi \}_{\pi \in \Ghat}$ of bounded operators $\sigma_\pi: \RS \to \RS$ which are essentially uniformly bounded with respect to the Plancherel meausre $\mu$, i.e.,
\begin{align*}
	L^\infty(\Ghat) := \bigl \{ \sigma = \{ \sigma_\pi \}_{\pi \in \Ghat} \, \mbox{ meas.} \mid \sigma_\pi \in \BL(\RS), \, \esssup_{\pi \in \Ghat} \| \sigma_\pi \|_{\BL(\RS)} < \infty \bigr \}.
\end{align*}
As in the case of $L^2(\Ghat)$, the elements of $L^\infty(\Ghat)$ are equivalence classes determined up to Plancherel measure zero and unitary equivalence of $\pi \in [\pi] \in \Ghat$, and pointwise multiplication (in this sense) of any two fields $\sigma_1$ and $\sigma_2$ is easily seen to turn $L^\infty(\Ghat)$ into a $C^*$-algebra. For general graded groups $G$ it plays the same important role for defining symbol classes on $G \times \Ghat$ that the $C^*$-algebra $L^\infty(\widehat{\R}^n) \cong L^\infty(\R^n)$ plays for symbols defined on $T^*\R^n \cong \R^n \times \widehat{\R}^n$ in the classical case.

Note that because of the bound $\| \pi \|_{\BL(\RS)} = 1$ for all $\pi \in \Ghat$, the Fourier integral~\eqref{eq:GFT} converges for all $f \in L^1(G)$ and the Fourier transform $\F$ maps the space continuously into $L^\infty(\Ghat)$. Since the elements of $L^1(G)$ act continuously on $L^2(G)$ by convolution from the right (cf.~Young's convolution inequality), the Fourier transform defined on $L^1(G)$ extends, as a bounded linear map, to the space $\BL_{\mathrm{L}}(L^2(G))$ of all bounded left-invariant operators on $L^2(G)$. By a special version of the Schwartz kernel theorem (see, e.g., \cite[Cor.~3.2.1]{FR}), these operators can be viewed as convolution operators with kernels $\kappa \in \TD(G)$. If $\mathscr{K}(G)$ denotes the space of all $\kappa \in \mathscr{D}'(G)$ such that the linear map
\begin{align*}
	f \mapsto f * \kappa: \mathscr{D}(G) \to L^2(G)
\end{align*}
extends to a bounded operator on $L^2(G)$, then $\mathscr{K}(G)$ equipped with the convolution product and the usual $\BL(L^2(G))$-operator norm is a von Neumann algebra, which is isomorphic to $\BL_{\mathrm{L}}(L^2(G))$. As a consequence, the Fourier transform on $L^1(G)$ extends to a von Neumann algebra isomorphism from $\mathscr{K}(G) \cong \BL_{\mathrm{L}}(L^2(G))$ to $L^\infty(\Ghat)$, which justifies its name. The Plancherel formula helps express these isomorphisms in a very clear fashion: given $T \in \BL_{\mathrm{L}}(L^2(G))$, its uniquely determined convolution kernel $\kappa \in \mathscr{K}(G)$ and the corresponding Fourier multiplier $\sigma = \widehat{\kappa} \in L^\infty(\Ghat)$, one has
\begin{align*}
	\subbracket{L^2(G)}{T \phi}{\psi} = \subbracket{L^2(G)}{\phi * \kappa}{\psi} = \int_{\Ghat} \Tr \bigl ( \sigma_\pi \hspace{2pt} \widehat{\phi}(\pi) \hspace{2pt} \widehat{\psi}(\pi)^* \bigr ) \,d\mu
\end{align*}
 for all $\phi, \psi \in L^2(G)$

In our case we need not distinguish the Fourier transforms on $L^1(G)$, $L^2(G)$ or $\mathscr{K}(G)$ if the context is unambiguous, in which case we will simply speak of the (group) Fourier transform $\F$ and denote the images of functions/distributions by $\F (f)$ or $\widehat{f}$.

\subsection{The Heisenberg group}

The simplest non-Abelian nilpotent Lie algebras are the \textit{Heisenberg Lie algebras} $\h$ with $n \in \N$. For a given $n \in \N$, it is defined as the vector space $\R^{2n+1}$ equipped with the Lie bracket whose only non-trivial relation is
\begin{align*}
	[X_j, X_{n+j}] = X_{2n+1}, \hspace{5pt} j = 1, \ldots, n,
\end{align*}
for the Euclidean standard basis $\{ X_1, \ldots, X_{2n+1} \}$. This Lie algebra admits the stratification
\begin{align*}
	\h = \R\mathrm{-span}\{ X_1, \ldots, X_{2n} \} \oplus \R X_{2n+1},
\end{align*}
and hence a canonical family of homogeneous dilations, determined by
\begin{align}
\begin{split} 
\label{eq:hom_dil_Hn}
    D_r(X_j) &= r X_j, \hspace{5pt} j = 1, \ldots, 2n, \\
    D_r(X_{2n+1}) &= r^2 X_{2n+1}.
\end{split}
\end{align}

The corresponding stratified Lie group $\H := \exp(\h)$ is called the \textit{Heisenberg group} and its group law is given by
\begin{align*}
	&x y = \exp \bigl ( x_1 X_1 + \ldots + x_{2n+1} X_{2n+1} \bigr ) \exp \bigl ( y_1 X_1 + \ldots + y_{2n+1} X_{2n+1} \bigr ) \\
	&= \exp \Bigl ( (x_1 + y_1) X_1 + \ldots + (x_{2n} + y_{2n}) X_{2n} \\
	&\hspace{120pt} + \bigl ( x_{2n+1} + y_{2n+1} + \sum_{j=1}^n\frac{x_j y_{n+j} - x_{n+j} y_j}{2} \bigr )X_{2n+1} \Bigr ).
\end{align*}
The left-invariant basis vector fields on $\H$,
\begin{align}
\begin{split}
	X_j &= \partial_j - \frac{1}{2} x_{n+j} \partial_{2n+1}, \quad j = 1, \dots, n,\\ X_{n+j} &= \partial_{n+j} + \frac{1}{2} x_j \partial_{2n+1}, \quad j = 1, \dots, n, \\  X_{2n+1}& = \partial_{2n+1},  \label{eq:vf_Hn}
    \end{split}
\end{align}
are of homogeneous degrees $1$, $1$ and $2$, respectively, according to \eqref{eq:hom_dil_Hn}.

Apart from the quasi-norms~\ref{eq:p_qn} and \ref{eq:inf_qn}, which are defined for any family of dilations on $\H$, the canonical dilations admit a homogeneous norm of the form
\begin{align*}
	|x| := \Bigl ( (|x_1|^2+ \ldots + |x_{2n}|^2)^2 +
	\frac{1}{16} |x_{2n+1}|^2 \Bigr )^{\frac{1}{4}},
\end{align*}
called the Kor\'{a}nyi-Cygan-norm.

The unitary dual $\Hhat$ is exhausted by, up to Plancherel measure zero and unitary equivalence, the so-called Schr\"{o}dinger representations on $L^2(\R^n)$, since finite dimensional representations have Plancherel measure zero. For a fixed $\lambda \in \R \setminus \{ 0 \}$, the Schr\"{o}dinger representation $\pi_\lambda \in \Hhat$ can be realized on $\mathcal{H}_{\pi_\lambda} := L^2(\R^n)$ as
\begin{align}
    \bigl ( \pi_\lambda(x) h \bigr )(u) &= \bigl ( \pi_\lambda(x', x'', x_{2n+1}) h \bigr )(u) \\ &= e^{i \lambda (x_{2n+1} + \frac{1}{2}x'x'')} e^{i \sqrt{\lambda} x''u} h(u + \sqrt{|\lambda|} x') \label{eq:Schr_rep}
\end{align}
for $u \mapsto h(u) \in L^2(\R^n)$ and $x' := (x_1, \ldots, x_n)$, $x'' := (x_{n+1}, \ldots, x_{2n}) \in \R^n$, and the Plancherel identity~\eqref{eq:Plancherel} takes the simple form
\begin{align*}
	\int_{\H} |f(x)|^2 \, dx &= \int_{\R \setminus \{ 0 \}} \| \widehat{f}(\pi_\lambda) \|_{\mathtt{HS}(\mathcal{H}_{\pi_\lambda})}^2 \, | \lambda |^n \, d\lambda.
\end{align*}
Since the infinitesimal Schr\"{o}dinger representations of the vector fields~\eqref{eq:vf_Hn} are given by
\begin{equation*}
\begin{array}{rcl}
	\bigl ( \pi_\lambda(X_j) h \bigr )(u) &=& \sqrt{| \lambda |} \partial_{u_j} h(u), \\
	\bigl ( \pi_\lambda(X_{n+j}) h \bigr )(u) &=& i \sqrt{\lambda} u_j \hspace{2pt} h(u), \\
	\bigl ( \pi_\lambda(X_{2n+1}) h \bigr )(u) &=& i \lambda \hspace{2pt} h(u),
\end{array}
\end{equation*}
the symbol in $\pi_\lambda \in \Hhat$ of the canonical sub-Laplacian $\mathcal{L}_{\H} = X_1^2 + \ldots + X_{2n}^2$ is given by
\begin{align}
    \pi_\lambda \bigl ( \mathcal{L}_{\H} \bigr ) = | \lambda | \sum_{j=1}^n (\partial_j^2 - u_j^2), \label{eq:harmonic_oscillator}
\end{align}
that is, $|\lambda|$ times the harmonic oscillator on $L^2(\R^n)$.

\section{Symbol classes and Sobolev spaces} \label{sec:sym_Sob}

To set the stage for the symbolic calculus for the $\tau$-quantizations \eqref{eq:tau_quant_G}, we recall some crucial facts about the H\"{o}rmander-type symbol classes and the inhomogeneous Sobolev spaces on graded groups established in \cite{FR}.

\subsection{Bessel potentials} \label{subs:Bessel}

Given a positive Rockland operator $\RO$ of homogeneous degree $\hdeg \in \N$, let us denote by $\RO_2$ its self-adjoint extension on $L^2(G)$ and by $\varphi(\RO_2)$ any spectral multiplier of $\RO_2$ defined via the spectral calculus for unbounded self-adjoint operators in $L^2(G)$. Since $\RO_2$ is left-invariant, the operators $\varphi(\RO_2)$ are also left-invariant, which in combination with the Schwarz kernel theorem for $\mathscr{D}'(G)$ yields the identity
\begin{align*}
	\varphi(\RO_2) f = f * \kappa_{\varphi(\RO_2)}
\end{align*}
for all $f \in \mathscr{D}(G)$ and some $\kappa_{\varphi(\RO_2)} \in \mathscr{D}'(G)$. Of particular importance for our setting are the generally unbounded self-adjoint operators $(I + \RO_2)^{\frac{s}{\hdeg}}$, $s \in \R$, called Bessel potentials.
For any integer $s = k \hdeg$, $k \in \N$, the spectral multiplier $(I + \RO_2)^{\frac{s}{\hdeg}}$ equals the left-invariant differential operator $(I + \RO)^k$ on $\mathscr{D}(G)$. For general $s \in \R$ one can use properties of the heat kernel $h_t$ on $L^2(G)$, defined by
\begin{align*}
	f * h_t := e^{-t \RO_2} f
\end{align*}
for all $f \in L^2(G)$, to show that the convolution identity
\begin{align}
	(I + \RO_2)^{-\frac{s}{\hdeg}}f = f * \mathcal{B}_s  \label{eq:Bessel_con_ker}
\end{align}
holds for all $f \in \SC(G)$ and a kernel $\mathcal{B}_s \in \TD(G)$ and that in fact $(I + \RO_2)^{\frac{s}{\hdeg}}(\SC(G)) = \SC(G)$. Moreover, for any $s > 0$ the convolution kernel can explicitly be expressed in terms of the heat kernel:
\begin{align*}
	\mathcal{B}_s(x) = \frac{1}{\Gamma(\frac{s}{\nu})} \int_0^\infty t^{\frac{s}{\nu}-1} h_t(x) \, dt.
\end{align*}
Since the group Fourier transform relates the spectral calculus of $\RO_2$ on $L^2(G)$ with that of the self-adjoint operator $\pi(\RO)$ on $\RS$ for every $\pi \in \Ghat$ (cf.~\cite[Ch.~4]{FR}), an application of the Fourier transform to \eqref{eq:Bessel_con_ker} yields the identity
\begin{align}
	\pi(I + \RO)^{-\frac{s}{\hdeg}} \widehat{f}(\pi) = \F \bigl ( f * \mathcal{B}_s \bigr )(\pi) \hspace{5pt} \mbox{ a.e. } \hspace{5pt} \pi \in \Ghat \label{eq:Bessel_symbol}
\end{align}
for the symbol of the Bessel potential. This identity will prove crucial for the rest of the paper since by \eqref{eq:tau_quant_G} we obtain, for now at least formally,
\begin{align*}
	\Op^\tau \bigl ( \pi(I + \RO)^{-\frac{s}{\hdeg}} \bigr ) = (I + \RO_2)^{-\frac{s}{\hdeg}}
\end{align*}
for all quantizing functions $\tau$.

\subsection{H\"{o}rmander-type symbol classes} \label{subs:Horm_sc}

Let $\RO$ be a positive Rockland operator of homogeneous degree $\nu \in \N$. A smooth symbol is a field of operators
\begin{align*}
	\sigma = \bigl \{ \sigma(x, \pi): \RS^\infty \to \RS \mid x \in G, \pi \in \Ghat \bigr \}
\end{align*}
for which there exist $a, b \in \R$ such that for each $x \in G$ the field 
\begin{align*}
	\{\pi(I + \RO)^{\frac{a}{\hdeg}} \sigma(x, \pi) \pi(I + \RO)^{-\frac{b}{\hdeg}} \mid \pi \in \Ghat \bigr \}
\end{align*}
is an element of $L^\infty(\Ghat)$ and such that this field is smooth as a Banach space-valued function on $G$. By the properties of $L^\infty(\Ghat)$ discussed in Subsection~\ref{subs:GFT}, the condition on $a, b \in \R$ is equivalent  to $L^2$-continuity of the convolution operators $T_xf = f * \kappa_x$ with kernels $\kappa_x \in \mathscr{S}'(G)$, $x \in G$, defined by
\begin{align*}
    \kappa_x(y) := \F^{-1} \Bigl ( \{\pi(I + \RO)^{\frac{a}{\hdeg}} \sigma(x, \pi) \pi(I + \RO)^{-\frac{b}{\hdeg}} \}_{\pi \in \Ghat} \Bigr )(y) = \bigl (\mathcal{B}_{b} * \kappa_\sigma(x, \, . \,) * \mathcal{B}_{-a} \bigr )(y).
\end{align*}

The generalization of differentiability in the dual variable $\xi \in \R^n$ of symbols $\sigma: \R^n \times \Rhat^n \to \C$ to general graded groups $G$ involves the so-called difference operators on $\Ghat$. For any $\alpha \in \N^\dimG_0$, we write $\tilde{q}_\alpha(x) := q_\alpha(x^{-1})$, $x \in G$, for the homogeneous polynomial $q_\alpha$ determined by \eqref{eq:can_hom_pol}. Then the difference operator $\Delta^\alpha$ is defined by
\begin{align}
    \bigl ( \Delta^\alpha \widehat{f} \bigr )(\pi) := \widehat{\tilde{q}_\alpha f}(\pi), \hspace{5pt} \pi \in \Ghat,
\end{align}
for any $f \in \D(G)$ for which
\begin{align*}
    \mathcal{B}_{b} * f * \mathcal{B}_{-a}, \hspace{5pt} \mathcal{B}_{b'} * (\tilde{q}_\alpha f) * \mathcal{B}_{-a'} \in \mathcal{K}(G)
\end{align*}
for some $a, a', b, b' \in \R$, i.e., for which these kernels define continuous convolution operators on $L^2(G)$. While on $G = \R^n$ this recovers the classical notion, with
\begin{align*}
    \bigl ( \Delta^\alpha \widehat{f} \bigr )(\xi) = (-i)^{|\alpha|} (\partial^\alpha_\xi \widehat{f})(\xi)
\end{align*}
due to $q_\alpha(x) = (\alpha_1 ! \cdots \alpha_n !)^{-1} x^{\alpha}$, for the Heisenberg group $\H$, for example, one obtains a combination of  Euclidean derivatives $\partial_\lambda$ and commutators with the symbols $\pi_\lambda(X_j)$, $j = 1, \ldots, 2n$~\cite[\SS~6.3.3]{FR}. (See also~\cite{Ch} for the difference operators on the Engel and Cartan groups.)

Now let $m \in \R$ and $0 \leq \delta \leq \rho \leq 1$. Following~\cite{FR}, we will denote by $S^m_{\rho,\delta}(G)$ the class of smooth symbols which for all $a, b\in \N_0$ satisfy
\begin{align}
\begin{split} \label{defn.seminorms}
	\| \sigma\|_{S^m_{\rho,\delta},a,b} :=& \sup_{\substack{(x, \pi) \in G \times \Ghat \\ [\alpha] \leq a, [\beta] \leq b}} \bigl \| \bigl ( \Delta^\alpha X_x^\beta \sigma \bigr )(x,\pi)\pi(I+\RO)^{-\frac{m-\rho[\alpha]+\delta[\beta]}{\nu}} \bigr \|_{\mathscr{L}(\mathcal{H}_\pi)} \\
	=& \sup_{\substack{x \in G \\ [\alpha] \leq a, [\beta] \leq b}} \bigl \| \bigl \{ \bigl (\Delta^\alpha X_x^\beta \sigma \bigr )(x,\pi)\pi(I+\RO)^{-\frac{m-\rho[\alpha]+\delta[\beta]}{\nu}} \mid \pi \in \Ghat \bigr \} \bigr \|_{L^\infty(\Ghat)}< \infty.
\end{split}
\end{align}
These symbols are called smooth symbols of order $m$ and type $(\rho,\delta)$.

\medskip

Note that the seminorms defined by \eqref{defn.seminorms} are precisely the seminorms $\| \cdot \|_{S^{m,R}_{\rho,\delta},a,b}$ in \cite{FR}, which for technical reasons prove to be more convenient in this paper. However,
due to \cite[Thm.~5.5.20]{FR}, they define the same topology on $S^m_{\rho,\delta}(G)$ as the original seminorms in Definition~\cite[Def.~5.2.11]{FR}. Several equivalent characterizing families of seminorms are provided by \cite[Thm.~5.5.20]{FR} and \cite[Thm.~13.16]{CR_Sub}, and in each case the topology is independent of the choice of Rockland operator $\RO$ on $G$.

\begin{example} \label{ex:Hormander_classes_Rn}
For $G = \R^n$, equipped with the canonical isotropic dilations, and $\RO = - \sum_{j = 1}^n \partial_{x_j}^2$, one recovers the seminorms
\begin{align*}
	\| \sigma\|_{S^m_{\rho,\delta},a,b} = \sup_{\substack{(x, \xi) \in \R \times \widehat{\R}^n \\ [\alpha] \leq a, [\beta] \leq b}} \bigl | \bigl ( \partial_\xi^\alpha \partial_x^\beta \sigma \bigr )(x,\xi)(I + |\xi |^2)^{-\frac{m-\rho[\alpha]+\delta[\beta]}{2}} \bigr |,
\end{align*}
which define the H\"{o}rmander symbol classes $S^m_{\rho,\delta}(\R^n)$.
\end{example}

\begin{example} \label{ex:Hormander_classes_Hn}
The idea of using operator-valued symbols was first proposed for the case of convolution operators on the Heisenberg group $\H$ in \cite[Chapter~2]{TayNCMA}. Right from the start, the author highlights the crucial observation that if one chooses to realize the elements of $\Hhat$ as Schr\"{o}dinger represenations on $L^2(\R^n)$, then the symbol of an operator $K f = f * \kappa$ can be described by
\begin{align} \label{eq:Hn_symbols_Weyl_Taylor}
    \sigma(\pi_{\pm \lambda}) = \widehat{\kappa}(\pi_{\pm \lambda}) = \Opw_{\R^n}(a_{\pm \lambda}), \quad \lambda \in \R \setminus \{ 0 \},
\end{align}
that is, by a family of Weyl-quantized operators with scalar-valued symbols
\begin{align*}
    a_{{\pm \lambda}}(u, \xi) = \iint_{\R^{2n}} \kappa(\pm \lambda, \pm \lambda^{1/2} q, \lambda^{1/2} p) \, e^{i (qx + p\xi)} \, dq \, dp, \quad \lambda \in \R \setminus \{ 0 \},
\end{align*}
that are defined on the classical phase space $\R^{2n}$.\footnote{Our notation differs slightly from \cite{TayNCMA} since the translation-invariant operators in this paper are left-invariant and not right-invariant as in \cite{TayNCMA}.}

The first calculus for $x$-dependent pseudo-differential operators that made use of an extended version \eqref{eq:Hn_symbols_Weyl_Taylor} was developed in \cite{BFG}. Similar motivations led the authors of \cite{FR} to a characterization in the spirit of \eqref{eq:Hn_symbols_Weyl_Taylor} of their H\"{o}rmander-type classes $S^m_{\rho,\delta}(\H)$ (cf~\cite[Theorem~6.5.1]{FR}).
\end{example}

\begin{remark} \label{rem:pw_product}
It was shown in \cite[Thm.~5.2.22~(ii)]{FR} that if $\sigma_1 \in S^{m_1}_{\rho, \delta}(G)$ and $\sigma_2 \in S^{m_2}_{\rho, \delta}(G)$, for $m_1, m_2 \in \R$ and $0 \leq \delta \leq \rho \leq 1$, then the smooth symbol defined by
\begin{align}
    \sigma(x, \pi) := \sigma_1(x, \pi) \sigma_2(x, \pi)
\end{align}
lies in $S^{m_1 + m_2}_{\rho, \delta}(G)$.
\end{remark}

\subsection{Associated kernels} \label{subs:ass_ker}

By~\cite[Thm.~5.2.15]{FR}, any pseudo-differential operator
\begin{align}
    \Op(\sigma)f(x) = \iint\limits_{\Ghat \times G} \Tr \bigl ( \pi(y^{-1} x) \sigma(x, \pi) f(y) \bigr ) \, dy \, d\mu(\pi) \label{eq:KN_2}
\end{align}
with $\sigma \in S^{m}_{\rho, \delta}(G)$ for some $m \in \R$ and $0 \leq \delta \leq \rho \leq 1$, is continuous on the Schwartz space $\SC(G)$. It follows from the Schwartz kernel theorem that $\Op(\sigma)$ has a distributional kernel $\mathsf{Ker}_\sigma$ in $\TD(G \times G)$. Abusing the notation of integral kernels, we will frequently express this fact by writing
\begin{align}
     \Op(\sigma)f(x) = \int_G \mathsf{Ker}_\sigma(x, y) f(y) \, dy \nonumber
\end{align}
for $f \in \SC(G)$. The so-called \textit{associated kernel} $\kappa_\sigma$ is formally defined by
\begin{align}
    \Op(\sigma)f(x) = \int_G \kappa_\sigma(x, y^{-1} x) f(y) \, dy, \label{eq:ass_ker}
\end{align}
so that, again formally, it is related to the symbol by
\begin{align}
    \sigma(x, \pi) = \F \bigl ( \kappa_\sigma(x, \, \cdot \,) \bigr )(\pi) \label{eq:symbol}
\end{align}
for $x \in G$, $\pi \in \Ghat$. The change of variables
\begin{align}
     &\mathrm{cv}: G \times G \to G \times G, \nonumber \\
    &\mathrm{cv}(x, y) := (x,y^{-1}x), \label{eq:Ker_kappa}
\end{align}
which formally relates $\mathsf{Ker}_\sigma$ and $\kappa_\sigma$ to each other, is a Lie group automorphism of the direct product group $G \times G$, and its pullback $\mathrm{CV}$ maps $\SC(G \times G)$ continuously onto itself. By the open mapping theorem for Fr\'{e}chet spaces (cf.~\cite[Thm.~17.1]{T}), it is even a Fr\'{e}chet space isomorphism on $\SC(G \times G)$. So, under the assumption that $\mathsf{Ker}_\sigma \in \TD(G \times G)$, by duality, we also have $\kappa_\sigma \in \TD(G \times G)$.

Maintaining the formal relation~\eqref{eq:ass_ker} for a given $\tau$-quantization~\eqref{eq:tau_quant_G}, we may rewrite the quantization as
\begin{align}
    \Optau(\sigma)f(x) = \int_G \kappa_\sigma(x \tau(y^{-1}x)^{-1}, y^{-1}x)f(y) \, dy = \int_G \Ker_\sigma(x , y) f(y)\, dy, \label{eq:tau_quant_ass_kernel_int_kernel}
\end{align}
for its distributional kernel $\Ker_\sigma$. Under the assumption that the quantizing function $\tau:G \to G$ is smooth and that its pullback maps the Schwartz space $\SC(G \times G)$ continuously into itself, the change of coordinates
\begin{align}
    &\cv: G \times G \to G \times G, \nonumber \\
    &\cv(x,y) := (x\tau(y^{-1}x)^{-1},y^{-1}x), \label{eq:cv}
\end{align}
which formally relates $\Ker_\sigma$ to $\kappa_\sigma$, is easily seen to be a Lie group automorphism of $G \times G$ with inverse
\begin{align}
    \cvi(x,y) = (x\tau(y),x\tau(y)y^{-1}).  \nonumber
\end{align}
So, the associated pullbacks
\begin{align}
    (\CV \circ \kappa_\sigma)(x,y) &:= \kappa(\mathrm{cv}^\tau(x,y)), \\
    (\CVI \circ \kappa_\sigma)(x,y) &:= \kappa((\mathrm{cv}^\tau)^{-1}(x,y)),
\end{align}
are in fact Fr\'{e}chet space isomorphisms on $\SC(G\times G)$ and $\Ker_\sigma \in \TD(G \times G)$.
For any reasonable pseudo-differential calculus on graded groups this is an indispensable condition, hence throughout this paper we will assume
\begin{itemize}
    \item[(CV)] $\tau$ is smooth and the pullback of $\cv$ is a Fr\'{e}chet space isomorphism on the Schwartz space $\SC(G \times G)$.
\end{itemize}

\medskip

Let us finally point out that the formal relation between the distributional $\Ker_\sigma$ kernel and the symbol $\sigma$, given by
\begin{align}\label{symbol-kernel}
\sigma(x,\pi) = \bigl ( (\mathrm{id}\otimes\F_{G})\circ \kappa \bigr )(x,\pi) = \F_{y \to \pi}(\CVI \circ \Ker_\sigma)(x,y),
\end{align}
which, for symbols $\sigma \in S^{-\infty}(G)$, is given by an absolutely convergent integral.

\subsection{Kernel estimates} \label{sub:ker_est}

The following estimates from \cite{FR} for the kernels $\kappa_\sigma$ associated to symbols $\sigma \in S^m_{\rho, \delta}(G)$, $m \in \R$, $0 \leq \delta \leq \rho \leq 1$, will be used frequently throughout the paper. The quasi-norm in question will always be the canonical supremum quasi-norm defined by \eqref{eq:inf_qn}.\footnote{This is a matter of convenience since any two quasi-norms on a given $G$ are equivalent and therefore give the same kernel estimates.}

The first result gives a general $L^2$-decay condition, which for large negative $m$ gives square integrability.

\begin{proposition} [\cite{FR}, Prop.~5.2.16] \label{prop:L2_int_ass_ker}
Let $m \in \R$, $0 \leq \delta \leq \rho \leq 1$ and let $\sigma \in S^m_{\rho, \delta}(G)$. If $m < -\frac{\hdim}{2}$, then for any $x \in G$, the tempered distribution $y \mapsto \kappa_\sigma(x, y)$ is square-integrable and
\begin{align}
\begin{split} \label{eq:L^2_bd_ass_ker}
	\|\kappa(x, \, \cdot \,)\|_{L^2(G)} &\leq C \sup_{\pi \in \Ghat} \| \pi(I + \RO)^{-\frac{m}{\hdeg}} \sigma(x, \pi) \|_{\mathcal{L}(\RS)}, \\
\|\kappa(x, \, \cdot \,)\|_{L^2(G)} &\leq C \sup_{\pi \in \Ghat} \| \sigma(x, \pi) \pi(I + \RO)^{-\frac{m}{\hdeg}} \|_{\mathcal{L}(\RS)},
\end{split}
\end{align}
with $C = C(m) \in (0, \infty)$ independent of $\sigma$ and $x$.
\end{proposition}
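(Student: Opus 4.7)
The plan is to derive both bounds via the Plancherel identity combined with a factorization trick using the Bessel potential symbols $\pi(I+\mathcal{R})^{\pm m/\nu}$.

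First I would recall that by the definition of the associated kernel, $\sigma(x,\pi) = \mathcal{F}_{y\to\pi}\bigl(\kappa_\sigma(x,\cdot)\bigr)(\pi)$. So if I can show that the field $\{\sigma(x,\pi)\}_{\pi\in\widehat{G}}$ lies in $L^2(\widehat{G})$ for each fixed $x$, then the Plancherel identity \eqref{eq:Plancherel} will automatically yield $\kappa_\sigma(x,\cdot) \in L^2(G)$ together with the norm identity
\begin{align*}
\|\kappa_\sigma(x,\cdot)\|_{L^2(G)}^2 = \int_{\widehat{G}} \|\sigma(x,\pi)\|_{\mathtt{HS}(\mathcal{H}_\pi)}^2 \, d\mu(\pi).
\end{align*}
So the problem reduces to estimating this integral in terms of the uniform $\mathcal{L}(\mathcal{H}_\pi)$-norms appearing in \eqref{eq:L^2_bd_ass_ker}.

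Next, I would use the submultiplicativity inequalities between operator norm and Hilbert--Schmidt norm: $\|AB\|_{\mathtt{HS}} \le \|A\|_{\mathcal{L}}\|B\|_{\mathtt{HS}}$ and $\|AB\|_{\mathtt{HS}} \le \|A\|_{\mathtt{HS}}\|B\|_{\mathcal{L}}$. Writing
\begin{align*}
\sigma(x,\pi) = \bigl[\sigma(x,\pi)\,\pi(I+\mathcal{R})^{-m/\nu}\bigr]\,\pi(I+\mathcal{R})^{m/\nu}
\end{align*}
for the second bound, and symmetrically for the first, I obtain
\begin{align*}
\|\sigma(x,\pi)\|_{\mathtt{HS}}^2 \le \bigl\|\sigma(x,\pi)\pi(I+\mathcal{R})^{-m/\nu}\bigr\|_{\mathcal{L}(\mathcal{H}_\pi)}^2 \; \bigl\|\pi(I+\mathcal{R})^{m/\nu}\bigr\|_{\mathtt{HS}(\mathcal{H}_\pi)}^2.
\end{align*}
Taking the supremum over $\pi$ of the first factor and pulling it out of the integral reduces everything to showing that
\begin{align*}
C^2 := \int_{\widehat{G}} \bigl\|\pi(I+\mathcal{R})^{m/\nu}\bigr\|_{\mathtt{HS}(\mathcal{H}_\pi)}^2 \, d\mu(\pi) < \infty.
\end{align*}

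The final step is to identify this integral with the squared $L^2$-norm of a Bessel kernel. By Subsection~\ref{subs:Bessel}, the field $\{\pi(I+\mathcal{R})^{m/\nu}\}_{\pi\in\widehat{G}}$ is the group Fourier transform of the kernel $\mathcal{B}_{-m}$ associated with $(I+\mathcal{R}_2)^{m/\nu}$ via \eqref{eq:Bessel_symbol}, so by Plancherel we have $C^2 = \|\mathcal{B}_{-m}\|_{L^2(G)}^2$. The hypothesis $m<-Q/2$ translates into $-m>Q/2$, which is precisely the threshold ensuring that the singularity of $\mathcal{B}_{-m}$ at the origin is $L^2$-integrable (the Bessel kernel $\mathcal{B}_s$ behaves like $|x|^{s-Q}$ near $e_G$ and decays rapidly at infinity, a standard fact recalled from the heat kernel representation). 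Hence $C=C(m)<\infty$, and it manifestly depends only on $m$ (and on the fixed choice of $\mathcal{R}$), not on $\sigma$ or $x$, yielding the desired bounds. The only subtle point here is the $L^2$-control of $\mathcal{B}_{-m}$, but this is a well-established property of Bessel potentials on graded groups, so I do not expect a serious obstacle.
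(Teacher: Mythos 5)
Your proof is correct and is in fact essentially the argument used in the cited source \cite{FR} (Prop.~5.2.16); the paper at hand does not supply its own proof but simply quotes the result from \cite{FR}. A few small points worth noting. First, the initial application of Plancherel has no circularity as long as you phrase it carefully: you show the field $\{\sigma(x,\pi)\}_{\pi}$ lies in $L^2(\widehat{G})$, and since the associated kernel is the distributional inverse Fourier transform of this field (cf.~\eqref{eq:symbol}), the unitarity of the Plancherel transform then forces $\kappa_\sigma(x,\cdot)\in L^2(G)$ with the claimed norm identity. Second, your reduction to $C^2 = \|\mathcal{B}_{-m}\|_{L^2(G)}^2$ via the Bessel convention $\mathcal{F}(\mathcal{B}_s)(\pi) = \pi(I+\mathcal{R})^{-s/\nu}$ (so that $s = -m$) is exactly right. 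Third, the ``only subtle point'' you flag is indeed the load-bearing one: the finiteness $\mathcal{B}_{-m}\in L^2(G)$ for $-m>\hdim/2$ relies on the local asymptotics $\mathcal{B}_s(x)\asymp |x|^{s-\hdim}$ near $e_G$ for $0<s<\hdim$ (and boundedness for $s\ge\hdim$) together with rapid decay at infinity. This is a genuine theorem about Bessel kernels on graded groups, proved in \cite[Ch.~4]{FR} via the heat-kernel representation and the Gaussian estimates for $h_t$; it is not a purely formal observation. As long as one accepts that input, your submultiplicativity split $\|AB\|_{\mathtt{HS}}\le\|A\|_{\mathcal{L}}\|B\|_{\mathtt{HS}}$ (and the symmetric variant for the left-sided bound) gives both inequalities, with the constant $C=\|\mathcal{B}_{-m}\|_{L^2(G)}$ depending only on $m$ and the fixed Rockland operator $\mathcal{R}$.
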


The second estimate ensures Schwartz decay away from the origin.

\begin{proposition} [\cite{FR}, Prop.~5.4.4] \label{prop:ker_est_at_infty}
Let $m \in \R$, $0 \leq \delta \leq \rho \leq 1$ and $\rho > 0$. Let $\sigma \in S^m_{\rho, \delta}(G)$ and let $|\, \cdot \,|$ be a quasi-norm on $G$. Then for any $M \in \R$ and any $\alpha, \beta_1, \beta_2, \beta_o \in \N^\dimG_0$ there exist some $C > 0$ and $a, b \in \N$ independent of $\sigma$ such that for all $x \in G$ and all $z \in G$ with $|z| \geq 1$, we have
\begin{align*}
	\bigl | X^{\beta_1}_{z_1 = z} X^{\beta_2}_{z_2 = z_1} X^{\beta_o}_{x_1 = x} \tilde{q}_\alpha(z) \kappa_\sigma(x_1, z_2) \bigr | \leq C \|\sigma\|_{S^m_{\rho, \delta}} |z|^{-M}.
\end{align*}
\end{proposition}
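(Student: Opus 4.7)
My strategy is to translate the pointwise kernel estimate into a symbol estimate via the group Fourier transform, where the hypothesis $\rho > 0$ allows many difference operators $\Delta^{\alpha}$ to drive the effective symbol order arbitrarily negative, so that a Sobolev-type argument then gives uniform pointwise bounds on the corresponding kernel. The key Fourier correspondences are: left-invariant differentiation $X^\gamma_z$ on $\kappa_\sigma(x, \cdot)$ corresponds to left multiplication on $\sigma(x, \cdot)$ by $\pi(X^\gamma)$; multiplication by $\tilde{q}_\alpha(z)$ corresponds to $\Delta^\alpha$; and $X^{\beta_o}_x$ simply differentiates the $x$-slot of $\sigma$, raising the order by $\delta[\beta_o]$.

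First, by \eqref{l.i.prod}, the iterated derivative $X^{\beta_1}_{z_1=z}X^{\beta_2}_{z_2=z_1}$ reduces to a linear combination of $X^\gamma_z$ with $[\gamma] = [\beta_1] + [\beta_2]$. Combining the three Fourier correspondences above with Remark~\ref{rem:pw_product} and the inclusion $S^{[\gamma]}_{1,0}(G) \subseteq S^{[\gamma]}_{\rho, \delta}(G)$, the quantity to estimate equals $\kappa_{\sigma_1}(x, z)$ for
\begin{align*}
    \sigma_1 := \Delta^\alpha \bigl [ \pi(X^\gamma)\, X^{\beta_o}_x \sigma(x, \pi) \bigr ] \in S^{m + \delta[\beta_o] + [\gamma] - \rho[\alpha]}_{\rho, \delta}(G),
\end{align*}
with seminorms controlled by those of $\sigma$.

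To produce the $|z|^{-M}$ decay, I pick $M' \in \mathcal{W}$ with $M' \geq M$ and divisible by every weight $v_j$. Writing each monomial $z_j^{M'/v_j}$ as a homogeneous polynomial of degree $M'$ in the basis $\{\tilde{q}_{\alpha'} : [\alpha'] = M'\}$, and using the supremum quasi-norm \eqref{eq:inf_qn}, one obtains $|z|^{M'} \leq C \sum_{[\alpha'] = M'} | \tilde{q}_{\alpha'}(z) |$ on all of $G$. Collapsing the resulting products via \eqref{HomPol1}, the required estimate reduces to a uniform pointwise bound $| \kappa_{\sigma_2}(x, z) | \leq C \|\sigma\|_{S^m_{\rho,\delta}, a, b}$ for a finite family of symbols
\begin{align*}
    \sigma_2 \in S^{m + \delta[\beta_o] + [\gamma] - \rho(M' + [\alpha])}_{\rho, \delta}(G),
\end{align*}
whose orders can be made arbitrarily negative as $M'$ grows, since $\rho > 0$.

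The last step, and \emph{main obstacle}, is to establish the following uniform pointwise estimate on negatively-ordered kernels: if $m'$ is sufficiently negative (depending on $\hdim$), then any $\sigma' \in S^{m'}_{\rho, \delta}(G)$ satisfies $\sup_{x, z} |\kappa_{\sigma'}(x, z)| \leq C \|\sigma'\|_{S^{m'}_{\rho, \delta}, a', b'}$ for some fixed $a', b'$. I will prove this by applying Proposition~\ref{prop:L2_int_ass_ker} to each symbol $\pi(X^\beta) \sigma'(x, \pi)$, $[\beta] \leq s_o$, which by Remark~\ref{rem:pw_product} and the inclusion $S^{[\beta]}_{1,0}(G) \subseteq S^{[\beta]}_{\rho, \delta}(G)$ lies in $S^{m' + [\beta]}_{\rho, \delta}$, and then invoking the Sobolev embedding $L^2_{s_o}(G) \hookrightarrow C^0(G)$, $s_o > \hdim/2$, in the $z$-variable uniformly in $x$. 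Applied to the family $\sigma_2$ above with $M'$ chosen large enough, this yields the claim. The difficulty lies in tracking all symbol-class constants uniformly through the iterated left multiplications by $\pi(X^\beta)$, so that the final constant depends only on a fixed finite number of seminorms of the original $\sigma$.
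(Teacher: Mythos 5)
The statement you are proving is quoted by the paper directly from~\cite[Prop.~5.4.4]{FR} and the authors give no proof of their own, so there is no "paper proof" to compare against; the relevant comparison is with the argument in~\cite{FR}. Your plan is essentially the same as the one used there, and it is correct in its structure: reduce the iterated left-invariant derivatives in $z$ to a single $X^\gamma_z$ via~\eqref{l.i.prod}, absorb the polynomial factor and the weight $|z|^{M'}$ into difference operators $\Delta^{\alpha''}$ by choosing $M'$ a common multiple of the weights, observe that the resulting symbol $\sigma_2 = \Delta^{\alpha''}\bigl[\pi(X^\gamma)\,X^{\beta_o}_x\sigma\bigr]$ has order $m+\delta[\beta_o]+[\gamma]-\rho(M'+[\alpha])$ which $\rho>0$ lets you drive as low as you wish, and conclude by the $L^2$ estimate of Proposition~\ref{prop:L2_int_ass_ker} combined with the Sobolev embedding $L^2_{s_o}(G)\hookrightarrow C^0(G)$ for $s_o>\hdim/2$.

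Two small remarks on the bookkeeping you flag as the main obstacle. First, the uniformity in the constant is not actually subtle once you note that each step is a continuous linear map between symbol classes: $\sigma\mapsto X^{\beta_o}_x\sigma$, $\sigma\mapsto\pi(X^\gamma)\sigma$ (by Remark~\ref{rem:pw_product} together with $\pi(X^\gamma)\in S^{[\gamma]}_{1,0}\subseteq S^{[\gamma]}_{\rho,\delta}$), and $\sigma\mapsto\Delta^{\alpha''}\sigma$ are all bounded between the relevant H\"ormander classes, so a finite number of seminorms of $\sigma$ control any fixed seminorm of $\sigma_2$, and Proposition~\ref{prop:L2_int_ass_ker} plus the embedding then close the argument with constants independent of $x$, $z$ and $\sigma$. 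Second, the notation $X^{\beta_1}_{z_1=z}X^{\beta_2}_{z_2=z_1}\cdots\tilde{q}_\alpha(z)\kappa_\sigma(x_1,z_2)$ is ambiguous as to whether the polynomial is also differentiated; your reading (polynomial untouched) is consistent with what is written, but under the alternative reading a Leibniz split merely produces finitely many terms $\tilde{q}_{\alpha-\gamma_1}(z)\,X^{\gamma_2}_z\kappa_\sigma$ with $[\gamma_1]+[\gamma_2]\le[\beta_1]+[\beta_2]$ and lower-degree polynomials, which your scheme handles identically. Your proof is sound.
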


The third result, a direct consequence of Lemma~5.5.6 and Propositions~5.4.4 and 5.4.6 in \cite{FR}, provides some integrability conditions, dictated by the behavior of the kernel at the origin. In addition, the result shows that such oscillatory integrals can be bounded by suitable seminorms of the corresponding symbol, a fact that will turn out to be crucial to perform the approximation argument we use in Section \ref{sec:sym_calc}.

\begin{lemma}\label{FRcorollary}
Let $\sigma\in S^{m}_{\rho,\delta}(G)$, with $0\leq \delta\leq\rho\leq 1$ and $\rho\neq 0$, and let $\kappa_x$  be the associated kernel. Then for every $\gamma\in\mathbb{R}$, with $\gamma+Q>\max\left\{ \frac{Q+m+\delta[\beta_0]-\rho[\alpha]+[\beta_1]+[\beta_2]}{\rho},0\right\}$, there exists $C>0$ and a seminorm $\|\cdot\|_{S^{m}_{\rho,\delta}a,b}$ such that
\begin{align*}
	\int_{G} |z|^\gamma \sup_{x\in G}|X^{\beta_1}_z\tilde{X}^{\beta_2}_z (X^{\beta_0}_x\tilde{q}_\alpha(z)\kappa_x(z))|dz\leq C \|\sigma\|_{S^{m}_{\rho,\delta}a,b}.
\end{align*}
\end{lemma}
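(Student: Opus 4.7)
The plan is to decompose the integral over $G$ into the two regions $\{|z| \geq 1\}$ and $\{|z| < 1\}$ and estimate each separately using the two kernel-estimate results of \cite{FR} cited immediately before the lemma, combined with Lemma~5.5.6 to obtain a uniform seminorm bound.

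For the contribution at infinity, I would first use the identities \eqref{RelationsVFs1} to rewrite the right-invariant derivative $\tilde{X}^{\beta_2}_z$ as a sum of products of homogeneous polynomials $Q_{\beta_2, \beta}(z)$ with left-invariant derivatives $X^\beta_z$ of homogeneous degree $[\beta] = [\beta_2]$. After distributing via the Leibniz rule the resulting polynomial factors against $\tilde{q}_\alpha$ and the remaining $X^{\beta_1}$, the integrand becomes a finite sum of expressions to which Proposition~\ref{prop:ker_est_at_infty} applies directly. Choosing in that proposition the decay exponent $M$ larger than $\gamma + Q$ makes $\int_{|z|\geq 1} |z|^\gamma |z|^{-M} dz$ finite, and the resulting constant depends only on a fixed seminorm $\|\sigma\|_{S^m_{\rho,\delta},a,b}$.

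For the contribution near the origin, the key pointwise estimate to invoke is the one proved in \cite[Prop.~5.4.6]{FR} (combined with Lemma~5.5.6 to absorb the $X^{\beta_0}_x$-derivative in $x$ at the cost of raising the symbolic order by $\delta[\beta_0]$): for $|z|<1$ one obtains a bound
\[
\sup_{x \in G} |X^{\beta_1}_z \tilde{X}^{\beta_2}_z (X^{\beta_0}_x \tilde{q}_\alpha(z) \kappa_x(z))| \leq C\, \|\sigma\|_{S^{m}_{\rho,\delta},a,b} \cdot \omega(|z|),
\]
where, following the case distinction in \cite[Prop.~5.4.6]{FR}, $\omega(|z|) = |z|^{-d}$ if $d := \frac{Q+m+\delta[\beta_0]-\rho[\alpha]+[\beta_1]+[\beta_2]}{\rho} > 0$, and $\omega(|z|)$ is a bounded function (possibly with a logarithmic blow-up in the borderline case, which can be absorbed) if $d \leq 0$. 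The hypothesis $\gamma + Q > \max\{d, 0\}$ is precisely what is needed to make $\int_{|z|<1} |z|^\gamma \omega(|z|) \, dz$ finite, using polar-type coordinates adapted to the homogeneous dimension $Q$.

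The main obstacle, I expect, will be the bookkeeping required to reduce the mixed left/right invariant expression inside the integrand to a form to which Proposition~\ref{prop:ker_est_at_infty} and \cite[Prop.~5.4.6]{FR} apply, since those results are stated for purely left-invariant derivatives, and one must verify that after applying \eqref{RelationsVFs1} and Leibniz, all produced terms have homogeneous degrees consistent with the exponent $d$ on the right-hand side of the claimed bound. Once this is done carefully, summing the estimates on $\{|z|\geq 1\}$ and $\{|z|<1\}$ gives the result with $C$ and the seminorm $\|\cdot\|_{S^{m}_{\rho,\delta},a,b}$ depending only on $\gamma$, $\alpha$, $\beta_0$, $\beta_1$, $\beta_2$ and the fixed parameters $m,\rho,\delta,Q$.
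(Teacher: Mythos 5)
Your proposal reconstructs what the paper leaves implicit: split the $z$-integral over $\{|z|\ge 1\}$ and $\{|z|<1\}$, use Proposition~\ref{prop:ker_est_at_infty} (\cite[Prop.~5.4.4]{FR}) on the far region, the near-origin kernel estimate \cite[Prop.~5.4.6]{FR} on the near region, and \cite[Lem.~5.5.6]{FR} to turn the resulting constants into a fixed seminorm. Since the paper itself states the lemma only as ``a direct consequence'' of those three results, your outline is at the right level of detail, and the integrability bookkeeping (convergence of $\int_{|z|<1}|z|^{\gamma}\omega(|z|)\,dz$ under $\gamma+Q>\max\{d,0\}$, arbitrary decay absorbing $|z|^{\gamma}$ at infinity) is correct.

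There is, however, one point in your final paragraph worth straightening out, because the route you sketch there would not actually reproduce the exponent $d$. You propose to convert $\tilde X^{\beta_2}_z$ into left-invariant derivatives via \eqref{RelationsVFs1}: this produces terms $Q_{\beta_2,\beta'}(z)\,X^{\beta'}_z$ with $[\beta']\ge[\beta_2]$ and $\deg Q_{\beta_2,\beta'}=[\beta']-[\beta_2]$. Near the origin, each polynomial factor of homogeneous degree $k$ gains only $|z|^{k}$ pointwise but costs $\rho k$ in the numerator of the FR exponent (it acts like a $\tilde q_{\gamma}$ with $[\gamma]=k$). The net exponent of the converted term is therefore
\begin{align}
\frac{Q+m+\delta[\beta_0]-\rho[\alpha]+[\beta_1]+[\beta']-\rho\bigl([\beta']-[\beta_2]\bigr)}{\rho}
= d + \frac{(1-\rho)\bigl([\beta']-[\beta_2]\bigr)}{\rho},
\end{align}
which is strictly worse than $d$ whenever $\rho<1$ and $[\beta']>[\beta_2]$. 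So the \eqref{RelationsVFs1}-conversion is harmless at infinity (decay is arbitrary there) but is the wrong reduction near the origin. What makes the lemma work with $[\beta_1]$ and $[\beta_2]$ appearing symmetrically and with coefficient~$1$ in $d$ is that, on the Fourier side, a left-invariant $X^{\beta_1}_z$ and a right-invariant $\tilde X^{\beta_2}_z$ on the associated kernel both correspond to multiplying $\sigma(x,\pi)$ by $\pi(X)^{\beta}$ (on one side or the other), each raising the symbol order by exactly $[\beta]$ with no loss; \cite[Prop.~5.4.6]{FR} is then applied directly to this higher-order symbol, with no conversion of derivatives at all. Apart from replacing that one reduction step, your argument is complete.
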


\subsection{Sobolev spaces} \label{subs:Sobolev_spaces}

The notion of Sobolev space on a graded group established in \cite{FR, FR2} is based on the use of a positive Rockland operator $\RO$ in a very canonical fashion. This makes them the ideal spaces to express and judge the mapping properties of the $\tau$-calculus developed in this paper. This class of Sobolev spaces includes, in particular, the Sobolev spaces on stratified groups by \cite{F3}, since for a given stratified $G$ equipped with the corresponding canonical homogeneous structure, one may choose $\RO = -\sL$, the negative canonical homogeneous sub-Laplacian on $G$ from \eqref{eq:gen_sL}. For the trivially stratified group $G = \R^n$ the choice $\RO = - \partial_1^2 - \ldots -\partial_n^2$ recovers the classical homogeneous and inhomogeneous Sobolev spaces $\dot{L}^p_s(\R^n)$ and $L^p_s(\R^n)$, respectively, with $p \in (1, \infty)$, $s \in \R$.

The original definition of the inhomogeneous Sobolev spaces $L^p_s(G) = L^p_{s, \RO}(G)$ in \cite{FR, FR2} relies on the properties of the heat semi-group  $f \mapsto f * h_t$, $t > 0$, on $L^p(G)$, and on the spectral theory of its infinitesimal generator $\RO_p$. Since this is outside the scope of this paper, we choose a slightly different, nevertheless equivalent definition, reminiscent of \cite[\S~5.2]{S}, on $\R^n$.\footnote{We only rely on the familiarity with the spectral theory of unbounded self-adjoint operators in Hilbert space, in this case $\RO_2$, to readily access the relevant Sections~4.1 and 4.2 in \cite{FR} on the heat kernel and the Bessel potentials when $p = 2$.}

In order to avoid any circular arguments in our approach, let us first recall that for the Bessel potential $(I + \RO_2)^{\frac{s}{\hdeg}}$ on $L^2(G)$, and $s \in \R$, the Sobolev space $L^2_s(G) = L^2_{s, \RO}(G)$ is defined as the completion of $\SC(G)$ in $\TD(G)$ with respect to the norm
\begin{align}
	\| f \|_{L^2_s(G)} := \| (I + \RO_2)^{\frac{s}{\hdeg}}f \|_{L^2(G)}.
\end{align}
To treat the case of $p \in (1, \infty)$, we use \eqref{eq:Bessel_con_ker} to deduce that the right convolution kernel of the adjoint of the operator $(I + \RO_2)^{\frac{s}{\hdeg}}: \SC(G) \to \SC(G)$ is an element of $\TD(G)$, which by an abuse of notation is also denoted by $\mathcal{B}_{-s}$.
However, the operator can also be viewed as a Fourier multiplier and its symbol $\F(\mathcal{B}_{-s}) = \pi(I + \RO)^{\frac{s}{\hdeg}}$ is clearly an element of $S^s_{1, 0}(G)$ by \cite[Prop.~5.3.4]{FR}.\footnote{This is a consequence of Hulanicki's theorem.} So, for $p \in (1, \infty)$, $s \in \R$, we can define the space $L^p_s(G) = L^p_{s, \RO}(G)$ as the space of all $f \in \TD(G)$ such that
\begin{align}
	\| f \|_{L^p_s(G)} := \| (I + \RO_2)^{\frac{s}{\hdeg}}f \|_{L^p(G)} < \infty.
\end{align}
It is easily seen that for $p = 2$ the two definitions agree, and for $s = 0$ one obviously recovers $L^p(G)$. Since the operators $(I + \RO_2)^{\frac{s}{\hdeg}}$ and $(I + \RO_p)^{\frac{s}{\hdeg}}$ coincide on $\SC(G)$, our definition gives the same spaces as \cite[Def.~4.2.2]{FR}, which was shown to be independent of the choice of $\RO$.

\begin{notation}
Throughout the rest of this paper we will no longer distinguish $\RO$ and $\RO_2$ notationally and simply write $\RO$,  since it will be clear from the context whether we mean the differential operator on its natural domain $\mathrm{dom}(\RO) \supseteq \SC(G)$ or its self-adjoint extension on $L^2(G)$.
\end{notation}

\section{Symbolic calculus} \label{sec:sym_calc}

In this section we develop a symbolic calculus for a wide range of $\tau$-quantizations on graded groups and the H\"{o}rmander symbol classes $S^m_{\rho, \delta}(G)$ introduced in~\cite{FR}.
For technical reasons that will become clear soon, we will mostly have to require that $\rho$ and $\delta$ satisfy $0\leq \delta<\frac{\rho}{v_n}\leq \rho\leq1$. Note that the fundamental case $(\rho,\delta)=(1,0)$ always satisfies this condition.

\subsection{Admissible quantizing functions} \label{subs:adm_qu_f}

In order to make the proposed $\tau$-calculus work for the symbol classes $S^m_{\rho, \delta}(G)$, we need to control oscillatory integrals such as \eqref{eq:tau_quant_ass_kernel_int_kernel}. We will therefore require that the quantizing functions $\tau: G \to G$ satisfy certain properties that are trivial for the $\tau$-quantizations~\eqref{eq:tau_quant} on $G = \R^n$. While on $\R^n$ this is owed to the trivial nature of its Lie algebra, on non-trivially graded groups it hinges on the compatibility with the given homogeneous structure. Thus, for any graded $G$ under consideration we fix, once and for all, a family of dilations that admits a Rockland operator and a Malcev basis $\{ X_1, \ldots, X_n \}$ of $\Lie{g}$ that passes through the gradation and whose elements are eigenvectors of the dilations with integer weights $1 \leq v_1 \leq \ldots \leq v_n$ (cf.~Subsections~\ref{subs:gr_strat_gr} and \ref{subs:RO}). We then define the following primary condition on quantizing functions $\tau$ on $G$:
\begin{itemize}
    \item[(HP)] The exponential coordinates of $\tau(x) = \bigl ( c^\tau_1(x), \ldots, c^\tau_n(x) \bigr ) \in G$ as functions of $x \in G$ either vanish identically, i.e., $c^\tau_j(x) = 0$ for all $x \in G$, or are \textit{homogeneous polynomials} of the form
    \begin{align} \label{eq:c_tau_j}
        c^\tau_j(x) = c^\tau_j(x_1, \ldots, x_j) = C^\tau_j x_j + d^\tau_j(x_1, \ldots, x_{j-1})
    \end{align}
    for some $C^\tau_j \neq 0$ and some homogeneous polynomial $d^\tau_j$ of degree $v_j$ which only depends on $x_k$ with $k = 1, \ldots, j-1$.
\end{itemize}

Whenever $\tau$ satisfies the symmetry condition~\eqref{eq:sym_quant} for the classes $S^m_{\rho, \delta}(G)$, $m \in \R$, $0\leq \delta<\frac{\rho}{v_n}\leq \rho\leq1$, we will call $\tau$ a \textit{symmetry function} and the corresponding $\tau$-calculus a \textit{symmetric calculus}.

\medskip

Since for such $\tau$ the change of coordinates $\cv$ defined by \eqref{eq:cv} is a Fr\'{e}chet space isomorphism on the Schwartz space $\SC(G)$ (see, e.g., \cite[Lem.~A.2.1]{CG}), the condition (HP) supersedes the condition (CV) from Subsection~\ref{subs:ass_ker}, which we have considered to be the minimal assumption on $\tau$ so far. To prove the main theorems of the $\tau$-calculus presented in this paper, we will always assume (HP). This covers all the classical $\tau$-quantizations~\eqref{eq:tau_quant} on $\R^n$ and the Kohn-Nirenberg quantization~\eqref{eq:KN_quant_G} as well as the symmetric quantizations arising from~\eqref{eq:MR_tau} and \eqref{eq:onehalf_tau} on general graded groups.

\begin{remark}
The explicit dependence of (HP) on the choice of Malcev basis may seem a bit stark at first. Admittedly, a more natural, larger class of admissible quantizing functions may emerge from relaxing the conditions on (HP) to merely depend on subspaces $\Lie{g}_i$ of the gradation rather than individual coordinates.\footnote{This very reasonable assumption was suggested to us by the referee.} While this is a very interesting aspect worth exploring in the future, we have tried not to involve even more algebraic properties of the Lie groups under consideration in our endeavor to develop a symbolic calculus for interesting families of quantizations.
\end{remark}

\begin{remark}
One could also dispense with the homogeneity of the coefficients of $\tau$ and admit quantizing functions with more general polynomial coordinates, including quadratic maps $\tau: \R^n \to \R^n$ as in \cite{CT} and the nonlinear quantizing functions $\tau$ studied in \cite{ER}. This would come at the cost of carefully keeping track of products and sums of homogeneous and isotropic orders of polynomials and vector fields throughout the already quite lengthy estimates of many oscillatory integrals, while it would not generate much additional insight. Implementing this in our framework would also require that if $k_o \in \N$ denoted the highest isotropic order of any polynomial coefficient of $\tau$, then the parameters $\rho, \delta \in [0, 1]$ would generally have to satisfy $0 \leq \delta < \frac{\rho}{k_o v_n} \leq 1$, a stricter condition than $0 \leq \delta < \frac{\rho}{v_n} \leq 1$ required currently.
\end{remark}

Let us provide a few examples of very distinct quantizing functions to illustrate that neither condition is overly restrictive but, on the contrary, rather natural.

\begin{example}
The quantizing function $\tau: x \mapsto e_G = (0, \ldots, 0)$ and the corresponding change of variables $\cv(x, y) = (x, y^{-1}x)$ clearly characterizes the Kohn-Nirenberg quantization from \cite{FR}.
\end{example}

\begin{example}
The quantizing function $\tau: x \mapsto x$ trivially satisfy (HP) and the corresponding change of variables $\cv(x, y) = (y, y^{-1}x)$ corresponds to the quantization which on $G = \R^n$ is sometimes called the right quantization.
\end{example}

\begin{example} \label{ex:fam_sym_fun_Hn}
On the Heisenberg group $\H$, equipped with the canonical homogeneous dilations, we consider the quantizing functions
\begin{align}
	\tau(x) = \Bigl ( \frac{x_1}{2}, \ldots, \frac{x_{2n}}{2}, \frac{x_{2n+1}}{2}+ \sum_{j, k = 1}^{2n} c_{j, k} \, x_j x_k \Bigr ) \label{eq:fam_sym_fun_Hn}
\end{align}
for any choice of $c_{j, k} \in \R$, $j, k = 1, \ldots, 2n$.

Since the canonical dilation weights are given by $v_1 = \ldots = v_{2n} = 1$ and $v_{2n+1} = 2$, each function $\tau$ clearly satisfies (HP). To see that it is also symmetric, by Theorem~\ref{thm:asym_exp_adjoint_tau} below, it suffices to show that $\tau(x)=\tau(x^{-1})x$ holds for all $x \in \H$. Explicitly, this means that
\begin{align}
\begin{split} \label{eq:coef_fam_sym_fun_Hn}
&\Bigl ( \frac{x_1}{2}, \ldots, \frac{x_{2n}}{2}, \frac{x_{2n+1}}{2} + \sum_{j, k = 1}^{2n} c_{j, k} \, x_j x_k \Bigr ) = \\
&\Bigl ( -\frac{x_1}{2} + x_1, \ldots, -\frac{x_{2n}}{2} + x_{2n}, -\frac{x_{2n+1}}{2} + \sum_{j, k = 1}^{2n} c_{j, k} \, x_j x_k + x_{2n+1} - \frac{1}{2} \sum_{l=1}^n \underbrace{(x_l x_{n+l} - x_{n+l} x_l )}_{=0}  \Bigr )
\end{split}
\end{align}
has to hold for all $x \in \H$, which is clearly the case.

In fact, these are the only symmetric quantizing functions $\tau: \H \to \H$ which satisfy (HP): because of the homogeneous structure of $\H$ the function $\tau$ has to be of the form
\begin{align*}
    \tau(x) = \Bigl ( a_1 x_1 + L_1(x),\ldots, a_{2n} x_{2n} + L_{2n}(x), a_{2n+1} x_{2n+1} + \sum_{j, k = 1}^{2n} c_{j, k} \, x_j x_k \Bigr ) 
\end{align*}
for some linear forms $L_l(x) = b_{l, 1} x_1 + \ldots + b_{l, 2n} x_{2n}$ with $b_{l, 1}, \ldots, b_{l, 2n} \in \R$ and some coefficients $c_{j, k} \in \R$, $j, k, l = 1, \ldots, n$. Now, by a comparison of coefficients as in \eqref{eq:coef_fam_sym_fun_Hn}, the condition $\tau(x)=\tau(x^{-1})x$ for all $x \in \H$ requires that
\begin{align*}
\left\{ \begin{array}{ll}
 a_j x_j + L_j(x) = (1-a_j)x_j - L_j(x) & \Rightarrow  a_j = \frac{1}{2}, \, L_j(x) = 0, \\
  a_{2n+1} x_{2n+1} = (1-a_{2n+1})x_{2n+1} & \Rightarrow  a_{2n+1} = \frac{1}{2}\\
\end{array}\right.
\end{align*}
for all $j = 1, \ldots, 2n$. Hence, any such $\tau$ is necessarily of the form~\eqref{eq:fam_sym_fun_Hn}.
\end{example}

\begin{example}
The quantizing function \eqref{eq:onehalf_tau}, which for $G = \R^n$ coincides with $\tau(x) = (\frac{x_1}{2}, \ldots, \frac{x_\dimG}{2})$ and thus yields the Weyl quantization and which for $G = \H$ is an instance of \eqref{eq:fam_sym_fun_Hn}, trivially satisfying (HP) on general graded $G$.
\end{example}

\begin{example}
The quantizing function \eqref{eq:MR_tau} by \cite{MR1}, which for $G = \R^n$ also coincides with $\tau(x) = (\frac{x_1}{2}, \ldots, \frac{x_\dimG}{2})$ and which for $G = \H$ is another instance of \eqref{eq:fam_sym_fun_Hn}, can also easily be seen to satisfy (HP) if one, for example, employs an upper diagonal matrix representation of the nilpotent group $G$ in question.
\end{example}

The following lemmas will be key ingredients in proving the convergence of the main oscillatory integrals in this section. The first lemma establishes a useful relation between the group law in exponential coordinates and the condition (HP). The second lemma characterizes the chain rule of differentiation for compositions involving the quantizing function $\tau$.

\begin{lemma} \label{lem:product_tau}
Let the quantizing function $\tau: G \to G$ satisfy (HP). There exist polynomials $T_1, \ldots, T_\dimG: G \times G \to \R$ such that
\begin{align}
    \tau(x) \tau(y) &= \exp \bigl( T_1(x_1, y_1) X_1 + \ldots T_\dimG(x_1, \ldots x_\dimG, y_1, \ldots y_\dimG) X_\dimG \bigr ) \tau(x y) \\
    &= \bigl( T_1(x, y), \ldots, T_\dimG(x, y) \bigr ) \tau(x y) \label{eq:product_tau}
\end{align}
 for all $x = (x_1, \ldots, x_\dimG)$, $y = (y_1, \ldots, y_\dimG) \in G$. Moreover, each $T_j(x, y)$, $j = 1, \ldots, \dimG$, equals a sum of monomials $x^\alpha y^\beta$, $\alpha, \beta \neq 0$, which only depend on the variables $x_1, \ldots x_j$ and $y_1, \ldots y_j$.
\end{lemma}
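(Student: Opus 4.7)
The plan is to take as the definition of the tuple $(T_1(x,y),\ldots, T_\dimG(x,y))$ the exponential coordinates of
\[
\tau(x)\tau(y)\tau(xy)^{-1} \in G,
\]
so that \eqref{eq:product_tau} holds tautologically, and then verify the three properties stated: polynomiality of each $T_j$, dependence only on $x_1,\ldots,x_j,y_1,\ldots,y_j$, and the absence of pure-$x$ and pure-$y$ monomials as well as of any constant term.

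First I would verify polynomiality. By (HP), each coordinate function $c^\tau_k$ of $\tau$ is a polynomial on $G$. Since $G$ is nilpotent, both the group multiplication \eqref{eq:pol_exp_coord1} and, by iterated application of that identity, the group inversion are polynomial operations in exponential coordinates. Composing these polynomial operations with $\tau$ shows that each coordinate of $\tau(x)\tau(y)\tau(xy)^{-1}$ is a polynomial in $x_1,\ldots,x_\dimG,y_1,\ldots,y_\dimG$.

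Next I would establish the dependence structure. By \eqref{eq:pol_exp_coord1}, the $j$-th coordinate of any product $zw \in G$ depends only on $z_1,\ldots,z_j$ and $w_1,\ldots,w_j$, and the same consequently holds for inversion. Combined with the property from (HP) that $c^\tau_k(z)$ depends only on $z_1,\ldots,z_k$, an iteration yields that the $j$-th coordinate of $\tau(x)\tau(y)\tau(xy)^{-1}$ only involves $x_1,\ldots,x_j$ and $y_1,\ldots,y_j$.

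Finally, for the absence of pure monomials and of a constant term, I would exploit that each non-vanishing $c^\tau_k$ is a homogeneous polynomial of positive degree $v_k$ and thus has no constant term, so that $\tau(e_G) = e_G$. Substituting $y = e_G$ into the defining identity gives $\tau(x)\tau(e_G)\tau(x)^{-1} = e_G$, that is, $T_j(x_1,\ldots,x_\dimG,0,\ldots,0) = 0$ for every $j$ and every $x$; by the linear independence of monomials this forces every monomial $x^\alpha y^\beta$ in $T_j$ to satisfy $\beta \neq 0$, and simultaneously rules out a constant term. The symmetric substitution $x = e_G$ yields $T_j(0,\ldots,0,y_1,\ldots,y_\dimG) = 0$ and hence $\alpha \neq 0$ as well. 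The main obstacle is purely bookkeeping: carefully tracking that each iterative step preserves both polynomiality and dependence on only the first $j$ variables; the remaining structural assertions then follow directly from evaluating at $x = e_G$ and $y = e_G$.
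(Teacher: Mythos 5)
Your proof is correct and takes a genuinely different (and arguably cleaner) route than the paper's. The paper proceeds by an explicit induction on the coordinate index $j$: it compares the $(j+1)$-th coordinate of $\tau(x)\tau(y)$ with that of $\tau(xy)$ using \eqref{eq:pol_exp_coord1} and (HP), and reads off a formula for $T_{j+1}$ by subtraction, deducing the absence of pure monomials from the algebraic form of the resulting expression. You instead \emph{define} $(T_1,\ldots,T_n)$ once and for all as the exponential coordinates of $\tau(x)\tau(y)\tau(xy)^{-1}$, which makes \eqref{eq:product_tau} hold by construction, and you then obtain the three structural properties by abstract arguments: polynomiality from polynomiality of $\tau$, of the group law, and of inversion (which in exponential coordinates is simply $z\mapsto -z$); the dependence on $x_1,\dots,x_j,y_1,\dots,y_j$ from \eqref{eq:pol_exp_coord1} combined with (HP); and the absence of pure-$x$ monomials, pure-$y$ monomials, and a constant term from the substitutions $y=e_G$ and $x=e_G$ together with $\tau(e_G)=e_G$, which itself follows because each nonzero $c^\tau_j$ is homogeneous of degree $v_j\geq 1$. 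The paper's induction buys a more explicit recursive formula for $T_{j+1}$; your approach buys conceptual economy — the evaluation-at-identity trick replaces the coordinate bookkeeping entirely — and is easier to verify, at the cost of being less constructive. Both rely on the same underlying facts (polynomial group law, (HP), and $\tau(e_G)=e_G$), so the substitutability is clear.
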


\begin{proof}
To show that our claim holds true for any value of $\dim(G) = \dimG \in \N$, we prove it by induction. The proof will make crucial use of the homogeneous polynomials $R_1, \ldots, R_\dimG$ that describe the group multiplication of $G$ in terms of the exponential coordinates. We recall that they are uniquely determined by the identity \eqref{eq:exp_coord} and the choice of Malcev basis $X_1, \ldots, X_\dimG$ of $\Lie{g}$. To show the base case, we recall that for any $x, y \in G$ we have $c^\tau_1(x) = C^\tau_1 x_1$ due to (HP) and $R_1(xy) = x_1 + y_1$ due to \eqref{eq:pol_exp_coord1}, hence
\begin{align*}
    R_1 \bigl ( \tau(x) \tau(y) \bigr ) = C^\tau_1 x_1 +  C^\tau_1 y_1 = c^\tau_1(xy).
\end{align*}
This proves the base case since $T_1 = 0$ due to the Abelian behavior of the first variable.

To show the induction step $j \to j+1$, we assume the existence of polynomials $T_2, \ldots T_j: G \times G \to \R$ such that the first $j$ coordinates of the LHS and the RHS of \eqref{eq:product_tau} coincide for all $x, y \in G$. Now, on the one hand, the $(j+1)$-th coordinate of the LHS satisfies
\begin{align}
    R_{j+1} \Bigl ( c^\tau_1(x), \ldots, &c^\tau_{j+1}(x), c^\tau_1(y), \ldots, c^\tau_{j+1}(y) \Bigr ) \nonumber \\
    &= c^\tau_{j+1}(x) + c^\tau_{j+1}(y) + \tilde{R}_{j+1} \Bigl ( c^\tau_1(x), \ldots, c^\tau_j(y) \Bigr ) \label{eq:LHS_product_tau}
    \end{align}
for the polynomials $\tilde{R}_j$ from \eqref{eq:pol_exp_coord1}. On the other hand, the $(j+1)$-th coordinate of $\tau(xy)$ can be expressed as
\begin{align}
\begin{split} \label{eq:RHS_product_tau}
    c^\tau_{j+1}(xy) &= c^\tau_{j+1} \bigl ( R_1(x_1, y_1), \ldots, R_{j+1}(x_1, \ldots, y_{j+1}) \bigr ) \\
    &= c^\tau_{j+1} \bigl ( x_1 + y_1, \ldots, x_{j+1} + y_{j+1} + \tilde{R}_{j+1}(x_1, \ldots, y_j) \bigr ) \\
    &= c^\tau_{j+1}(x) + c^\tau_{j+1}(y) + \underset{\substack{\alpha, \beta \neq 0, \\ [\alpha] + [\beta] \leq v_{j+1}}}{\overline{\sum}} x^\alpha y^\beta
   \end{split}
\end{align}

Since the polynomial $T_{j+1}$ not only accounts for the difference of the terms \eqref{eq:LHS_product_tau} and \eqref{eq:RHS_product_tau} but also compensates the contribution of $\tilde{R}_{j+1}$, necessitated by representing this difference as the $(j+1)$-th coordinate of the product element given by RHS(\eqref{eq:product_tau}), it is given by
\begin{align*}
    T_{j+1}(x, y) &= \tilde{R}_{j+1} \bigl ( c^\tau_1(x), \ldots, c^\tau_{j+1}(y) \bigr ) \\ &- \underset{\substack{\alpha, \beta \neq 0, \\ [\alpha] + [\beta] \leq v_{j+1}}}{\overline{\sum}} x^\alpha y^\beta - \tilde{R}_{j+1} \bigl ( T_1(x, y), \ldots, T_j(x, y), c^\tau_1(xy), \ldots, c^\tau_j(xy) \bigr ).
\end{align*}
By the induction hypothesis, the polynomials $T_1, \ldots, T_j$ satisfy the identity \eqref{eq:product_tau} in the coordinates $1, \ldots, j$, respectively, hence the polynomial $T_{j+1}$ satisfies the identity \eqref{eq:product_tau} in the $(j+1)$-th coordinate. Hence, by a combination of \eqref{eq:pol_exp_coord1}, (HP) and the induction hypothesis, $T_{j+1}(x,y)$ is necessarily a sum of monomials $x^\alpha y^\beta$, $\alpha, \beta \neq 0$, which only depend on $x_1, \ldots x_{j+1}$ and $y_1, \ldots y_{j+1}$. This proves the induction step and thus the lemma.
\end{proof}

\begin{lemma} \label{lem:pvf}
Let $E$ be a Banach space and let $f: G \to E$. Moreover, let $p: G \times G \to G$ be a smooth function defined by
\begin{align}
    p(y,z)=(p_{1}(y,z),\ldots,p_{n}(y,z)),
\end{align}
where, for every $j=1,\ldots, n$, the $j$-th coefficient $p_{j}(y,z)$ is homogeneous of degree $v_j$ in $y$ and $z$, that is,
\begin{align}
    p_j(y,z)=\sum_{\substack{[\eta_1]+[\eta_2]=
v_j}} C_{\eta_1,\eta_2} y^{\eta_1} z^{\eta_2}.
\end{align}
Then, for every $\alpha \in \N^n_0$ there exist coefficients $c_{\alpha_1, \alpha_2}, c'_{\alpha_1, \alpha_2} \in \R$ such that
\begin{align}
    X^\alpha_{y_1 = y} f \bigl (p(y_1, z) \bigr ) &= \sum\limits_{\substack{|\alpha_3| \leq |\alpha|, \\ [\alpha_3] \geq [\alpha]}} \sum_{[\alpha_1]+[\alpha_2]=[\alpha_3]-[\alpha]} c_{\alpha_1,\alpha_2}q_{\alpha_1}(y)q_{\alpha_2}(z)( X^{\alpha_3} f \bigr )\bigl ( p(y, z) \bigr ), \label{eq:der_p1_p2} \\
    \tilde{X}^\alpha_{y_2 = y} f \bigl (p(y_2, z) \bigr ) &= \sum\limits_{\substack{|\alpha_3| \leq |\alpha|, \\ [\alpha_3] \geq [\alpha]}} \sum_{[\alpha_1]+[\alpha_2]=[\alpha_3]-[\alpha]} c'_{\alpha_1,\alpha_2}q_{\alpha_1}(y)q_{\alpha_2}(z)( \tilde{X}^{\alpha_3} f \bigr )\bigl ( p(y, z) \bigr ), \label{eq:der_p1_p2_2}
\end{align}
where the homogeneous polynomials $q_{\alpha_2}$ are determined by \eqref{eq:can_hom_pol}. Moreover, the same conclusion holds true if we replace $X^\alpha_{y_1=y}$ by $X^\alpha_{z_1=z}$  in \eqref{eq:der_p1_p2}, and $\tilde{X}^\alpha_{y_2=y}$ by $\tilde{X}^\alpha_{z_2=z}$ in \eqref{eq:der_p1_p2_2}.
\end{lemma}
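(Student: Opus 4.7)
The plan is to shuttle between left-invariant, right-invariant, and standard partial derivatives via the identities in \eqref{RelationsVFs1}, with the iterated chain rule doing the middle step, and then to keep a careful ledger of weighted-homogeneous degrees. Since the conclusion is linear in $f$ and $E$ plays no role in the bookkeeping, I would reduce to scalar $f$ by testing against continuous functionals $v^* \in E^*$.

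The sequence of reductions is as follows. First, using \eqref{RelationsVFs1}, write
\begin{align*}
    X^{\alpha}_{y_1 = y}
    = \sum_{\substack{|\beta| \leq |\alpha|,\, [\beta]\geq [\alpha]}} R_{\alpha,\beta}(y)\, \partial_{y}^{\beta},
\end{align*}
with $R_{\alpha,\beta}$ weighted-homogeneous of degree $[\beta]-[\alpha]$. Then expand $\partial_y^{\beta} f(p(y,z))$ by the iterated scalar chain rule (Fa\`{a} di Bruno) to get a finite sum
\begin{align*}
    \partial_y^{\beta} f(p(y,z))
    = \sum_{|\gamma|\leq|\beta|} A_{\beta,\gamma}(y,z)\, (\partial_x^{\gamma} f)(p(y,z)),
\end{align*}
in which each $A_{\beta,\gamma}(y,z)$ is a polynomial in $(y,z)$ built from products of partial derivatives of the coordinate polynomials $p_j$. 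Finally apply \eqref{RelationsVFs1} in the reversed form $\partial_x^{\gamma}=\sum_{|\alpha_3|\leq|\gamma|,\,[\alpha_3]\geq[\gamma]} S_{\gamma,\alpha_3}(x)\, X^{\alpha_3}$ and evaluate at $x=p(y,z)$.

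The crucial observation is that, since each $p_j$ is weighted-homogeneous of degree $v_j$ in $(y,z)$, the map $p$ intertwines the dilations: $p(D_r y, D_r z)=D_r p(y,z)$. Testing the identity against the basis polynomials $f=q_{\alpha_3}$ (for which $f\circ p$ is weighted-homogeneous of degree $[\alpha_3]$ in $(y,z)$, while $(\partial_x^{\gamma}f)(p(y,z))$ has weighted degree $[\alpha_3]-[\gamma]$ and $\partial_y^{\beta}(f\circ p)$ has weighted degree $[\alpha_3]-[\beta]$) forces $A_{\beta,\gamma}(y,z)$ to be weighted-homogeneous of degree $[\gamma]-[\beta]$, hence to vanish unless $[\gamma]\geq[\beta]$. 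Likewise $S_{\gamma,\alpha_3}(p(y,z))$ is weighted-homogeneous of degree $[\alpha_3]-[\gamma]$ in $(y,z)$. Concatenating the three sums, the total coefficient of $(X^{\alpha_3}f)(p(y,z))$ is a polynomial in $(y,z)$ weighted-homogeneous of degree $[\alpha_3]-[\alpha]$ with $|\alpha_3|\leq|\alpha|$ and $[\alpha_3]\geq[\alpha]$. Expanding this coefficient in the tensor-product basis $\{q_{\alpha_1}(y)\, q_{\alpha_2}(z) : [\alpha_1]+[\alpha_2]=[\alpha_3]-[\alpha]\}$, which spans the space of weighted-homogeneous polynomials of this degree in $(y,z)$ (cf.\ the discussion preceding \eqref{HomPol1}), yields \eqref{eq:der_p1_p2}.

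Identity \eqref{eq:der_p1_p2_2} follows by an identical argument, starting instead from the right-invariant versions in \eqref{RelationsVFs1}, namely $\tilde{X}^{\alpha}=\sum Q_{\alpha,\beta} X^{\beta}$ combined with $X^{\gamma}=\sum P_{\gamma,\alpha_3}\tilde{X}^{\alpha_3}$, passing through $\partial^{\beta}$ as the intermediate layer. The two variants with $X^{\alpha}_{z_1=z}$ and $\tilde{X}^{\alpha}_{z_2=z}$ are obtained by interchanging the roles of $y$ and $z$, which the hypothesis on $p$ permits symmetrically. The one step that demands care is the degree tracking inside Fa\`{a} di Bruno: one must verify by induction on $|\beta|$ that every surviving monomial in $A_{\beta,\gamma}(y,z)$ comes from derivatives of $p_j$ that together have total weighted degree exactly $[\gamma]-[\beta]$, and that the admissible range $|\gamma|\leq|\beta|$ is preserved. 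This is the only nontrivial combinatorial point; the rest is a clean concatenation of \eqref{RelationsVFs1}.
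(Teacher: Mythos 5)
Your proposal follows essentially the same route as the paper's proof: convert the invariant derivative to Euclidean partials via \eqref{RelationsVFs1}, apply the chain rule to $f\circ p$ and record that the resulting coefficients are homogeneous polynomials of degree $[\gamma]-[\beta]$ in $(y,z)$, convert back to invariant derivatives evaluated at $p(y,z)$, and expand the accumulated homogeneous coefficient in the tensor-product basis $q_{\alpha_1}(y)q_{\alpha_2}(z)$. Your extra observation that $p$ intertwines the dilations on $G\times G$ and $G$ is a pleasant alternative way to read off the homogeneity of the chain-rule coefficients, but it leads to the same bookkeeping and the same conclusion as the paper's direct tracking of derivatives of the $p_j$.
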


\begin{proof}
Recall that, due to the use of exponential coordinates, every canonical vector field $X_j$, homogeneous of degree $v_j$, is of the form
\begin{align*}
    X_j = \partial_j+\sum_{k=j+1}^n P_{k,j} \, \partial_k,
\end{align*}
with $P_{k,j}$ being a polynomial of homogeneous degree $v_k-v_j$. In particular, due to homogeneity, the Euclidean derivative $\partial_j$, for every $j=1,\ldots, n$, is a first order homogeneous differential operator of homogeneous degree $v_j$, which implies that $\partial^\gamma:=\partial_1^{\gamma_1}\cdots \partial_n^{\gamma_n}$ is a homogeneous differential operator of degree $[\gamma]$ for every multi-index $\gamma\in \mathbb{N}^n_0$.

Now, from the chain rule for multi-variate (Banach space-valued) smooth functions and the homogeneity properties of Euclidean derivatives, we have that for all $\beta \in \N^n_0$ there exist polynomials $C_{\beta, \gamma}: G \times G \to \R$, $|\gamma| \leq |\beta|$, of homogeneous degree equal to $[\gamma]-[\beta]$, such that
\begin{align*}
    \partial^\beta_{y_1 = y} f \bigl (p(y_1, z) \bigr ) = \sum_{\substack{|\gamma| \leq |\beta|\\ [\gamma]\geq[\beta]}} C_{\beta, \gamma}(y, z) \bigl ( \partial^\gamma f \bigr )(p(y,z)).
\end{align*}
By switching back and forth between left-invariant derivatives and Euclidean ones (cf.~\eqref{RelationsVFs1}), we can thus write
\begin{align*}
    X^\alpha_{y_1 = y} f \bigl (p(y_1, z) \bigr ) &= \sum\limits_{\substack{|\beta| \leq |\alpha|, \\ [\beta] \geq [\alpha]}} R_{\alpha, \beta}(y) \partial^\beta_{y_1 = y} f \bigl (p(y_1, z) \bigr ) \\
    &= \sum\limits_{\substack{|\beta| \leq |\alpha|, \\ [\beta] \geq [\alpha]}} R_{\alpha, \beta}(y) \sum_{\substack{|\gamma| \leq |\beta|\\ [\gamma]\geq[\beta]}} C_{\beta, \gamma}(y, z) \bigl ( \partial^\gamma f \bigr )\bigl ( p(y, z) \bigr ) \\
    &= \sum\limits_{\substack{|\beta| \leq |\alpha|, \\ [\beta] \geq [\alpha]}} R_{\alpha, \beta}(y) \sum_{\substack{|\gamma| \leq |\beta|\\ [\gamma]\geq[\beta]}} C_{\beta, \gamma}(y, z) \sum\limits_{\substack{|\alpha_3| \leq |\gamma|, \\ [\alpha_3] \geq [\gamma]}} S_{\gamma,\alpha_3}\bigl ( p(y, z) \bigr ) \bigl ( X^\delta f \bigr )\bigl ( p(y, z) \bigr )
\end{align*}
for some polynomials $R_{\alpha, \beta}$ and $S_{\gamma, \alpha_3}$ of homogeneous degree $[\beta] - [\alpha]$ and $[\alpha_3] - [\gamma]$, respectively. 
Since
\begin{align}
  C_{\beta, \gamma}(y, z)&=\sum_{[\alpha_1]+[\alpha_2]=[\gamma]-[\beta]}c_{\alpha_1,\alpha_2}q_{\alpha_1}(y)q_{\alpha_2}(z) \\
 S_{\alpha_3, \delta}\bigl ( p(y, z) \bigr )&=\sum_{[\beta_1]+[\beta_2]=[\alpha_3]-[\gamma]}c_{\beta_1,\beta_2}q_{\beta_1}(y)q_{\beta_2}(z)\\
 R_{\alpha, \beta}(y)
q_{\alpha_1}(y)q_{\alpha_2}(z)
   q_{\beta_1}(y)q_{\beta_2}(z)&=q_{\beta-\alpha+\alpha_1+\beta_1}(y)q_{\alpha_2+\beta_2}(z),
\end{align}
with $[\beta-\alpha+\alpha_1+\beta_1]+[\alpha_2+\beta_2]=[\alpha_3]-[\alpha]$, we get
\begin{align}
   X^\alpha_{y_1 = y} f \bigl (p(y_1, z) \bigr ) =& \sum\limits_{\substack{|\beta| \leq |\alpha|, \\ [\beta] \geq [\alpha]}} 
   \sum_{\substack{|\gamma| \leq |\beta|\\ [\gamma]\geq[\beta]}}\sum\limits_{\substack{|\alpha_3| \leq |\gamma|, \\ [\alpha_3] \geq [\gamma]}}
   \sum_{[\alpha_1]+[\alpha_2]=[\gamma]-[\beta]}
    \sum_{[\beta_1]+[\beta_2]=[\alpha_3]-[\gamma]}c_{\alpha_1,\alpha_2}c_{\beta_1,\beta_2}\\
   &  R_{\alpha, \beta}(y)
q_{\alpha_1}(y)q_{\alpha_2}(z)
   q_{\beta_1}(y)q_{\beta_2}(z) \bigl ( X^{\alpha_3} f \bigr )\bigl ( p(y, z) \bigr )\\
   =& \sum\limits_{\substack{|\alpha_3| \leq |\alpha|, \\ [\alpha_3] \geq [\alpha]}} \sum_{[\alpha_1]+[\alpha_2]=[\alpha_3]-[\alpha]} \tilde{c}_{\alpha_1,\alpha_2}\, q_{\alpha_1}(y)q_{\alpha_2}(z)( X^{\alpha_3} f \bigr )\bigl ( p(y, z) \bigr ),
\end{align}
which concludes the proof of \eqref{eq:der_p1_p2}.

The identity \eqref{eq:der_p1_p2_2} follows by the same argument if one additionally employs the relations between right-invariant and Euclidean derivatives in \eqref{RelationsVFs1}.
\end{proof}

\subsection{Change of quantization} \label{sub:cont_ChQ}

In this subsection we show that one can switch between any $\tau$-quantization and the Kohn-Nirenberg quantization, and therefore between any two $\tau$-quantizations, without losing the essential properties of the Kohn-Nirenberg calculus for the range of symbol classes under consideration.

First, however, we need an auxiliary proposition on the continuity on the Schwartz space $\SC(G)$ of $\tau$-quantized operators. This result extends \cite[Thm.~5.2.15]{FR} to $\tau$-quantized operators with symbols in H\"{o}rmander classes for $0 < \rho$, $\delta < \frac{1}{v_n}$. Note that these restrictions can be omitted for $\tau = e_G$, i.e., in the setting of \cite{FR}. While at least $\delta < 1$ is necessary\footnote{Confer, e.g., \cite[Thm.~2.21]{F1} for $\delta < 1$ for the Weyl quantization on $\R^n$, in which case $v_1 = \ldots = v_n = 1$.} whenever $\tau \neq e_G$, the condition $0 < \rho$ serves us to control the derivatives of $\tau$ at infinity (cf.~\cite[Thm.~5.4.1]{FR}). 
Explicit computations in the case of $G = \H$ equipped with the canonical dilations show that $\delta < \frac{1}{v_n} = \frac{1}{2}$ can be relaxed to $\delta < 1$. Whether this can be improved for general graded groups remains an open question.

\begin{proposition} \label{prop:cont_on_SC}
Let $m \in \R$, $0 \leq \delta \leq \rho \leq 1$, $0 < \rho$, $\delta < \frac{1}{v_n}$, and let $\sigma \in S^m_{\rho, \delta}(G)$. Then for every $f \in \SC(G)$ the quantization formula~\eqref{eq:tau_quant_G} defines an element in $\SC(G)$ given by the absolutely convergent integrals
\begin{align}
    \Optau(\sigma)f(x) &= \int_{\Ghat} \Tr \Bigl ( \int_G \pi(y^{-1} x) \sigma(x\tau(y^{-1} x)^{-1}, \pi) f(y) \, dy \Bigr ) \, d\mu(\pi) \label{eq:tau_quant_G_SC_1} \\
    &= \int_G \kappa_\sigma\bigl ( x \tau(y^{-1} x)^{-1}, y^{-1} x \bigr ) f(y) \, dy. \label{eq:tau_quant_G_SC_2}
\end{align}
Moreover, the linear operator $\Optau(\sigma): \SC(G) \to \SC(G)$ is continuous:
for each seminorm $\| \, . \, \|_{\SC, N_1}$, $N_1 \in \N$, there exist a constant $C > 0$, a seminorm $\| \, . \, \|_{\SC, N_2}$,$N_2 \in \N$, and a seminorm $\|\, . \, \|_{S^{m}_{\rho,\delta}a,b}$, $a, b \in \N$, such that
\begin{align} \label{eq:tau_quant_G_on_SC}
	\| \Optau(\sigma)f \|_{\SC, N_1} \leq C \| \sigma \|_{S^{m}_{\rho,\delta}a,b} \, \| f \|_{\SC, N_2}.
\end{align}
\end{proposition}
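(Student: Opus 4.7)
The plan is to estimate each Schwartz seminorm $\|q^\alpha X^\beta \Optau(\sigma)f\|_\infty$ directly from the kernel representation. Starting from \eqref{eq:tau_quant_G_SC_2} and the change of variables $z := y^{-1}x$, one has
\begin{align*}
\Optau(\sigma)f(x) = \int_G \kappa_\sigma\bigl(x\tau(z)^{-1}, z\bigr)\, f(xz^{-1})\, dz.
\end{align*}
A key geometric observation, forced by (HP), is that the components of $\tau(z)^{-1}$ are homogeneous polynomials of degree $v_j$, so $|\tau(z)^{-1}| \leq C|z|$; combined with \eqref{eq:pol_exp_coord2} this implies that the $j$-th component of $x\tau(z)^{-1}$ is jointly homogeneous of degree $v_j$ in $(x,z)$, so both $p(x,z) := x\tau(z)^{-1}$ and $p'(x,z) := xz^{-1}$ satisfy the hypotheses of Lemma~\ref{lem:pvf}.

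Granting the differentiation under the integral sign (to be justified \emph{a posteriori} by absolute convergence, itself obtained from the forthcoming estimates applied with $\alpha = \beta = 0$ and $|f| \leq \|f\|_\infty$), I would apply the Leibniz rule to $X^\beta_x$ and then Lemma~\ref{lem:pvf} to each resulting factor. This expresses $X^\beta_x\bigl(\kappa_\sigma(x\tau(z)^{-1}, z)\, f(xz^{-1})\bigr)$ as a finite linear combination of terms of the form
\begin{align*}
q_{\mu_1}(x)\, q_{\mu_2}(z)\, (X^{\beta_1}_{x_1}\kappa_\sigma)(x\tau(z)^{-1}, z)\, \cdot\, q_{\mu_3}(x)\, q_{\mu_4}(z)\, (X^{\beta_2}f)(xz^{-1}),
\end{align*}
with the appropriate homogeneous-degree balance supplied by Lemma~\ref{lem:pvf}. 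The polynomial weight $q^\alpha(x)$, together with each $q_{\mu_j}(x)$-factor arising above, is then decomposed via \eqref{HomPol1} in the factorisation $x = xz^{-1}\cdot z$: the $xz^{-1}$-parts are absorbed into a Schwartz seminorm of $f$ through $|q_\nu(xz^{-1}) (X^{\beta_2} f)(xz^{-1})| \leq \|f\|_{\SC, N_2}$, while the $z$-parts are bounded by powers of $|z|$. Routing these polynomials through $f$ rather than through the first argument of $\kappa_\sigma$ (which would use $x = x\tau(z)^{-1}\cdot\tau(z)$) is essential, since $\kappa_\sigma$ carries no uniform polynomial decay in its first argument.

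After these manipulations $|q^\alpha(x) X^\beta_x \Optau(\sigma)f(x)|$ is dominated by a finite sum of expressions of the form
\begin{align*}
\|f\|_{\SC, N_2} \int_G |z|^{\gamma}\, \sup_{x_1 \in G}\bigl|(X^{\beta_1}_{x_1}\kappa_\sigma)(x_1, z)\bigr|\, dz,
\end{align*}
where the supremum absorbs the remaining $x$-dependence in the first argument of $\kappa_\sigma$ and $\gamma$ is a nonnegative integer produced by the expansions above. Lemma~\ref{FRcorollary} then bounds each such $z$-integral by a seminorm $\|\sigma\|_{S^m_{\rho,\delta}, a, b}$ for appropriate $a, b$, and its integrability hypothesis is precisely what the standing assumption $0 < \rho$, $\delta < 1/v_n$ is designed to guarantee in the worst-case balance between the isotropic and homogeneous orders of $\beta_1$ relative to $\beta$. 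Summing the finitely many terms yields \eqref{eq:tau_quant_G_on_SC}, and feeding this estimate back with $\alpha = \beta = 0$ confirms the initial absolute convergence and retroactively justifies the differentiation under the integral.

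The principal technical obstacle will be the bookkeeping: across the nested application of Leibniz, Lemma~\ref{lem:pvf}, and \eqref{HomPol1}, one must track several families of multi-indices and verify, in every worst-case configuration, that the polynomial weights in $|z|$ generated by the chain rule through $\tau$ are compensated by the decay of $X^{\beta_1}_{x_1}\kappa_\sigma(x_1,\cdot)$ provided by Lemma~\ref{FRcorollary}. It is precisely this balance that forces the restriction $\delta < 1/v_n$.
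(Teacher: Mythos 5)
Your strategy of routing all polynomial weights through $f(xz^{-1})$ and collapsing the $z$-integral into the form $\int_G |z|^\gamma \sup_{x_1}|X^{\beta_1}_{x_1}\kappa_\sigma(x_1,z)|\,dz$ is appealing, but it hides a genuine gap: nothing in your expansion guarantees that $\gamma$ is large enough for the hypothesis of Lemma~\ref{FRcorollary} to hold. In the worst-case split afforded by Lemma~\ref{lem:pvf} and \eqref{HomPol1}, all the polynomial weight $q_{\alpha_1}(x)q_{\alpha_2}(z)$ can land in the $x$-factor with $\alpha_2 = 0$, leaving $\gamma = 0$. Already in the base case $\alpha = \beta = 0$ your estimate reduces to $\int_G \sup_{x_1}|\kappa_\sigma(x_1,z)|\,dz$, and for a symbol of order $m \geq -Q(1-\rho)/\rho$ — in particular any $\sigma$ of nonnegative order, such as $\sigma = I$, whose kernel in the second variable is the Dirac delta — this integral is not finite. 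Lemma~\ref{FRcorollary} requires $\gamma + Q > (Q + m + \delta[\beta_0])/\rho$, and $\gamma$ must grow roughly like $[\beta_0]/\rho$, which your expansion cannot produce on its own; the condition $\delta < 1/v_n$ enters the paper for a completely different balance and cannot rescue a missing $|z|$-power. In short, the kernel $\kappa_\sigma(x_1,\cdot)$ is a tempered distribution, generally not $L^1_{\mathrm{loc}}$ near the origin, and an $L^1$-argument cannot go through without first regularizing it.

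What the paper does instead, and what your proof is missing, is to smooth the kernel before estimating: it inserts $(I+\widetilde{\RO})^{N_0}(I+\widetilde{\RO})^{-N_0}$ in the $y$-integral. The negative power turns the kernel into an $L^2$-function with uniformly bounded $L^2_z$-norm once $N_0\nu > m + Q/2$ (Proposition~\ref{prop:L2_int_ass_ker}); the positive power is then integrated by parts onto $f$, whose derivatives are absorbed into Schwartz seminorms. The final estimate is a Cauchy--Schwarz pairing in $L^2(G, dy)$ between the improved kernel and the Schwartz-decaying $f$-factor, not an $L^1$-bound. The restriction $\delta < 1/v_n$ then appears exactly where you expect: the right-invariant Leibniz manipulation through $\tau$ (via Lemma~\ref{lem:pvf}) costs up to $v_n$ extra isotropic derivatives on the first argument of $\kappa_\sigma$ per homogeneous order gained, each adding $\delta$ to the effective symbol order, and the resulting $m + \delta\nu N_0 v_n + Q/2 < N_0\nu$ is solvable iff $\delta v_n < 1$. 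If you want to make your $L^1$-style argument rigorous, you would need to replace your step ``the supremum absorbs the remaining $x$-dependence'' with an explicit Bessel-potential regularization of $\kappa_\sigma$ in $z$ (or equivalently a Taylor-remainder subtraction) so that Lemma~\ref{FRcorollary} applies; as written, the convergence claim for the $z$-integral is unjustified.
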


\begin{proof}
Thus, let $m \in \R$, $0 \leq \delta \leq \rho \leq 1$, $\delta < \frac{1}{v_\dimG}$, and let $\sigma \in S^m_{\rho, \delta}(G)$. By Proposition~\ref{prop:L2_int_ass_ker}, we have that
\begin{align*}
	y \mapsto \tilde{\kappa}(x, y) := ( (I+\widetilde{\RO})^{-N} \kappa_\sigma(x, \, . \,) \bigr )(y) = \F^{-1} \Bigl ( \bigl \{ \sigma(x, \pi) \pi(I + \RO)^{-N} \bigr \}_{\pi \in \Ghat} \Bigr )(y),
\end{align*}
is in $L^2(G)$ for all $x \in G$ whenever $N \in \N$ satisfies $m + \hdim/2 < N \hdeg$, and $\| \tilde{\kappa}(x, \, . \,) \|_{L^2(G)}$ is uniformly bounded in $x$ due to \eqref{eq:L^2_bd_ass_ker} with
\begin{align}
	\sup_{x \in G} \| \tilde{\kappa}(x, \, . \,) \|_{L^2(G)} \leq C_1 \sup_{(x,\pi) \in G \times \Ghat} \| \sigma(x, \pi) \pi(I + \RO)^{-N} \|_{\RS} \leq C_2 \| \sigma \|_{S^m_{\rho, \delta}, a, b} \label{eq:L2_modif_ass_ker_1_cont_on_SC}
\end{align}
for some $a, b \in \N_0$ and constants $C_1, C_2 > 0$ independent of $\sigma$ and $x$. Since $\kappa_\sigma, \Ker_\sigma \in \TD(G \times G)$ whenever $\sigma \in S^m_{\rho, \delta}(G)$ (cf.~Subsection~\ref{subs:ass_ker}), for any $f \in \SC(G)$ the integral \eqref{eq:tau_quant_G_SC_2} defines a tempered distribution $g := \Optau(\sigma)f$. To see that $g \in \SC(G)$, we fix some $N_0 \in \N$ and use integration by parts (in the distributional sense) to rewrite \eqref{eq:tau_quant_G_SC_2} as
\begin{align}
	g(x) &= \int_G f(y) \bigl [ (I+\widetilde{\RO})^{N_0} (I+\widetilde{\RO})^{-N_0} \bigr ]_{y_0 = y} \kappa_\sigma \bigl ( x \tau(y^{-1} x)^{-1}, y_0^{-1} x \bigr ) \, dy \nonumber \\
	&= \int_G \underset{\substack{[\alpha] \leq \hdeg N_0, \\ [\beta] \geq [\alpha], \\ |\beta| \leq |\alpha|}}{\overline{\sum}} Q_{\alpha, \beta}(y) \underset{[\beta_1] + [\beta_2] = [\beta]}{\overline{\sum}} X^{\beta_1}_{y_1 = y} f(y_1) \\
    &\hspace{130pt} \times X^{\beta_2}_{y_2 = y} (I+\widetilde{\RO})^{-N_0}_{y_0 = y} \kappa_\sigma\bigl ( x \tau(y_2^{-1} x)^{-1}, y_0^{-1} x \bigr ) \, dy,
\end{align}
where the homogeneous polynomials $Q_{\alpha, \beta}$ are determined by \eqref{RelationsVFs1}. Before we estimate $g(x)$ by an application of the Cauchy-Schwarz inequality for $L^2(G, dy)$, we observe that, due to (HP), we can use Lemma \ref{lem:pvf} and write
\begin{align}
    &X^{\beta_2}_{y_2 = y}(I+\widetilde{\RO})^{-N_0}_{y_0 = y} \kappa_\sigma\bigl ( x \tau(y_2^{-1} x)^{-1}, y_0^{-1} x )\\
    &=\underset{\substack{[\beta_{2,3}]\geq [\beta_2]\\ |\beta_{2,3}|\leq |\beta_{2}|\\
    [\beta_{2,1}]+[\beta_{2,2}]=[\beta_{2,3}]-[\beta_2]}}{\overline{\sum}}q_{\beta_{2,1}}(y)q_{\beta_{2,2}}(x)X^{\beta_{2,3}}_{x_1=x\tau(y^{-1}x)^{-1}}(I+\widetilde{\RO})^{-N_0}_{y_0 = y} \kappa_\sigma\bigl ( x_1), y_0^{-1} x \bigr )\\
    &=\underset{\substack{[\beta_{2,3}]\geq [\beta_2]\\ |\beta_{2,3}|\leq |\beta_{2}|\\
    [\beta_{2,1}]+[\beta_{2,2}]=[\beta_{2,3}]-[\beta_2]}}{\overline{\sum}}
    \underset{\substack{[\beta_{2,2}']+[\beta_{2,2}'']=[\beta_{2,2}]}}{\overline{\sum}}
    q_{\beta_{2,1}}(y)q_{\beta_{2,2}'}(y)q_{\beta_{2,2}''}(y^{-1}x)\\
    &\hspace{190pt} \times X^{\beta_{2,3}}_{x_1=x\tau(y^{-1}x)^{-1}}(I+\widetilde{\RO})^{-N_0}_{y_0 = y} \kappa_\sigma\bigl ( x_1, y_0^{-1} x \bigr ),
\end{align}
so that the polynomials $q_{\beta_{2,1}+\beta_{2,2}'}(y)$ in these summands can be grouped with the Schwartz function $X^{\beta_1}f$ and the polynomials $q_{\beta_{2,2}''}(y^{-1}x)$ with the associated kernel. 
Since $\rho > 0$, the kernel $\kappa_\sigma$ decays like a Schwartz function away from the origin, due to Proposition~\ref{prop:ker_est_at_infty}, so the factors $q_{\beta_{2,2}''}(y^{-1}x)$ do not affect our eventual estimate, while close to the origin they at most improve the $L^2$-convergence of
\begin{align}
	y \mapsto X^{\beta_{2,3}}_{x_1=x \tau(y_2^{-1} x)^{-1}} (I+\widetilde{\RO})^{-N_0}_{y_0 = y} \kappa_\sigma \bigl ( x_1, y_0^{-1} x \bigr ). \label{eq:L2_modif_ass_ker_2_cont_on_SC}
\end{align}
So we can ignore these polynomial factors without loss of generality and focus on the fact that, by \eqref{eq:L2_modif_ass_ker_1_cont_on_SC}, the $L^2$-norm of \eqref{eq:L2_modif_ass_ker_2_cont_on_SC} is uniformly bounded in $x \in G$ whenever we choose $N_0 \in N$ such that $m + \delta [\beta_{2,3}] + \hdim/2 \leq m + \delta \hdeg N_0 v_\dimG + \hdim/2  < N_0 \hdeg$. Such a choice is in fact possible due to $\delta < \frac{1}{v_n}$. We can now estimate
\begin{align*}
	|g(x)| &\leq \underset{\substack{[\alpha] \leq \hdeg N_0, \\ [\beta] \geq [\alpha], \\ |\beta| \leq |\alpha|}}{\overline{\sum}} \, 
 \underset{\substack{[\beta_{2,3}]\geq [\beta_2]\\ |\beta_{2,3}|\leq |\beta_{2}|\\
    [\beta_{2,1}]+[\beta_{2,2}]=[\beta_{2,3}]-[\beta_2]}}{\overline{\sum}}
    \underset{\substack{[\beta_{2,2}']+[\beta_{2,2}'']=[\beta_{2,2}]}}{\overline{\sum}}
 \int_G |q_{\beta_{2,1}+\beta'_{2,2}}(y)||Q_{\alpha, \beta}(y)| \\
	&\hspace{60pt} \times |(X^{\beta_1}f)(y)| |q_{\beta_{2,2}''}(y^{-1}x)| \sup_{x_1 \in G} |X^{\beta_{2,3}}_{x_1} (I+\widetilde{\RO})^{-N_0}_{y_0 = y} \kappa_\sigma\bigl ( x_1, y_0^{-1} x \bigr )| \, dy \\
	&\leq C \| f \|_{\SC, N_0} \, \| \sigma \|_{S^{m}_{\rho,\delta}a,b},
\end{align*}
for some sufficiently large $N_0 \in \N$, where for each of the integrals in the sum we have used \eqref{eq:L2_modif_ass_ker_1_cont_on_SC} to estimate $\sup_{x_1 \in G} |X^{\beta_{2,3}}_{x_1} \tilde{\kappa}(x_1, y)|$ and the Cauchy-Schwarz inequality for $L^2(G, dy)$. This in turn gives \eqref{eq:tau_quant_G_on_SC} for $N_1 = 0$.

To show that $x^{\alpha_o} (X^{\beta_o}g)(x)$ is defined for all $x \in G$ and that $x \mapsto x^{\alpha_o} (X^{\beta_o}g)(x) \in L^2(G)$ for all $\alpha_o, \beta_o \in \N^\dimG_0$ together with an estimate of the type \eqref{eq:tau_quant_G_on_SC}, we use dominated convergence to differentiate under the integral sign. Next, we split $x^{\alpha_o}$ as 
\begin{align}
    x^{\alpha_o}=\underset{[\alpha_{o,1}]+[\alpha_{o,2}]=[\alpha_{o}]}{\overline{\sum}}q_{\alpha_{o,1}}(y)q_{\alpha_{o,2}}(y^{-1}x)
\end{align}
in order to apply essentially the same argument as above, with $N_0 \in N$ chosen sufficiently large in order to compensate additional polynomial factors in the integral when $\alpha_o$ or $\beta_o$ are different from zero. This completes the proof.
\end{proof}

\begin{assumptions}
In what follows we will often assume $0 \leq \delta < \frac{\rho}{v_n} 
\leq \frac{1}{v_n}$, which in turn implies the conditions $0 < \rho$ and $\delta < \frac{1}{v_n}$ in Propostion~\ref{prop:cont_on_SC}. With the exception of $G = \R^n$ equipped with isotropic dilations, for which $v_n = 1$, this condition is stricter than $0 \leq \delta < \rho \leq 1$. The restriction $\delta < \rho$ is always assumed because of the authors' personal interest in asymptotic extensions for the symbols of adjoints, composite operators, etc, and it cannot be relaxed even in the case of $G = \R^n$, while $\delta < \frac{1}{v_n}$ and, in particular, $\delta < \frac{\rho}{v_n}$ are needed to make the proofs work for general graded $G$. Note that for the Kohn-Nirenberg quantization it suffices to assume $0 \leq \delta < \rho \leq 1$ as in \cite{FR} since the interaction of the quantizing function $\tau = e_G$ with the calculus developed here becomes trivial. The proofs given in this paper simplify in this case and recover the well-known results due to \cite{FR}.
\end{assumptions}

\begin{theorem} \label{thm:asym_exp_ch_qu}
Let $T$ be a continuous linear operator from $\SC(G)$ to $\SC(G)$ and let $m \in \R$ and $0 \leq \delta < \min\{\rho,\frac{1}{v_n}\} \leq 1$.  Let $\tau: G \to G$ be a quantizing function different from the constant function $\tau = e_G$ which satisfies (HP). 
Then, if $\sigma$ and $\sigma_\tau$ are two symbols such that $T=\Optau(\sigma_\tau)=\Op(\sigma)$, $\sigma\in S^{m}_{\rho,\delta}(G)$ if and only if $\sigma_\tau\in S^{m}_{\rho,\delta}(G)$.

The map $\sigma \mapsto \sigma_\tau$ is a Fr\'{e}chet space isomorphism from $S^m_{\rho, \delta}(G)$ onto itself, and the symbols are related to one another by the asymptotic expansions
\begin{align}
	\sigma_\tau &\sim \sum_{j=0}^\infty\Bigl(\sum_{[\alpha]= j} \sum_{[\alpha'] = [\alpha]} c^{\tau, \mathrm{KN}}_{\alpha', \alpha} \, \Delta^{\alpha'} X^\alpha \sigma\Bigr), \label{eq:asym_exp_tau_KN} \\
	\sigma &\sim \sum_{j=0}^\infty \Bigl(\sum_{[\beta] =j } \sum_{[\beta'] = [\beta]} c^{\mathrm{KN}, \tau}_{\beta', \beta} \, \Delta^{\beta'} X^\beta \sigma_\tau\Bigr), \label{eq:asym_exp_KN_tau}
\end{align}
in the sense that for given $M, N \in \mathbb{N}_0$
\begin{align*}
	R^{\tau, \mathrm{KN}}_M &:= \sigma_\tau - \sum_{[\alpha]\leq M} \sum_{[\alpha'] = [\alpha]} c^{\tau, \mathrm{KN}}_{\alpha', \alpha} \, \Delta^{\alpha'} X^\alpha \sigma\in S^{m-(\rho-\delta)(M+1)}_{\rho,\delta}(G), \\
	R^{\mathrm{KN}, \tau}_N &:= \sigma - \sum_{[\beta] \leq N } \sum_{[\beta'] = [\beta]} c^{\mathrm{KN}, \tau}_{\beta', \beta} \, \Delta^{\beta'} X^\beta \sigma_\tau \in S^{m-(\rho-\delta)(N+1)}_{\rho,\delta}(G).
\end{align*}
Moreover, the coefficients $c^{\tau, \mathrm{KN}}_{\alpha', \alpha}, c^{\mathrm{KN}, \tau}_{\beta', \beta} \in \R$
are uniquely determined by the equations
\begin{align}
	 q_\alpha \bigl ( \tau(y) \bigr ) &= \sum_{[\alpha'] = [\alpha]} c^{\tau, \mathrm{KN}}_{\alpha', \alpha} \, \tilde{q}_{\alpha'}(y), \label{eq:coeff_W_KN} \\
	 q_\beta \bigl ( \tau(y)^{-1} \bigr ) &= \sum_{[\beta'] = [\beta]} c^{\mathrm{KN}, \tau}_{\beta', \beta} \, \tilde{q}_{\beta'}(y).
\end{align}
\end{theorem}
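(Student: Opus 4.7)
The proof would proceed by Taylor-expanding the associated kernels and Fourier-transforming, so the first step is to derive a kernel-level relation. Comparing the integral-kernel formulas in \eqref{kernel-kernel} for $\Optau(\sigma_\tau)$ and $\Op(\sigma)$, the identity $\Optau(\sigma_\tau) = \Op(\sigma)$ translates, via the change of variables $\cv$ and its inverse, into the two kernel identities
\begin{align*}
    \kappa_{\sigma_\tau}(x,z) = \kappa_\sigma(x\tau(z),z), \qquad \kappa_\sigma(x,z) = \kappa_{\sigma_\tau}(x\tau(z)^{-1},z),
\end{align*}
which by condition (HP) and the smoothness of $\tau$ are meaningful in $\TD(G \times G)$.

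Next, I would apply the Taylor formula of Subsection~2.4 to the first argument. Taking $f = \kappa_\sigma(\cdot,z)$, expanding at the base point $x$ to homogeneous order $M$ and evaluating at the offset $\tau(z)$ gives
\begin{align*}
\kappa_\sigma(x\tau(z),z) = \sum_{[\alpha] \le M} q_\alpha(\tau(z))\, (X_x^\alpha \kappa_\sigma)(x,z) + R_M^\tau(x,z).
\end{align*}
By (HP) each coordinate of $\tau(z)$ is a homogeneous polynomial in $z$ of homogeneous degree exactly equal to the corresponding weight $v_j$, so $q_\alpha(\tau(z))$ is a homogeneous polynomial in $z$ of homogeneous degree exactly $[\alpha]$. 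Since the family $\{\tilde{q}_{\alpha'}\}_{[\alpha']=[\alpha]}$ forms a basis of the space of polynomials of that homogeneous degree, I can write $q_\alpha(\tau(z)) = \sum_{[\alpha']=[\alpha]} c^{\tau,\mathrm{KN}}_{\alpha',\alpha}\, \tilde{q}_{\alpha'}(z)$ with uniquely determined coefficients. Applying $\F_{z \to \pi}$ and using the definition $\F_z[\tilde{q}_{\alpha'}\,h] = \Delta^{\alpha'} \F_z[h]$, combined with the fact that $X_x^\alpha$ commutes with $\F_z$, yields the expansion \eqref{eq:asym_exp_tau_KN}. The dual expansion \eqref{eq:asym_exp_KN_tau} follows from Taylor-expanding the second kernel identity at the offset $\tau(z)^{-1}$, whose coordinates again have the correct homogeneous degree by (HP).

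The main obstacle is showing that $R^{\tau,\mathrm{KN}}_M \in S^{m-(\rho-\delta)M}_{\rho,\delta}(G)$ and similarly for $R^{\mathrm{KN},\tau}_N$. To control the seminorm $\|R_M^{\tau,\mathrm{KN}}\|_{S^{m-(\rho-\delta)M}_{\rho,\delta},a,b}$, I would apply $\Delta^\gamma X_x^\eta$, which equals $\F_{z\to\pi}[\tilde{q}_\gamma(z)\, X_x^\eta R_M^\tau(x,z)]$, then differentiate the Taylor remainder formula \eqref{eq:est_Taylor_rem} using the chain-rule identity of Lemma~\ref{lem:pvf} applied to $p(y,z) := y\tau(z)$ (whose coefficients are homogeneous of the right degree thanks to (HP) and Lemma~\ref{lem:product_tau}), and finally estimate the resulting $x$- and $z$-derivatives of $\kappa_\sigma$ by means of Propositions~\ref{prop:L2_int_ass_ker}--\ref{prop:ker_est_at_infty} and Lemma~\ref{FRcorollary}. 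The bookkeeping is where the restriction $0\le\delta<\rho/v_n$ enters: each application of $X_x^\eta$ forces a cascade through the polynomial structure of $\tau(z)$ (of homogeneous degree up to $v_n$), producing extra polynomial factors in $z$ that must be absorbed against the decay of $\kappa_\sigma$; the integrability condition of Lemma~\ref{FRcorollary} then precisely demands $\delta v_n<\rho$. This is by far the lengthiest and most technical step.

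Finally, once both remainder estimates are established, the equivalence $\sigma\in S^m_{\rho,\delta}(G)\Leftrightarrow \sigma_\tau\in S^m_{\rho,\delta}(G)$ follows because every main term $c^{\tau,\mathrm{KN}}_{\alpha',\alpha}\Delta^{\alpha'}X^\alpha\sigma$ lies in $S^{m+(\delta-\rho)[\alpha]}_{\rho,\delta}\subseteq S^m_{\rho,\delta}$, and symmetrically for the opposite expansion. Tracking the seminorm inequalities produced in the remainder analysis shows that $\sigma \mapsto \sigma_\tau$ is continuous with values in $S^m_{\rho,\delta}(G)$, and the second expansion provides a continuous inverse; hence by the open mapping theorem for Fr\'echet spaces, the map is a topological isomorphism on $S^m_{\rho,\delta}(G)$.
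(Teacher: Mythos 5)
Your approach matches the paper's: derive the kernel identities from $\Optau(\sigma_\tau)=\Op(\sigma)$, Taylor-expand the first argument of the associated kernel, exploit (HP) to rewrite $q_\alpha\circ\tau$ as a linear combination of $\tilde{q}_{\alpha'}$ of the same homogeneous degree, Fourier-transform to read off the expansion, estimate the Taylor remainder, and conclude the Fr\'echet isomorphism. Two corrections, though.

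First, you have misidentified the restriction this particular theorem uses. Theorem~\ref{thm:asym_exp_ch_qu} assumes only $\delta < \min\{\rho,\frac{1}{v_n}\}$, which is strictly weaker than $\delta < \rho/v_n$. In the paper's remainder estimate, the admissible Taylor order $M_0$ can be chosen because $\delta/\rho < 1$, i.e.\ $\delta < \rho$; the separate condition $\delta<1/v_n$ enters only through Proposition~\ref{prop:cont_on_SC} (continuity of $\Optau(\sigma)$ on $\SC(G)$), used when passing to limits in the extension argument. The stricter condition $\delta < \rho/v_n$ (equivalently $\delta v_n < \rho$) is what the composition Theorem~\ref{thm:asym_exp_comp_tau} needs, not the change-of-quantization theorem. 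So if you carried your stated restriction through, you would prove a weaker statement than the one asserted.

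Second, the Taylor expansion and the seminorm estimates of the remainder are pointwise manipulations that are rigorous only for $\sigma\in S^{-\infty}(G)$ with $\kappa_\sigma\in\SC(G\times G)$. The paper establishes the seminorm inequalities on this dense subspace and then extends to general $\sigma\in S^m_{\rho,\delta}(G)$ via an approximating net of smoothing symbols (cf.\ \cite[Lem.~5.4.11]{FR}), using Proposition~\ref{prop:cont_on_SC} and the Schwartz kernel theorem to identify the limiting operator. You should state this density step explicitly; without it, the argument produces the asymptotic expansion only for smoothing symbols, not for the full class $S^m_{\rho,\delta}(G)$.
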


\begin{proof}
Our proof crucially relies on the relation between the associated kernels. We will see that it suffices to prove our statement for the dense subset of $\sigma \in S^{-\infty}(G)$ with $\kappa_\sigma \in \SC(G\times G)$, and conversely for $\sigma_\tau \in S^{-\infty}(G)$ with $\kappa_{\sigma_\tau} \in \SC(G\times G)$, and extend the obtained seminorm estimates to $\sigma \in S^m_{\rho, \delta}(G)$, and to $\sigma_\tau \in S^m_{\rho, \delta}(G)$, respectively.
Thus, let $m \in \R$, $0 \leq \delta < \min\{\frac{1}{v_n},\rho \}$ and let $\tau: G \to G$ be a quantizing function which satisfies (HP). To show the necessary condition and the asymptotic expansion~\eqref{eq:asym_exp_tau_KN}, let us therefore suppose that $\sigma \in S^{-\infty}(G)$ with associated kernel satisfying $\kappa_\sigma\in \SC(G\times G)$. Then also $\kappa_{\sigma_\tau} \in \SC(G\times G)$ due to (HP) and the relation
\begin{align} \label{eq:rel_ass_ker_tau_KN}
	\kappa_{\sigma_\tau}(x, y) = \kappa_\sigma \bigl( x\tau(y), y \bigr),
\end{align}
which we deduce from the identity
\begin{align*}
	(Tf)(x) &= (\Op(\sigma)f)(x)=\iint\limits_{\widehat{G} \times G} \Tr \bigl ( \pi(y^{-1} x) \sigma(x, \pi) f(y) \bigr ) \, dy \, d\mu(\pi) \\
	&= (\Optau(\sigma_\tau)f)(x)= \iint\limits_{\widehat{G} \times G} \Tr \bigl ( \pi(y^{-1} x) \sigma_\tau(x\tau(y^{-1}x)^{-1}, \pi) f(y) \bigr ) \, dy \, d\mu(\pi).
\end{align*}
and an application of the pullback $(CV^\tau)^{-1}$ defined by \eqref{eq:Ker_kappa} to the associated kernels $\kappa_\sigma$ and $\kappa_{\sigma_\tau}$. Moreover, the corresponding symbol
\begin{align}
    \sigma_\tau(x, \pi) = \F_G \bigl ( \kappa_{\sigma_\tau}(x, \, \cdot \,) \bigr )(\pi) \label{eq:tau_symbol}
\end{align}
is an element of $S^{-\infty}(G)$ by~\cite[Lem.~5.5.20]{FR}.
As in the proofs of Theorems \ref{thm:asym_exp_adjoint_tau} and \ref{thm:asym_exp_comp_tau}, we employ a Taylor expansion to reveal the asymptotic expansion of $\sigma_\tau$ in terms of $\sigma$. To do so, we observe that the condition~(HP) implies that for any $\alpha \in \N^n_0$ there exist coeffcients $c_{\alpha', \alpha} \in \R$, $\alpha' \in \N^n_0$, $[\alpha'] = [\alpha]$, such that
\begin{align*}
	 (q_\alpha \circ \tau)(y) = \sum_{[\alpha'] = [\alpha]} c^{\tau, \mathrm{KN}}_{\alpha', \alpha} \, \tilde{q}_{\alpha'}(y),
\end{align*}
for the homogeneous polynomials $q_\alpha$ and $\tilde{q}_{\alpha'} = x \mapsto q_{\alpha'}(x^{-1})$ determined by \eqref{eq:can_hom_pol} for all $\alpha, \alpha' \in \N^n_0$. So, we may expand
\begin{align} \label{eq:Taylor_exp_ch_qu}
\begin{split}
	\kappa_{\sigma_\tau}(x, y) &= \kappa_\sigma(x\tau(y),y) \underbrace{\sum_{[\alpha] \leq M} \sum_{[\alpha'] = [\alpha]} c^{\tau, \mathrm{KN}}_{\alpha', \alpha} \, \tilde{q}_{\alpha'}(y) X^\alpha_{x_1 = x} \kappa_\sigma(x_1, y)}_{:= \kappa_{T_0}(x, y)} \\
	&\hspace{30pt} + \sum_{M+1 \leq [\alpha] \leq M_0} \sum_{[\alpha'] = [\alpha]} c^{\tau, \mathrm{KN}}_{\alpha', \alpha} \, \tilde{q}_{\alpha'}(y) X^\alpha_{x_1 = x} \kappa_\sigma(x_1, y) + R^{\kappa_\sigma(\, . \,, y)}_{x, M_0} \bigl (\tau(y) \bigr )
\end{split}
\end{align}
for some $M_0 \geq M + 1$, where the Taylor remainder is controlled by \eqref{eq:est_Taylor_rem}. Since
\begin{align}
	\Delta^{\alpha'} X^\alpha \sigma \in S^{m-(\rho - \delta)[\alpha]}_{\rho, \delta}(G), \label{eq:T_0_T_1}
\end{align}
we immediately obtain that $T_0 := \F_G (\kappa_{T_0})$ yields the asymptotic expansion \eqref{eq:asym_exp_tau_KN} provided that
\begin{align*}
	T_1(x, \pi) := \int_G R^{\kappa_\sigma(\, . \,, y)}_{x, M_0} \bigl (\tau(y) \bigr ) \pi(y)^* \, dy
\end{align*}
belongs to the symbol class $S^{m-(\rho - \delta)(M+1)}_{\rho, \delta}(G)$.
To prove this, we will show that
\begin{align}
	\sup_{(x, \pi) \in G \times \Ghat} \, \Bigl \| (\Delta^{\alpha_0} X^{\beta_0} T_1)(x, \pi) (I + \pi(\RO))^{-\frac{m - (\rho - \delta)(M+1)-\rho [\alpha_0] + \delta [\beta_0]}{\hdeg}} \Bigr \|_{\mathcal{L}(\RS)} < \infty \label{eq:sn_T1_tau_KN_1}
\end{align}
for arbitrary, but fixed $\alpha_0, \beta_0 \in \N^n_0$. Thus, let $\alpha_0, \beta_0 \in \N^n_0$ and let $M_1$ be the least non-negative integer such that $- m + (\rho - \delta)(M+1) + \rho [\alpha_0] - \delta [\beta_0] - \hdeg M_1 \leq 0$. Since
\begin{align*}
	&\sup_{\pi \in \Ghat} \, \Bigl \| (I + \pi(\RO))^{-\frac{m - (\rho - \delta)(M+1)-\rho [\alpha_0] + \delta [\beta_0]}{\hdeg} - M_1} \Bigr \|_{\mathcal{L}(\RS)} \\
	&\hspace{100pt} = \Bigl \| (I + \RO)^{-\frac{m - (\rho - \delta)(M+1)-\rho \alpha_0 + \delta \beta_0}{\hdeg} - M_1} \Bigr \|_{L^2 \to L^2} =: C < \infty,
\end{align*}
we can now bound \eqref{eq:sn_T1_tau_KN_1} by estimating
\begin{align*}
	\Bigl \| (&\Delta^{\alpha_0} X^{\beta_0} T_1)(x, \pi) (I + \pi(\RO))^{-\frac{m - (\rho - \delta)(M+1)-\rho \alpha_0 + \delta \beta_0}{\hdeg}} \Bigr \|_{\mathcal{L}(\RS)} \\
	&\leq C \Bigl \| (\Delta^{\alpha_0} X^{\beta_0} T_1)(x, \pi) (I + \pi(\RO))^{M_1} \Bigr \|_{\mathcal{L}(\RS)} \\
	&= C \Bigl \| \int_G X^{\beta_0}_{x_1 = x} R^{\kappa_\sigma(\, . \,, y)}_{x_1, M_0} \bigl (\tau(y) \bigr ) \tilde{q}_{\alpha_0}(y) \pi(y)^* (I + \pi(\RO))^{M_1} \, dy \Bigr \|_{\mathcal{L}(\RS)} \\
	&= C \Bigl \| \underset{[\beta_1] + [\beta_2] + [\beta_3] \leq \hdeg M_1}{\overline{\sum}} \int_G \tilde{X}^{\beta_1}_{y_1 = y} \tilde{X}^{\beta_2}_{y_2 = y} X^{\beta_0}_{x_1 = x} R^{\kappa_\sigma(\, . \,, y_1)}_{x_1, M_0} \bigl (\tau(y_2) \bigr ) (\tilde{X}^{\beta_3}\tilde{q}_{\alpha_0})(y) \pi(y)^* \, dy \Bigr \|_{\mathcal{L}(\RS)} \\
	&= C \Bigl \| \underset{[\beta_1] + [\beta_2] + [\beta_3] \leq \hdeg M_1}{\overline{\sum}} \int_G \tilde{X}^{\beta_2}_{y_2 = y} R^{X^{\beta_0}_{x_1} \tilde{X}^{\beta_1}_{y_1 = y} \kappa_\sigma(x_1 , y_1)}_{0, M_0} \bigl (\tau(y_2) \bigr ) (\tilde{X}^{\beta_3}\tilde{q}_{\alpha_0})(y) \pi(y)^* \, dy \Bigr \|_{\mathcal{L}(\RS)}.
\end{align*}
Because of $\| \pi(y)^* \|_{\mathcal{L}(\RS)} = 1$, so we can rewrite
\begin{align*}
	\tilde{X}^{\beta_2}_{y_2 = y} &R^{X^{\beta_0}_{x_1} \tilde{X}^{\beta_1}_{y_1 = y} \kappa_\sigma(x_1 , y_1)}_{0, M} \bigl (\tau(y_2) \bigr ) \\
	&\hspace{30pt} = \underset{\substack{|\beta'_2| \leq |\beta_2| \\ [\beta'_2] \geq [\beta_2]}}{\sum} Q_{\beta'_2, \beta_2}(y) X^{\beta'_2}_{y_2 = y} R^{X^{\beta_0}_{x_1} \tilde{X}^{\beta_1}_{y_1 = y} \kappa_\sigma(x_1 , y_1)}_{0, M_0} \bigl (\tau(y_2) \bigr ) \\
	&\hspace{30pt} = \underset{\substack{|\beta'_2| \leq |\beta_2| \\ [\beta'_2] \geq [\beta_2]}}{\sum} Q_{\beta'_2, \beta_2}(y) \underset{\substack{[\beta'_2] \leq [\beta'''_2] \leq v_\dimG |\beta'_2|\\ [\beta_2''']-[\beta_2'']=[\beta_2']}}{\overline{\sum}} q_{\beta''_2}(y) X^{\beta'''_2}_{y_2 = \tau(y)} R^{X^{\beta_0}_{x_1} \tilde{X}^{\beta_1}_{y_1 = y} \kappa_\sigma(x_1 , y_1)}_{0, M_0} (y_2) \\
	&\hspace{30pt} = \underset{[\beta'''_2] \leq v_\dimG [\beta_2]}{\overline{\sum}} q_{\beta'''_2-\beta_2}(y) X^{\beta'''_2}_{y_2 = \tau(y)} R^{X^{\beta_0}_{x_1} \tilde{X}^{\beta_1}_{y_1 = y} \kappa_\sigma(x_1 , y_1)}_{0, M_0} (y_2),
\end{align*}
where in the second step we have used Lemma~\ref{lem:pvf}. We further estimate
\begin{align*}
	\Bigl \| &(\Delta^{\alpha_0} X^{\beta_0} T_1)(x, \pi) (I + \pi(\RO))^{-\frac{m - (\rho - \delta)(M+1)-\rho \alpha_0 + \delta \beta_0}{\hdeg}} \Bigr \|_{\mathcal{L}(\RS)} \\
	&\leq C \underset{\substack{[\beta_1] + [\beta_2] + [\beta_3] \leq \hdeg M_1 \\ [\beta'''_2] \leq v_\dimG [\beta_2]}}{\overline{\sum}}
	\int_G |(\tilde{X}^{\beta_3}\tilde{q}_{\alpha_0})(y)| |q_{\beta'''_2-\beta_2}(y)| | X^{\beta'''_2}_{y_2 = \tau(y)} R^{X^{\beta_0}_{x_1} \tilde{X}^{\beta_1}_{y_1 = y} \kappa_\sigma(x_1 , y_1)}_{0, M_0} (y_2)| \, dy \\
	&\leq C \underset{\substack{[\beta_1] + [\beta_2] + [\beta_3] \leq \hdeg M_1 \\ [\beta'''_2] \leq v_\dimG [\beta_2]}}{\overline{\sum}} \int_G |q_{\beta'''_2-\beta_2}(y)| \underset{\substack{|\gamma| \leq \lceil (M_0 - [\beta'''_2]) \rfloor_+ + 1 \\ [\gamma] \geq \lceil (M_0 - [\beta'''_2]) \rfloor_+}}{\sum} |\tau(y)|^{[\gamma]} \\
 &\hspace{210pt} \times \sup_{x_1 \in G} |X^\gamma_{x_1} X^{\beta'''_2}_{x_2 = x_1} \tilde{X}^{\beta_1}_{y_1 = y} \kappa_\sigma(x_2, y_1)| \, dy.
\end{align*}
Away from $y = 0$, this integral converges absolutely since $X^\gamma_{x_1} X^{\beta'''_2}_{x_2 = x_1} \tilde{X}^{\beta_1}_{y_1 = y} \kappa_\sigma$ has Schwartz decay and $|\tau(y)|$ is of polynomial growth due to (HP). To see that it also converges in a neighborhood of the origin, we notice that there exist $a, b \in \N_0$ and a constant $C_0 > 0$ such that
\begin{align*}
	\sup_{x_1 \in G} |X^\gamma_{x_1} X^{\beta'''_2}_{x_2 = x_1} \tilde{X}^{\beta_1}_{y_1 = y} \kappa_\sigma( x_2, y_1)| \leq C_0 \| \sigma \|_{S^m_{\rho, \delta}, a, b} |y|^{-\frac{m + [\beta_1] + \delta([\gamma] + [\beta'''_2])}{\rho}}.
\end{align*}
Hence, the integral converges in this neighborhood, since we can bound $|\tau(y)| \lesssim |y|$ and pick $M_0 \in \N$, $M_0>\nu M_1\geq [\beta_2''']$, large enough so that
\begin{align*}
	\rho \bigl ( Q + [\gamma] \bigr ) \geq \rho \bigl ( Q + M_0 \bigr )> Q + m + \hdeg M_1 (1 + v_\dimG) + \delta M_0>Q + m + \hdeg M_1 (1 + v_\dimG) + \delta [\gamma],
\end{align*}
because the latter is greater or equal $Q + m + [\beta_1] + \delta([\gamma] + [\beta'''_2])$. Note that such a choice of $M_0$ is possible due to $\frac{\delta}{\rho} < 1$. So, we obtain
\begin{align}
    \sup_{(x, \pi) \in G \times \Ghat} \, \Bigl \| (\Delta^{\alpha_0} X^{\beta_0} T_1)(x, \pi) (I + \pi(\RO))^{-\frac{m - (\rho - \delta)(M+1)-\rho \alpha_0 + \delta \beta_0}{\hdeg}} \Bigr \|_{\mathcal{L}(\RS)} \lesssim  \| \sigma \|_{S^m_{\rho, \delta}, a, b}, \label{eq:sn_T1_tau_KN_2}
\end{align}
and, in view of \eqref{eq:T_0_T_1}, also
\begin{align}
    \| \sigma_\tau \|_{S^m_{\rho, \delta}, [\alpha_0], [\beta_0]} \lesssim  \| \sigma \|_{S^m_{\rho, \delta}, a, b}. \label{eq:sn_T1_tau_KN_3}
\end{align}
This completes the proof of \eqref{eq:asym_exp_tau_KN}.

To prove the sufficient condition and the asymptotic expansion~\eqref{eq:asym_exp_KN_tau}, our line of arguments applies essentially verbatim. From the identity~\eqref{eq:rel_ass_ker_tau_KN} we immediately obtain the converse identity
\begin{align}
	\kappa_\sigma \bigl(x, y) = \kappa_{\sigma_\tau}( x\tau(y)^{-1}, y \bigr). \label{eq:rel_ass_ker_KN_tau}
\end{align}
Since the condition (HP) implies the condition (CV), it follows that $\kappa_\sigma \in \SC(G \times G)$ whenever $\kappa_{\sigma_\tau} \in \SC(G \times G)$, and we may follow precisely the same steps as above to show \eqref{eq:asym_exp_KN_tau} and 
\begin{align}
    \| \sigma \|_{S^m_{\rho, \delta}, [\alpha_0], [\beta_0]} \lesssim  \| \sigma_\tau \|_{S^m_{\rho, \delta}, a, b}\label{eq:sn_T1_KN_tau_3}.
\end{align}
This proves the converse direction.

To pass to general symbols $\sigma \in S^m_{\rho, \delta}(G)$, we employ an argument based on the approximation of $\sigma$ by a net of smoothing symbols. By \cite[Lem.~5.4.11]{FR} and the equivalence of seminorms \cite[Thm.~5.5.20]{FR}, there exists a net $\{ \sigma_{\varepsilon} \}_{\varepsilon \in (0, 1)} \subseteq S^{-\infty}(G)$, with $\kappa_{\sigma_{\varepsilon}}\in \SC(G\times G)$, such that for every $\theta > 0$ and every seminorm $\| \, \cdot \, \|_{S^m_{\rho, \delta}, a, b}$ there exist a constant $C = C(a, b, c, m, \theta, \rho, \delta) > 0$ and $a', b' \in \N_0$ such that
\begin{align}
    \| \sigma - \sigma_{\varepsilon} \|_{S^{m + \theta}_{\rho, \delta}, a, b} &\leq C \| \sigma \|_{S^m_{\rho, \delta}, a', b'} \, \varepsilon^{\frac{\theta}{\hdeg}}, \label{eq:smoothing_app_adj} \\
    \|\sigma_{\varepsilon} \|_{S^m_{\rho, \delta}, a, b} &\leq C \| \sigma \|_{S^m_{\rho, \delta}, a', b'} \, \varepsilon^{\frac{\theta}{\hdeg}}. \label{eq:smoothing_bound_adj}
\end{align}
Now, let $a, b \in \N_0$ and $\eta \geq 0$. Since $\sigma \mapsto \sigma_\tau$ is a linear map from $S^{-\infty}(G)$ into itself with the property that $\kappa_{\sigma_\tau} \in \SC(G \times G)$ precisely when $\kappa_\sigma \in \SC(G \times G)$, the estimates \eqref{eq:sn_T1_tau_KN_3} and \eqref{eq:smoothing_bound_adj} yield the existence of integers $a', b', a'', b'' \in \N$ and a constant $C > 0$ such that
\begin{align*}
    \| (\sigma_{\varepsilon})_\tau &- (\sigma_{\varepsilon'})_\tau  \|_{S^{m + \theta}_{\rho, \delta}, a, b} 
    \leq \| \sigma_{\varepsilon} - \sigma_{\varepsilon'} \|_{S^{m + \theta}_{\rho, \delta}, a', b'} \leq C \| \sigma \|_{S^m_{\rho, \delta}, a'', b''} \, ( \varepsilon^{\frac{\theta}{\hdeg}} + {\varepsilon'}^{\frac{\theta}{\hdeg}} ) \leq \eta,
\end{align*}
for all $\varepsilon, \varepsilon' \in (0, \varepsilon_0)$ and a sufficiently small $\varepsilon_0 \in (0, 1)$. Let us denote the by $(\sigma_\tau)_\varepsilon$ the sequence $(\sigma_{\varepsilon})_\tau$.  
Since $a, b \in \N_0$ and $\eta > 0$ were arbitrary, the net $\{ (\sigma_{\tau})_\varepsilon)\}_{\varepsilon \in (0, 1)} \subseteq S^m_{\rho, \delta}(G)$ is Cauchy in the coarser topology of $S^{m + \theta}_{\rho, \delta}(G) \supseteq S^{m_1 + m_2}_{\rho, \delta}(G)$, but its limit $\lim_{\varepsilon} (\sigma_\tau)_\varepsilon =: \sigma'_\tau$ nevertheless lies in a closed bounded subset of $S^m_{\rho, \delta}$, due to \eqref{eq:sn_T1_tau_KN_3}. On the one hand, by \cite[Lem.~5.4.11~(3)]{FR} and Proposition~\ref{prop:cont_on_SC}, this implies that $\Optau\bigl ( (\sigma_\tau)_{\varepsilon}\bigr )$ converges strongly on $\SC(G)$ to $\Optau(\sigma'_\tau)$ with $\sigma'_\tau \in S^m_{\rho, \delta}(G)$.\footnote{To employ Proposition~\ref{prop:cont_on_SC}, we require $\delta < \frac{1}{v_n}$, unlike in the preceding part of the proof.} On the other hand, since $\Optau\bigl ( (\sigma_\tau)_{\varepsilon}\bigr ) = \Optau\bigl ( (\sigma_{\varepsilon})_\tau \bigr ) = \Op(\sigma_{ \varepsilon})$ strongly converges on $\SC(G)$ to $\Op(\sigma)$ and $\sigma \in S^m_{\rho, \delta}(G)$, this gives
\begin{align*}
    \Optau(\sigma'_\tau) f = \lim_{\varepsilon} \Optau\bigl ( (\sigma_\tau)_{\varepsilon}\bigr )f = \lim_{\varepsilon} \Op(\sigma_{\varepsilon})f = \Op(\sigma)f
\end{align*}
for all $f \in \SC(G)$. By the Schwartz kernel theorem, the kernels, and hence the associated kernels, in $\TD(G \times G)$ of these two continuous operators coincide. The extension of \eqref{eq:rel_ass_ker_tau_KN} to $\TD(G \times G)$ by duality thus yields $\sigma'_\tau = \sigma_\tau \in S^m_{\rho, \delta}(G)$. This proves the continuity of $\sigma \mapsto \sigma_\tau: S^m_{\rho, \delta}(G) \to S^m_{\rho, \delta}(G)$.

Due to \eqref{eq:sn_T1_KN_tau_3}, we can apply these arguments verbatim to show that also $\sigma_\tau \mapsto \sigma: S^m_{\rho, \delta}(G) \to S^m_{\rho, \delta}(G)$ is continuous.

Clearly, the identities \eqref{eq:rel_ass_ker_KN_tau} and \eqref{eq:rel_ass_ker_tau_KN} establish a one-to-one correspondence between the associated kernels and by applying the Fourier transform in $y$ also between the symbols (cf.~\eqref{eq:symbol} and \eqref{eq:tau_symbol}). Together with the continuity of the map, this yields a vector space isomorphism $\sigma \mapsto \sigma_\tau$ from $S^{m-(\rho - \delta)(M+1)}_{\rho, \delta}(G)$ onto itself, which completes the proof.
\end{proof}

\begin{example}
For the Weyl quantization on $G = \R^n$, the asymptotic expansion~\eqref{eq:asym_exp_tau_KN} recovers the well-known relation
\begin{align}
    \sigma_{\mathrm{w}} \sim \sum_{|\alpha| \leq M} \frac{2^{|\alpha|}}{\alpha!} (i \partial_{\xi})^\alpha \partial_x^\alpha \sigma_{\mathrm{KN}},
\end{align}
since the coefficients of both series are immediately seen to coincide due to the Abelian structure of $G$ and \eqref{eq:coeff_W_KN}. (Cf., e.g., \cite[Thm.~2.41]{F1} or \cite[Lem.~4.1.5]{L}.)
\end{example}

\subsection{The symbol of the adjoint} \label{subs:adjoint}

In this subsection we show that the symbol of the formal adjoint of an operator $\Optau(\sigma)$ with $\sigma \in S^m_{\rho, \delta}(G)$, $m \in \R$, $0 \leq \delta < \min\{\rho,\frac{1}{v_n} \}$, is in the same symbol class, and we provide an explicit asymptotic expansion for it. Below we shall denote by $\sigma^{(*)}_\tau$ the symbol of the adjoint of $\Op^\tau(\sigma)$, and by $\sigma^*$ the pointwise adjoint of the symbol $\sigma$. When $\tau=e_G$, we shall simply write $\sigma^{(*)}$ for $\sigma^{(*)}_{\tau=e_G}$.

\begin{theorem} \label{thm:asym_exp_adjoint_tau}
Let $m \in \R$, $0 \leq \delta < \min\{\rho,\frac{1}{v_n} \} \leq 1$, and let $\tau: G \to G$ be a quantizing function which satisfies (HP). Let $T$ be a continuous linear operator from $\SC(G)$ to $\TD(G)$ given by $T = \Optau(\sigma)$ for some $\sigma \in S^m_{\rho, \delta}(G)$. Then its formal adjoint $T^*$, defined with respect to the $\mathscr{S}(G)$-$\mathscr{S}'(G)$-duality, is given by $T^* = \Optau(\sigma^{(*)}_\tau)$ for a uniquely determined symbol $\sigma^{(*)}_\tau \in S^m_{\rho, \delta}(G)$.

The map $\sigma \mapsto \sigma^{(*)}_\tau$ is a Fr\'{e}chet space isomorphism from $S^m_{\rho, \delta}(G)$ onto itself, and the symbols are related to one another by the asymptotic expansion
\begin{align}
	\sigma^{(*)}_\tau &\sim \sum_{j=0}^\infty\Bigr(\sum_{[\alpha] = j} \sum_{[\alpha'] = [\alpha]} c^{(*), \tau}_{\alpha', \alpha} \, \Delta^{\alpha'} X^\alpha \sigma^*\Bigr), \label{eq:asym_exp_adjoint_tau}
\end{align}
in the sense that for given $M \in \mathbb{N}_0$
\begin{align*}
	R^{\sigma^{(*)}_\tau}_M &:= \sigma_\tau^{(*)} -  \sum_{[\alpha] \leq M} \sum_{[\alpha'] = [\alpha]} c^{(*), \tau}_{\alpha', \alpha} \, \Delta^{\alpha'} X^\alpha \sigma^* \in S^{m-(\rho-\delta)(M+1)}_{\rho,\delta}(G).
\end{align*}
The coefficients $c^{(*), \tau}_{\alpha', \alpha} \in \R$ are uniquely determined by the equations
\begin{align*}
    q_{\alpha} \bigl ( \tau(y) y^{-1} \tau(y^{-1})^{-1} \bigr ) = \sum_{[\alpha'] = [\alpha]} c^{(*), \tau}_{\alpha', \alpha} \, \tilde{q}_{\alpha'}(y).
\end{align*}

For the quantizing function $\tau = e_G$ the expansion \eqref{eq:asym_exp_adjoint_tau} recovers the asymptotic expansion
\begin{align}
	\sigma^{(*)} \sim \sum_{j=0}^\infty\bigl( \sum_{[\alpha] = j} \Delta^\alpha X^\alpha \sigma^* \bigr)\label{eq:asym_exp_adjoint_KN_quant_G}
\end{align}
 in the Kohn-Nirenberg calculus, which was established in \cite[Cor.~5.5.17]{FR}.

Moreover, the quantizing function $\tau$ is symmetric, i.e., it satisfies
\begin{align}
	T^* = \Optau(\sigma)^*=\Optau(\sigma^*) \label{eq:sym_quant_G}
\end{align}
and \eqref{eq:asym_exp_adjoint_tau} collapses to $\sigma^{(*)}_\tau = \sigma^*$, if and only if $\tau$ satisfies
\begin{align}
	\tau(x)= \tau(x^{-1})x \label{eq:char_sym_fun_G}
\end{align}
for all $x \in G$.

In particular, if $T = \Optau(\sigma)$ for some $\sigma \in S^0_{\rho, \delta}(G)$ and a symmetry function $\tau$, then $T = T^* \in \mathcal{L}(L^2(G))$ if and only if $\sigma^{(*)}_\tau = \sigma^* = \sigma$.
\end{theorem}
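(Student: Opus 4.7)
The plan is to reduce the adjoint computation to a ``change of quantization''-type analysis modelled on the proof of Theorem~\ref{thm:asym_exp_ch_qu}. Starting from the defining identity $\Ker_{\sigma^{(*)}_\tau}(x,y) = \overline{\Ker_\sigma(y,x)}$ for the formal adjoint and the relation \eqref{kernel-kernel}, the change of variables $z := y^{-1}x$ followed by the substitution $w := x\tau(z)^{-1}$ yields the key kernel identity
\begin{align*}
    \kappa_{\sigma^{(*)}_\tau}(w, z) \;=\; \overline{\kappa_\sigma\bigl(w\,\phi(z),\, z^{-1}\bigr)} \;=\; \kappa_{\sigma^*}\bigl(w\,\phi(z),\, z\bigr),
\end{align*}
where $\phi(z) := \tau(z)\,z^{-1}\,\tau(z^{-1})^{-1}$ and where the pointwise adjoint $\sigma^*$ has associated kernel $\kappa_{\sigma^*}(x,z) = \overline{\kappa_\sigma(x, z^{-1})}$. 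A direct calculation shows that $\phi \equiv e_G$ identically if and only if $\tau(x) = \tau(x^{-1})\,x$ for all $x \in G$, which gives the equivalence \eqref{eq:sym_quant_G}~$\Leftrightarrow$~\eqref{eq:char_sym_fun_G} once the main symbol-class statement is known, and in this case the identity above collapses at once to $\sigma^{(*)}_\tau = \sigma^*$.

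Next, under (HP) an induction analogous to Lemma~\ref{lem:product_tau}, together with the fact that inversion $z \mapsto z^{-1}$ preserves graded homogeneity in exponential coordinates, shows that each coordinate of $\phi(z)$ is a polynomial homogeneous of degree $v_j$ in $z$. Consequently, for every $\alpha \in \N^\dimG_0$ the function $z \mapsto q_\alpha(\phi(z))$ is a homogeneous polynomial of degree $[\alpha]$, hence admits the unique expansion
\begin{align*}
    q_\alpha(\phi(z)) \;=\; \sum_{[\alpha']=[\alpha]} c^{(*),\tau}_{\alpha',\alpha}\, \tilde{q}_{\alpha'}(z),
\end{align*}
which fixes the coefficients in \eqref{eq:asym_exp_adjoint_tau}. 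Applying a left Taylor expansion of $w_1 \mapsto \kappa_{\sigma^*}(w_1, z)$ about $w$ to homogeneous order $M$ gives
\begin{align*}
    \kappa_{\sigma^{(*)}_\tau}(w, z) = \sum_{[\alpha] \leq M}\,\sum_{[\alpha']=[\alpha]} c^{(*),\tau}_{\alpha',\alpha}\, \tilde{q}_{\alpha'}(z)\, \bigl(X^\alpha_{w_1=w} \kappa_{\sigma^*}(w_1, z)\bigr) \;+\; R_{w, M}^{\kappa_{\sigma^*}(\cdot, z)}\bigl(\phi(z)\bigr),
\end{align*}
and the group Fourier transform in $z$ reproduces the partial sum in \eqref{eq:asym_exp_adjoint_tau}, whose $j$-th summand lies in $S^{m-(\rho-\delta)j}_{\rho,\delta}(G)$ by the same symbol-class arithmetic used in \cite{FR}.

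The principal obstacle is showing that the Fourier transform $R_M(w,\pi) := \int_G R_{w, M_0}^{\kappa_{\sigma^*}(\cdot, z)}(\phi(z))\,\pi(z)^*\,dz$ of the Taylor remainder lies in $S^{m-(\rho-\delta)(M+1)}_{\rho,\delta}(G)$ once $M_0$ is chosen sufficiently large. The strategy mirrors the remainder analysis of Theorem~\ref{thm:asym_exp_ch_qu}: for fixed $\alpha_0, \beta_0$ one bounds $\| \Delta^{\alpha_0} X^{\beta_0}_w R_M(w,\pi)\,\pi(I+\RO)^{-s/\hdeg} \|_{\mathcal{L}(\RS)}$, with $s = m-(\rho-\delta)(M+1)-\rho[\alpha_0]+\delta[\beta_0]$, by multiplying on the right by $(I+\pi(\RO))^{M_1}$ (absorbed as a bounded Bessel multiplier), distributing the resulting $\tilde{X}^\beta$-derivatives in $z$ onto $R_{w,M_0}^{\cdots}(\phi(z))$ and $\tilde q_{\alpha_0}(z)$ via Lemma~\ref{lem:pvf} applied with $p(z_1, z_2) := \phi(z_2)$, and then invoking the Taylor remainder bound \eqref{eq:est_Taylor_rem} together with the integrability estimate in Lemma~\ref{FRcorollary}. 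The hypothesis $\delta < \min\{\rho, 1/v_\dimG\}$ enters here exactly as in the proof of Theorem~\ref{thm:asym_exp_ch_qu}: it ensures both convergence of the oscillatory integrals near the origin and that the effective weight, inflated by up to a factor $v_\dimG$ through the Lemma~\ref{lem:pvf} reshuffling, can still be absorbed by choosing $M_0$ large enough.

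With the seminorm bound $\|\sigma^{(*)}_\tau\|_{S^m_{\rho,\delta}, a, b} \lesssim \|\sigma\|_{S^m_{\rho,\delta}, a', b'}$ established on the dense subclass $\sigma \in S^{-\infty}(G)$ with $\kappa_\sigma \in \SC(G \times G)$, the extension to arbitrary $\sigma \in S^m_{\rho,\delta}(G)$ follows by the smoothing approximation argument of \cite[Lem.~5.4.11]{FR} precisely as at the end of the proof of Theorem~\ref{thm:asym_exp_ch_qu}, using Proposition~\ref{prop:cont_on_SC} for strong convergence of $\Optau(\sigma_\varepsilon)$ on $\SC(G)$. The Fréchet space isomorphism property is immediate from the involution $(T^*)^* = T$, which forces $(\sigma^{(*)}_\tau)^{(*)}_\tau = \sigma$. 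Specializing to $\tau = e_G$ gives $\phi(z) = z^{-1}$, so $q_\alpha(\phi(z)) = \tilde q_\alpha(z)$ and $c^{(*),e_G}_{\alpha',\alpha} = \delta_{\alpha,\alpha'}$, recovering \eqref{eq:asym_exp_adjoint_KN_quant_G}. Finally, when $\tau$ is symmetric and $\sigma \in S^0_{\rho,\delta}(G)$, the equivalence $T = T^* \Leftrightarrow \sigma = \sigma^*$ is immediate from $\Optau(\sigma)^* = \Optau(\sigma^*)$, combined with $L^2$-boundedness of $\Optau(\sigma)$ (obtained by transfer to Kohn-Nirenberg form via Theorem~\ref{thm:asym_exp_ch_qu} and the Calderón-Vaillancourt-type estimate of \cite{FR}) and the injectivity of the quantization map.
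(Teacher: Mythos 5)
Your proposal is correct and follows essentially the same route as the paper: the same kernel identity $\kappa_{\sigma^{(*)}_\tau}(w,z)=\kappa_{\sigma^*}\bigl(w\,\phi(z),z\bigr)$ with $\phi(z)=\tau(z)z^{-1}\tau(z^{-1})^{-1}$, the same homogeneous Taylor expansion about $w$ with coefficients determined by $q_\alpha\circ\phi$, the same remainder estimate via Lemma~\ref{lem:pvf}, \eqref{eq:est_Taylor_rem} and Lemma~\ref{FRcorollary}, and the same density argument for the extension to general $\sigma\in S^m_{\rho,\delta}(G)$. The only minor simplification you make is deducing the Fr\'echet isomorphism directly from the involution $(T^*)^*=T$ (so the map is its own inverse and one continuity bound suffices), whereas the paper refers back to the two-sided continuity argument in the proof of Theorem~\ref{thm:asym_exp_ch_qu}; both are valid.
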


\begin{proof}
As in the proof of Theorem~\ref{thm:asym_exp_ch_qu} our proof crucially relies on the relation between the associated kernels. Note that it suffices to prove our statement for the dense subset of symbols $S^{-\infty}(G)$ whose associated kernels lie in $\SC(G \times G)$ since the extension to symbols in $S^m_{\rho, \delta}(G)$ is identical to the one in the proof of Theorem~\ref{thm:asym_exp_ch_qu}.

Thus, let $m \in \R$, $0 \leq \delta < \min\{\rho,\frac{1}{v_n}\}$, and let $\tau: G \to G$ be a quantizing function which satisfies (HP). If $\sigma \in S^{-\infty}(G)$ with $\kappa_\sigma \in \SC(G \times G)$, then also the integral kernel $\Ker_\sigma$ of $T = \Optau(\sigma)$ is a member of $\SC(G \times G)$. The integral kernel of its formal adjoint $T^*$, which we denote by $\Ker_{\sigma^{(*)}} \in \TD(G \times G)$, is explicitly related to $\Ker_\sigma$ by $\Ker_{\sigma^{(*)}}(x, y) = \overline{\Ker_\sigma}(y, x)$, essentially due to the Schwartz kernel theorem. It immediately follows that $\Ker_{\sigma^{(*)}} \in \SC(G \times G)$ and that we can use $\Ker_\sigma = \kappa_\sigma \circ \cv$ to express the associated kernel by
\begin{align}
    \kappa_{\sigma^{(*)}_\tau}(x, y) &= \bigl ( \Ker_{\sigma^{(*)}_\tau} \circ \cvi \bigr ) (x, y) \nonumber \\
    &= \Ker_{\sigma^{(*)}_\tau} \bigl ( x \tau(y), x \tau(y) y^{-1} \bigr ) \nonumber \\
    &= \overline{\Ker_\sigma}( x \tau(y) y^{-1}, x \tau(y) \bigr ) \\
    &= \overline{\kappa_\sigma} \Bigl ( x \tau(y) y^{-1} \tau \bigl ( \tau(y)^{-1} x^{-1} x \tau(y) y^{-1} \bigr )^{-1}, \tau(y)^{-1} x^{-1} x \tau(y) y^{-1} \Bigr ) \nonumber \\
    &= \overline{\kappa_\sigma} \bigl ( x \tau(y) y^{-1} \tau(y^{-1})^{-1}, y^{-1} \bigr ) \nonumber \\
    &= \kappa_{\sigma^*} \bigl ( x \tau(y) y^{-1} \tau(y^{-1})^{-1}, y \bigr ), \label{eq:ass_ker_adjoint_tau}
\end{align}
where we recall from \cite[Thm.~5.2.22]{FR} that the kernel associated to $\sigma^{*}$ is given $\kappa_{\sigma^*}(x, y) = \overline{\kappa_\sigma}(x, y^{-1})$. We immediately notice that if $\tau$ satisfies \eqref{eq:char_sym_fun_G}, then
\begin{align}
    \tau(y) y^{-1} \tau(y^{-1})^{-1} = e_G \label{eq:trivial_adj_cond}
\end{align}
and
\eqref{eq:ass_ker_adjoint_tau} yields
\begin{align} \label{eq:char_sym_fun_G_equ_cond}
    \kappa_{\sigma^{(*)}_\tau}(x, y) = \kappa_{\sigma^*}(x, y) \hspace{10pt} \Longleftrightarrow \hspace{10pt} \sigma^{(*)}_\tau = \sigma^*,
\end{align}
hence $\tau$ is symmetric. Conversely, if $\tau$ is symmetric, that is, if the condition \eqref{eq:char_sym_fun_G_equ_cond} holds, then by the above computation the Schwartz functions $\Ker_{\sigma^{(*)}}$ and $\Ker_{\sigma^*}$ coincide only if \eqref{eq:char_sym_fun_G} holds true. By duality, this argument extends to distributions in $\TD(G \times G)$. Now, since any operator $T = \Optau(\sigma)$ with $\sigma \in S^m_{\rho, \delta}(G)$ is continuous from $\SC(G)$ into $\SC(G)$ by Proposition~\ref{prop:cont_on_SC}, the associated kernel lies in $\TD(G \times G)$ and the isomorphism property follows essentially verbatim as the last part of the proof of Theorem~\ref{thm:asym_exp_ch_qu}. To finish the part of the proof concerning symmetric $\tau$, note that $T = \Optau(\sigma)$ with $\sigma \in S^0_{\rho, \delta}(G)$ is continuous from $L^2(G)$ into itself (cf.~Theorem~\ref{thm:cont_on_Sobolev}~(i) below). Hence, if $T$ is self-adjoint, then the kernels, and hence the associated kernels and also the symbols, of $T$ and $T^*$ coincide.
\medskip

Let us continue the proof for a $\tau$ which is not necessarily symmetric. In order to derive an asymptotic expansion of $\sigma^{(*)}_\tau$ in terms of $\sigma^*$, we will employ a Taylor expansion of its associated kernel based on \eqref{eq:ass_ker_adjoint_tau}. Since we need not consider the trivial case \eqref{eq:trivial_adj_cond}, we may exclude symmetry functions. For any non-symmetric quantizing function $\tau$, the condition (HP) and the gradation of $\mathfrak{g}$ ensure that for at least one $j = 1, \ldots, \dimG$, the $j$-th coordinate of $\tau(y) y^{-1} \tau(y^{-1})^{-1}$ is given by
\begin{align} \label{eq:hom_str_tyt}
    (2C^\tau_j - 1) y_j + p_j(y_1, \ldots, y_{j-1})
\end{align}
for some $C^\tau_j \neq \frac{1}{2}$ and some homogeneous polynomial $p_j$ of degree $v_j$, which only depends on the variables $y_1, \ldots, y_{j-1}$, while the other coordinates may vanish. This yields a non-trivial Taylor expansion
\begin{align*}
    \kappa_{\sigma^{(*)}_\tau}(x, y) &= \sum_{[\alpha] \leq M} \sum_{[\alpha'] = [\alpha]} c^{(*), \tau}_{\alpha', \alpha} \, \tilde{q}_{\alpha'}(y) X^\alpha_{x_1 = x} \kappa_{\sigma^*}(x_1, y) \\
    &\hspace{-15pt} +\sum_{M+1 \leq [\alpha] \leq M_0} \sum_{[\alpha'] = [\alpha]} c^{(*), \tau}_{\alpha', \alpha} \, \tilde{q}_{\alpha'}(y) X^\alpha_{x_1 = x} \kappa_{\sigma^*}(x_1, y) + R^{\kappa_{\sigma^*}(\, . \,, y)}_{x, M_0} \bigl ( \tau(y) y^{-1} \tau(y^{-1})^{-1} \bigr )
\end{align*}
for some $M_0 \leq M+1$. Since $|\tau(y) y^{-1} \tau(y^{-1})^{-1}| \lesssim |y|$ due to \eqref{eq:hom_str_tyt}, the rest of the proof now follows verbatim the last part of the proof of Theorem~\ref{thm:asym_exp_ch_qu} from \eqref{eq:Taylor_exp_ch_qu} on.

Finally, note that for $\tau = e_G$ the identity \eqref{eq:ass_ker_adjoint_tau} reduces to $\kappa_{\sigma^{(*)}_\tau}(x, y) = \kappa_{\sigma^*}(xy, y)$. Consequently, the Taylor coefficients satisfy 
\begin{align*}
    c^{(*), \tau}_{\alpha', \alpha} = \delta_{\alpha', \alpha} =
    \left \{ \begin{array}{ll}
        1 & \mbox{ if } \alpha' = \alpha, \\
        0 & \mbox{ otherwise,}
    \end{array} \right.
\end{align*}
which yields the asymptotic expansion \eqref{eq:asym_exp_adjoint_KN_quant_G} as expected. This completes the proof.
\end{proof}

\begin{remark}
We wish to remark that the result of Theorem \ref{thm:asym_exp_adjoint_tau} can also be proved by using an alternative strategy. For instance, one could use the change of quantization from Theorem \ref{thm:asym_exp_ch_qu} in combination with the asymptotic formula for symbol of the adjoint operator in the Kohn-Nirenberg quantization in \cite{FR}, very similarly to how this is done in \cite[Thm.~2.52]{F1} for the Euclidean Weyl and Kohn-Nirenberg quantizations.
Although this indirect proof would shorten the current work substantially, it might oblige its readers to do substantial reading elsewhere. Moreover, we consider it preferable to give a direct independent proof to keep our work much more self-contained.
    
Note that the same consideration applies to Theorem \ref{thm:asym_exp_comp_tau} in the following subsection.
\end{remark}

\begin{example} \label{ex:adj_WQ}
Since for $G = \R^n$ the quantizing function $\tau(x) = \frac{x}{2}$ clearly satisfies \eqref{eq:char_sym_fun_G}, the identity~\eqref{eq:sym_quant_G} simply recovers the property \eqref{eq:sym_WQ} of the Weyl quantization. Note that Theorem~\ref{thm:asym_exp_adjoint_tau} also recovers the classical fact that among the $\tau$-quantizations~\eqref{eq:tau_quant}, defined by the quantizing functions $\tau(x) = \tau x$, $\tau \in [0, 1]$, the Weyl quantization $\Opw = \Op^{\frac{1}{2}}$ is the only one that satisfies \eqref{eq:sym_quant_G}.
\end{example}

\begin{example}
On $G = \H$, Theorem~\ref{thm:asym_exp_adjoint_tau} holds true for all members of the family of symmetry functions \eqref{eq:fam_tau_Hn}, which by Example~\ref{ex:fam_sym_fun_Hn} is precisely the set of all symmetry functions $\tau: \H \to \H$ that satisfy (HP) with respect to the canonical homogeneous structure.
\end{example}

For the sake of completeness, let us recall the inner product formula for $\tau$-quantized Hilbert-Schmidt operators on $L^2(G)$ established in~\cite[\SS~3.2]{MR1} in the context of Theorem~\ref{thm:asym_exp_adjoint_tau}. If $\sigma_1, \sigma_2$ are symbols on $G \times \Ghat$ given by measurable fields of operators whose associated kernels $\kappa_{\sigma_1}, \kappa_{\sigma_2}$ lie in $L^2(G \times G)$, then the operators $\Optau(\sigma_1), \Optau(\sigma_2)$ are Hilbert-Schmidt operators with $\sigma_j(x, \pi) \in \mathtt(HS)(\RS)$ for $j = 1,2$ and a.e. $x \in G$, $\pi \in \Ghat$. The Hilbert-Schmidt inner product of such operators can be expressed as \noeqref{eq:HS_inprod_tau}
\begin{align}
    \hspace{-25pt} \langle \Optau(\sigma_1), \Optau(\sigma_2) \rangle_{HS}
    = \iint\limits_{\Ghat \times G} \Tr \bigl ( \sigma_1(x, \pi) \sigma_2^*(x, \pi) \bigr ) \, dx \, d\mu(\pi) = \langle\sigma_1, \sigma_2 \rangle_{L^2(G\times\widehat{G})} \label{eq:HS_inprod_tau}
\end{align}
for any measurable $\tau: G \to G$. Hence, this formula holds in particular for any $\tau$ admissible in sense of Subsection~\ref{subs:adm_qu_f} and any two Hilbert-Schmidt operators $\Optau(\sigma_1), \Optau(\sigma_2)$ with symbols $\sigma_1, \sigma_2 \in S^\infty(G)$.

\subsection{The composition of symbols} \label{subs:comp}

 In this subsection we prove the main result of this paper. It states that if the quantizing function $\tau$ satisfies the condition (HP), then the associated composition of symbols $\circ_\tau$ is continuous on the H\"{o}rmander symbol classes $S^m_{\rho, \delta}(G)$, $m \in \R$, $0 \leq \delta < \frac{\rho}{v_n} \leq \frac{1}{v_n} \leq 1$, and the composite symbol is approximately given by an asymptotic expansion. If $\tau$ is symmetric and the group $G$ is stratified, the leading orders of this expansion take a specific asymmetric shape, which for $G = \R^n$ and $\tau(x) = \frac{x}{2}$ recovers the asymptotic expansion for the Weyl product. We will discuss this in more detail for symmetric quantizations on stratified groups. We stress once again that the fundamental case $(\rho,\delta)=(1,0)$ is covered by our result.

\begin{lemma}\label{lem:comp_ker}
Let $G$ be a graded group and let $\sigma_1, \sigma_2$ be smooth symbols whose associated kernels $\k, \kk$ are Schwartz functions. If the quantizing function $\tau$ satisfies (CV), then
\begin{align}
    \Optau(\sigma_1) \Optau(\sigma_2) = \Optau(\sigma) \nonumber
\end{align}
for a smooth symbol $\sigma$. The integral kernel and the associated kernel of $\Optau(\sigma)$ are Schwartz functions on $G \times G$, given by the absolutely convergent integrals
\begin{align}
\begin{split} \label{eq:Ker_comp}
    \Ker_{\sigma}(x, y) &= \kappa_{\sigma}\bigl ( x \tau(y^{-1}x)^{-1}, y^{-1}x \bigr ) \\
    &= \int_{G} \kappa_{\sigma_1}\bigl ( x\tau(z^{-1}x)^{-1},z^{-1}x \bigr ) \kappa_{\sigma_2}\bigl ( z\tau(y^{-1}z)^{-1},y^{-1}z \bigr ) \, dz,
\end{split}
\end{align}
and
\begin{align}
    \kappa_{\sigma}(x, y) = \int_{G} \k\bigl ( x \tau(y) \tau({z}^{-1}y)^{-1}, {z}^{-1}y \bigr ) \kk\bigl ( x \tau(y) y^{-1}z \tau(z)^{-1}, z \bigr ) \, dz, \label{eq:ass_ker_comp}
\end{align}
respectively. Explicitly, we have
\begin{align}
	\sigma(x, \pi) = \Bigl ( \F \bigl ( \kappa_{\sigma}(x, \, . \,)) \bigr )\Bigr )(\pi) \label{eq:comp_symb_Schw_ker}
\end{align}
for all $x \in G$ and almost all $\pi \in \Ghat$.

In particular, for the symmetry function $\tau(x) = \exp(\frac{1}{2} \log(x))$ the identity~\eqref{eq:ass_ker_comp} simplifies to
\begin{align}
    \kappa_{\sigma}(x, y) = \int_{G} \k\bigl ( x \tau(y) \tau(y^{-1}z), {z}^{-1}y \bigr ) \kk\bigl ( x \tau(y)^{-1} \tau(z), z \bigr ) \, dz, \label{eq:ass_ker_comp_onehalf_tau}
\end{align}
while for the constant quantizing function $\tau = e_G$ it recovers the composite kernel in the Kohn-Nirenberg quantization:
\begin{align}
    \kappa_{\sigma}(x, y) = \int_{G} \k(x , {z}^{-1}y) \kk(x y^{-1}z, z) \, dz = \kappa_{\sigma_1 \circ_{\mathrm{KN}} \sigma_2}(x, y). \label{eq:ass_ker_comp_KN}
\end{align}
\end{lemma}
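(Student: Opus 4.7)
The strategy is to work entirely at the level of integral kernels, where everything is a Schwartz function, and then translate the identities back to associated kernels via the change of variables $\cv$ and its inverse. First, since $\k, \kk \in \SC(G \times G)$, the assumption (CV) together with the continuity of $\CV, \CVI$ on $\SC(G \times G)$ (cf.~Subsection~\ref{subs:ass_ker}) implies $\Ker_{\sigma_1}, \Ker_{\sigma_2} \in \SC(G \times G)$. Hence $\Optau(\sigma_1)$ and $\Optau(\sigma_2)$ both map $\SC(G)$ continuously to $\SC(G)$ by Proposition~\ref{prop:cont_on_SC}, and Fubini applied to
\begin{align*}
\Optau(\sigma_1) \Optau(\sigma_2) f(x) = \iint_{G \times G} \Ker_{\sigma_1}(x, z) \Ker_{\sigma_2}(z, y) f(y) \, dy \, dz
\end{align*}
is justified by Schwartz decay. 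This produces a composite integral kernel
\begin{align*}
\Ker_\sigma(x, y) := \int_G \Ker_{\sigma_1}(x, z) \Ker_{\sigma_2}(z, y) \, dz,
\end{align*}
which, upon substituting $\Ker_{\sigma_j} = \kappa_{\sigma_j} \circ \cv$, immediately yields the second line of \eqref{eq:Ker_comp}, and clearly $\Ker_\sigma \in \SC(G \times G)$.

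Next, to pass to the associated kernel, I apply $\CVI$, that is, I evaluate $\Ker_\sigma$ at $\cvi(x, y) = \bigl (x \tau(y), x \tau(y) y^{-1} \bigr )$:
\begin{align*}
\kappa_\sigma(x, y) = \int_G \Ker_{\sigma_1} \bigl (x \tau(y), z \bigr ) \Ker_{\sigma_2} \bigl (z, x \tau(y) y^{-1} \bigr ) \, dz.
\end{align*}
Writing each factor in terms of its associated kernel using $\Ker_{\sigma_j}(u, v) = \kappa_{\sigma_j}\bigl (u \tau(v^{-1}u)^{-1}, v^{-1}u \bigr )$ and then performing the left translation $z \mapsto \tilde z := y (z^{-1} x \tau(y))$ (equivalently, $z = x \tau(y) y^{-1} \tilde z$), whose Jacobian is $1$ by left-invariance of Haar measure, transforms the arguments of $\k$ into $\bigl ( x \tau(y) \tau(\tilde z^{-1} y)^{-1}, \tilde z^{-1} y \bigr )$ and the arguments of $\kk$ into $\bigl ( x \tau(y) y^{-1} \tilde z \tau(\tilde z)^{-1}, \tilde z \bigr )$, which is exactly \eqref{eq:ass_ker_comp}. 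The main (modest) obstacle here is simply the careful bookkeeping of the group multiplications to verify this substitution; the convergence of the integral is automatic because $\k$ and $\kk$ are Schwartz and $\tau$ is smooth of polynomial growth.

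Finally, the special cases follow by direct simplification. For $\tau(x) = \exp(\tfrac{1}{2} \log(x))$, the Baker--Campbell--Hausdorff identity $\log(g^{-1}) = -\log(g)$ gives $\tau(\tilde z^{-1} y)^{-1} = \exp\bigl (-\tfrac{1}{2} \log(\tilde z^{-1} y) \bigr ) = \exp\bigl (\tfrac{1}{2} \log(y^{-1} \tilde z) \bigr ) = \tau(y^{-1} \tilde z)$, and likewise $\tau(y)^{-1} = \tau(y) y^{-1}$ (since the exponents commute with themselves), which turns \eqref{eq:ass_ker_comp} into \eqref{eq:ass_ker_comp_onehalf_tau}. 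For $\tau \equiv e_G$, all factors $\tau(\cdot)$ collapse to the identity and one recovers the Kohn--Nirenberg convolution identity \eqref{eq:ass_ker_comp_KN}. Since $\kappa_\sigma(x, \, \cdot \,) \in \SC(G)$ uniformly in $x$, the symbol $\sigma(x, \pi) = \F\bigl (\kappa_\sigma(x, \, \cdot \,)\bigr )(\pi)$ is well-defined; smoothness in $x$ follows from differentiation under the integral sign together with the Schwartz estimates on $\k, \kk$ and the smoothness of $\tau$ (guaranteed by (CV)), completing the proof.
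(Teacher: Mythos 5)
Your proof is correct and follows essentially the same route as the paper: compose the integral kernels, pass to the associated kernel via $\cvi$, and apply a Haar-measure-preserving change of variables (the paper changes variables in \eqref{eq:Ker_comp} first and then substitutes $(x',y')=\cv(x,y)$, while you apply $\cvi$ first and then translate in $z$; the two orderings are equivalent). One small slip: the displayed definition $\tilde z := y(z^{-1}x\tau(y))$ is wrong and is not a left translation — the correct map, consistent with your own ``equivalently'' line $z = x\tau(y)y^{-1}\tilde z$, is $\tilde z := \bigl(x\tau(y)y^{-1}\bigr)^{-1}z = y\tau(y)^{-1}x^{-1}z$; since you actually carry out the computation with the ``equivalently'' form, the argument still closes.
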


\begin{proof}
A formal proof of the identity \eqref{eq:Ker_comp} follows immediately from writing out the two-fold application of the integral representation~\eqref{eq:tau_quant_ass_kernel_int_kernel}. By applying the measure-preserving change of variables $z' := y^{-1}z$ to \eqref{eq:Ker_comp}, we then obtain
\begin{align*}
    \kappa_{\sigma}\bigl ( x \tau(y^{-1}x)^{-1}, y^{-1}x \bigr ) = \int_G \k \bigl ( x \tau({z'}^{-1} y^{-1}x)^{-1}, {z'}^{-1}y^{-1}x, \bigr ) \kk \bigl ( yz'\tau(z')^{-1}, z' \bigr ) \, dz'.
\end{align*}
Adopting the notation 
\begin{equation*}
    \begin{array}{rrccl}
        (x' , y') &:=& \cv(x, y) &=& (x \tau(y^{-1}x)^{-1}, y^{-1}x), \\
        (x, y) &=& \cvi(x', y') &=& (x' \tau(y'), x' \tau(y') {y'}^{-1}),
    \end{array}
\end{equation*}
we can rewrite this identity as
\begin{align*}
     \kappa_{\sigma}(x', y') = \int_{G} \k\bigl ( x' \tau(y') \tau({y'}^{-1}z'), {z'}^{-1}y' \bigr ) \kk\bigl ( x'\tau(y')y'^{-1}z' \tau(z')^{-1}, z' \bigr ) \, dz',
\end{align*}
which gives \eqref{eq:ass_ker_comp}. The identity \eqref{eq:ass_ker_comp_onehalf_tau} now follows immediately from the observation that $\tau(x) = \exp(\frac{1}{2} \log(x))$ satisfies $\tau(x^{-1}) = \tau(x)^{-1}$ and $\tau(x)^{-1}x =x \tau(x)^{-1} = \tau(x)$ for all $x \in G$, while \eqref{eq:ass_ker_comp_KN} follows trivially for $\tau = e_G$.

To show the absolute convergence of the integral \eqref{eq:ass_ker_comp}, we recall that the condition (CV) ensures that $\Ker_{\sigma_1}, \Ker_{\sigma_2} \in \SC(G \times G)$ whenever $\kappa_{\sigma_1}, \kappa_{\sigma_2}\in \SC(G \times G)$. It follows that the absolutely convergent integral
\begin{align}
    (x, y) \mapsto \Ker_{\sigma} = \int_G\Ker_{\sigma_1}(x, z) \Ker_{\sigma_1}(z, y) \, dz \in \SC(G \times G) \label{eq:conv_comp_ker}
\end{align}
defines a Schwartz function on $G \times G$, and an application of $\cv$ to both $\Ker_{\sigma_1}$ and $\Ker_{\sigma_2}$ does not affect the convergence of \eqref{eq:conv_comp_ker}. Consequently, the integral
\begin{align}
    \sigma(x, \pi) := \int_G \kappa_{\sigma}(x, y) \pi(y)^* \, dy, \hspace{5pt} x \in G,
\end{align}
defines a smooth symbol, which even lies in $S^{-\infty}(G)$ by \cite[Lem.5.2.21]{FR}. This completes the proof.
\end{proof}

\medskip

The following result is the composition theorem for $\tau$-quantized operators. To prove it, we will use Lemma~\ref{lem:comp_ker} and estimates for the kernels $\kappa_\sigma$ associated with symbols $\sigma \in S^m_{\rho, \delta}(G)$.
Moreover, the functions $p_1, p_2: G \times G \to G$,
\begin{align}
\begin{split} \label{eq:p1_p2}
    p_1(y, z) &:=\tau(y) \tau({z}^{-1}y)^{-1}, \\
    p_2(y, z) &:= \tau(y) y^{-1}z \tau(z)^{-1},
\end{split}
\end{align}
will play a prominent role.

\begin{theorem} \label{thm:asym_exp_comp_tau}
Let $m_1, m_2 \in \R$, $0 \leq \delta < \frac{\rho}{v_n} \leq 1$, and let the quantizing function $\tau$ satisfy (HP). Let $T_1$ and $T_2$ be continuous linear operators from $\SC(G)$ to $\SC(G)$ given by $T_1 = \Optau(\sigma_1)$ and $T_2 = \Optau(\sigma_2)$ for some $\sigma_1 \in S^{m_1}_{\rho,\delta}(G)$ and $\sigma_2\in S^{m_2}_{\rho,\delta}(G)$, respectively. Then there exists a uniquely determined symbol $\sigma \in S_{\rho,\delta}^{m_1+m_2}(G)$ such that $\Optau(\sigma) = T_1 T_2$. If $\k, \kk \in \SC(G \times G)$, then the symbol $\sigma$ is given by the absolutely convergent integral \eqref{eq:comp_symb_Schw_ker}.

The $\tau$-composition of symbols
\begin{align}
\begin{split} \label{eq:comp_tau}
    \circ_\tau: S_{\rho,\delta}^{m_1}(G) \times S_{\rho,\delta}^{m_2}(G) &\to S_{\rho,\delta}^{m_1+m_2}(G), \\
    (\sigma_1, \sigma_2) &\mapsto \sigma  =: \sigma_1 \circ_\tau \sigma_2
\end{split}
\end{align}
is a bilinear and continuous map, and the symbol $\sigma$ is asymptotically given by
\begin{align} \label{eq:asym_exp_comp_tau}
    \hspace{-20pt} \sigma \sim \sum_{i,j=1}^\infty\left(\underset{\substack{[\alpha] =i, \\ [\beta] =j}}{\sum} \hspace{5pt} \underset{\substack{[\alpha_1] + [\alpha_2] = [\alpha], \\ [\beta_1] + [\beta_2] = [\beta]}}{\sum} c_{\alpha_1, \alpha_2} c_{\beta_1, \beta_2} \Bigl ( \Delta^{\alpha_2} \Delta^{\beta_1} X^\alpha \sigma_1 \Bigr ) \, \Bigl ( \Delta^{\beta_2} \Delta^{\alpha_1} X^\beta \sigma_2 \Bigr ) \right),
\end{align}
in the sense that for given $M, N \in \mathbb{N}_0$ and $L := \min\{M, N\}$
\begin{align}
    &R^{\sigma}_L := \sigma -\!\!\!\underset{\substack{[\alpha] \leq M, \\ [\beta] \leq N}}{\sum} \underset{\substack{[\alpha_1] + [\alpha_2] = [\alpha], \\ [\beta_1] + [\beta_2] = [\beta]}}{\sum} \hspace{5pt}\!\!\!\!\! c_{\alpha_1, \alpha_2} \, c_{\beta_1, \beta_2} \Bigl ( \Delta^{\alpha_2} \Delta^{\beta_1} X^\alpha \sigma_1 \Bigr ) \, \Bigl ( \Delta^{\beta_2} \Delta^{\alpha_1} X^\beta \sigma_2 \Bigr ) \label{eq:asymptotic_kernel_remainder_composition} \\
    &\hspace{250pt} \in S^{m_1 + m_2-(\rho-\delta)(L+1)}_{\rho,\delta}(G).
\end{align}
Moreover, the coefficients  $c_{\alpha_1, \alpha_2}, c_{\beta_1, \beta_2} \in \R$ are uniquely determined by the equations
\begin{align}
    q_\alpha \bigl( p_1(y, z) \bigr ) = \underset{[\alpha_1] + [\alpha_2] = [\alpha]}{\sum} c_{\alpha_1, \alpha_2} \tilde{q}_{\alpha_1}(z) \tilde{q}_{\alpha_2}(z^{-1}y),\label{eqn1811-1510} \\
    q_\beta \bigl ( p_2(y, z) \bigr ) = \underset{[\beta_1] + [\beta_2] = [\beta]}{\sum} c_{\beta_1, \beta_2} \tilde{q}_{\beta_1}(z^{-1}y) \tilde{q}_{\beta_2}(z). \label{eqn1811-1511}
\end{align}
\end{theorem}

\begin{remark}
We would like to emphasize that our asymptotic formula depends on two delimiting parameters, $M$ and $N$, in contrast to the asymptotic expansions for the Kohn-Nirenberg quantization in \cite{FR} and the Weyl quantization on $\R^n$. The absence of a second parameter in the case of the Kohn-Nirnberg quantization is a direct consequence of \eqref{eq:ass_ker_comp_KN}. In the case of the Euclidean Weyl quantization, the asymptotic series of the composite symbol can be written as a power series of Poisson brackets, due to the symplectic nature of phase space, which allows the two parameters $M$ and $N$ to be merged into one.

This difference motivates the arguments in the proof of Part II of Theorem \ref{thm:asym_exp_comp_tau} and in the proof of Step 2 of Lemma \ref{lemma1}, Lemma \ref{lemma2} and Lemma \ref{lemma3}, where we prove that the remainder terms belong to symbol classes whose order depends on both $M$ and $N$, since $L=L(M,N):=\min\{M,N\}$, and not just on one parameter $L$ independent of $M$ and $N$. Hence, to make the proof work one has to carefully take into account both the parameters.
\end{remark}

\medskip

\begin{proof}[Proof of Theorem~\ref{thm:asym_exp_comp_tau}, Part I]
To prove the theorem, we will make crucial use of the relation~\eqref{symbol-kernel} between the symbol $\sigma$ of a given $\tau$-quantized operator $\Optau(\sigma)$ and its associated kernel $\kappa_\sigma$ in combination with the identity~\eqref{eq:ass_ker_comp} for the composite associated kernel. Since for $\k, \kk \in \SC(G \times G)$ the integrals in \eqref{symbol-kernel} and \eqref{eq:ass_ker_comp} converge absolutely and we may differentiate under the integral sign, we will first prove the asymptotic expansion and accompanied estimates for the dense subset of symbols $\sigma_1, \sigma_2 \in S^{-\infty}(G)$ with associated kernels $\k, \kk \in \SC(G \times G)$.\footnote{This set is dense in each $S^m_{\rho,\delta}(G)$ with $m \in \R$, $0 \leq \delta < \rho \leq 1$, due to Lemma~\cite[Lem.~5.4.11]{FR}.} In particular, we will show that for such $\sigma_1, \sigma_2$ the $S^{m_1+m_2-(\rho-\delta)(\min\{M,N\}+1)}_{\rho,\delta}$ seminorms of the remainder term $R^\sigma_L$ are bounded by products of suitable  $S^{m_1}_{\rho,\delta}$ and $S^{m_2}_{\rho,\delta}$ seminorms of $\sigma_1$ and $\sigma_2$, respectively, thus showing that $R^\sigma_L \in S^{m_1+m_2-(\rho-\delta)(\min\{M,N\}+1)}_{\rho,\delta}$. This will turn out to be crucial for the final part of the proof, an approximation argument that ensures that the continuity of the composition $\circ_\tau$ and the asymptotic expansion~\eqref{eq:asym_exp_comp_tau} hold on the whole space $S^{m_1}_{\rho, \delta}(G) \times S^{m_2}_{\rho, \delta}(G)$.

\medskip

Thus, let $\sigma_1, \sigma_2 \in S^{-\infty}(G)$ with associated kernels $\k, \kk \in \SC(G \times G)$. In order to employ the homogeneous Taylor expansions of $\k$ and $\kk$ to derive \eqref{eq:asym_exp_comp_tau},
we observe that by (HP) the functions $p_1, p_2: G \times G \to G$ defined by \eqref{eq:p1_p2}
are smooth and such that, for every $j=1,\ldots, n$, their $j$-th coordinate is a homogeneous polynomial of degree $v_j$ in two variables.
So, for arbitrary but fixed orders $M, N \in \N$, we can employ \eqref{eq:Taylor_expansion} to obtain the homogeneous Taylor expansions
\begin{align}
    \k \bigl ( x p_1(y, z), {z}^{-1}y \bigr ) &= \sum_{[\alpha] \leq M} q_\alpha \bigl ( p_1(y, z) \bigr ) X^\alpha_{x_1 = x} \k(x_1, z^{-1}y) + R^{\k(\, \cdot \,, z^{-1}y)}_{x, M}\bigl ( p_1(y, z) \bigr ), \nonumber \\
    \kk \bigl ( x p_2(y, z), z \bigr ) &= \sum_{[\beta] \leq N} q_\beta \bigl( p_2(y, z) \bigr ) X^\beta_{x_2 = x} \kk(x_2, z) + R^{\kk(\, \cdot \,, z)}_{x, N}\bigl ( p_2(y, z) \bigr ), \nonumber
\end{align}
where $ R^{\k}_{x, M}$ and $R^{\kk}_{x, N}$ satisfy \eqref{eq:est_Taylor_rem}. Since we know that $q_\alpha \circ p_1, q_\beta \circ p_2: G \times G \to \R$ are homogeneous polynomials of degree $[\alpha]$ and $[\beta]$, respectively, we may rewrite\footnote{Note that we choose the homogeneous polynomials $\tilde{q}_\gamma(z) = q_\gamma(z^{-1}) = (-1)^{[\gamma]} q_\gamma(z)$, $\gamma = \alpha_1, \alpha_2, \beta_1, \beta_2$, over the polynomials $q_\gamma$ in the expansions because they ``quantize'' difference operators. Alternatively, one can work with the $q_\gamma$, replace them afterwards by the $\tilde{q}_\gamma$ and absorb the additional factors $(-1)^{[\gamma]}$, a posteriori, via the constants $c_{\alpha_1, \alpha_2}, c_{\beta_1, \beta_2}$.} $q_\alpha \circ p_1$ as in \eqref{eqn1811-1510} and $q_\beta \circ p_2$ as in \eqref{eqn1811-1511}.
With this at hand, we can now expand the composite kernel~\eqref{eq:ass_ker_comp} as
\begin{align*}
    \kappa_\sigma(x, y) =& \underset{\substack{[\alpha] \leq M, \\ [\beta] \leq N}}{\sum} \hspace{5pt} \underset{\substack{[\alpha_1] + [\alpha_2] = [\alpha], \\ [\beta_1] + [\beta_2] = [\beta]}}{\sum} \hspace{5pt} c_{\alpha_1, \alpha_2} c_{\beta_1, \beta_2} \int_G \tilde{q}_{\alpha_2}(z^{-1}y) \tilde{q}_{\beta_1}(z^{-1}y) X^\alpha_{x_1 = x} \k(x_1, z^{-1}y) \\
    &\hspace{195pt} \times \tilde{q}_{\beta_2}(z) \tilde{q}_{\alpha_1}(z) X^\beta_{x_2 = x} \kk(x_2, z) \, dz \\
    &\hspace{-50pt} + \underset{[\beta] \leq N}{\sum} \hspace{5pt} \underset{[\beta_1] + [\beta_2] = [\beta]}{\sum} \hspace{5pt} c_{\beta_1, \beta_2} \int_G \tilde{q}_{\beta_1}(z^{-1}y) \tilde{q}_{\beta_2}(z) X^\beta_{x_2 = x} \kk(x_2, z) R^{\k(\, \cdot \,, z^{-1}y)}_{x, M} \bigl ( p_1(y, z) \bigr ) \, dz \\
    &\hspace{-50pt} + \underset{[\alpha] \leq N}{\sum} \hspace{5pt} \underset{[\alpha_1] + [\alpha_2] = [\alpha]}{\sum} c_{\alpha_1, \alpha_2} \int_G \tilde{q}_{\alpha_1}(z) \tilde{q}_{\alpha_2}(z^{-1}y) X^\alpha_{x_1 = x} \k(x_1, z^{-1}y) R^{\kk(\, \cdot\,, z)}_{x, N} \bigl( p_2(y, z) \bigr ) \, dz \\
    &\hspace{-50pt} + \int_G R^{\k(\, \cdot \,, z^{-1}y)}_{x, M} \bigl ( p_1(y, z) \bigr ) R^{\kk(\, \cdot\,, z)}_{x, N} \bigl ( p_2(y, z) \bigr ) \, dz.
\end{align*}
These very distinct types of singular integrals suggest to split the associated kernel into four terms and to treat each of them as an associated kernel of a symbol of specific type. To see that the three latter contributions together form the remainder $R^{\sigma}_L(x,\pi)$ defined by \eqref{eq:asymptotic_kernel_remainder_composition}, we observe that
\begin{align*}
    \int_G \tilde{q}_{\alpha_2}(z^{-1}y) \tilde{q}_{\beta_1}(z^{-1}y) X^\alpha_x \k(x, z^{-1}y) \pi(z^{-1}y)^* \, dy &= \Bigl ( \Delta^{\alpha_1} \Delta^{\beta_1} X^\alpha \sigma_1 \Bigr )(x, \pi),
\end{align*}
and that
\begin{align*}
    \int_G \tilde{q}_{\beta_2}(z) \tilde{q}_{\alpha_1}(z) \tilde{q}_{\beta_2}(z) X^\beta_x \kk(x, z) \pi(z)^* \, dz = \Bigl ( \Delta^{\alpha_2} \Delta^{\beta_2} X^\beta \sigma_2 \Bigr )(x, \pi),
\end{align*}
since this implies that the Fourier transform of the associated kernel
\begin{align}
\begin{split}
    \kappa_{T_0} (x,y):=& \underset{\substack{[\alpha] \leq M, \\ [\beta] \leq N}}{\sum} \hspace{5pt} \underset{\substack{[\alpha_1] + [\alpha_2] = [\alpha], \\ [\beta_1] + [\beta_2] = [\beta]}}{\sum} \hspace{5pt} c_{\alpha_1, \alpha_2} c_{\beta_1, \beta_2} \int_G \tilde{q}_{\alpha_2}(z^{-1}y) \tilde{q}_{\beta_1}(z^{-1}y) X^\alpha_{x_1 = x} \k(x_1, z^{-1}y) \\
    &\hspace{160pt} \times \tilde{q}_{\beta_2}(z) \tilde{q}_{\alpha_1}(z) \tilde{q}_{\beta_2}(z) X^\beta_{x_2 = x} \kk(x_2, z) \, dz
\end{split}
\end{align}
equals
\begin{align*}
    T_0(x, \pi) = \int_G \kappa_{T_0}(x, y) \pi(y)^* \, dy = \iint\limits_{G \times G} \kappa_{T_0}(x, y) \pi(z^{-1}y)^* \, dy \, \pi(z)^* \, dz 
    = (\sigma- R_L^\sigma)(x,\pi),
\end{align*}
as desired. Since this is a finite sum of (products of) symbols of decreasing order (cf.~Remark~\ref{rem:pw_product} for the product of symbols), nothing more has to be said about the symbol $T_0$ until the final part of the proof, where we justify the extension from $\sigma_1, \sigma_2 \in S^{-\infty}(G)$ with $\k, \kk \in \SC(G \times G)$ to
$\sigma_1 \in S^{m_1}_{\rho,\delta}(G)$, $\sigma_2\in S^{m_1}_{\rho,\delta}(G)$.
The seminorm estimates for the remaining symbols
\begin{align}
    T_1(x, \pi) :=& \underset{\substack{[\beta] \leq N, \\ [\beta_1] + [\beta_2] = [\beta]}}{\sum} c_{\beta_1, \beta_2} \iint\limits_{G \times G} \tilde{q}_{\beta_1}(z^{-1}y) \tilde{q}_{\beta_2}(z) X^\beta_{x_2 = x} \kk(x_2, z)  \label{eq:def_T1} \\
    &\hspace{150pt} \times R^{\k(\, \cdot \,, z^{-1}y)}_{x, M}\bigl ( p_1(y, z) \bigr ) \, dz \, \pi(y)^* dy, \\
    %=& \underset{\substack{[\beta] \leq N, \\ [\beta_1] + [\beta_2] = [\beta]}}{\sum} c_{\beta_1, \beta_2} \iint\limits_{G \times G} \tilde{q}_{\beta_1}(z^{-1}y) X^\beta_{x_2 = x} \kappa_{\Delta_{\tilde{q}_{\beta_2}} \sigma_2}(x_2, z)  \label{eq:def_T1} \\
    %&\hspace{150pt} \times R^{\k(\, \cdot \,, z^{-1}y)}_{x, M}\bigl ( p_1(y, z) \bigr ) \, dz \, \pi(y)^* dy,\\
    T_2(x, \pi) :=& \underset{\substack{[\alpha] \leq M, \\ [\alpha_1] + [\alpha_2] = [\alpha]}}{\sum}  c_{\alpha_1, \alpha_2} \iint\limits_{G \times G} \tilde{q}_{\alpha_1}(z) \tilde{q}_{\alpha_2}(z^{-1}y) X^\alpha_{x_1 = x} \k(x_1, z^{-1}y) \label{eq:def_T2} \\
    &\hspace{150pt} \times R^{\kk(\, \cdot \,, z)}_{x, N}\bigl ( p_2(y, z) \bigr ) \, dz \, \pi(y)^* dy, \\
    %:=& \underset{\substack{[\alpha] \leq M, \\ [\alpha_1] + [\alpha_2] = [\alpha]}}{\sum}  c_{\alpha_1, \alpha_2} \iint\limits_{G \times G} \tilde{q}_{\alpha_1}(z) X^\alpha_{x_1 = x} \kappa_{\Delta_{\tilde{q}_{\alpha_2}} \sigma_1}(x_1, z^{-1}y) \label{eq:def_T2} \\
    %&\hspace{150pt} \times R^{\kk(\, \cdot \,, z)}_{x, N}\bigl ( p_2(y, z) \bigr ) \, dz \, \pi(y)^* dy,
    T_3(x, \pi) :=& \iint\limits_{G \times G} R^{\k(\, \cdot \,, z^{-1}y)}_{x, M}\bigl ( p_1(y, z) \bigr ) R^{\kk(\, \cdot \,, z)}_{x, N}\bigl ( p_2(y, z) \bigr ) \, dz \pi(y)^* \, dy, \label{eq:def_T3}
\end{align}
which will justify the validity of \eqref{eq:asym_exp_comp_tau}, will constitute Part II of the proof.
\end{proof}

\medskip

\begin{proof}[Proof of Theorem~\ref{thm:asym_exp_comp_tau}, Part II]
In this part of the proof we will show that each of the \\ $S^{m_1+m_2-(\rho-\delta)(L+1)}(G)$ seminorms of the symbols $T_j$, $j=1,2,3$, defined by \eqref{eq:def_T1} -- \eqref{eq:def_T3}, are finite and bounded by the product $\|\sigma_1\|_{S^{m_1}_{\rho,\delta,a_1,b_1}}$ $\|\sigma_2\|_{S^{m_2}_{\rho,\delta,a_2,b_2}}$ for suitable $a_1,a_2,b_1,b_2\in \mathbb{R}$. As a byproduct, this will give  $R^\sigma_L\in S^{m_1+m_2-(\rho-\delta)(L+1)}(G)$ when $\sigma_1,\sigma_2$ are smooth symbols with Schwartz kernels.
Moreover,  the estimates we obtain here will be fundamental in the final part of the proof, where we deal with symbols $\sigma_j\in S^{m_j}_{\rho,\delta}(G)$, $j=1,2$.

We provide the necessary estimates of the remainder terms $T_j$, $j=1,2,3$, in the asymptotic expansion~\eqref{eq:asym_exp_comp_tau} in a series of lemmas. After an auxiliary result, the actual estimates are given in Lemmas~\ref{lemma1}, \ref{lemma2} and \ref{lemma3}, which are deeply based on kernel estimates. In the proofs of these lemmas we will also make frequent use of Lemma \ref{FRcorollary}.

\medskip

Let us begin with an auxiliary lemma.

\begin{lemma} \label{prop-p1}
For any quantizing function $\tau: G \to G$ that satisfies (HP), the polynomial functions $p_1(y, z) = \tau(y) \tau({z}^{-1}y)^{-1}$ and $p_2(y, z) = \tau(y) y^{-1}z \tau(z)^{-1}$ satisfy the following pointwise estimates for all $y, z \in G$:
\begin{align}
    |p_1(y,z)| &\lesssim |z| + \sum_{j=1}^n\sum_{\substack{[\alpha]+[\beta]=v_j\geq2\\ \alpha,\beta\in\mathbb{N}^n
    }}|z|^{\frac{[\alpha]}{v_j}}|z^{-1} y|^{\frac{[\beta]}{v_j}}, \label{eq:est_p1} \\
    |p_2(y,z)| &\lesssim |z^{-1}y| + \sum_{j=1}^n\sum_{\substack{[\alpha]+[\beta]=v_j\geq2\\ \alpha,\beta\in\mathbb{N}^n
    }}|z|^{\frac{[\alpha]}{v_j}}|z^{-1} y|^{\frac{[\beta]}{v_j}}. \label{eq:est_p2}
\end{align}
\end{lemma}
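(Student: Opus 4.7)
The plan is as follows. For $p_1$, I will apply Lemma~\ref{lem:product_tau} with $w:=z^{-1}y$ so that $zw=y$, rewriting
\begin{align*}
p_1(y,z) = \tau(zw)\tau(w)^{-1} = T(z,w)^{-1}\tau(z)\tau(w)\tau(w)^{-1} = T(z,w)^{-1}\tau(z).
\end{align*}
The quasi-triangle inequality together with $|\tau(z)|\lesssim|z|$, an immediate consequence of (HP), then reduces the task to estimating $|T(z,w)|$. By Lemma~\ref{lem:product_tau}, each coordinate $T_j(z,w)$ is a sum of monomials $z^\alpha w^\beta$ with $\alpha,\beta\neq 0$, and by the homogeneity of $\tau$ each such monomial necessarily satisfies $[\alpha]+[\beta]=v_j$, which forces $v_j\geq 2$. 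Using the equivalent quasi-norm $|x|_\infty:=\max_j|x_j|^{1/v_j}$, the bound $|x_j|\leq|x|_\infty^{v_j}$, and the subadditivity $(a+b)^{1/v_j}\leq a^{1/v_j}+b^{1/v_j}$ (valid since $v_j\geq 1$), I will deduce the cross-term bound for $|T(z,w)|$ that appears in \eqref{eq:est_p1}.

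For $p_2$, the same substitution yields
\begin{align*}
p_2(y,z) = \tau(zw)\,w^{-1}\tau(z)^{-1} = T(z,w)^{-1}\tau(z)\tau(w)w^{-1}\tau(z)^{-1},
\end{align*}
but the conjugation by $\tau(z)$ obstructs a direct estimate of this kind. Instead, I will argue from the polynomial structure of $p_2$ in exponential coordinates. Because (HP) renders the coordinates of $\tau$ homogeneous polynomials, and because group multiplication and inversion on the nilpotent group $G$ are polynomial in exponential coordinates and respect the dilations (cf.~Subsection~\ref{subs:hom_gr}), each coordinate $p_{2,j}(z,w)$ is a polynomial in $(z,w)$ that is homogeneous of joint degree $v_j$. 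Setting $y=z$ (equivalently $w=e_G$) yields $p_2(z,z)=\tau(z)z^{-1}z\tau(z)^{-1}=e_G$, so every monomial in $p_{2,j}(z,w)$ must contain at least one $w$-factor, and is therefore of the form $z^\alpha w^\beta$ with $\beta\neq 0$ and $[\alpha]+[\beta]=v_j$. The same quasi-norm argument, with the roles of $z$ and $w$ interchanged, will then produce \eqref{eq:est_p2}.

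In both estimates, the $|z|$ (respectively $|z^{-1}y|$) term accounts for the monomials with $\beta=0$ (respectively $\alpha=0$), which contribute a plain $|z|$ (respectively $|w|$) upon extracting the $v_j$-th root, while the cross sum captures precisely the monomials with both $\alpha\neq 0$ and $\beta\neq 0$, a condition that automatically forces $v_j\geq 2$. The main obstacle I anticipate is verifying the polynomial-and-homogeneous structure of $p_{2,j}$ cleanly; once this structural fact is in hand, the rest is straightforward bookkeeping of multi-indices combined with the quasi-norm equivalence and the subadditivity estimate above.
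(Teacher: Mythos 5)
Your proof is correct and takes a genuinely shorter, cleaner route than the paper's. For $p_1$, instead of expanding the group-law polynomial $R_j\bigl(\tau(y),\tau(z^{-1}y)^{-1}\bigr)$ and peeling off the cancelling diagonal term $R_j\bigl(\tau(y),\tau(y)^{-1}\bigr)=0$, which is what the paper does, you substitute $w:=z^{-1}y$ and apply Lemma~\ref{lem:product_tau} to obtain the closed-form identity $p_1(y,z)=T(z,w)^{-1}\tau(z)$, from which the estimate follows via the quasi-triangle inequality, the homogeneity of the $T_j$, and the monomial structure. (The homogeneity $T_j(D_r z,D_r w)=r^{v_j}T_j(z,w)$ is not stated explicitly in Lemma~\ref{lem:product_tau}, but follows immediately from (HP) and the fact that the dilations are group automorphisms; you invoke it implicitly.) This formulation also produces the bound directly in the variables $(z,z^{-1}y)$ exactly as in \eqref{eq:est_p1}, whereas the paper's displayed computation ends with $|y|_\infty$ rather than $|z^{-1}y|_\infty$ on the right-hand side. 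For $p_2$, where the paper introduces the auxiliary function $\tilde{\tau}(x):=\tau(x)x^{-1}$, verifies that it satisfies (HP), and re-applies Lemma~\ref{lem:product_tau} to it, you bypass this machinery with a structural observation: $p_2(zw,z)$ has coordinates that are homogeneous polynomials of degree $v_j$ in $(z,w)$, and since $p_2(z,z)=e_G$ each monomial must carry a nontrivial $w$-factor, which is precisely what is needed. Both routes are valid; yours trades the $R_j$-expansion and the auxiliary $\tilde{\tau}$ for two elementary observations and is, on balance, the tighter argument.
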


\begin{proof}
For the sake of convenience, we will use the quasi-norm $|\cdot|_\infty$ defined by \eqref{eq:inf_qn} to prove the lemma. Although we normally suppress the specific choice of equivalent quasi-norm notationally, we will highlight it in this proof in order to distinguish it from the absolute values of the components $z_j$ and $(z^{-1}y)_k$, $j, k = 1, \ldots, \dimG$.

Let us write the functions $p_1$ and $ p_2$ as the $\dimG$-tuples
\begin{align}
    p_1(y, z) &= \bigl ( p_1^1(y, z), \ldots, p_1^\dimG(y, z) \bigr ), \nonumber \\
    p_2(y, z) &= \bigl ( p_2^1(y, z), \ldots, p_2^\dimG(y, z) \bigr ). \nonumber
\end{align}
Since the functions $p_1$ and $ p_2$ are defined pointwise as the products of functions, whose components are homogeneous polynomials, each coefficient function $p_k^j: G \times G \to \R$, $k = 1, 2$, $j = 1, \ldots, n$ is again a homogeneous polynomial of degree $v_j$.

Let us first prove our claim for $p_1$. We recall that we are working in exponential coordinates. By our choice of basis (cf.~Subsection~\ref{subs:hom_gr}), the group law is given by
\begin{align}
	x y &= \exp(x_1 X_1 + \ldots + x_{\dimG} X_{\dimG}) \exp(y_1 X_1 + \ldots + y_{\dimG} X_{\dimG}) \nonumber \\
	&= \exp(R_1(x, y) X_1 + \ldots + R_{\dimG}(x, y) X_{\dimG}),
\end{align}
for some homogeneous polynomials $R_j(x, y)$, $j = 1, \ldots, \dimG$ of degree $v_j$. By the Baker-Campbell-Hausdorff formula, these polynomials satisfy $R_j(x, x^{-1}) = 0$ for all $j = 1, \ldots, \dimG$, $x \in G$.
Thanks to these properties of the polynomials $p_k^j$ and $R_j$, we can rewrite each $p_1^j$ as
\begin{align}
    \hspace{-25pt} p_1^j(y,z) &= R_j \bigl (\tau(y), \tau(z^{-1}y)^{-1} \bigr ) \nonumber \\
    &= \underset{[\alpha] + [\beta] = v_j}{\sum} c_{j, \alpha, \beta} \, \bigl ( \tau(y) \bigr )^\alpha \bigl ( \tau(z^{-1} y)^{-1} \bigr )^\beta \nonumber \\
    &= \underbrace{R_j \bigl( \tau(y), \tau(y)^{-1} \bigr )}_{=0} + \!\!\underset{\substack{[\alpha] + [\beta] = v_j, \\
    [\beta_1] + [\beta_2] + [\beta_3] = \beta, \\ [\beta_2] + [\beta_3] \neq 0}}{\overline{\sum}} \!\!\!\!\!\!\!\!\!\!\!\bigl ( \tau(y) \bigr )^\alpha \bigl ( \tau(y)^{-1} \bigr )^{\beta_1} ( \tau(z^{-1})^{-1} \bigr )^{\beta_2} \bigl (x^{-1} \bigr )^{\beta_3} \label{eq:purely_mixed_summand}
\end{align}
where the letter $x$ simply abbreviates the element $(T_1(z^{-1}, y), \ldots, T_\dimG(z^{-1}, y)) \in G$ from Lemma~\ref{lem:product_tau} which satisfies
\begin{align*}
    \tau(z^{-1}y) = \bigl ( T_1(z^{-1}, y), \ldots, T_\dimG(z^{-1}, y) \bigr ) \tau(z^{-1}) \tau(y).
\end{align*}
We recall that the coordinates $T_j(z^{-1}, y)$ are either trivial or sums of monomials $z^\alpha y^\beta$, $\alpha, \beta \neq 0$, which only depend on $z_1, \ldots z_j$ and $y_1, \ldots y_j$. Hence, the non-trivial sum in \eqref{eq:purely_mixed_summand} can be written as a sum of monomials in $z$ or of the type $z^\alpha y^\beta$, $\alpha, \beta \neq 0$. Moreover, the condition (HP) and the nilpotent group law  imply that $p_j^1$ only depends on the coefficients $y_1, \ldots, y_{j-1}$ and $z_1, \ldots, z_{j-1}$ as well as a linear term in $z_j$. Excluding the trivial case of (HP), the coefficient $p_1^j$ can therefore be expressed as
\begin{align*}
    p_1^j(y,z) = C^\tau_j z_j + \underset{\substack{[\alpha(j)] + [\beta(j)] = v_j\geq 2, \\ \alpha(j), \beta(j) \neq 0, \\ \alpha(j)_j, \ldots, \alpha(j)_\dimG = 0, \\ \beta(j)_j, \ldots, \beta(j)_\dimG = 0}}{\overline{\sum}} z^{\alpha(j)} y^{\beta(j)}
\end{align*}
for some $C^\tau_j \neq 0$, with $p_1^j(y,z)=C_j^\tau z_j$ when $v_j=1$.
In order to estimate $|p_1(y, z)|_\infty = \max_{j = 1, \ldots, \dimG} |p^j_1(y, z)|^{1/v_j}$, we use the basic estimate
\begin{align*}
    |z^{\alpha(j)}|^{\frac{1}{v_j}}&=|z_1^{\alpha(j)_1}\ldots z_n^{\alpha(j)_n}|^{\frac{1}{v_j}}=|z_1|^{\frac{1}{v_1} \frac{\alpha(j)_1 v_1}{v_j}}\ldots |z_n|^{\frac{1}{v_n} \frac{\alpha(j)_n v_n}{v_j}}\lesssim |z|_{\infty}^{\frac{[\alpha(j)]}{v_j}},
\end{align*}
which, for any $j = 1, \ldots, \dimG$, yields
\begin{align*}
   |p_1^j(y,z)|^{\frac{1}{v_j}}= \bigl | C^\tau_j z_j + \!\!\!\!\!\!\!\!\!\ \underset{\substack{[\alpha(j)] + [\beta(j)] = v_j\geq 2, \\ \alpha(j),\beta(j) \neq 0, \\ \alpha(j)_j, \ldots, \alpha(j)_\dimG = 0, \\ \beta(j)_j, \ldots, \beta(j)_\dimG = 0}}{\overline{\sum}} z^{\alpha(j)} y^{\beta(j)} \bigr |^{\frac{1}{v_j}} \leq& |C^\tau_j z_j|^{\frac{1}{v_j}} + \!\!\!\!\!\!\!\!\!\! \underset{\substack{[\alpha(j)] + [\beta(j)] = v_j\geq 2, \\ \alpha(j),\beta(j) \neq 0, \\ \alpha(j)_j, \ldots, \alpha(j)_\dimG = 0, \\ \beta(j)_j, \ldots, \beta(j)_\dimG = 0}}{\overline{\sum}} \!\!\!\!\!\!\!\!\!\!\!\!\! |z^{\alpha(j)}|^{\frac{1}{v_j}} |y^{\beta(j)}|^{\frac{1}{v_j}}\\
    \lesssim&  |z|_\infty + \underset{\substack{[\alpha] + [\beta] = v_j\geq 2, \\ \alpha,\beta \neq 0, }}{\overline{\sum}} |z|_\infty^{\frac{[\alpha]}{v_j}} |y|_\infty^{\frac{[\beta]}{v_j}}.
\end{align*}
Observe that all the sums above are taken over a finite number of multi-indices, or, equivalently, they are sums where only a finite number of coefficients are nonzero.
Finally, taking the maximum over $j$ in the previous inequality, and using that
$$\max_{j=1,\ldots,\dimG}\,\underset{\substack{[\alpha] + [\beta] = v_j\geq 2, \\ \alpha,\beta \neq 0, }}{\overline{\sum}} |z|_\infty^{\frac{[\alpha]}{v_j}} |y|_\infty^{\frac{[\beta]}{v_j}}\leq \sum_{j=1}^\dimG
\,\underset{\substack{[\alpha] + [\beta] = v_j\geq 2, \\ \alpha,\beta \neq 0, }}{\overline{\sum} }|z|_\infty^{\frac{[\alpha]}{v_j}} |y|_\infty^{\frac{[\beta]}{v_j}},$$
we conclude the estimate \eqref{eq:est_p1} for $|p_1(y,z)|_\infty$.

As for the estimate \eqref{eq:est_p2} for $|p_2(y,z)|_\infty$, it can be concluded from the proof of \eqref{eq:est_p1} by considering the function $\tilde{\tau}: G \to G$, $\tilde{\tau}(x) :=\tau(x) x^{-1}$. Unless $\tau(x) = x$ or $\tau(x) = e_G$ for all $x \in G$, in which case nothing remains to be proved, the function $\tilde{\tau}$ satisfies (HP) non-trivially, a property it clearly inherits from $\tau$. Hence, for some $j \in \{ 1, \ldots, \dimG \}$, $\tilde{\tau}$ has non-trivial coordinate functions, that is,
\begin{align*}
    \tilde{\tau}_j(x) = \tilde{C}^\tau_j x_j + \tilde{d}^\tau_j(x_1, \ldots, x_{j-1}),
\end{align*}
with $\tilde{C}^\tau_j \neq 0$. Now, by Lemma~\ref{lem:product_tau}  applied to $\tilde{\tau}$, there exist $\dimG$ polynomials $\tilde{T}_1, \ldots, \tilde{T}_\dimG: G \times G \to \R$ of the form
\begin{align*}
    \tilde{T}_j(y, z) = \underset{\substack{[\alpha(j)] + [\beta(j)] = v_j, \\ \alpha(j), \beta(j) \neq 0, \\ \alpha(j)_j, \ldots, \alpha(j)_\dimG = 0, \\ \beta(j)_j, \ldots, \beta(j)_\dimG = 0}}{\overline{\sum}} z^{\alpha(j)} y^{\beta(j)},\quad \forall j=1,\ldots,\dimG,
\end{align*}
such that, for all $y, z \in G$,
\begin{align*}
    \tilde{\tau}(yz) = \bigl ( \tilde{T}_1(y,z), \ldots, \tilde{T}_\dimG( y,z) \bigr )^{-1} \tilde{\tau}(y) \tilde{\tau}(z).
\end{align*}
The latter identity then gives
\begin{align*}
   p_2(y,z)= \tilde{\tau}(y)\tilde{\tau}(z)^{-1}&=\tilde{\tau}(yz^{-1} z) \tilde{\tau}(z)^{-1}\\
   &=\bigl (\tilde{T}_1(y z^{-1} ,z), \ldots, \tilde{T}_\dimG(y z^{-1}, z) \bigr )^{-1} \tilde{\tau}(y z^{-1}) 
   \underbrace{\tilde{\tau}(z)\, \tilde{\tau}(z)^{-1}}_{=e_G}
\end{align*}
which, as in the proof of \eqref{eq:est_p1}, implies 
\begin{align*}
    p_2^j(y,z) = C^{\tilde{\tau}}_j (y^{-1}z)_j + \underset{\substack{[\alpha(j)] + [\beta(j)] = v_j\geq 2, \\  \alpha(j),\beta(j)\neq 0, \\ \alpha(j)_j, \ldots, \alpha(j)_\dimG = 0, \\ \beta(j)_j, \ldots, \beta(j)_\dimG = 0}}{\overline{\sum}} (y^{-1}z)^{\alpha(j)} y^{\beta(j)}
\end{align*}
 for some $C^{\tilde{\tau}}_j \neq 0$, for all $j=1,\ldots, n$.
By repeating the last steps in the proof of \eqref{eq:est_p1}, we obtain the desired \eqref{eq:est_p2}.

We conclude the proof with the observation that since all homogeneous quasi-norms on $G$ are equivalent, the estimates hold true for any homogeneous quasi-norm on $G$, up to an implicit positive multiplicative constant in \eqref{eq:est_p1} and \eqref{eq:est_p2}.
\end{proof}

\begin{remark}
Note that \eqref{eq:est_p1} implies the useful estimate
\begin{align}
    |p_1(y,z)|\lesssim (|z|+|z|^{\frac{1}{v_n}}) (1+ |y^{-1}z|+|y^{-1}z|^{\frac{1}{v_n}}), \label{eq.estp1.old}
\end{align}
which we will use in the proof of Lemma \ref{lemma1}.
To see this, it suffices to observe that
\begin{align}
    |z| \lesssim (|z| + |z|^{\frac{1}{v_j}}) \quad \forall z\in G,
\end{align}
and that, for $\alpha,\beta\neq 0$ satisfying $[\alpha]+[\beta]=v_j\geq 2$, we have
   \begin{align}
       |z|^{\frac{[\alpha]}{v_j}} |y|^{\frac{[\beta]}{v_j}}\lesssim (|z| + |z|^{\frac{1}{v_j}})(|y^{-1}z| +|y^{-1}z|^{\frac{1}{v_j}}) \quad \forall y,z\in G.
   \end{align}
Hence, using the latter inequalities in \eqref{eq:est_p1}, we get \eqref{eq.estp1.old}. 
   
An analogous estimate, based on \eqref{eq:est_p2}, holds for $p_2$ with the roles of $z$ and $y^{-1}z$ exchanged in \eqref{eq.estp1.old}.
\end{remark}

\begin{remark}
Note that when $G = \R^n$ and $v_n=1$, then $1 \leq v_1 \leq \ldots \leq v_n = 1$. The condition (HP) then implies that $\tau$ is linear, hence $|p_1(y,z)|\lesssim |z|$ and $|p_2(y,z)|\lesssim |y-z|$.
\end{remark}

\medskip

We can now start proving the estimates for the remainder terms $T_1, T_2, T_3$ defined by \eqref{eq:def_T1}, \eqref{eq:def_T2} and \eqref{eq:def_T3}, respectively.

\noindent \underline{The symbol $T_1$:} We begin with the symbol $T_1$ defined by \eqref{eq:def_T1}. For technical reasons that will become obvious soon, we employ the measure-preserving change of variables $(y, z) =: (y, ys^{-1})$ to rewrite $T_1(x,\pi)$ as
\begin{align} \label{eq:chv_T_1}
\begin{split}
    T_1(x, \pi) =& \underset{\substack{[\beta] \leq N, \\ [\beta_1] + [\beta_2] = [\beta]}}{\sum} c_{\beta_1, \beta_2} \iint\limits_{G \times G} \tilde{q}_{\beta_1}(s) \tilde{q}_{\beta_2}(ys^{-1}) X^\beta_{x_2 = x} \kk(x_2, ys^{-1}) \\
    &\hspace{130pt} \times R^{\k(\, . \,, s)}_{x, M}\bigl ( p_1(y, ys^{-1}) \bigr ) \, ds \, \pi(y)^* dy.
\end{split}
\end{align}
For $T_1$ written in this form, the desired estimate is now provided by Lemma \ref{lemma1} below. Notice that the absence of the integrands $\tilde{q}_{\beta_2}$ is justified by the fact that while homogeneous polynomials in the variables of integration may improve the oscillatory behavior close to the origin, they never worsen it. To apply the same argument to the integrands $\tilde{q}_{\beta_1}$, we simply rewrite $\tilde{q}_{\beta_1}(s) = \overline{\sum}_{[\beta'_1] + [\beta''_1] = [\beta'_1]} \tilde{q}_{\beta'_1}(y)\tilde{q}_{\beta''_1}(y^{-1}s)$, due to \eqref{HomPol1}.

\begin{lemma}\label{lemma1}
Let $\sigma_1, \sigma_2$ be smooth symbols with $\k, \kk \in \SC(G \times G)$, and let $m_1, m_2\in\mathbb{R}$. For every arbitrary but fixed $M_0\in\mathbb{N}$, let $\tau_{M_0}$ be the symbol defined by
\begin{align*}
    \tau_{M_0}(x,\pi):=\iint_{G \times G}R_{x,M_0}^{\k(\cdot,s)}(p_1(y,ys^{-1})) \kappa_{\sigma_2}(x,ys^{-1})\pi(y)^*dy \,ds.
\end{align*}

Then for all $\alpha_0,\beta_0\in \mathbb{N}^{2n+1}_0$ and for all $\rho,\delta\in [0,1]$ such that $0\leq\delta<\min\{\rho,\frac{1}{v_n}\}$ there exist a constant $C>0$ and two seminorms $\|\cdot\|_{S^{m_1}_{\rho,\delta},a_1,b_1}$, $\|\cdot\|_{S^{m_2}_{\rho,\delta},a_2,b_2}$ such that
\begin{align*}
    \sup_{(x, \pi) \in G \times \Ghat}\| \bigl(X_x^{\beta_0} \Delta^{\alpha_0}\tau_{M_0}\bigr)(x,\pi) \pi(I+\mathcal{R})&^{-\frac{m_1+m_2-(\rho-\delta)(M_0+1) -\rho[\alpha_0]+\delta[\beta_0]}{\nu}}\|_{\mathscr{L}(\mathcal{H}_\pi)} \\
    &\leq C \|\sigma_1\|_{S^{m_1}_{\rho,\delta},a_1,b_1}\|\sigma_2\|_{S^{m_2}_{\rho,\delta},a_2,b_2}.
\end{align*}
\end{lemma}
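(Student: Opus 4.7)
The plan is to reduce everything to Bessel-bounded quantities via integration by parts, expand the Taylor remainder pointwise, and then split the resulting $(y,s)$-integral using the substitution $u=ys^{-1}$ so as to invoke Lemma \ref{FRcorollary} separately for $\kappa_{\sigma_1}$ and $\kappa_{\sigma_2}$.

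First, set $s_0:=\frac{m_1+m_2-(\rho-\delta)(M_0+1) -\rho[\alpha_0]+\delta[\beta_0]}{\nu}$ and choose $N_1\in\mathbb{N}$ with $\nu N_1\geq -s_0\nu$, so that $\pi(I+\mathcal{R})^{-s_0-N_1}$ is uniformly bounded in $\pi\in\widehat G$ by the spectral calculus. It then suffices to estimate $\bigl\|(X^{\beta_0}_x\Delta^{\alpha_0}\tau_{M_0})(x,\pi)\pi(I+\mathcal{R})^{N_1}\bigr\|_{\mathscr{L}(\mathcal{H}_\pi)}$ uniformly in $(x,\pi)$. Since $\kappa_{\sigma_1},\kappa_{\sigma_2}\in\SC(G\times G)$, the integrals defining $\tau_{M_0}$ are absolutely convergent and we may differentiate under the integral sign. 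Applying $\Delta^{\alpha_0}$ pulls out $\tilde q_{\alpha_0}(y)$ in front of $\pi(y)^*$, while $X^{\beta_0}_x$ acts by Leibniz on the two $x$-dependent factors with $[\beta_0']+[\beta_0'']=[\beta_0]$, turning the Taylor remainder into $R^{X^{\beta_0'}\kappa_{\sigma_1}(\cdot,s)}_{x,M_0}$ via \eqref{Prop.Rem-vf}. To produce $\pi(I+\mathcal{R})^{N_1}$ on the right, I insert $(I+\widetilde{\mathcal R})^{N_1}_{y_0=y}$ inside the integral and rewrite this right-invariant operator as a linear combination of $\tilde X^{\gamma}_y$ with $[\gamma]\leq\nu N_1$ via \eqref{RelationsVFs1}, then distribute by Leibniz across the three $y$-dependent factors: $\tilde q_{\alpha_0}(y)$, $\kappa_{\sigma_2}(x,ys^{-1})$, and $R^{X^{\beta_0'}\kappa_{\sigma_1}(\cdot,s)}_{x,M_0}(\tau(y)\tau(s)^{-1})$ (using that $p_1(y,ys^{-1})=\tau(y)\tau(s)^{-1}$).

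The derivatives $\tilde X^{\gamma_1}_y$ falling on the Taylor remainder only hit $\tau(y)$, and by the condition (HP) and Lemma \ref{lem:pvf} they can be rewritten as a finite sum $\sum q_{\mu_1}(y) X^{\gamma_1'}_{y_1=\tau(y)}R^{X^{\beta_0'}\kappa_{\sigma_1}(\cdot,s)}_{x,M_0}(y_1)$, with $[\gamma_1']\geq[\gamma_1]$. Using \eqref{Prop.Rem-vf} again, each such derivative of the remainder is itself a Taylor remainder of order $M_0-[\gamma_1']$ of $X^{\gamma_1'+\beta_0'}\kappa_{\sigma_1}$, to which \eqref{eq:est_Taylor_rem} applies:
\[
 \bigl|X^{\gamma_1'}_{y_1=\tau(y)}R^{X^{\beta_0'}\kappa_{\sigma_1}(\cdot,s)}_{x,M_0}(y_1)\bigr|
 \lesssim \!\!\sum_{M_0<[\alpha]\leq M_0+v_n}\!\! |\tau(y)\tau(s)^{-1}|^{[\alpha]}\sup_{|z|\leq C|\tau(y)\tau(s)^{-1}|}|X^{\alpha+\gamma_1'+\beta_0'}\kappa_{\sigma_1}(xz,s)|.
\]
Combining this with Lemma \ref{prop-p1} (applied with $z=ys^{-1}$, so $z^{-1}y=s$) gives the pointwise majorant $|p_1(y,ys^{-1})|\lesssim |ys^{-1}|+\sum |ys^{-1}|^{[\alpha_a]/v_j}|s|^{[\alpha_b]/v_j}$, producing only mixed homogeneous powers of $ys^{-1}$ and $s$.

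Next, I perform the measure-preserving substitution $u=ys^{-1}$ (so $y=us$ and $dy=du$), split $\tilde q_{\alpha_0}(y)=\tilde q_{\alpha_0}(us)=\sum c\,\tilde q_{\alpha_0^1}(s)\tilde q_{\alpha_0^2}(u)$ via \eqref{HomPol1}, and similarly regroup the polynomial factors coming from Lemma \ref{lem:pvf} and from $|p_1|$ into pure $s$-factors and pure $u$-factors. The $\pi(y)^*$ factor only contributes $\|\pi(y)^*\|_{\mathscr{L}(\mathcal{H}_\pi)}=1$ under the operator norm. This decouples the resulting bound into a product of two absolutely convergent integrals — one in $s$ involving suitable $X$-, $\tilde X$-, and $\tilde q$-weighted versions of $\kappa_{\sigma_1}(x,s)$, the other in $u$ involving analogous weighted versions of $\kappa_{\sigma_2}(x,u)$. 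Lemma \ref{FRcorollary} then bounds each integral by a seminorm $\|\sigma_j\|_{S^{m_j}_{\rho,\delta},a_j,b_j}$ provided the weight exponents $\gamma$ satisfy $\gamma+Q>(Q+m_j+\ldots)/\rho$. The main obstacle — and the step where the hypothesis $\delta<\rho/v_n$ is needed — is the verification of these convergence conditions: the ``worst'' term in $|p_1(y,ys^{-1})|^{[\alpha]}$ with $[\alpha]>M_0$ contributes powers of order $[\alpha]/v_n$ to $s$ or $u$, and one must choose $M_0$ (hence $N_1$, determining the number of $\tilde X^\gamma$-derivatives) large enough that the gains from $|s|^\gamma$ and $|u|^\gamma$ outweigh the singularities of $\kappa_{\sigma_j}$ near the origin. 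The quantitative inequality that makes this possible is exactly $\delta/\rho<1/v_n$; under it, $M_0$ can be chosen so that all the $\gamma$-conditions of Lemma \ref{FRcorollary} hold simultaneously for both integrals, yielding the desired product estimate and completing the proof.
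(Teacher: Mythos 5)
There is a genuine gap. Two essential ingredients of the paper's argument are absent, and both are needed to close the estimate.

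\textbf{(1) You cannot choose $M_0$ large.} The lemma fixes $M_0\in\mathbb{N}$ in advance (``arbitrary but fixed''), yet your verification of the Lemma~\ref{FRcorollary} conditions ends with ``one must choose $M_0$ (hence $N_1$\ldots) large enough.'' That is not available: $M_0$ is a datum, and $N_1$ is determined (minimally) by $M_0$ via $N_1\geq -s_0$; enlarging $N_1$ beyond that only produces more $\tilde X^\gamma$-derivatives whose cost ($\delta[\gamma]$) is not compensated. The paper resolves this with a two-step argument: Step~1 proves the estimate with a \emph{freely chosen} Taylor order $M=M(\beta_0)\gg M_0$; Step~2 then expresses $\tau_{M_0}-\tau_{M(\beta_0)}$ as the finite sum $\sum_{M_0<[\alpha]\leq M(\beta_0)}\sum_{[\alpha_1]+[\alpha_2]=[\alpha]}(\Delta^{\alpha_1}X^\alpha\sigma_1)(\Delta^{\alpha_2}\sigma_2)$, whose seminorms are controlled by elementary manipulations. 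Your proposal skips this reduction entirely, so it only establishes the estimate for a specific (large) Taylor order, not for the given $M_0$.

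\textbf{(2) The Bessel-potential shift on $\kappa_{\sigma_1}$ is missing.} After the change of variables $u=ys^{-1}$, the $|p_1(y,ys^{-1})|^{[\alpha]}$ factor from the Taylor remainder does \emph{not} yield a positive power of $|s|$ in the worst case: by Lemma~\ref{prop-p1} with $z=ys^{-1}$, $z^{-1}y=s$, the leading term is $|ys^{-1}|$ alone, with no $|s|$ factor at all. So your $s$-integral, which carries $\kappa_{\sigma_1}$, receives only the polynomial weights from $\tilde q_{\alpha_0}$ and the $\tilde X$-derivatives, all of nonnegative but possibly zero homogeneous degree. For $m_1\gg0$, the condition $\gamma+Q>(Q+m_1+\delta(\cdots))/\rho$ in Lemma~\ref{FRcorollary} then fails, because $\gamma$ may be zero while the right-hand side grows with $m_1$. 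The paper's fix is the convolution identity $\kappa_{\sigma_1}(x,\cdot)=(I+\mathcal R)^{M_2}\bigl(\kappa_{\sigma_1}(x,\cdot)*\mathcal B_{\nu M_2}\bigr)$, which replaces $m_1$ by $m_1-\nu M_2$ at the cost of $\nu M_2$ extra $s$-derivatives; those are then converted to $p_1$-derivatives via Lemma~\ref{lem:pvf} and absorbed by taking the Taylor order $M$ large. This trade-off, which is precisely where the inequality $\delta<\rho/v_n$ is used quantitatively, does not appear in your plan, and without it the $\sigma_1$-side integral does not converge for arbitrary $m_1$.

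Your claim that the $|p_1|^{[\alpha]}$ factor supplies gains to \emph{both} $|s|$ and $|u|$ is therefore too optimistic — it contributes to $|u|$ but may contribute nothing to $|s|$ — and this is exactly what forces the extra Bessel-potential parameter $M_2$ into the argument.
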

\begin{proof}
We give the proof of Lemma~\ref{lemma1} in two steps. Below we will assume that $\alpha_0=0$ because the presence of a (non-vanishing) difference operator $\Delta^{\alpha_0}$, i.e., of a homogeneous polynomial $\tilde{q}_{\alpha_0}$ in the oscillatory integral~\eqref{eq2.lem.1}, renders the integral less singular close to the origin.

\medskip

\noindent\textit{Step~1.} We prove that for $m_1,m_2\in \mathbb{R}$, $\beta_0\in\mathbb{N}^{2n+1}$ and $M_0\in\mathbb{N}$ there exist $C>0$, $M=M(\beta_0) \in \N_{>M_0}$ and $a_1, b_1, a_2, b_2 \in \mathbb{N}_0$ such that
\begin{equation}\label{eq1.lem.1}
    \| X_x^{\beta_0}\tau_M(x,\pi) \pi(I+\mathcal{R})^{-\frac{m_1+m_2-(\rho-\delta)(M_0+1) +\delta[\beta_0]}{\nu}}\|_{\mathscr{L}(\mathcal{H}_\pi)}\leq C \|\sigma_1\|_{S^{m_1}_{\rho,\delta},a_1,b_1}\|\sigma_2\|_{S^{m_2}_{\rho,\delta},a_2,b_2}.
\end{equation}
\textit{Step~2.} We show that the estimate in Step~1 also holds true for $M=M_0$ and arbitrary $\beta_0$. 

\vspace{0.5cm}

\noindent\textit{Proof of Step~1.} 
We denote by $m:=(m_1+m_2-(\rho-\delta)(M_0+1)+\delta[\beta_0])/\nu$, and take $M_1\in\mathbb{N}$ as the smallest nonnegative integer such that $M_1\geq-m $. With this choice of $M_1=M_1(\beta_0, M_0)$ the operator $(I+\mathcal{R})^{-m-M_1}$ is bounded, and we can reduce the proof of \eqref{eq1.lem.1} to that of
\begin{equation*}\label{eq1.lem.1.1}
    \| X_x^{\beta_0}\tau_M(x,\pi) \pi(I+\mathcal{R})^{M_1}\|_{\mathscr{L}(\mathcal{H}_\pi)}\leq C \|\sigma_1\|_{S^{m_1}_{\rho,\delta},a_1,b_1}\|\sigma_2\|_{S^{m_2}_{\rho,\delta},a_2,b_2}.
\end{equation*}
Using integration by parts, Lemma~\ref{lem:pvf} and \eqref{Prop.Rem-vf}, we get
\begin{align}
    &\tau_M(x,\pi)\pi(I+\mathcal{R})^{M_1}\nonumber\\
    &=\iint_{G \times G}\kk(x,ys^{-1}) R_{x,M}^{\k(\cdot,s)}(p_1(y,ys^{-1})) \overline{\sum}_{[\beta]\leq \nu M_1}(-1)^{|\beta|}(\tilde{X}^\beta \pi^*)(y)ds\, dy\nonumber\\
   &= \!\!\!\!\underset{[\beta]\leq \nu M_1}{\overline{\sum}} 
    \,\,\underset{[\beta_1]+[\beta_2]=[\beta]}{\overline{\sum}}
   \iint_{G \times G} \!\!\!\! \tilde{X}_{y_2=y}^{\beta_1}\kk(x,y_2s^{-1})\tilde{X}_{y_1=y}^{\beta_2}R_{x,M}^{\k(\cdot,s)}(p_1(y_1,y_1s^{-1}))\pi^*(y)ds\, dy \nonumber\\
    &= \underset{[\beta]\leq \nu M_1}{\overline{\sum}} 
    \,\,\underset{[\beta_1]+[\beta_2]=[\beta]}{\overline{\sum}}
    \,\, \underset{\substack{[\beta_2']+[\beta_2'']=[\beta_2^0]-[\beta_2]\\ [\beta_2]\leq |\beta_2^0|\leq v_\dimG|\beta_2|}}{\overline{\sum}}
   \iint_{G \times G}
   \tilde{X}_{y_2=ys^{-1}}^{\beta_1}\kk(x,y_2)
   \tilde{q}_{\beta_2'}(s)\tilde{q}_{\beta_2''}(ys^{-1})\nonumber\\
   & \hspace{7,5cm} \times \tilde{X}_{y_1 = p_1(y,ys^{-1})}^{\beta_2^0}R_{x,M}^{\k(\cdot,s) }(y_1)\pi^*(y)ds\, dy\nonumber\\
   &= \underset{[\beta]\leq \nu M_1}{\overline{\sum}} 
    \,\,\underset{[\beta_1]+[\beta_2]=[\beta]}{\overline{\sum}}
     \,\,\underset{\substack{[\beta_2']+[\beta_2'']=[\beta_2^0]-[\beta_2]\\ [\beta_2]\leq |\beta_2^0|\leq v_\dimG|\beta_2|}}{\overline{\sum}}
   \iint_{G \times G}
   \tilde{X}_{y_2=ys^{-1}}^{\beta_1}\kk(x,y_2)\tilde{q}_{\beta_2'}(s)\tilde{q}_{\beta_2''}(ys^{-1})\nonumber\\
   & \hspace{7,5cm} \times R_{0,M-[\beta_2^0]}^{X_x^{\beta_2^0}\k(x\cdot,s) }(p_1(y,ys^{-1}))\pi^*(y)ds\, dy.\label{eq2.lem.1}
\end{align} 
Note that we applied Lemma \ref{lem:pvf} in \eqref{eq2.lem.1} to rewrite the differentiation in the variable $s$ as a differentiation in the variable $y_1 = p_1(y,ys^{-1})$. This was possible since $p_1(y,ys^{-1})$ is again a function from $G\times G$ to $G$ with coefficients that are homogeneous polynomials in $y$ and $s$.

Next we apply $X^{\beta_0}_x$ to $\tau_M(x,\pi)\pi(I+\mathcal{R})^{M_1}$, that is, to \eqref{eq2.lem.1}. Since $\alpha_0=0$ and $\beta_0 \in \N^\dimG$, integration by parts and \eqref{Prop.Rem-vf} yield
\begin{align}
    X^{\beta_0}_x \tau_M(x,\pi)\pi(I+\mathcal{R})^{M_1}=&
    \underset{[\beta]\leq \nu M_1}{\overline{\sum}} 
     \,\,\underset{[\beta_1]+[\beta_2]=[\beta]}{\overline{\sum}}
     \,\,\underset{\substack{[\beta_2']+[\beta_2'']=[\beta_2^0]-[\beta_2]\\ [\beta_2]\leq |\beta_2^0|\leq v_\dimG|\beta_2|}}{\overline{\sum}}
    \,\, \underset{[\beta_{0,1}]+[\beta_{0,2}]=[\beta_0]}{\overline{\sum}}\nonumber\\
   \iint_{G \times G} &
   X^{\beta_{0,1}}_{x_2=x}\tilde{X}_{y_2=ys^{-1}}^{\beta_1}\kk(x_2,y_2)\nonumber
   \tilde{q}_{\beta_2'}(s)\tilde{q}_{\beta_2''}(ys^{-1})\\
   \times\, & R_{0,M-[\beta_2^0]}^{X^{\beta_{0,2}}_{x}X_x^{\beta_2^0}\k(x\cdot,s) }(p_1(y,ys^{-1}))\pi^*(y)ds\, dy \label{eq2.1.lem.1}.
\end{align}
Note that the growth of $|p_1|$ at the origin, brought into play via the Taylor estimate, will help us control the behavior of $\kk$ in \eqref{eq2.1.lem.1}. In order to control the behavior of $\k$, after the Taylor estimate, we change the order of its symbol. We do so by using the identity
\begin{align*}
    f=(I+\mathcal{R})^{M_2}(f\ast \mathcal{B}_{\nu M_2})=\underset{[\eta]\leq \nu M_2}{\overline{\sum}}X^\eta (f\ast \mathcal{B}_{\nu M_2}), \hspace{5pt} f \in \SC(G),
\end{align*}
where $\mathcal{B}_{\nu M_2}$ is the convolution kernel of the Bessel potential introduced in Subsection~\ref{subs:Bessel} and $M_2$ a positive integer to be chosen later. The trade-off for introducing the derivative $X^\eta$ into \eqref{eq2.1.lem.1}, however, namely $x$-derivatives of $\k$, will need to be carefully counterbalanced by the order $M$ of the Taylor expansion and the order $M_2$ of the Bessel potential. So, by employing the above identity, we rewrite
\begin{align*}
   R_{0,M-[\beta_2^0]}^{X^{\beta_{0,2}}_{x}X_x^{\beta_2^0}\k(x\cdot,s) }(p_1(y,ys^{-1}))=
   \underset{[\eta]\leq \nu M_2}{\overline{\sum}}
   X^\eta_{s_1=s} R_{0,M-[\beta_2^0]}^{(X^{\beta_{0,2}}_{x}X_x^{\beta_2^0}\k(x\cdot,\cdot)\ast \mathcal{B}_{\nu M_2}(\cdot))(s_1) }(p_1(y,ys^{-1}))
\end{align*}
in \eqref{eq2.1.lem.1}, and integrating by parts, we obtain
\begin{align}
    &\hspace{-80pt} \eqref{eq2.lem.1}=
    \underset{[\beta]\leq \nu M_1}{\overline{\sum}} 
     \,\,\underset{[\beta_1]+[\beta_2]=[\beta]}{\overline{\sum}}
     \,\,\underset{\substack{[\beta_2']+[\beta_2'']=[\beta_2^0]-[\beta_2]\\ [\beta_2]\leq |\beta_2^0|\leq v_\dimG|\beta_2|}}{\overline{\sum}}
    \,\, \underset{[\beta_{0,1}]+[\beta_{0,2}]=[\beta_0]}{\overline{\sum}}\nonumber\\
    \underset{[\eta_1]+\cdots+[\eta_4]\leq \nu M_2}{\overline{\sum}}\iint_{G \times G} &
    X^{\eta_1}_{s_1=s}\tilde{q}_{\beta_2'}(s_1)X^{\eta_2}_{s_2=s}\tilde{q}_{\beta_2''}(ys_2^{-1})
   X^{\eta_3}_{s_3=s}X^{\beta_{0,1}}_{x_2=x}\tilde{X}_{y_2=ys_3^{-1}}^{\beta_1}\kk(x_2,y_2)\nonumber
   \\
    &\hspace{-30pt} \times\, X^{\eta_4}_{s_4=s}R_{0,M-[\beta_2^0]}^{(X^{\beta_{0,2}}_{x}X_x^{\beta_2^0}\k(x\cdot,\cdot)\ast \mathcal{B}_{\nu M_2}(\cdot))(s) }(p_1(y,ys_4^{-1}))
   \pi^*(y)ds\, dy \label{eq2.2.lem.1}.
\end{align}
We will now rewrite the derivatives in $s$ in a convenient way. Since
\begin{align*}
    X_j|_s f(ys^{-1}) &= \partial_{t = 0} f \bigl ( y (s \exp(t X_j))^{-1} \bigr ) =  \partial_{t = 0} f \bigl ( y  \exp(-t X_j)s^{-1} \bigr )\\
    =&-\tilde{X}_j|_{s^{-1}}f(ys^{-1}), 
\end{align*}
the relation between left and right-invariant vector fields yields
\begin{align*}
X_j|_s f(ys^{-1})=-\tilde{X}_j|_{s^{-1}}f(ys^{-1})=-\sum_{\substack{|\beta| > 1 \\ [\beta] \geq v_j}}Q_{v_j, \beta}(s^{-1}) X^\beta_{s^{-1}} f(ys^{-1}),
\end{align*}
with $Q_{v_j, \beta}$ being a homogeneous polynomial of homogeneous degree $[\beta]-v_j$.
Inductively, for every multi-index $\alpha\in \mathbb{N}^{n}_0$, we obtain
\begin{equation}\label{formula.inv.diff.}
    X^\alpha_{s^{-1}}f(ys^{-1})=(-1)^{|\alpha|}\sum_{\substack{|\beta|\leq |\alpha|\\ [\beta]\geq[\alpha]}}Q_{\alpha,\beta}(s^{-1}) X^\beta_{s^{-1}} f(ys^{-1}),
\end{equation}
with $Q_{\alpha,\beta}$ being a homogeneous polynomial of homogeneous degree $[\beta]-[\alpha]$. For the sake of convenience, we will write $Q_{\alpha,\beta}(s)$ instead of $Q_{\alpha,\beta}(s^{-1})$.

We now use \eqref{formula.inv.diff.} to write
\begin{align}
   X^{\eta_3}_{s_3=s}X^{\beta_{0,1}}_{x_2=x}\tilde{X}_{y_2=ys_3^{-1}}^{\beta_1}\kk(x_2,y_2)=&X^{\beta_{0,1}}_{x_2=x}X^{\eta_3}_{s_3=s}\tilde{X}_{y_2=ys_3^{-1}}^{\beta_1}\kk(x_2,y_2) \nonumber\\
   &\hspace{-40pt} =(-1)^{|\eta_3|}\sum_{\substack{|\eta_3'|\leq |\eta_3|\\ [\eta_3']\ge[\eta_3]}}Q_{\eta_3,\eta_3'}(s) X^{\beta_{0,1}}_{x_2=x}X^{\eta_3'}_{s^{-1}}\tilde{X}_{y_2=ys^{-1}}^{\beta_1}\kk(x_2,y_2), \nonumber
   \nonumber\\
   &\hspace{-40pt} =(-1)^{|\eta_3|}\sum_{\substack{|\eta_3'|\leq |\eta_3|\\ [\eta_3']\ge[\eta_3]}}Q_{\eta_3,\eta_3'}(s) X^{\beta_{0,1}}_{x_2=x}X^{\eta_3'}_{y_2=ys^{-1}}\tilde{X}_{y_2}^{\beta_1}\kk(x_2,y_2), \nonumber
\end{align}
and
\begin{align*}
   X^{\eta_2}_{s_2=s} \tilde{q}_{\beta_2''}(ys_2^{-1})=(-1)^{|\eta_2|}\sum_{\substack{|\eta_2'|\leq |\eta_2|\\ [\eta_2']\ge[\eta_2]}}Q_{\eta_2,\eta_2'}(s)(X^{\eta_2'}\tilde{q}_{\beta_2''})(ys^{-1}),
\end{align*}
which, by substitution in \eqref{eq2.2.lem.1}, give
\begin{align}
    \eqref{eq2.lem.1}=&
    \underset{[\beta]\leq \nu M_1}{\overline{\sum}} 
     \,\,\underset{[\beta_1]+[\beta_2]=[\beta]}{\overline{\sum}}
     \,\,\underset{\substack{[\beta_2']+[\beta_2'']=[\beta_2^0]-[\beta_2]\\ [\beta_2]\leq |\beta_2^0|\leq v_\dimG|\beta_2|}}{\overline{\sum}}
    \,\, \underset{[\beta_{0,1}]+[\beta_{0,2}]=[\beta_0]}{\overline{\sum}}
    \,\,\underset{\substack{[\eta_1]+\cdots+[\eta_4] \\ \leq \nu M_2}}{\overline{\sum}}
     \,\,\underset{\substack{|\eta_2'|\leq |\eta_2|\\ [\eta_2']\geq[\eta_2]}}{\overline{\sum}} \,\,\underset{\substack{|\eta_3'|\leq |\eta_3|\\ [\eta_3']\geq[\eta_3]}}{\overline{\sum}}
    \nonumber\\
    &\hspace{-40pt} \iint_{G \times G} \!\!
    Q_{\eta_2,\eta_2'}(s)Q_{\eta_3,\eta_3'}(s)(X^{\eta_1}\tilde{q}_{\beta_2'})(s)(X^{\eta_2'}\tilde{q}_{\beta_2''})(ys^{-1})
    X^{\beta_{0,1}}_{x_2=x}X^{\eta_3'}_{y_2=ys^{-1}}\tilde{X}_{y_2}^{\beta_1}\kk(x_2,y_2)\nonumber
   \\
    &\hspace{2.5cm} \times\, X^{\eta_4}_{s_4=s}R_{0,M-[\beta_2^0]}^{(X^{\beta_{0,2}}_{x}X_x^{\beta_2^0}\k(x\cdot,\cdot)\ast \mathcal{B}_{\nu M_2}(\cdot))(s) }(p_1(y,ys_4^{-1}))
   \pi^*(y)ds\, dy. \nonumber
\end{align}
We will now focus on the last factor. Using Lemma~\ref{lem:pvf} in combination with \eqref{Prop.Rem-vf} in the last term of \eqref{eq2.lem.1}, gives
\begin{align}
   & \eqref{eq2.lem.1}=
    \underset{[\beta]\leq \nu M_1}{\overline{\sum}} 
     \,\,\underset{[\beta_1]+[\beta_2]=[\beta]}{\overline{\sum}}
     \,\,\underset{\substack{[\beta_2']+[\beta_2'']=[\beta_2^0]-[\beta_2]\\ [\beta_2]\leq |\beta_2^0|\leq v_\dimG|\beta_2|}}{\overline{\sum}}
    \,\, \underset{[\beta_{0,1}]+[\beta_{0,2}]=[\beta_0]}{\overline{\sum}}
    \,\,\underset{\substack{[\eta_1]+\cdots+[\eta_4] \\ \leq \nu M_2}}{\overline{\sum}}
     \,\,\underset{\substack{|\eta_2'|\leq |\eta_2|\\ [\eta_2']\ge[\eta_2]}}{\overline{\sum}}
     \,\,\underset{\substack{|\eta_3'|\leq |\eta_3|\\ [\eta_3']\ge[\eta_3]}}{\overline{\sum}}
    \nonumber\\
    &\underset{\substack{[\eta_4']+[\eta_4''] = [\eta_4^0] - [\eta_4] \\ [\eta_4] \leq |\eta_4^0| \leq v_\dimG |\eta_4|}}{\overline{\sum}}
    \iint_{G \times G} 
    \tilde{q}_{\eta_4'}(s)\tilde{q}_{\eta_4''}(ys^{-1})Q_{\eta_2,\eta_2'}(s)Q_{\eta_3,\eta_3'}(s)(X^{\eta_1}\tilde{q}_{\beta_2'})(s)(X^{\eta_2'}\tilde{q}_{\beta_2''})(ys^{-1})\nonumber
   \\
   &\times\,  X^{\beta_{0,1}}_{x_2=x}X^{\eta_3'}_{y_2=ys^{-1}}\tilde{X}_{y_2}^{\beta_1}\kk(x_2,y_2)\,R_{0,M-[\beta_2^0]-[\eta_4^0]}^{(X^{\eta_4^0}_{z=0}X^{\beta_{0,2}}_{x}X_x^{\beta_2^0}\k(xz,\cdot)\ast \mathcal{B}_{\nu M_2}(\cdot))(s) }(p_1(y,ys^{-1})) \\
   &\hspace{250pt} \times \pi^*(y)ds\, dy. \label{lem1.eqn.3}
\end{align}
So, taking the operator norm on $\RS$, we bound this integral by
\begin{align}
    &\| X^{\beta_0}_x \tau_M(x,\pi)\pi(I+\mathcal{R})^{M_1}\|_{\mathscr{L}(\mathcal{H}_\pi)} \nonumber\\
     &\lesssim\underset{[\beta]\leq \nu M_1}{\overline{\sum}} 
     \,\,\underset{[\beta_1]+[\beta_2]=[\beta]}{\overline{\sum}}
     \,\,\underset{\substack{[\beta_2']+[\beta_2'']=[\beta_2^0]-[\beta_2]\\ [\beta_2]\leq |\beta_2^0|\leq v_\dimG|\beta_2|}}{\overline{\sum}}
    \,\, \underset{[\beta_{0,1}]+[\beta_{0,2}]=[\beta_0]}{\overline{\sum}}
    \,\,\underset{[\eta_1]+\cdots+[\eta_4]\leq \nu M_2}{\overline{\sum}}
     \,\,\underset{\substack{|\eta_2'|\leq |\eta_2|\\ [\eta_2']\ge[\eta_2]}}{\overline{\sum}}
     \,\,\underset{\substack{|\eta_3'|\leq |\eta_3|\\ [\eta_3']\ge[\eta_3]}}{\overline{\sum}}
    \nonumber\\
    &\underset{\substack{[\eta_4']+[\eta_4''] = [\eta_4^0] - [\eta_4] \\ [\eta_4] \leq |\eta_4^0| \leq v_\dimG |\eta_4|}}{\overline{\sum}}
    \iint_{G \times G}
     |ys^{-1}|^{[\beta_2'']+[\eta_4'']-[\eta_2']}|X^{\beta_{0,1}}_{x_2=x}X^{\eta_3'}_{y_2=ys^{-1}}\tilde{X}_{y_2}^{\beta_1}\kk(x_2,y_2)| \label{lem1.eqn.rem1}\\
    &\times |s|^{[\beta_2']+[\eta_2'] + [\eta_3'] + [\eta_4']-[\eta_1]-[\eta_2]-[\eta_3]}
   \, |R_{0,M-[\beta_2^0]-[\eta_4^0]}^{(X^{\eta_4^0}_{z=0}X^{\beta_{0,2}}_{x}X_x^{\beta_2^0}\k(xz,\cdot)\ast \mathcal{B}_{\nu M_2}(\cdot))(s) }(p_1(y,s))|
   ds\, dy. \nonumber
\end{align}
The estimates for the remainder of Taylor's expansion now gives
\begin{align}
    &|R_{0,M-[\beta_2^0]-[\eta_4^0]}^{X^{\eta_4^0}_{s_2}(X^{\beta_{0,2}}_{x}X_x^{\beta_2^0}\k(x\cdot,\cdot)\ast \mathcal{B}_{\nu M_2}(\cdot))(s_2) }(p_1(y,ys^{-1}))| \lesssim \!\!\!\!\!\!\!\!\!\!\!\! \underset{\substack{((M-[\beta_0^2])_++v_\dimG>[\gamma]\geq (M-[\beta_0^2])_+}}{\overline{\sum}} \!\!\!\!\!\!|p_1(y,ys^{-1})|^{[\gamma]} \nonumber\\
   &\hspace{40pt} \times \sup_{|z|\leq \eta ^{\lceil (M-[\beta_2^0]-[\eta_4^0])_+\rfloor+1}|p_1(y,ys^{-1})|} \!\!\!\!\!\! |X^\gamma_{z}(X^{\eta_4^0}_{z_1=z}X^{\beta_{0,2}}_{x}X_x^{\beta_2^0}\k(xz_1,\cdot)\ast \mathcal{B}_{\nu M_2}(\cdot))(s )|\nonumber\\
   &\hspace{20pt} \lesssim \underset{\substack{((M-[\beta_2^0]-[\eta_4^0])_++v_\dimG>[\gamma]\geq (M-[\beta_2^0]-[\eta_4^0])_+\\
    }}{\overline{\sum}} \underset{\substack{|\gamma_0|\leq |\beta_{0,2}|+|\beta_{2}^0|\\
    [\gamma_0]= [\beta_{0,2}]+[\beta_{2}^0]}}{\overline{\sum}} \underset{\substack{|\gamma_0'|\leq |\gamma_0|\\
    [\gamma_0']\geq [\gamma_0]}}{\overline{\sum}} |p_1(y,ys^{-1})|^{[\gamma]} \\
    &\hspace{40pt} \times \sup_{|z|\leq \eta ^{\lceil (M-[\beta_2^0]-[\eta_4^0])_+\rfloor+1}|p_1(y,ys^{-1})|} \!\!\!\!\!\! |Q_{\gamma_0,\gamma_0'}(z)(X^\gamma_z X^{\eta_4^0}_z X_z^{\gamma_0'}\k(xz,\cdot)\ast \mathcal{B}_{\nu M_2}(\cdot))(s )|, \label{lem1.eqn2}
\end{align}
where in the last inequality we have used \eqref{l.i.prod} to rewrite $X^{\beta_{2,0}}_x X^{\beta_2^0}_x$ as a sum of left invariant vector fields, and \eqref{RelationsVFs2} to rewrite the latter in terms of left-invariant vector fields in the variable $z$.

Note that the pointwise estimate \eqref{eq.estp1.old} of $|p_1|$, based on \eqref{eq:est_p1}, allows us to bound each term in the sum above by
\begin{align}
 & |p_1(y,ys^{-1})|^{[\gamma]} \!\!\!\!\!\! \sup_{|z|\leq \eta ^{\lceil (M-[\beta_2^0]-[\eta_4^0])_+\rfloor+1}p_1(y,ys^{-1})} \!\!\!\!\!\! |Q_{\gamma_0,\gamma_0'}(z) X^\gamma_z X^{\eta_4^0}_z X_z^{\gamma_0'}\k(xz,\cdot)\ast \mathcal{B}_{\nu M_2}(\cdot))(s )| \\
 &\lesssim (|ys^{-1}|^\frac{1}{v_\dimG}+|ys^{-1}|)^{[\gamma]+[\gamma_0']-[\gamma_0]}(1+|s|^\frac{1}{v_\dimG}+|s|)^{[\gamma]+[\gamma_0']-[\gamma_0]}\\
 &\quad \times \sup_{x\in G}|(X^\gamma_x X^{\eta_4^0}_x X_x^{\gamma_0'}\k(x,\cdot)\ast \mathcal{B}_{\nu M_2}(\cdot))(s )|. \,\label{lem1.eqn4}
\end{align}
Therefore, using \eqref{lem1.eqn4} in \eqref{lem1.eqn2}, and replacing the resulting estimate in \eqref{lem1.eqn.rem1}, we have
\begin{align}
 &\| X^{\beta_0}_x \tau_M(x,\pi)\pi(I+\mathcal{R})^{M_1}\|_{\mathscr{L}(\mathcal{H}_\pi)} \nonumber\\
     &\lesssim\underset{[\beta]\leq \nu M_1}{\overline{\sum}} 
     \,\,\underset{[\beta_1]+[\beta_2]=[\beta]}{\overline{\sum}}
     \,\,\underset{\substack{[\beta_2']+[\beta_2''] = [\beta_2^0] -  [\beta_2] \\ [\beta_2] \leq |\beta_2^0|\leq v_\dimG[\beta_2]}}{\overline{\sum}}
    \,\, \underset{[\beta_{0,1}]+[\beta_{0,2}]=[\beta_0]}{\overline{\sum}}
    \,\,\underset{[\eta_1]+\cdots+[\eta_4]\leq \nu M_2}{\overline{\sum}}
     \,\,\underset{\substack{|\eta_2'|\leq |\eta_2|\\ [\eta_2']\ge[\eta_2]}}{\overline{\sum}}
     \,\,\underset{\substack{|\eta_3'|\leq |\eta_3|\\ [\eta_3']\ge[\eta_3]}}{\overline{\sum}}
    \nonumber\\
    &\underset{\substack{[\eta_4']+[\eta_4''] = [\eta_4^0] - [\eta_4] \\ [\eta_4] \leq |\eta_4^0| \leq v_\dimG |\eta_4|}}{\overline{\sum}}\,\,
    \underset{\substack{((M-[\beta_2^0]-[\eta_4^0])_++v_\dimG>[\gamma]\geq ((M-[\beta_2^0]-[\eta_4^0])_+\\
    |\gamma|< \lceil (M-[\beta_2^0]-[\eta_4^0]})_+\rfloor+1}{\overline{\sum}} \,\,
    \underset{\substack{|\gamma_0|\leq |\beta_{0,2}|+|\beta_{2}^0|\\
    [\gamma_0]= [\beta_{0,2}]+[\beta_{2}^0]}}{\overline{\sum}}\,\,
    \underset{\substack{|\gamma_0'|\leq |\gamma_0|\\
    [\gamma_0']\geq [\gamma_0]}}{\overline{\sum}} \nonumber \\
    &\iint_{G \times G}
     |ys^{-1}|^{[\beta_2'']+[\eta_4'']-[\eta_2']}\sup_{x\in G}|X^{\beta_{0,1}}_{x_2=x}X^{\eta_3'}_{y_2=ys^{-1}}\tilde{X}_{y_2}^{\beta_1}\kk(x_2,y_2)| \, |s|^{[\beta_2']+[\eta_2']+[\eta_3']}\nonumber\\
    &\times |s|^{[\eta_4']-[\eta_1]-[\eta_2]-[\eta_3]}
    (|ys^{-1}|^\frac{1}{v_\dimG}+|ys^{-1}|)^{[\gamma]+[\gamma_0']-[\gamma_0]}(1+|s|^\frac{1}{v_\dimG}+|s|)^{[\gamma]+[\gamma_0']-[\gamma_0]}\nonumber\\
 &\quad \times \sup_{x\in G}|(X^\gamma_x X^{\eta_4^0}_x X_x^{\gamma_0'}\k(x,\cdot)\ast \mathcal{B}_{\nu M_2}(\cdot))(s )|\, ds\,dy.\label{eqn 1711-1104}
\end{align}
Now because of
$$(\k(x,\cdot)\ast \mathcal{B}_{\nu M_2}(\cdot))(s)=\kappa_{\pi(I+\mathcal{R})^{-M_2}\sigma_1}(x,s)$$
and the left invariance of the Haar measure, we can rewrite the integral in \eqref{eqn 1711-1104} as the product of 
\begin{align}
    \int_{G}
     |y|^{[\beta_2'']+[\eta_4'']-[\eta_2']}(|y|^\frac{1}{v_\dimG}+|y|)^{[\gamma]+[\gamma_0']-[\gamma_0]}\sup_{x\in G}|X^{\beta_{0,1}}_{x_2=x}X^{\eta_3'}_{y}\tilde{X}_{y}^{\beta_1}\kk(x_2,y)|  \,dy, \label{eqn1711-1125}
     \end{align}
     and 
     \begin{align}
     &\int_{G}
     |s|^{[\beta_2']+[\eta_2']+[\eta_3']+[\eta_4']-[\eta_1]-[\eta_2]-[\eta_3]} (1+|s|^\frac{1}{v_\dimG}+|s|)^{[\gamma]+[\gamma_0']-[\gamma_0]}\nonumber\\
     &\hspace{4cm}\times \sup_{x\in G}|X^\gamma_x X^{\eta_4^0}_x X_x^{\gamma_0'}\kappa_{\pi(I+\mathcal{R})^{-M_2}\sigma_1}(x,s)|\, ds,\label{eqn1711-1132}
\end{align}
and estimate each of the two integrals via suitable $S^{m_1}_{\rho,\delta}$- and $S^{m_2}_{\rho,\delta}$-seminorms of $\sigma_1$ and $\sigma_2$, respectively. 
To obtain the desired estimates, we will make use of Lemma \ref{FRcorollary} in a suitable way.

First, we split the integrals in \eqref{eqn1711-1125} and \eqref{eqn1711-1132} into the sum of integrals over the regions $\{x\in G, |x|<1\}$ and $\{x\in G, |x|\geq1\}$, with $x=y$ in \eqref{eqn1711-1125} and $x=s$ in \eqref{eqn1711-1132}.
By using the Schwartz decay of kernels of pseudo-differential operators away from the origin together with the inequalities
$$|x|^\frac{1}{v_\dimG}+|x|\leq 2|x|^\frac{1}{v_\dimG},\quad1+|x|^\frac{1}{v_\dimG}+|x|\leq 3,\quad  \text{when}\quad |x|<1,$$
we have
\begin{align}
    \eqref{eqn1711-1125}\lesssim \|\sigma_2\|_{S^{m_2}_{\rho,\delta, a_2,b_2}}+
     \int_{G}
     |y|^{[\beta_2'']+[\eta_4'']-[\eta_2']}|y|^\frac{[\gamma]+[\gamma_0']-[\gamma_0]}{v_\dimG}\sup_{x\in G}|X^{\beta_{0,1}}_{x_2=x}X^{\eta_3'}_{y}\tilde{X}_{y}^{\beta_1}\kk(x_2,y)|dy, \quad \quad \label{eqn1711-1803}
\end{align}
and, since $-M_2<0$,
\begin{align}
  \eqref{eqn1711-1132}  \lesssim \|\sigma_1\|_{S^{m_1}_{\rho,\delta, a_1,b_1}}+
     \int_{G}&|s|^{[\beta_2']+[\eta_2']+[\eta_3']+[\eta_4']-[\eta_1]-[\eta_2]-[\eta_3]} \\
     & \times \sup_{x\in G}|X^\gamma_x X^{\eta_4^0}_x X_x^{\gamma_0'}\kappa_{\pi(I+\mathcal{R})^{-M_2}\sigma_1}(x,s)|\, ds.
\label{eqn1711-1816}
\end{align}

A crucial aspect to stress is that we have nonnegative exponents in the integrals above, that is, $[\beta_2'']+[\eta_4'']-[\eta_2']\geq 0$ and  $[\beta_2']+[\eta_2']+[\eta_3']+[\eta_4']-[\eta_1]-[\eta_2]-[\eta_3]\geq 0$, since they come from differentiations of homogeneous polynomials of nonnegative degree. 
By the previous considerations, the desired estimate for \eqref{eqn1711-1125} and \eqref{eqn1711-1132} can be reached by estimating \eqref{eqn1711-1803} and \eqref{eqn1711-1816} in terms of suitable seminorms, that is, by using Lemma \ref{FRcorollary}.
In order to apply the lemma to the integrals in \eqref{eqn1711-1803} and in \eqref{eqn1711-1816}, the following conditions have to be satisfied simultaneously:
\begin{align}
    [\beta_2'']+[\eta_4'']-[\eta_2'] +\frac{[\gamma]+[\gamma_0']-[\gamma_0]}{v_\dimG}+Q& > \max \left \{ \frac{Q+m_2+\delta[\beta_{0,1}]+[\beta_1]+[\eta_3']}{\rho}, 0 \right \}, \label{lem.1.cond-2_1} \\
  [\beta_2']+[\eta_2']+[\eta_3']+[\eta_4']-[\eta_1]-[\eta_2]-[\eta_3] &+Q \\
    & \hspace{-30pt} > \max \left \{ \frac{Q+m_1-\hdeg M_2+\delta([\gamma_0']+[\gamma] + [\eta_4^0])}{\rho}, 0 \right \}, \label{lem.1.cond-2_2}
\end{align}
where $M_2 \in \N_0$ is still to be chosen.
To ensure this, we have a closer look at the range of the summation index $\gamma \in N_0^\dimG$, given by
\begin{align}
    ((M-[\beta_2^0]-[\eta_4^0])_++v_\dimG >[\gamma] &\geq ((M-[\beta_2^0]-[\eta_4^0])_+, \label{eq:range_gamma_T1} \\
    |\gamma| &< \lceil (M-[\beta_2^0]-[\eta_4^0])_+\rfloor+1,
\end{align}
and show that \eqref{lem.1.cond-2_1} and \eqref{lem.1.cond-2_2} are satisfied for suitable choices of $M, M_2 \in \N_0$ for the two cases when $M-[\beta_2^0]-[\eta_4^0]$ is either $\leq 0$ or $> 0$.

\medskip

\noindent \underline{$M-[\beta_2^0]-[\eta_4^0] \leq 0$:} 
Since the left-hand sides in the inequalities above are always positive, here we show that the right-hand sides of \eqref{lem.1.cond-2_1} and \eqref{lem.1.cond-2_2} are zero for suitable choices of $M$ and $M_2$, which will trivially prove the validity of these conditions.
For \eqref{lem.1.cond-2_1}, we first observe that the lower bound
\begin{align}
     0 \geq M-[\beta_2^0]-[\eta_4^0] \geq M - v_\dimG [\beta_2] -v_\dimG [\eta_4^0] = M -v_\dimG (\hdeg M_1 - [\beta_1]) - v_\dimG \hdeg M_2
\end{align}
provides the useful upper bound
\begin{align}
     [\beta_1] \leq -\frac{M - v_\dimG \hdeg M_1 - v_\dimG \hdeg M_2}{v_\dimG}.
\end{align}
In combination with $[\eta'_3] \leq v_\dimG [\eta_3] \leq v_\dimG \hdeg M_2$, it gives
\begin{align}
   \hdim + m_2 + \delta [\beta_{0, 1}] &+ [\beta_1] + [\eta'_3]
   \leq \hdim + m_2 + \delta [\beta_0] - \frac{M - v_\dimG \hdeg M_1 - v_\dimG \hdeg M_2}{v_\dimG} + v_\dimG \hdeg M_2 \\
   &= \hdim + m_2 + \delta [\beta_0] + \frac{-M + v_\dimG \hdeg M_1 + v_\dimG (1 + v_\dimG)M_2}{v_\dimG}. \label{eqn1811-1419_1_1}
\end{align}

On the other hand, we observe that $\rho$ times the right-hand side of \eqref{lem.1.cond-2_2} can be bounded from above by
\begin{align}
    \hdim+m_1-\hdeg M_2 +\delta([\gamma_0']&+[\gamma] + [\eta_4^0])
    \leq \hdim+m_1-\hdeg M_2+\delta(v_n[\beta_{0, 2}] + v_n [\beta^0_2] + [\gamma] + [\eta_4^0]) \\
    &< \hdim +m_1-\hdeg M_2 + \delta (v_n[\beta_{0, 2}] + v_n[\beta^0_2] + v_\dimG + [\eta_4^0]) \\
    &\leq \hdim + m_1 - \hdeg M_2 + \delta (v_n[\beta_0] + v_n^2 \hdeg M_1 + v_\dimG + v_\dimG \hdeg M_2) \\
    &= \hdim +m_1- ((1 - \delta v_\dimG) \hdeg M_2) + \delta (v_n [\beta_0] + v_\dimG + v_n^2 \hdeg M_1) \label{eqn1811-1419_1_2},
\end{align}
where we have used $[\beta_2^0] \leq v_n |\beta_2^0| \leq v_n [\beta_2^0] \leq v_n [\beta_2] \leq v_n [\beta] \leq v_n \hdeg M_1$ and $[\gamma] \leq v_n |\gamma| \leq v_n$. Since the left-hand sides of \eqref{lem.1.cond-2_1} and \eqref{lem.1.cond-2_2} are bounded from below by $\hdim$ because of 
\begin{align}
    [\beta_2'']+[\eta_4'']-[\eta_2']+\frac{[\gamma]+[\gamma_0']-[\gamma_0]}{v_\dimG} &\geq 0, \\
    [\beta_2']+[\eta_2']+[\eta_3']+[\eta_4']-[\eta_1]-[\eta_2]-[\eta_3] &\geq 0,
\end{align}
the condition $v_\dimG \delta < 1$ allows us to first pick $M_2 = M_2(\beta_0, M_1) \in \N_0$ large enough so that \eqref{lem.1.cond-2_2} is less than zero, and then pick $M = M(\beta_0, M_1, M_2) \in \N_0$\footnote{All the other parameters are fixed.} large enough to ensure that \eqref{eqn1811-1419_1_1} is less than zero. This implies that the right-hand sides of \eqref{lem.1.cond-2_1} both less than zero and, a fortiori, less than $\hdim$, hence that for suitable choices of $M, M_2 \in \N_0$ both \eqref{lem.1.cond-2_1} and \eqref{lem.1.cond-2_2} are satisfied in the first case.

\medskip

\noindent \underline{$M-[\beta_2^0]-[\eta_4^0] > 0$:}
Here we begin by choosing $M_2$ large enough to ensure that \eqref{eqn1811-1419_1_2} is negative, so we just have to treat \eqref{lem.1.cond-2_1}, \eqref{lem.1.cond-2_2} being  trivially satisfied.
Since $[\gamma] \geq M-[\beta_0^2]-[\eta_4^0]$, by \eqref{eq:range_gamma_T1}, and $[\beta_0^2] \leq v_\dimG \hdeg M_1$, we deduce that $\rho$ times the left-hand side of \eqref{lem.1.cond-2_1} can be bounded from below by
\begin{align}
    \rho \frac{[\gamma] + [\gamma'_0] - [\gamma_0]}{v_\dimG} + \rho \hdim
    &\geq \rho \frac{[\gamma]}{v_\dimG} + \rho \hdim\\
    & \geq \rho \frac{[\gamma]}{v_\dimG}-\rho\frac{[\beta_2^0]}{v_n}-\rho\frac{[\eta_4^0]}{v_n} + \rho \hdim\\
    &\geq \rho \frac{M}{v_\dimG} - \rho \frac{v_\dimG \hdeg M_1}{v_\dimG} - \rho \frac{ v_\dimG^2[\eta_4]}{v_\dimG} + \rho \hdim \\
    &\geq \rho \frac{M}{v_\dimG} - \rho \hdeg M_1 - v_\dimG [\eta_4] + \rho \hdim, \label{eqn1811-1419_2_1_a}
\end{align}
where we have used $[\beta_2^0] \leq v_n \hdeg M_1$ as in the case above
and $[\eta_4^0]\leq v_\dimG|\eta_4^0|\leq v_\dimG^2|\eta_4|\leq v_n^2[\eta_4]$
, while $\rho$ times the right-hand side of \eqref{lem.1.cond-2_1}
can be bounded from above by
\begin{align}
   \hspace{-20pt}\hdim + m_2 + \delta [\beta_{0, 1}] + [\beta_1] + [\eta'_3]
   &\leq \hdim + m_2 + \delta [\beta_0] + [\beta_1] + v_\dimG [\eta_3] \\
   &\leq \hdim + m_2 + \delta [\beta_0] + [\beta_1] + v_\dimG ([\eta_1] + [\eta_1] + [\eta_3]). \label{eqn1811-1419_2_1_b}
\end{align}
So, for \eqref{lem.1.cond-2_1} to be satisfied, it suffices to show the stricter inequality
\begin{align}
    \rho \frac{M}{v_\dimG} - \rho \hdeg M_1 + \rho \hdim > \hdim + m_2 + \delta [\beta_0] + [\beta_1] + v_\dimG \hdeg M_2, \label{eqn1811-1419_2_1_c}
\end{align}
 which is obtained by adding $v_\dimG [\eta_4]$ to both \eqref{eqn1811-1419_2_1_a} and \eqref{eqn1811-1419_2_1_b}.

On the other hand, because of $M-[\beta_0^2]-[\eta_4^0] +v_\dimG > [\gamma]$, we can bound the right-hand side of \eqref{lem.1.cond-2_2} from above by
\begin{align}
    \frac{Q+m_1-\hdeg M_2+\delta([\gamma_0']+[\gamma] + [\eta_4^0])}{\rho}
    &= \frac{Q+m_1-\hdeg M_2+\delta([\beta_{0, 2}] + [\beta^0_2] + [\gamma] + [\eta_4^0])}{\rho} \\
    &< \frac{Q+m_1-\hdeg M_2 + \delta ( [\beta_0] + M + v_\dimG)}{\rho} \label{eqn1811-1419_2_2}.
\end{align}
Now we may choose $M_2 = M_2(\beta_0, M_1)$, $M = M(\beta_0, M_1, M_2) \in \N_0$ large enough so that \eqref{eqn1811-1419_2_1_c} is satisfied and so that \eqref{eqn1811-1419_1_2} is less than zero. The latter implies that the right-hand side of \eqref{lem.1.cond-2_2} $\leq 0 < \hdim$, hence that \eqref{lem.1.cond-2_1} is satisfied. Moreover, since \eqref{eqn1811-1419_2_1_c} is satisfied for this choice of $M_2, M \in \N_0$, we also see that, by subtracting $v_\dimG [\eta_4]$ from both sides again, the so-adjusted right-hand side is positive, hence that \eqref{lem.1.cond-2_2} is satisfied for a positive maximum on the right-hand side. In summary, both \eqref{lem.1.cond-2_1} and \eqref{lem.1.cond-2_2} are satisfied also in the second case for suitably chosen of $M_2, M \in \N_0$.

Since we can pick $M_2$ larger than the maximum of the respective choices of $M_2$ for the two cases studied above, both \eqref{lem.1.cond-2_1} and \eqref{lem.1.cond-2_2} can be satisfied simultaneously in both cases. Hence, by Lemma~\ref{FRcorollary}, this finally gives
\begin{align}
    \eqref{eqn1711-1803}&\lesssim \|\sigma_2\|_{S^{m_2}_{\rho,\delta},a_2,b_2}, \\
     \eqref{eqn1711-1816}&\lesssim\|\sigma_1\|_{S^{m_1}_{\rho,\delta},a_1,b_1},
\end{align}
possibly with a new choice of $a_i,b_i \in \N$, $i=1,2$.\footnote{We may simply pick the largest seminorm.} Plugging the previous estimates for \eqref{eqn1711-1803} and \eqref{eqn1711-1816} back into \eqref{eqn 1711-1104}, we have concluded the proof of Step~1.
\medskip

 \noindent\textit{Proof of Step~2.} We now use Step~1 to prove \eqref{eq1.lem.1} for $M=M_0$.
     
Notice that, given arbitrary $\beta_0 \in \N^\dimG$ and $M_0\in\mathbb{N}$, and choosing $M(\beta_0)$ as in Step~1 and additionally larger than $M_0$,
we can write
\begin{align*}
    \tau_{M_0}(x,\pi) &= \tau_{M(\beta_0)}(x,\pi)+\!\!\!\sum_{M_0< [\alpha]\leq M(\beta_0)} \hspace{2pt} \iint_{G \times G} q_\alpha(p_1(y,ys^{-1}))X^\alpha_{x_1=x}\k(x_1,s) \\
    & \hspace{5cm} \times \kk(x,ys^{-1})\pi(y)^*ds\, dy,\\
    &= \tau_{M(\beta_0)}(x,\pi)+\!\!\!\!\!\!\!\!\!\sum_{M_0<[\alpha]\leq M(\beta_0)}
    \underset{[\alpha_1] + [\alpha_2] = [\alpha]}{\overline{\sum}} \hspace{3pt} \iint\limits_{G \times G}\!\! \tilde{q}_{\alpha_1}(s) 
    X^\alpha_{x_1=x}\k(x_1,s)\\
    &\hspace{5cm}\times \tilde{q}_{\alpha_2}(ys^{-1})\kk(x,ys^{-1})\pi(s)^* \pi( ys^{-1})^* dy \, ds\nonumber\\
     &= \tau_{M(\beta_0)}(x,\pi)+\!\!\!\!\!\!\!\!\!\sum_{M_0< [\alpha]\leq M(\beta_0)}
    \underset{[\alpha_1] + [\alpha_2] =[\alpha]}{\overline{\sum}} (\Delta^{\alpha_1} X_x^\alpha\s)(x,\pi)(\Delta^{\alpha_2}\ss)(x,\pi),
    \end{align*}
    where in the second line we have applied \eqref{eqn1811-1510}.
Here, once again, we consider only the case $\alpha_0=0$. By using the estimate for $\tau_{M(\beta_0)}$ proved in Step~1 of the proof, we get
\begin{align}
    \| (X^{\beta_0}\tau_{M_0})(x,\pi)\pi(I + \RO)^{-m}\|_{\mathscr{L}(\RS)}\lesssim& 
    \| (X^{\beta_0}\tau_{M(\beta_0)})(x,\pi)\pi(I + \RO)^{-m}\|_{\mathscr{L}(\RS)}\nonumber\\
    &\hspace{-6cm}+\sum_{M_0<[\alpha]\leq M(\beta_0)}\underset{[\alpha_1] + [\alpha_2] =[\alpha]}{\overline{\sum}}
    \| X^{\beta_0}_x(\Delta^{\alpha_1}X^\alpha\s)(x,\pi)(\Delta^{\alpha_2})\ss(x,\pi)\pi(I + \RO)^{-m}\|_{\mathscr{L}(\mathcal{H}_\pi)}\nonumber\\
     & \hspace{-6cm} \lesssim \|\s\|_{S^{m_1}_{\rho,\delta},a'_1,b'_1}\|\ss\|_{S^{m_2}_{\rho,\delta},a'_2,b'_2}
+\sum_{M_0< [\alpha]\leq M(\beta_0)}\underset{[\alpha_1] + [\alpha_2] =[\alpha]}{\overline{\sum}} \nonumber\\
&\hspace{-5cm} \underset{[\beta_{0,1}]+[\beta_{0,2}]=[\beta_0]}{\overline{\sum}}\, \| (X^{\beta_{0,1}}\Delta^{\alpha_1}X^\alpha\s)(x,\pi)(X^{\beta_{0,2}}\Delta^{\alpha_2}\ss)(x,\pi)\pi(I + \RO)^{-m}\|_{\mathscr{L}(\RS)}.\label{lem.1.final}
\end{align}
Since  $m=(m_1+m_2-(\rho-\delta)(M_0+1)+\delta[\beta_0])/\nu$, we can estimate
\begin{align}
    &\| (X^{\beta_{0,1}} \Delta^{\alpha_1} X^\alpha\s)(x,\pi) (X^{\beta_{0,2}} \Delta^{\alpha_2}\ss)(x,\pi) \pi(I + \RO)^{-m}\|_{\mathscr{L}(\mathcal{H}_\pi)}\nonumber\\
  &\leq \| (X^{\beta_{0,1}} \Delta^{\alpha_1} X^\alpha\s)(x,\pi)\pi(I + \RO)^{-\frac{m_1+\delta[\alpha]+\delta[\beta_{0,1}]-\rho[\alpha_1]}{\nu}}\|_{\mathscr{L}(\mathcal{H}_\pi)} \, \times \nonumber\\
    & \|\pi(I + \RO)^{\frac{m_1+\delta[\alpha]+\delta[\beta_{0,1}]-\rho[\alpha_1]}{\nu}}
    (X^{\beta_{0,2}} \Delta^{\alpha_2}\ss)(x,\pi)\pi(I + \RO)^{-\frac{m_1+m_2-(\rho-\delta)(M_0+1)+\delta[\beta_0]}{\nu}}\|_{\mathscr{L}(\mathcal{H}_\pi)}\nonumber\\
    &\lesssim \|\s\|_{S^{m_1}_{\rho,\delta},M(\beta_0),[\beta_0]+M(\beta_0)} \\
    & \hspace{1cm} \times \|\pi(I + \RO)^{\frac{m_1+\delta[\alpha]+\delta[\beta_{0,1}]-\rho[\alpha_1]}{\nu}}
    (X^{\beta_{0,2}} \Delta^{\alpha_2}\ss)(x,\pi) \pi(I + \RO)^{-\frac{m_2+\delta[\beta_{0,2}]-\rho[\alpha_2] }{\nu}} \\
    & \hspace{2cm} \pi(I + \RO)^{-\frac{m_1+\delta[\alpha]+\delta[\beta_{0,1}]-\rho[\alpha_1]}{\nu}} \pi(I + \RO)^{-\frac{\rho[\alpha_1]+\rho[\alpha_2]-\delta[\alpha]-(\rho-\delta)(M_0+1)}{\nu}}
    \|_{\mathscr{L}(\RS)}\nonumber\\
    &\lesssim \|\s\|_{S^{m_1}_{\rho,\delta},M(\beta_0),[\beta_0]+M(\beta_0)} \, 
    \|\pi(I + \RO)^{\frac{m_1+\delta[\alpha]+\delta[\beta_{0,1}]-\rho[\alpha_1]}{\nu}}
    (X^{\beta_{0,2}} \Delta^{\alpha_2}\ss)(x,\pi) \nonumber\\
    & \quad \pi(I + \RO)^{-\frac{m_2+\delta[\beta_{0,2}]-\rho[\alpha_2] }{\nu}} \pi(I + \RO)^{-\frac{m_1+\delta[\alpha]+\delta[\beta_{0,1}]-\rho[\alpha_1]}{\nu}}
    \|_{\mathscr{L}(\RS)},\label{lem.1.finalest}
\end{align}
where in the the last inequality we have used that 
$$ \|\pi(I + \RO)^{-\frac{\rho[\alpha_1]+\rho[\alpha_2]-\delta[\alpha]-(\rho-\delta)(M_0+1)}{\nu}}
    \|_{\mathscr{L}(\mathcal{H}_\pi)}\lesssim C,$$
since $\delta[\alpha]-\rho([\alpha_1]+[\alpha_2])(\rho-\delta)(M_0+1)\leq 0$ for all the parameters in the ranges appearing in the sums in \eqref{lem.1.final}.
For the sake of convenience, we will use the notation
\begin{align}
    \sigma_3(x,\pi):=  (X^{\beta_{0,2}}_x\Delta^{\alpha_2}\ss)(x,\pi)\pi(I + \RO)^{-\frac{m_2+\delta[\beta_{0,2}]-\rho[\alpha_2] }{\nu}}
\end{align}
in the next few lines.
Since taking the pointwise adjoint of the symbol is a continuous map in the $S^m_{\rho,\delta}(G)$-topology,
(see \cite[Theorem 5.5.12]{FR}), we have
\begin{align}
    &\|\pi(I + \RO)^{\frac{m_1+\delta[\alpha]+\delta[\beta_{0,1}]-\rho[\alpha_1]}{\nu}}
    \sigma_3(x,\pi)\pi(I + \RO)^{-\frac{m_1+\delta[\alpha]+\delta[\beta_{0,1}]-\rho[\alpha_1]}{\nu}}
    \|_{\mathscr{L}(\mathcal{H}_\pi)}\nonumber\\
    &\lesssim \|\pi(I + \RO)^{\frac{m_1+\delta[\alpha]+\delta[\beta_{0,1}]-\rho[\alpha_1]}{\nu}}
    \sigma_3
    \|_{S^{m_1+\delta[\alpha]+\delta[\beta_{0,1}]-\rho[\alpha_1]}_{\rho,\delta},0,0}\nonumber\\
    &\lesssim \|
    \sigma_3^*\, \pi(I + \RO)^{\frac{m_1+\delta[\alpha]+\delta[\beta_{0,1}]-\rho[\alpha_1]}{\nu}}
    \|_{S^{m_1+\delta[\alpha]+\delta[\beta_{0,1}]-\rho[\alpha_1]}_{\rho,\delta},a_1,b_1}\nonumber\\
    &\lesssim \|\sigma_3^* \|_{S^0_{\rho,\delta}, a_2,b_2}\lesssim \|\sigma_3 \|_{S^0_{\rho,\delta},a_3,b_3}
    \lesssim \|\ss\|_{S^{m_2}, a_4,b_4}.\nonumber
\end{align}
The latter inequality yields
\begin{align}
    \eqref{lem.1.finalest}\lesssim\|\s\|_{S^{m_1}, a_1,b_1} \|\ss\|_{S^{m_2}, a_2,b_2},
\end{align}
for some suitably chosen $a'_1, a'_2, b'_1, b'_2\in\mathbb{N}_0$ and for all the parameters in the ranges in the sums in \eqref{lem.1.final}. 
This finally shows \eqref{eq1.lem.1} for $M=M_0$ and thus concludes the proof.
\end{proof}

\medskip

\noindent \underline{The symbol $T_2$:}  The estimate of the symbol $T_2$ defined by \eqref{eq:def_T2} is completely analogous to the case of $T_1$ and based on Lemma~\ref{lemma2} below, whose proof follows the lines of the proof of Lemma \ref{lemma1}.

\begin{lemma}\label{lemma2}
Let $\sigma_1, \sigma_2$ be smooth symbols with $\k, \kk \in \SC(G \times G)$, and let $m_1, m_2\in\mathbb{R}$. For every arbitrary but fixed $M_0\in\mathbb{N}$, let $\tau_{M_0}$ be the symbol defined by
$$\tau_{M_0}(x,\pi):=\iint_{G\times G}R_{x,M_0}^{\kk(\cdot,s)}(p_2(y,s)) \kappa_{\sigma_1}(x,s^{-1}y)\pi(y)^*dy \,ds.$$

Then for all $\alpha_0,\beta_0\in \mathbb{N}^{\dimG}_0$ and $\rho,\delta\in [0,1]$ such that $0\leq\delta<\min\{\rho,\frac{1}{v_n}\} \leq 1$ there exist a constant $C>0$ and two seminorms $\|\cdot\|_{S^{m_1}_{\rho,\delta},a_1,b_1},\|\cdot\|_{S^{m_2}_{\rho,\delta},a_2,b_2}$ such that
\begin{align*}
\| (X^{\beta_0}\Delta^{\alpha_0}\tau_{M_0})(x,\pi) &\pi(I+\mathcal{R})^{-\frac{m_1+m_2-(\rho-\delta)(M_0+1) -\rho[\alpha_0]+\delta[\beta_0]}{\nu}}\|_{\mathscr{L}(\mathcal{H}_\pi)} \\
&\leq C \|\sigma_1\|_{S^{m_1}_{\rho,\delta},a_1,b_1}\|\sigma_2\|_{S^{m_2}_{\rho,\delta},a_2,b_2}.
\end{align*}
\end{lemma}

\medskip

\noindent \underline{The symbol $T_3$:} To estimate the seminorms of the remaining symbol $T_3$, defined by \eqref{eq:def_T3}, we need to employ a very different strategy, whose main argument is captured by the following lemma.

\begin{lemma}\label{lemma3}
Let $\sigma_1, \sigma_2$ be smooth symbols with $\k, \kk \in \SC(G \times G)$, and let $m_1, m_2\in\mathbb{R}$ and $M_0,N_0\in\mathbb{N}$. Let also $r_{M_0,N_0}$ be the symbol defined by
$$r_{M_0,N_0}(x,\pi):=\int_{G \times G} R_{x,M_0}^{\kappa_{\sigma_1}(\cdot,s^{-1}y)}(p_1(y,s))R_{x,N_0}^{\kappa_{\sigma_2}(\cdot,s)}(p_2(y,s)) \pi(y)^* ds\, dy.$$
Then for all $\alpha_0,\beta_0\in \mathbb{N}^{\dimG}_0$ and for all $\rho,\delta \in [0,1]$ such that $0\leq\delta<\frac{\rho}{v_n} \leq 1$, there exist a constant $C>0$ and two seminorms $\|\cdot\|_{S^{m_1}_{\rho,\delta},a_1,b_1}$, $\|\cdot\|_{S^{m_2}_{\rho,\delta},a_2,b_2}$ such that
\begin{align*}
    \| (X^{\beta_0} \Delta^{\alpha_0} r_{M_0,N_0})(x,\pi) \pi(I+\mathcal{R}&)^{-\frac{m_1+m_2-(\rho-\delta)(\min\{M_0,N_0\}+1) - \rho[\alpha_0]+\delta[\beta_0]}{\nu}}\|_{\mathscr{L}(\mathcal{H}_\pi)} \\
    &\leq C \|\sigma_1\|_{S^{m_1}_{\rho,\delta},a_1,b_1}\|\sigma_2\|_{S^{m_2}_{\rho,\delta},a_2,b_2}.\end{align*}
\end{lemma}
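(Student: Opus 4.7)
The strategy will closely parallel the proofs of Lemmas \ref{lemma1} and \ref{lemma2}, but will have to control both Taylor remainders simultaneously rather than just one. As before, I will assume $\alpha_0 = 0$, since a non-trivial polynomial $\tilde q_{\alpha_0}(y)$ only decreases the singularity of the oscillatory integral near the origin and the argument then goes through analogously. I would also reduce to estimating $\|X_x^{\beta_0} r_{M_0, N_0}(x, \pi) \pi(I + \RO)^{M_1}\|_{\mathscr{L}(\RS)}$ for the smallest integer $M_1$ with $M_1 \geq -m$, where $m = \frac{m_1 + m_2 - (\rho - \delta)(\min\{M_0, N_0\}+1) + \delta [\beta_0]}{\nu}$, since $(I + \RO)^{-m-M_1}$ is bounded.

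Next I would integrate by parts in $y$ to convert $\pi(I + \RO)^{M_1}$ into a finite sum of $\tilde X^\beta \pi^*$ with $[\beta] \leq \nu M_1$, and then distribute these right-invariant derivatives across both Taylor remainders via the Leibniz rule and Lemma \ref{lem:pvf} (rewriting derivatives in $y$ and $s$ acting on $p_1, p_2$ as derivatives in the $p$-variable with polynomial coefficients in $y, s$, by the chain rule). The identities \eqref{Prop.Rem-vf} then let me absorb these derivatives into the Taylor remainders of $\kappa_{\sigma_j}$ with reduced order. I would additionally apply the Bessel trick $f = \sum_{[\eta]\leq \nu M_2} X^\eta(f * \mathcal B_{\nu M_2})$ to one of the two kernels (say $\sigma_1$), shifting its order by a parameter $M_2 \in \N$ to be chosen later. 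Applying the remainder estimate \eqref{eq:est_Taylor_rem} to both factors and relating left-invariant derivatives of kernels at $x$ via \eqref{RelationsVFs2} reduces the problem to estimating a finite sum of products of the form
\begin{align*}
    \iint_{G \times G} |p_1(y,s)|^{[\gamma_1']} |p_2(y,s)|^{[\gamma_2']} \bigl(\text{polynomial in } s, s^{-1}y\bigr) \sup_{x} |(X^\gamma \kappa_{\widetilde\sigma_1})(x, s^{-1}y)| \sup_{x} |(X^{\gamma'} \kappa_{\sigma_2})(x, s)| \, ds\, dy,
\end{align*}
where $[\gamma_1'] \geq (M_0 - \text{shift}_1)_+$ and $[\gamma_2'] \geq (N_0 - \text{shift}_2)_+$, and $\widetilde\sigma_1$ has shifted order $m_1 - \nu M_2$.

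I would then apply Lemma \ref{prop-p1} to bound $|p_1|^{[\gamma_1']} |p_2|^{[\gamma_2']}$ by a sum of terms of the form $(|s^{-1}y|^{1/v_n}+|s^{-1}y|)^{a_1}(1+|s|^{1/v_n}+|s|)^{a_2}(|s|^{1/v_n}+|s|)^{b_1}(1+|s^{-1}y|^{1/v_n}+|s^{-1}y|)^{b_2}$, and then perform the measure-preserving change of variables $(y, s) \mapsto (z, s) := (s^{-1}y, s)$ to separate the integral into a product of two integrals, one over $z$ (against $\kappa_{\widetilde\sigma_1}$) and one over $s$ (against $\kappa_{\sigma_2}$). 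Each single integral is then estimated via Lemma \ref{FRcorollary}, controlling it by a single seminorm $\|\sigma_j\|_{S^{m_j}_{\rho,\delta}, a_j, b_j}$.

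The main obstacle, and the reason the stricter condition $\delta < \rho / v_n$ is needed here rather than $\delta < \min\{\rho, 1/v_n\}$, lies precisely in this final balancing step. Unlike in Lemmas \ref{lemma1} and \ref{lemma2}, where only one Taylor remainder contributed factors $|p_i|^{[\gamma]}$ near the origin, here both remainders contribute such factors, each of which can only give decay of homogeneous order at least $[\gamma_i']/v_n$ in $z$ (or $s$), due to the $1/v_n$-th powers unavoidable in Lemma \ref{prop-p1}. Verifying that the hypothesis of Lemma \ref{FRcorollary} can be met for both resulting integrals simultaneously amounts to checking two inequalities of the form $[\gamma_i']/v_n + \hdim > (\hdim + m_i + \delta \cdot (\text{orders}))/\rho$, and after tracking all the contributions from the $\nu M_1$ derivatives from $(I+\RO)^{M_1}$, the $[\beta_0]$ derivatives from $X^{\beta_0}_x$, the Bessel shift $M_2$, and the minimum gain $(\rho - \delta)(\min\{M_0, N_0\} + 1)$ in the target estimate, the only way to choose $M_2, M_0, N_0$ large enough to make both inequalities hold is when $\delta v_n < \rho$. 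Once this is verified via a careful case distinction on whether $M_0 - (\text{shift}_1) \leq 0$ or $> 0$ (and similarly for $N_0$), mimicking Step~1 of the proof of Lemma \ref{lemma1}, the desired estimate follows; the final step of promoting from $M = \max\{M_0, N_0\}$ back down to $\min\{M_0, N_0\}$ is handled by the same bootstrap as in Step~2 of that proof, extracting the finitely many intermediate symbols in $S^{m_1 + m_2 - (\rho - \delta)k}_{\rho, \delta}(G)$ with $\min\{M_0, N_0\} < k$ as products $(\Delta^{\alpha_1} X^\alpha \sigma_1)(\Delta^{\alpha_2} \sigma_2)$ and applying Remark \ref{rem:pw_product}.
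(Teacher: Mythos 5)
Your overall architecture is right — the two-step reduction, the integration by parts against $\pi(I+\RO)^{L_1}$, the Leibniz distribution over both remainders, the Taylor estimate combined with Lemma~\ref{prop-p1}, the change of variables $y\mapsto st$ to factorize, and the final appeal to Lemma~\ref{FRcorollary} with a case analysis yielding $\delta<\rho/v_n$. But there are two points where your proposal deviates from, or understates, what is actually required.

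First, you insert the Bessel trick (convolving $\kappa_{\sigma_1}$ with $\mathcal B_{\nu M_2}$ and pulling out $\sum_{[\eta]\le\nu M_2}X^\eta$), which the paper does \emph{not} use in this lemma. That device was needed in Lemma~\ref{lemma1} because there only one factor is a Taylor remainder, and the bare kernel $\kappa_{\sigma_2}$ has to have its singularity at the origin tamed by artificially lowering the order of $\sigma_1$. In Lemma~\ref{lemma3} \emph{both} factors are remainders, each already producing decay $|p_i|^{[\gamma_i]}$ near the origin with $[\gamma_i]$ as large as one likes, so there is nothing to compensate and the Bessel shift is superfluous. Including it is not obviously fatal, but the extra $X^\eta_s$-derivatives would land, after integration by parts, on the \emph{other} Taylor remainder $R^{\kappa_{\sigma_2}(\cdot,s)}_{x,N_0}(p_2(y,s))$, lowering its order below zero for large $M_2$ and forcing you to track a degenerate remainder; you would then need a separate argument to handle that regime, which you do not supply. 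The cleaner (and paper's) route is to work with the two remainders as they stand and let the two exponents $[\gamma_1],[\gamma_2]$ from the Taylor estimates alone satisfy the system of inequalities needed by Lemma~\ref{FRcorollary}.

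Second, your description of the Step~2 bootstrap is incomplete. When you write $r_{M_0,N_0}=r_{M(\beta_0),N(\beta_0)}+(\text{corrections})$, the correction terms are \emph{not} all pure products $(\Delta^{\alpha_1}X^\alpha\sigma_1)(\Delta^{\alpha_2}\sigma_2)$. The full decomposition also produces two families of \emph{cross} terms in which one factor is still a Taylor remainder $R_{x,M(\beta_0)}^{\kappa_{\sigma_1}}(p_1)$ (resp.\ $R_{x,N(\beta_0)}^{\kappa_{\sigma_2}}(p_2)$) and the other is a Taylor polynomial of $\kappa_{\sigma_2}$ (resp.\ $\kappa_{\sigma_1}$). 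These terms are not handled by Remark~\ref{rem:pw_product}; they require invoking Lemmas~\ref{lemma1} and~\ref{lemma2} applied to the symbols $X^\beta_x\sigma_2$ and $X^\alpha_x\sigma_1$ (one uses that $q_\beta(p_2)$ and $q_\alpha(p_1)$ expand into products of the $\tilde q$'s via~\eqref{eqn1811-1510}--\eqref{eqn1811-1511} so they only improve convergence). Omitting this leaves the bootstrap open: without the cross-term estimates you cannot conclude the claimed seminorm bound for arbitrary $M_0,N_0$ from the bound at the enlarged $M(\beta_0),N(\beta_0)$.
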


\begin{proof}
    \noindent\textit{Proof of Step~1.}
Let $L_1$ be the smallest nonnegative integer such that
\begin{align}
L_1\geq -\frac{m_1+m_2-(\rho -\delta)L_0+\delta[\beta_0]}{\nu},\label{CondL_1-new}
\end{align}
since this yields
\begin{align}
     \| X_x^{\beta_0} r_{M,N}(x,\pi)\pi(I+&\mathcal{R})^{-\frac{m_1+m_2-(\rho -\delta)L_0-\rho[\alpha_0]+\delta[\beta_0]}{\nu}}\|_{\mathscr{L}(\mathcal{H}_\pi)}\nonumber\\
    &\lesssim  \|  X^{\beta_0}_x r_{M,N}(x,\pi)\pi(I+\mathcal{R})^{L_1}\|_{\mathscr{L}(\mathcal{H}_\pi)} \label{Est.T3.1-new}
\end{align}
and it therefore suffices to provide an estimate for $\|  X^{\beta_0}_x r_{M,N}(x,\pi)\pi(I+\mathcal{R})^{L_1}\|_{\mathscr{L}(\mathcal{H}_\pi)}$ in order to conclude the result. As before, the first step towards the desired estimate is integration by parts using the relation
\begin{align*}
   \pi(y)^* \pi(I+\mathcal{R})^{L_1}=\underset{[\eta]\leq \nu L_1}{\overline{\sum}} \pi(y)^*\pi(X)^\eta= \underset{[\eta]\leq \nu L_1}{\overline{\sum}}(-1)^{|\eta|}(\Tilde{X}^\eta \pi^*)(y).
\end{align*}
Thus, combining the above identity with integration by parts and the Leibniz rule, we can bound \eqref{Est.T3.1-new} by
\begin{align}
    \|  (X^{\beta_0} r_{M,N})&(x,\pi) \pi(I+\mathcal{R})^{L_1}\|_{\mathscr{L}(\mathcal{H}_\pi)} \\
    &\lesssim 
    \underset{\substack{[\eta_0]+[\eta_1]+[\eta_2]\leq \nu L_1, \\ [\beta_{01}]+[\beta_{02}]\leq [\beta_0] }}{\overline{\sum}}  \iint_{G \times G} 
    |\underbrace{\tilde{X}^{\eta_0}_{y_0=y}\tilde{X}^{\eta_1}_{y_1=y}X^{\beta_{01}}_{x_1=x}R_{x_1, M}^{\kappa_{\sigma_1}(\cdot, s^{-1}y_0)}(p_1(y_1, s))}_{ =: I_1(y,s)}| \\
    &\hspace{3cm} \times |\underbrace{\tilde{X}^{\eta_2}_{y}X^{\beta_{02}}_{x_2=x} R_{x_2,N}^{\kappa_{\sigma_2}(\cdot, s)}(p_2(y,s))}_{ =: I_2(y,s)}| \, ds \, dy, \label{Est.T3.2-new}
    \end{align}
where we have pulled the $\mathscr{L}(\mathcal{H}_\pi)$-norm into the integral and used the trivial bound $\|\pi(y)^*\|_{\mathscr{L}(\mathcal{H}_\pi)} = 1$, $y \in G$. In what follows we estimate $I_1$ and $I_2$ separately.

Let us begin with $I_1$. Using \eqref{RelationsVFs1} to rewrite the derivatives in $y_0$ and the property
\begin{align}
    X^{\beta_{01}}_{x_1=x}R_{x_1, M}^{\kappa_{\sigma_1}(\, \cdot \,, s^{-1}y_0)}(p_1(y_1, s))
    = R_{0, M}^{X^{\beta_{01}}_{x_1=x}\kappa_{\sigma_1}( x_1\cdot \,, s^{-1}y_0)}(p_1(y_1, s))
\end{align}
of the Taylor remainder, we have
\begin{align}
	\tilde{X}^{\eta_0}_{y_0=y} \tilde{X}^{\eta_1}_{y_1=y} &X^{\beta_{01}}_{x_1=x}R_{x_1, M}^{\kappa_{\sigma_1}(\, . \,, s^{-1}y_0)}(p_1(y_1, s)) \\
	&= \underset{\substack{|\eta'_0| \leq |\eta_0|, \\ [\eta'_0] \geq [\eta_0]}}{\overline{\sum}} Q_{\eta_0, \eta'_0}(s) \, \tilde{X}^{\eta_1}_{y_1=y} R_{0, M}^{X^{\eta'_0}_{y_0 = y} X^{\beta_{01}}_{x_1=x}\kappa_{\sigma_1}(x_1\cdot, s^{-1}y_0)}(p_1(y_1, s)). \label{eq:T3_I1_1}
\end{align}
Now, using \eqref{Prop.Rem-vf} for the Taylor remainder in combination with Lemma~\ref{lem:pvf} to express the differentiation in $y_1$ into one in $p_1$, we can get
\begin{align}
	\eqref{eq:T3_I1_1} &= \underset{\substack{|\eta'_0| \leq |\eta_0|, \\ [\eta'_0] \geq [\eta_0]}}{\overline{\sum}} \, \underset{\substack{[\zeta_3] -([\zeta_1]+[\zeta_2])= [\eta_1], \\\ [\eta_1]\leq [\zeta_3] \leq v_n|\eta_1|}}{\overline{\sum}} \, \underset{\substack{[\zeta_2'] + [\zeta_2''] =[\zeta_2]}}{\overline{\sum}}  Q_{\eta_0, \eta'_0}(s) \, \tilde{q}_{\zeta_1+\zeta_2'}(s) \tilde{q}_{\zeta_2''}(s^{-1}y) \\
    &\hspace{60pt} \times 
	R_{0, M - [\zeta_3]}^{X^{\zeta_3}_{x_2=x} X^{\eta'_0}_{y_0 = y}  X^{\beta_{01}}_{x_1=x_2}\kappa_{\sigma_1}(x_1 \cdot, s^{-1}y_0)}(p_1(y_1, s)),
\end{align}
where we have rewritten $\tilde{q}_{\zeta_2}(y)$ to accommodate the change of variables. Next we rewrite
\begin{align}
    X^{\zeta_3}_{x_2=x} X^{\eta'_0}_{y_0 = y}  X^{\beta_{01}}_{x_1=x_2}&\kappa_{\sigma_1}(x_1\cdot , s^{-1}y_0)=\underset{\substack{[\zeta]=[\zeta_3]+[\beta_{01}]}}{\overline{\sum}}
     X^{\eta'_0}_{y_0 = y}X^{\zeta}_{x_1 = x}
     \kappa_{\sigma_1}(x \cdot, s^{-1}y_0)\\
     &= \underset{\substack{[\zeta]=[\zeta_3]+[\beta_{01}]}}{\overline{\sum}}
     \underset{\substack{|\zeta'|\leq|\zeta|\\ [\zeta']\geq[\zeta]}}{\overline{\sum}}
     Q_{\zeta,\zeta'}(z_1)X^{\eta'_0}_{y_0 = y}X^{\zeta'}_{z_1=z}
     \kappa_{\sigma_1}(x z_1 , s^{-1}y_0),
\end{align}
so that, by the estimate for the Taylor remainder~\eqref{eq:est_Taylor_rem}, $I_1$ can be bounded pointwise by
\begin{align}
	I_1(y,s) &\lesssim \underset{\substack{|\eta'_0| \leq |\eta_0|, \\ [\eta'_0] \geq [\eta_0]}}{\overline{\sum}} \, \underset{\substack{[\zeta_3] -([\zeta_1]+[\zeta_2])= [\eta_1], \\\ [\eta_1]\leq [\zeta_3] \leq v_n|\eta_1|}}{\overline{\sum}} \, \underset{\substack{[\zeta_2'] + [\zeta_2''] =[\zeta_2]}}{\overline{\sum}}  
 \underset{\substack{[\zeta]=[\zeta_3]+[\beta_{01}]}}{\overline{\sum}}
     \underset{\substack{|\zeta'|\leq|\zeta|\\ [\zeta']\geq[\zeta]}}{\overline{\sum}}
 \sum_{\substack{M - [\zeta_3] < [\gamma_1] \\ \leq M - [\zeta_3] + v_\dimG }}
 |s|^{[\eta_0']-[\eta_0]} \\
&\hspace{-10pt} \times |s|^{ [\zeta_1] + [\zeta_2']} |s^{-1}y|^{\zeta_2''} |p_1(y, s)|^{[\gamma_1]} \!\!\!\!\!\!\!  \sup_{|z|\leq \eta^{\lceil M \rfloor}|p_1(y,s)|} \!\!\!\!\!\!\! \bigl | X^{\gamma_1}_{z_1=z}Q_{\zeta,\zeta'}(z_1) X^{\eta'_0}_{y_0 = y} X^{\zeta'}_{z_1}  \kappa_{\sigma_1}(x z_1, s^{-1}y_0) \bigr | \\
&\lesssim \underset{\substack{|\eta'_0| \leq |\eta_0|, \\ [\eta'_0] \geq [\eta_0]}}{\overline{\sum}} \, \underset{\substack{[\zeta_3] -([\zeta_1]+[\zeta_2])= [\eta_1], \\\ [\eta_1]\leq [\zeta_3] \leq v_n|\eta_1|}}{\overline{\sum}} \, \underset{\substack{[\zeta_2'] + [\zeta_2''] =[\zeta_2]}}{\overline{\sum}}  
 \underset{\substack{[\zeta]=[\zeta_3]+[\beta_{01}]}}{\overline{\sum}}
     \underset{\substack{|\zeta'|\leq|\zeta|\\ [\zeta']\geq[\zeta]}}{\overline{\sum}}
 \sum_{\substack{M - [\zeta_3] < [\gamma_1] \\ \leq M - [\zeta_3] + v_\dimG }} 
 \underset{[\gamma_1']+[\gamma_1'']=[\gamma_1]}{\overline{\sum}} \\
& |s|^{[\eta_0']-[\eta_0] + [\zeta_1] + [\zeta_2']} |s^{-1}y|^{\zeta_2''} \Big(|s|^{[\gamma_1]+[\zeta']-[\zeta]-[\gamma_1']}+\sum_{j=1}^n\sum_{\substack{[\mu'_1]+[\mu'_2]=v_j\\ \mu'_1,\mu'_2\neq 0}}\!\!\!\! |s|^{\frac{([\gamma_1]+[\zeta']-[\zeta]-[\gamma_1'])[\mu'_1]}{v_j}} \\
&\times |s^{-1}y|^{\frac{([\gamma_1]+[\zeta']-[\zeta]-[\gamma_1'])[\mu'_2]}{v_j}}\Big) \sup_{x\in G}|   X^{\eta'_0}_{y_0 = y} X^{\gamma_1''}_{x_1=x}X^{\zeta'}_{x_1}\kappa_{\sigma_1}(x_1, s^{-1}y_0) \bigr |,
\end{align}
where, for the second inequality, we have applied the Leibniz rule to $X^{\gamma_1}_{z_1=z}$. Since  the derivatives $X^{\gamma_1'}Q_{\zeta,\zeta'}$ vanish for $[\gamma_1']\geq [\zeta']-[\zeta]$, we may assume $[\zeta'] - [\zeta] \geq [\gamma_1']$.

Analogously, the integrand $I_2$ can be expressed as
\begin{align}
	&\tilde{X}^{\eta_2}_{y} X^{\beta_{02}}_{x_2=x} R_{x_2,N}^{\kappa_{\sigma_2}(\, . \,, s)}(p_2(y,s)) \\
	&\hspace{10pt}= \underset{\substack{[\theta_3]-([\theta_1]+[\theta_2])=[\eta_2]\\
  [\eta_2]\leq [\theta_3]\leq v_n|\eta_2|}}{\overline{\sum}}
 \, \underset{\substack{[\theta_2'] + [\theta_2''] =[\theta_2]}}{\overline{\sum}}  \, \tilde{q}_{\theta_1+\theta_2'}(s) \tilde{q}_{\theta_2''}(s^{-1}y) 
	R_{0, N - [\theta_3]}^{X^{\theta_3}_{s_1=s}X^{\beta_{02}}_{x_2=x} \kappa_{\sigma_2}(x_2\, \cdot, s_1)}(p_2(y_1, s)),
\end{align}
where
\begin{align}
    X^{\theta_3}_{s_1=s}X^{\beta_{02}}_{x_2=x} \kappa_{\sigma_2}(x_2 z, s_1)=&
    \underset{\substack{|\beta_{02}'|\leq |\beta_{02}|\\ [\beta_{02}']\geq [\beta_{02}]}}{\overline{\sum}} 
    Q_{\beta_{02},\beta_{02}'}(z) X^{\theta_3}_{s_1=s}X^{\beta_{02}'}_{x_2=x} \kappa_{\sigma_2}(x_2z, s_1).
\end{align}
Therefore, arguing as for $I_1$, we find 
\begin{align}
I_2(y,s)\lesssim&
 \underset{\substack{[\theta_3]-([\theta_1]+[\theta_2])=[\eta_2]\\
  [\eta_2]\leq [\theta_3]\leq v_n|\eta_2|}}{\overline{\sum}}
   \underset{\substack{[\theta_2'] + [\theta_2''] =[\theta_2]}}{\overline{\sum}} 
 \underset{\substack{|\beta_{02}'|\leq |\beta_{02}|\\ [\beta_{02}']\geq [\beta_{02}]}}{\overline{\sum}}
 \sum_{\substack{N - [\theta_3] < [\gamma_2] \\ \leq N - [\theta_3] + v_\dimG }} |s|^{\theta_1+\theta_2'} |s^{-1}y|^ {\theta_2''}
 \\
 &\hspace{10pt} \times |p_2(y, s)|^{[\gamma_2]} \,  \sup_{|z|\leq \eta^{\lceil M\rfloor}|p_2(y,s)|}\bigl |  X^{\gamma_2}_{z_2 = z} Q_{\beta_{02},\beta_{02}'}(z_2)  X^{\theta_3}_{s} X^{\beta_{02'}}_{z_2} \kappa_{\sigma_2}(xz_2, s) \bigr | \\
\lesssim &
\underset{\substack{[\theta_3]-([\theta_1]+[\theta_2])=[\eta_2]\\
  [\eta_2]\leq [\theta_3]\leq v_n|\eta_2|}}{\overline{\sum}}
   \underset{\substack{[\theta_2'] + [\theta_2''] =[\theta_2]}}{\overline{\sum}} 
 \underset{\substack{|\beta_{02}'|\leq |\beta_{02}|\\ [\beta_{02}']\geq [\beta_{02}]}}{\overline{\sum}}
\sum_{\substack{N - [\theta_3] < [\gamma_2] \\ \leq N - [\theta_3] + v_\dimG }} 
\underset{[\gamma_2']+[\gamma_2'']=[\gamma_2]}{\overline{\sum}} |s|^{\theta_1+\theta_2'} |s^{-1}y|^ {\theta_2''} \\
&\hspace{10pt} \times \Big(|s^{-1}y|^{[\gamma_2]+[\beta_{02}']-[\beta_{02}]-[\gamma_2']} + \sum_{k=1}^n\sum_{\substack{[\mu''_1]+[\mu''_2]=v_k\\ \mu''_1,\mu''_2\neq 0}}\!\!\!\! |s|^{\frac{([\gamma_2]+[\beta_{02}']-[\beta_{02}]-[\gamma_2'])[\mu''_1]}{v_k}} \\
&\hspace{10pt} \times |s^{-1}y|^{\frac{([\gamma_2]+[\beta_{02}']-[\beta_{02}]-[\gamma_2'])[\mu''_2]}{v_k}}\Big) \sup_{x\in G}\bigl | X^{\theta_3}_{s} X^{\gamma_{2}''}_{x_1=x}X^{\beta_{02}'}_{x_1} \kappa_{\sigma_2}(x, s) \bigr |,
\end{align}
and because of $X^{\gamma_2} Q_{\beta_{02},\beta_{02}'} = 0$ for $[\gamma_2'] \geq [\beta_{02}']-[\beta_{02}]$ we may assume $[\beta_{02}']-[\beta_{02}] \geq [\gamma_2']$.

Combining the estimates for $I_1$ and $I_2$, we obtain
\begin{align}
  \eqref{Est.T3.2-new}&\lesssim
  \underset{\substack{[\eta_0]+[\eta_1]+[\eta_2]\leq \nu L_1, \\ [\beta_{01}]+[\beta_{02}]\leq [\beta_0] }}{\overline{\sum}}
\underset{\substack{|\eta'_0| \leq |\eta_0|, \\ [\eta'_0] \geq [\eta_0]}}{\overline{\sum}} \, \underset{\substack{[\zeta_3] -([\zeta_1]+[\zeta_2])= [\eta_1], \\\ [\eta_1]\leq [\zeta_3] \leq v_n|\eta_1|}}{\overline{\sum}} \, \underset{\substack{[\zeta_2'] + [\zeta_2''] =[\zeta_2]}}{\overline{\sum}} 
 \underset{\substack{[\zeta]=[\zeta_3]+[\beta_{01}]}}{\overline{\sum}}\\
 &
     \underset{\substack{|\zeta'|\leq|\zeta|\\ [\zeta']\geq[\zeta]}}{\overline{\sum}}
\underset{\substack{[\theta_3]-([\theta_1]+[\theta_2])=[\eta_2]\\
  [\eta_2]\leq [\theta_3]\leq v_n|\eta_2|}}{\overline{\sum}}
   \underset{\substack{[\theta_2'] + [\theta_2''] =[\theta_2]}}{\overline{\sum}} 
 \underset{\substack{|\beta_{02}'|\leq |\beta_{02}|\\ [\beta_{02}']\geq [\beta_{02}]}}{\overline{\sum}}
 \sum_{\substack{M - [\zeta_3] < [\gamma_1] \\ \leq M - [\zeta_3] + v_\dimG }}
  \sum_{\substack{N - [\theta_3] < [\gamma_2] \\ \leq N - [\theta_3] + v_\dimG }}
  \underset{\substack{[\gamma_1']+[\gamma_1'']=[\gamma_1]\\ [\gamma_2']+[\gamma_2'']=[\gamma_2] }}{\overline{\sum}}\\
&  \iint_{G\times G} |s|^{[\eta_0']-[\eta_0] + [\zeta_1] + [\zeta'_2] + [\theta_1] + [\theta'_2]} |s^{-1}y|^{[\zeta''_2] + [\theta''_2]} \\
&\hspace{-20pt} \times \Big(|s|^{[\gamma_1]+[\zeta']-[\zeta]-[\gamma_1']}+\sum_{j=1}^n\sum_{\substack{[\mu'_1]+[\mu'_2]=v_j\\ \mu'_1,\mu'_2\neq 0}}\!\!\!\! |s|^{\frac{([\gamma_1]+[\zeta']-[\zeta]-[\gamma_1'])[\mu'_1]}{v_j}} |s^{-1}y|^{\frac{([\gamma_1]+[\zeta']-[\zeta]-[\gamma_1'])[\mu'_2]}{v_j}}\Big) \\
&\hspace{-20pt} \times \Big(|s^{-1}y|^{[\gamma_2]+[\beta_{02}']-[\beta_{02}]-[\gamma_2']}
+\sum_{k=1}^n\sum_{\substack{[\mu''_1]+[\mu''_2]=v_k\\ \mu''_1,\mu''_2\neq 0}}\!\!\!\! |s|^{\frac{([\gamma_2]+[\beta_{02}']-[\beta_{02}]-[\gamma_2'])[\mu''_1]}{v_k}} \Big)\\
&\hspace{-20pt} \times |s^{-1}y|^{\frac{([\gamma_2]+[\beta_{02}']-[\beta_{02}]-[\gamma_2'])[\mu''_2]}{v_k}} \sup_{x\in G}|  X^{\eta'_0}_{y_0 = y} X^{\gamma_1''}_{x_1=x}X^{\zeta'}_{x_1}\kappa_{\sigma_1}(x_1, s^{-1}y_0) \bigr | \\
&\hspace{-20pt} \times \sup_{x\in G}\bigl | X^{\theta_3}_{s} X^{\gamma_2''}_{x_2=x}X^{\beta_{02}'}_{x_2} \kappa_{\sigma_2}(x_2, s) \bigr |
ds dy.\label{pf.T3.1}
\end{align}

To prove the convergence of the integral~\eqref{pf.T3.1}, we apply the change of variables $y\mapsto s t$ and write the integral in \eqref{pf.T3.1} as the sum of the following four integrals:
\begin{align*}
    J_1=&
    \int_{G}|s|^{[\eta_0']-[\eta_0] + [\zeta_1] + [\zeta'_2] + [\theta_1] + [\theta'_2] +[\gamma_1]+[\zeta']-[\zeta]-[\gamma_1']}
    \sup_{x\in G}\bigl | X^{\theta_3}_{s} X^{\gamma_2''}_{x_2=x}\tilde{X}^{\beta_{02}'}_{x_2} \kappa_{\sigma_2}(x_2, s) \bigr | ds\,\,\\
    &\hspace{2cm}\times \int_G|t|^{[\zeta''_2] + [\theta''_2] + [\gamma_2]+[\beta_{02}']-[\beta_{02}]-[\gamma_2']} \sup_{x\in G}|  X^{\eta'_0}_{t} X^{\gamma_1''}_{x_1=x}\tilde{X}^{\zeta'}_{x_1}\kappa_{\sigma_1}(x_1, t) \bigr |dt,\\
%%%%%%%%%%%%%%
    J_2=&\sum_{j=1}^n\sum_{\substack{[\mu'_1]+[\mu'_2]=v_j\\ \mu'_1,\mu'_2\neq 0}}
    \int_{G}|s|^{[\eta_0']-[\eta_0] + [\zeta_1] + [\zeta'_2] + [\theta_1] + [\theta'_2] +\frac{([\gamma_1]+[\zeta']-[\zeta]-[\gamma_1'])[\mu'_1]}{v_j}} \\
    &\hspace{2cm} \times \sup_{x\in G}\bigl | X^{\theta_3}_{s} X^{\gamma_2''}_{x_2=x}\tilde{X}^{\beta_{02}'}_{x_2} \kappa_{\sigma_2}(x_2, s) \bigr | ds\, \int_G|t|^{[\zeta''_2] + [\theta''_2] + [\gamma_2]+[\beta_{02}']-[\beta_{02}]-[\gamma_2']} \\
    &\hspace{2cm}\times |t|^{\frac{([\gamma_1]+[\zeta']-[\zeta]-[\gamma_1'])[\mu'_2]}{v_j}} \sup_{x\in G}|  X^{\eta'_0}_{t} X^{\gamma_1''}_{x_1=x}\tilde{X}^{\zeta'}_{x_1}\kappa_{\sigma_1}(x_1, t) \bigr | dt,\\
%%%%%%%%%%%%%%%
%\end{align*}
%\begin{align*}
    J_3=&\sum_{k=1}^n\sum_{\substack{[\mu''_1]+[\mu''_2]=v_k\\ \mu''_1,\mu''_2\neq 0}}
    \int_{G}|s|^{[\eta_0']-[\eta_0]+ [\zeta_1] + [\zeta'_2] + [\theta_1] + [\theta'_2] +[\gamma_1]+[\zeta']-[\zeta]-[\gamma_1']}\\
    &\hspace{2cm}\times |s|^{\frac{([\gamma_2]+[\beta_{02}']-[\beta_{02}]-[\gamma_2'])[\mu''_1]}{v_k}} \sup_{x\in G}\bigl | X^{\theta_3}_{s} X^{\gamma_2''}_{x_2=x}\tilde{X}^{\beta_{02}'}_{x_2} \kappa_{\sigma_2}(x_2, s) \bigr | ds\,\,\\
    &\hspace{2cm}\times \int_G|t|^{[\zeta''_2] + [\theta''_2] + \frac{([\gamma_2]+[\beta_{02}']-[\beta_{02}]-[\gamma_2'])[\mu''_2]}{v_k}}\sup_{x\in G}|  X^{\eta'_0}_{t} X^{\gamma_1''}_{x_1=x}\tilde{X}^{\zeta'}_{x_1}\kappa_{\sigma_1}(x_1, t) \bigr | dt,\\
%%%%%%%%%%%%%%
    J_4=&\sum_{j,k=1}^n \sum_{\substack{[\mu'_1]+[\mu'_2]=v_j\\ \mu'_1,\mu'_2\neq 0}}\sum_{\substack{[\mu''_1]+[\mu''_2]=v_k\\ \mu''_1,\mu''_2\neq 0}}
    \int_{G}|s|^{[\eta_0']-[\eta_0]+ [\zeta_1] + [\zeta'_2] + [\theta_1] + [\theta'_2] +\frac{([\gamma_1]+[\zeta']-[\zeta]-[\gamma_1'])[\mu'_1]}{v_j}} \\
    &
    \hspace{2cm}\times |s|^{\frac{([\gamma_2]+[\beta_{02}']-[\beta_{02}]-[\gamma_2'])[\mu''_1]}{v_k}} \sup_{x\in G}\bigl | X^{\theta_3}_{s} X^{\gamma_2''}_{x_2=x}\tilde{X}^{\beta_{02}'}_{x_2} \kappa_{\sigma_2}(x_2, s) \bigr |  ds\,\,\\
    &\hspace{2cm}\times \int_G|t|^{[\zeta''_2] + [\theta''_2] + \frac{([\gamma_1]+[\zeta']-[\zeta]-[\gamma_1'])[\mu'_2]}{v_j}+
    \frac{([\gamma_2]+[\beta_{02}']-[\beta_{02}]-[\gamma_2'])[\mu''_2]}{v_k}} \\
    &\hspace{2cm}\times \sup_{x\in G}|  X^{\eta'_0}_{t} X^{\gamma_1''}_{x_1=x}\tilde{X}^{\zeta'}_{x_1}\kappa_{\sigma_1}(x_1, t) \bigr | dt.
\end{align*}
To prove the convergence of $J_1, \ldots, J_4$ for all the indices in the ranges appearing in \eqref{pf.T3.1}, we will use  Lemma~\ref{FRcorollary}. Moreover, this will allow us to show that the terms above are bounded by the product of suitable seminorms of $\sigma_1$ and $\sigma_2$, as desired.

We start by considering term $J_1$.
By Lemma~\ref{FRcorollary}, $J_1$ converges and is bounded by the product of suitable seminorms of $\sigma_1$ and $\sigma_2$ if the following conditions are satisfied simultaneously:
\begin{align}
\begin{split}\label{pf.T3.2.0}
    [\eta_0']-[\eta_0]+ [\zeta_1] + [\zeta'_2] + [\theta_1] + [\theta'_2] +[\gamma_1]+[\zeta']-[\zeta]-[\gamma_1']+Q& \\
    &\hspace{-105pt}>\max\left\{ \frac{Q+m_2+\delta([\gamma_2'']+[\beta_{02}'])+[\theta_3]}{\rho},0\right\},\\
   [\zeta''_2] + [\theta''_2] + [\gamma_2]+[\beta_{02}']-[\beta_{02}]-[\gamma_2']+Q& \\
   &\hspace{-105pt}>\max\left\{\frac{Q+m_1+\delta([\gamma_1'']+[\zeta'])+[\eta_0']}{\rho},0 \right\}.
\end{split}
\end{align}
 Note that, since $[\eta_0']-[\eta_0]\geq 0$, $[\zeta_1] + [\zeta'_2] + [\theta_1] + [\theta'_2] \geq 0$, $[\zeta''_2] + [\theta''_2] \geq 0$, $[\zeta']-[\zeta] \geq [\gamma_1']$, $[\beta_{02}']-[\beta_{02}] \geq [\gamma_2']$, $[\gamma_2]\geq [\gamma_2'']$ and $[\gamma_1]\geq [\gamma_1'']$, for \eqref{pf.T3.2.0} to be satisfied, it suffices that the following two inequalities hold:
 \begin{align}
 \begin{split}\label{pf.T3.2}
    [\gamma_1]+Q&>\max\left\{ \frac{Q+m_2+\delta([\gamma_2]+[\beta_{02}'])+[\theta_3]}{\rho},0\right\},\\
   [\gamma_2]+Q&>\max\left\{\frac{Q+m_1+\delta([\gamma_1]+[\zeta'])+[\eta_0']}{\rho},0 \right\}.
\end{split}
\end{align}
Moreover, since we may always choose $\gamma_1,\gamma_2$ sufficiently large, that is, by choosing $M>C_1(\beta_0)$, $N>C_2(\beta_0)$ for some sufficiently large constants $C_1(\beta_0), C_2(\beta_0) > 0$ and such that $M-[\zeta_3]$, $N-[\theta_3]$ and
the right-hand sides of \eqref{pf.T3.2} are always positive, we can consider the latter case directly.
Under this assumption and considering the ranges of the indices, we can reduce 
\eqref{pf.T3.2} to
\begin{align}
\begin{split} \label{pf.T3.3}
   [\gamma_1]&> \frac{\delta[\gamma_2]}{\rho}+c_1(\beta_0,n,\delta,\rho,m_2,\nu,L_1),\\
    [\gamma_2]&> \frac{\delta[\gamma_1]}{\rho}+c_2(\beta_0,n,\delta,\rho,m_1,\nu,L_1).
\end{split}
\end{align}
We observe that $0 < M-[\zeta_3]<[\gamma_1]\leq M-[\zeta_3]+v_n$, $0 < N-[\theta_3]<[\gamma_1]\leq N-[\theta_3]+v_n$, and that $[\zeta_3],[\theta_3]$ are bounded by constants depending on $\beta_0,\nu,L_1,v_n$, so we conclude that \eqref{pf.T3.3} is satisfied if $M,N \in \N_0$ are chosen such that
\begin{align}
\begin{split} \label{pf.T3.4}
   M&> \frac{\delta}{\rho}N+c_1(\beta_0,n,\delta,\rho,m_2,\nu,L_1),\\
    N&> \frac{\delta}{\rho}M+c_2(\beta_0,n,\delta,\rho,m_1,\nu,L_1)\underset{\text{if}\,\delta\neq 0}{\Longleftrightarrow} M<\frac{\rho}{\delta}N-\frac{\rho}{\delta}c_2(\beta_0,n,\delta,\rho,m_1,\nu,L_1).
\end{split}
\end{align}
Note that if $\delta=0$ the system of inequalities in \eqref{pf.T3.4} is trivially solvable. In particular, the set of solutions in this case is the set
\begin{align}
H_1(\beta_0):=&\{(M,N)\in\mathbb{N}^2; M> c_1(\beta_0,n,\delta,\rho,m_2,\nu,L_1), N>c_2(\beta_0,n,\delta,\rho,m_2,\nu,L_1)\}\\
&\bigcap \{(M,N)\in\mathbb{N}^2; M> C_1(\beta_0), N>C_2(\beta_0\},
\end{align}
which depends on the multi-index $\beta_0$, fixed at the beginning. On the other hand, if $\delta\neq 0<\rho$ then $\rho/\delta>1> \delta/\rho$, implying that the set of solutions of \eqref{pf.T3.4} is the set
\begin{align}
    \Gamma_1(\beta_0):=&\{ (M,N)\in\mathbb{N}^2; \frac{\delta}{\rho}N+c_1(\beta_0)<M<\frac{\rho}{\delta}N-\frac{\rho}{\delta}c_2(\beta_0)\}\\
    &\bigcap \{(M,N)\in\mathbb{N}^2; M> C_1(\beta_0), N>C_2(\beta_0\}.
\end{align}
This in turn implies that \eqref{pf.T3.3} is solvable for any $(M,N)=(M(\beta_0),N(\beta_0))\in \Gamma_1(\beta_0)$ if $\delta\in (0,\rho)$ and for any $(M,N)=(M(\beta_0),N(\beta_0))\in H_1(\beta_0)$ if $\delta=0$, and that we can choose $(M(\beta_0),N(\beta_0))$ as large as we want.
Therefore, by Lemma \ref{FRcorollary}, $J_1$ is convergent and satisfies $J_1\lesssim \|\sigma_1\|_{S^{m_1}_{\rho,\delta},a_1,b_1} \|\sigma_2\|_{S^{m_2}_{\rho,\delta},a_2,b_2}$ for suitably chosen $a_1,a_2,b_1,b_2 \in \N_0$.

To treat the integral $J_2$, we use the same line of arguments as for $J_1$. Hence, once again by Lemma~\ref{FRcorollary}, $J_2$ is convergent if
\begin{align}
    [\eta_0']-[\eta_0] + [\zeta_1] + [\zeta'_2] + [\theta_1] + [\theta'_2] +&\frac{[\mu'_1]([\gamma_1]+[\zeta']-[\zeta]-[\gamma_1'])}{v_j}+Q \\
    &>\max\left\{ \frac{Q+m_2+\delta([\gamma_2'']+[\beta_{02}'])+[\theta_3]}{\rho},0\right\}\\
   [\zeta''_2] + [\theta''_2] +  [\gamma_2]+[\beta_{02}']-[\beta_{02}]-[\gamma_2']+&\frac{[\mu'_2]([\gamma_1]+[\zeta']-[\zeta]-[\gamma_1'])}{v_j}+Q \\
   &>\max\left\{\frac{Q+m_1+\delta([\gamma_1'']+[\zeta'])+[\eta_0']}{\rho},0 \right\},\label{pf.T3.5}
\end{align}
are satisfied simultaneously. Estimating the parameters and arguing as for $J_1$, it suffices to require
\begin{align}
\begin{split} \label{pf.T3.6}
   \frac{[\mu_1']}{ v_j}M &>\frac{\delta}{\rho} N+c_1(\beta_0,n,\delta,\rho,m_2,\nu,L_1),\\
    N+\frac{[\mu_2']}{v_j}M &> \frac{\delta}{\rho}M+c_2(\beta_0,n,\delta,\rho,m_1,\nu,L_1),
\end{split}
\end{align}
 to have the desired convergence and estimate. Now for all $j=1,\ldots,n$, and for all $M,N \in \N_0$, with
\begin{align}
  \frac{[\mu_1']}{ v_j}M &> \frac{1}{ v_n}M, \\
  N+\frac{[\mu_2']}{v_j}M &>\frac{1}{ v_n}M,
\end{align}
the inequalities in \eqref{pf.T3.6} are satisfied if we pick $M,N \in \N_0$ sufficiently large and such that
\begin{align}
\begin{split} \label{pf.T3.6.1}
  &\frac{1}{ v_n}M>\frac{\delta}{\rho} N+c_1(\beta_0,n,\delta,\rho,m_2,\nu,L_1), \\
  &\frac{1}{ v_n}M>\frac{\delta}{\rho}M+c_2(\beta_0,n,\delta,\rho,m_1,\nu,L_1)\Longleftrightarrow\Big(\frac{1}{ v_n}-\frac{\delta}{\rho}\Big)M>c_2(\beta_0,n,\delta,\rho,m_1,\nu,L_1).
\end{split}
\end{align}
The system above is always solvable provided that $\frac{1}{ v_n}-\frac{\delta}{\rho}>0$, that is, if $\delta<\frac{\rho}{v_n}$. Thus, 
denoting $c_1(\beta_0):=c_1(\beta_0,n,\delta,\rho,m_1,\nu,L_1)$ and $ c_2(\beta_0):=c_2(\beta_0,n,\delta,\rho,m_1,\nu,L_1)$ for the sake of simplicity, the inequalities in \eqref{pf.T3.6.1} are simultaneously satisfied in the open cone 
\begin{align}
    \Gamma_2(\beta_0):=&\{(M, N)\in \mathbb{N}^2;   M>\frac{\delta v_n }{\rho} N+v_nc_1(\beta_0), M>\frac{v_n\rho}{\rho-v_n\delta} c_2(\beta_0)\}\\
    &\bigcap \{(M,N)\in\mathbb{N}^2; M> C_1(\beta_0), N>C_2(\beta_0\} \quad \text{if}\,\, 0<\delta<\frac{\rho}{v_n},
    \end{align}
and in the open quadrant
\begin{align}
H_{2}(\beta_0):=&\{(M, N)\in \mathbb{N}^2;  M>\max\{v_n c_1(\beta_0), v_n c_2(\beta_0)\}\}\\
&\bigcap \{(M,N)\in\mathbb{N}^2; M> C_1(\beta_0), N>C_2(\beta_0\} \quad \text{if}\,\, \delta=0.\end{align}
Hence the set of conditions that guarantee the convergence of $J_2$ is satisfied for any $(M,N)=$ $(M(\beta_0),N(\beta_0)) \in \Gamma_2(\beta_0)$ when $0<\delta<\frac{\rho}{v_n}$, and for any $(M,N)=(M(\beta_0),N(\beta_0))\in H_1(\beta_0)$  when $\delta=0$. Therefore, once again by Lemma \ref{FRcorollary}, $J_2\lesssim \|\sigma_1\|_{S^{m_1}_{\rho,\delta},a_1,b_1}\|\sigma_2\|_{S^{m_2}_{\rho,\delta},a_2,b_2}$ for suitably chosen $a_1,a_2,b_1,b_2 \in \N_0$.

Of course, in order to have the convergence of $J_1, J_2$ simultaneously, it is sufficient to choose $M(\beta_0), N(\beta_0)$ large enough, namely in the intersection $\Gamma_1\cap\Gamma_2$ when $0\neq\delta<\frac{\rho}{v_n}$, or in $H_1\cap H_2$ when $\delta=0$, which is always possible. Finally note that the validity of \eqref{pf.T3.6.1} does not depend on the index $j$ under consideration, so it holds for any $j=1,\ldots, n$.
 
 We are now left with the proof of the convergence of $J_3$ and $J_4$. Since the strategy should be clear now, we will immediately state the inequalities which are sufficient to conclude the convergence in the two cases.
 
 For $J_3$ to be convergent, we need that, for every $k=1,\ldots, n$,
\begin{align}
\begin{split} \label{pf.T3.7}
   M+\frac{[\mu_1'']}{v_k}N &> \frac{\delta}{\rho} N+c_1(\beta_0,n,\delta,\rho,m_2,\nu,L_1)\\
    \frac{[\mu_2'']}{ v_k}N &>\frac{\delta}{\rho}M+c_2(\beta_0,n,\delta,\rho,m_1,\nu,L_1).
\end{split}
\end{align}
Repeating the same reasoning as in the analysis of $J_2$, the system \eqref{pf.T3.7} is solvable if
\begin{align}
\begin{split} \label{pf.T3.7.2}
   &\frac{1}{v_n}N> \frac{\delta}{\rho} N+c_1(\beta_0,n,\delta,\rho,m_2,\nu,L_1)\\
    &\frac{1}{ v_n}N>\frac{\delta}{\rho}M+c_2(\beta_0,n,\delta,\rho,m_1,\nu,L_1),
\end{split}
\end{align}
meaning that a set of solutions of \eqref{pf.T3.7} is given by
\begin{align}
    \Gamma_3(\beta_0):=&\left\{(M,N)\in\mathbb{N}^2|\,   M<\frac{\rho}{\delta v_n}N-\frac{\rho}{\delta}c_2(\beta_0), N>\frac{\rho v_n}{\rho-\delta v_n} c_1(\beta_0)\right\}\\
    &\bigcap \{(M,N)\in\mathbb{N}^2; M> C_1(\beta_0), N>C_2(\beta_0\} \quad \text{if} \,\, 0<\delta<\frac{\rho}{v_n},
\end{align}
and by
\begin{align}
H_3(\beta_0):=&\left\{(M,N)\in\mathbb{N}^2| N>\max\{v_n c_1(\beta_0),v_n c_2(\beta_0)\}\right\}\\
&\bigcap \{(M,N)\in\mathbb{N}^2; M> C_1(\beta_0), N>C_2(\beta_0\} \quad \text{if} \,\, \delta=0.
\end{align}
Hence we have a set of solutions of \eqref{pf.T3.7} and the convergence of $J_3$ for suitably chosen positive $M(\beta_0)$ and $N(\beta_0)$ in the set of solutions. Moreover, by Lemma \ref{FRcorollary}, we have $J_3\lesssim \|\sigma_1\|_{S^{m_1}_{\rho,\delta},a_1,b_1}\|\sigma_2\|_{S^{m_2}_{\rho,\delta},a_2,b_2}$  for suitably chosen $a_1,a_2,b_1,b_2 \in \N_0$.

To have the convergence of $J_4$ (and the desired upper bound) for all $j,k=1,\ldots \dimG$, we need the parameters to satisfy
\begin{align}
   &\frac{[\mu_1']}{v_j}M+\frac{[\mu_1'']}{v_k}N> \frac{\delta}{\rho}N+c_1(\beta_0,n,\delta,\rho,m_2,\nu,L_1), \\
    &\frac{[\mu_2']}{v_j}M+\frac{[\mu_2'']}{v_k}N> \frac{\delta}{\rho}M+c_2(\beta_0,n,\delta,\rho,m_1,\nu,L_1).
    \label{pf.T3.8}
\end{align}
Now, since
\begin{align}
  &\frac{[\mu_1']}{v_j}M+\frac{[\mu_1'']}{v_k}N\geq \frac{1}{v_n}M+\frac{1}{v_n}N>\frac{1}{v_n}M,\\  
  &\frac{[\mu_2']}{v_j}M+\frac{[\mu_2'']}{v_k}N\geq \frac{1}{v_n}M+\frac{1}{v_n}N>\frac{1}{v_n}N,
\end{align}
for all $j,k=1,\ldots,\dimG$, we have \eqref{pf.T3.8} if the stricter conditions
\begin{align}
\frac{1}{v_n}M&>\frac{\delta}{\rho}N+c_1(\beta_0,n,\delta,\rho,m_2,\nu,L_1),\\
\frac{1}{v_n}N&> \frac{\delta}{\rho}M+c_2(\beta_0,n,\delta,\rho,m_1,\nu,L_1). 
\end{align}
 hold.
Note these conditions are satisfied for $\Gamma_2(\beta_0) \cap \Gamma_3(\beta_0)$ if $\delta\in (0,\frac{\rho}{v_n})$ and in
$H_2(\beta_0) \cap H_3(\beta_0)$ if $\delta=0$.
So for all solutions chosen from the according set, the conditions are satisfied and we have
$J_4\lesssim \|\sigma_1\|_{S^{m_1}_{\rho,\delta},a_1,b_1}\|\sigma_2\|_{S^{m_2}_{\rho,\delta},a_2,b_2}$ for suitably chosen $a_1,a_2,b_1,b_2 \in \N_0$.

Now to conclude the proof, it suffices to show that we can choose $(M,N)=(M(\beta_0), N(\beta_0))\in \Gamma :=\bigcap_{i=1}^3 \Gamma_i(\beta_0)$ if $\delta\in(0,\frac{\rho}{v_n})$ and $(M,N)=(M(\beta_0),N(\beta_0))\in H:=\bigcap_{i=1}^3 H_i$ if $\delta=0$.
Since the case when $\delta=0$ is trivial, we focus on the case when $\delta\in(0,\frac{\rho}{v_n})$. Recall that
\begin{align}
    \Gamma_1(\beta_0):=&
    \{ (M,N)\in\mathbb{N}^2; \frac{\delta}{\rho}N+c_1(\beta_0)<M<\frac{\rho}{\delta}N-\frac{\rho}{\delta}c_2(\beta_0)\}\\
    &\bigcap \{(M,N)\in\mathbb{N}^2; M> C_1(\beta_0), N>C_2(\beta_0\},
\end{align}
and that
\begin{align}
    &\Gamma_2(\beta_0)\bigcap\Gamma_3(\beta_0)=\{(M, N)\in \mathbb{N}^2;   \frac{\delta v_n }{\rho} N+v_nc_1(\beta_0)<M< \frac{\rho}{\delta v_n}N-\frac{\rho}{\delta}c_2(\beta_0),\\ 
    &N>\frac{\rho v_n}{\rho-\delta v_n} c_1(\beta_0), M>\frac{v_n\rho}{\rho-v_n\delta} c_2(\beta_0)\}\bigcap \{(M,N)\in\mathbb{N}^2; M> C_1(\beta_0), N>C_2(\beta_0\}.
\end{align}
Therefore, since 
\begin{align}
    \frac{\delta}{\rho}<\frac{\delta v_n}{\rho}<\frac{\rho}{v_n \delta}<\frac{\rho}{\delta}
\end{align}
for $\delta\in(0,\frac{\rho}{v_n})$, we have
\begin{align}
    \Gamma:=\bigcap_{i=1}^3 \Gamma_i\neq\emptyset
\end{align}
and we can choose $(M(\beta_0),N(\beta_0))\in \Gamma$ as large as necessary and such that $J_i$ is convergent and satisfies $J_i\lesssim \|\sigma_1\|_{S^{m_1}_{\rho,\delta},a_1,b_1}\|\sigma_2\|_{S^{m_2}_{\rho,\delta},a_2,b_2}$ for all $i=1,\ldots,4$ and suitably chosen $a_1,a_2,b_1,b_2 \in \N_0$.
Then, by choosing $M(\beta_0)\geq L_0, N(\beta_0)\geq L_0$\footnote{We recall that $L_0\in\mathbb{N}$ is fixed at the beginning of the proof of Step~1.}, and such that $(M(\beta_0), N(\beta_0)) \in \Gamma$ if $\delta\in (0,\frac{\rho}{v_n})$ and such that $(M(\beta_0), N(\beta_0)) \in H$ if $\delta=0$, we conclude that
 \begin{align}
 \| X^{\beta_0}_x r_{M(\beta_0),N(\beta_0)}(x,\pi)\pi(I+\mathcal{R})^{L_1}\|_{\mathscr{L}(\mathcal{H}_\pi)}
 \lesssim \|\sigma_1\|_{S^{m_1}_{\rho,\delta},a_1,b_1}\|\sigma_2\|_{S^{m_2}_{\rho,\delta},a_2,b_2}
\end{align}
 for suitably chosen $a_1,a_2,b_1,b_2 \in \N_0$. This completes the proof of Step~1.

\medskip
 
\noindent{\it Proof of Step~2.} 
Now we use Step~1 to prove that \eqref{Est.T3.1-new} holds true for $M_0,N_0\in\mathbb{N}$ with $L_0=\min\{M_0,N_0\}$.
 
Recall that for any fixed $\beta_0\in\mathbb{N}^n_0$ Step~1 holds true for some $M(\beta_0),N(\beta_0)\geq L_0$. However, it is not restrictive to choose $M(\beta_0), N(\beta_0)>\max\{M_0,N_0\}>L_0$ (for details we refer to the proof of Step~1), so we will make such a choice for these parameters in the rest of the proof. Now, for such $M(\beta_0),N(\beta_0)$ we have the following identity
\begin{align}
    r_{M_0,N_0}(x,\pi)&= 
    \iint_{G \times G} \Big(R_{x
    ,M(\beta_0)}^{\kappa_{\sigma_1}(\cdot, s^{-1}y)}(p_1(y,s)) + \!\!\!\!\underset{\substack{M_0<[\alpha]\leq M(\beta_0)} }{\overline{\sum}} q_{\alpha}(p_1(y,s))X^{\alpha}_x \kappa^\tau_{\sigma_1}(x,s^{-1}y) \Big) \nonumber
\\
    &\hspace{-10pt} \times \Big(R_{x,N(\beta_0)}^{\kappa_{\sigma_2}(\cdot, s)}(p_2(y,s))  +\underset{\substack{N_0<[\beta]\leq N(\beta_0)} }{\overline{\sum}} q_{\beta}(p_2(y,s))X^{\beta}_x \kappa_{\sigma_2}(x,s) \Big)  \hspace{2pt} \pi(y)^*  ds\,dy \nonumber\\
    &=\iint_{G\times G}R_{x
    ,M(\beta_0)}^{\kappa_{\sigma_1}(\cdot,  s^{-1}y)}(p_1(y,s))R_{x,N(\beta_0)}^{\kappa_{\sigma_2}(\cdot, s)}(p_2(y,s))\hspace{2pt} \pi(y)^* \,ds\,dy \label{T3.1}\\
    &\hspace{-45pt}+ \underset{\substack{N_0<[\beta]\leq N(\beta_0)} }{\overline{\sum}}\iint_{G\times G}R_{x
    ,M(\beta_0)}^{\kappa_{\sigma_1}(\cdot, s^{-1}y)}(p_1(y,s))q_{\beta}(p_2(y,s))X^{\beta}_x \kappa_{\sigma_2}(x,s) \hspace{2pt} \pi(y)^*  ds\,dy \label{T3.2}\\
    &\hspace{-45pt}+\underset{\substack{M_0<[\alpha]\leq M(\beta_0)} }{\overline{\sum}} \iint_{G\times G} R_{x,N(\beta_0)}^{\kappa_{\sigma_2}(\cdot,  s)}(s^{-1})q_{\alpha}(p_1(y,s))X^{\alpha}_x \kappa_{\sigma_1}(x,s^{-1}y)   \hspace{2pt} \pi(y)^*  ds\,dy \label{T3.3} \\
    &\hspace{-45pt}+\underset{\substack{M_0<[\alpha]\leq M(\beta_0)\\ N_0<[\beta]\leq N(\beta_0)} }{\overline{\sum}}\iint_{G\times G}
    q_{\alpha}(p_1(y,s))X^{\alpha}_{x_1=x} \kappa_{\sigma_1}(x_1,s^{-1}y)\\
    &\hspace{-10pt} \times q_{\beta}(p_2(y,s))X^{\beta}_x \kappa_{\sigma_2}(x,s) \hspace{2pt} \pi(y)^*  ds\,dy,\label{T3.4}
\end{align}
with 
\begin{equation}\label{T3.1.1}
  \| X^{\beta_0}_x\eqref{T3.1}(I+\mathcal{R})^{-\frac{m_1+m_2+\delta[\beta_0]-(\rho-\delta)(L+1)}{\nu}}\|_{\mathscr{L}(\mathcal{H}_\pi)}\lesssim \|\s\|_{S^{m_1}_{\rho,\delta}a_1,b_1}\|\ss\|_{S^{m_2}_{\rho,\delta}a_2,b_2},
\end{equation}
\begin{equation}\label{T3.1.2}
  \| X^{\beta_0}_x\eqref{T3.2}(I+\mathcal{R})^{-\frac{m_1+m_2+\delta[\beta_0]-(\rho-\delta)(L+1)}{\nu}}\|_{\mathscr{L}(\mathcal{H}_\pi)}\lesssim \|\s\|_{S^{m_1}_{\rho,\delta}a_1,b_1}\|\ss\|_{S^{m_2}_{\rho,\delta}a_2,b_2},
\end{equation}
\begin{equation}\label{T3.1.3}
  \| X^{\beta_0}_x\eqref{T3.3}(I+\mathcal{R})^{-\frac{m_1+m_2+\delta[\beta_0]-(\rho-\delta)(L+1)}{\nu}}\|_{\mathscr{L}(\mathcal{H}_\pi)}\lesssim \|\s\|_{S^{m_1}_{\rho,\delta}a_1,b_1}\|\ss\|_{S^{m_2}_{\rho,\delta}a_2,b_2},
\end{equation}
\begin{equation}\label{T3.1.4}
  \| X^{\beta_0}_x\eqref{T3.4}(I+\mathcal{R})^{-\frac{m_1+m_2+\delta[\beta_0]-(\rho-\delta)(L+1)}{\nu}}\|_{\mathscr{L}(\mathcal{H}_\pi)}\lesssim \|\s\|_{S^{m_1}_{\rho,\delta}a_1,b_1}\|\ss\|_{S^{m_2}_{\rho,\delta}a_2,b_2},
\end{equation}
for suitably chosen $a_1,a_2,b_1,b_2\in\mathbb{N}_0$.
More precisely, due to the similarity of \eqref{T3.1}, \eqref{T3.2},\eqref{T3.3} and \eqref{T3.4} with $T_3, T_1, T_2$ and $T_0$ respectively, we have that \eqref{T3.1.1} follows from Step~1 of the proof, while
\eqref{T3.1.2} and \eqref{T3.1.3} follow from Lemma~\ref{lemma1} and 
Lemma~\ref{lemma2}, respectively. We remark that in these terms the presence of $q_\alpha(p_1(y,s))$ and $q_\beta(p_2(y,s))$  are not an obstruction to the convergence of the integrals. Indeed, they can be rewritten as combinations of homogeneous polynomials according to formulas \eqref{eqn1811-1510} and \eqref{eqn1811-1511}, so their presence actually improves the convergence and therefore yields the same estimates as in the Lemmas~\ref{lemma1} and \ref{lemma2}. Moreover note that $X^\beta_x \kk(x,s)$ is the associated kernel of the symbol $X_x^\beta\sigma_2(x,\pi)$, whose seminorm is bounded by a seminorm of $\sigma_2$. Hence, an application of Lemma~\ref{lemma1} yields exactly \eqref{T3.1.2}. For the same reason \eqref{T3.1.3} is given by an application of Lemma~\ref{lemma2}.

Finally, in order to prove \eqref{T3.1.4}, we first note that a combination of \eqref{eqn1811-1510}, \eqref{eqn1811-1511} and a standard change of variables gives
\begin{align}
   & \iint_{G\times G}
    q_{\alpha}(p_1(y,s))X^{\alpha}_{x_1=x} \kappa_{\sigma_1}(x_1,s^{-1}y)
    q_{\beta}(p_2(y,s))X^{\beta}_x \kappa_{\sigma_2}(x,s)  \hspace{2pt} \pi(y)^*  ds\,dy\\
    &=\underset{\substack{[\alpha_1]+[\alpha_2]=[\alpha]\\ [\beta_1]+[\beta_2]=[\beta]}}{\overline{\sum}}\iint_{G\times G}
    \tilde{q}_{\alpha_2}(s^{-1}y)\tilde{q}_{\beta_2}(s^{-1}y)X^{\alpha}_{x_1=x}\kappa_{\sigma_1}(x_1,s^{-1}y)\pi(s^{-1}y)^*\\
    &\quad \times  
    \tilde{q}_{\alpha_1}(s)\tilde{q}_{\beta_1}(s)X^{\beta}_x \kappa_{\sigma_2}(x,s) \pi(s)^* ds\,dy\\
    &=\underset{\substack{[\alpha_1]+[\alpha_2]=[\alpha]\\ [\beta_1]+[\beta_2]=[\beta]}}{\overline{\sum}} (\Delta^{\alpha_2}\Delta^{\beta_2}X^\alpha \sigma_1 (x,\pi)) (\Delta^{\alpha_1}\Delta^{\beta_1}X^\beta \sigma_2 (x,\pi))\label{pf.T3.12}.
\end{align}
So, by \eqref{pf.T3.12} and an application of the Leibniz rule, we obtain
\begin{align}
    \eqref{T3.1.4}\leq&\underset{\substack{M_0<[\alpha]\leq M(\beta_0)\\ N_0<[\beta]\leq N(\beta_0)} }{\overline{\sum}}\underset{\substack{[\alpha_1]+[\alpha_2]=[\alpha]\\ [\beta_1]+[\beta_2]=[\beta]}}{\overline{\sum}}\underset{\substack{[\beta_{01}]+[\beta_{02}]=[\beta_0]}}{\overline{\sum}}\\
   &\hspace{20pt} \|(\Delta^{\alpha_2}\Delta^{\beta_2}X^{\beta_{01}}X^\alpha \sigma_1 (x,\pi)) (\Delta^{\alpha_1}\Delta^{\beta_1}X^{\beta_{02}}X^\beta \sigma_2 (x,\pi))\\
    & \times 
    (I+\mathcal{R})^{-\frac{m_1+m_2+\delta[\beta_0]-(\rho-\delta)(L+1)}{\nu}}
    \|_{\mathscr{L}(\mathcal{H}_\pi)} \lesssim 
    \|\s\|_{S^{m_1}_{\rho,\delta}a_1,b_1}\|\ss\|_{S^{m_2}_{\rho,\delta}a_2,b_2}
\end{align}
for suitably chosen $a_1,a_2,b_1,b_2\in\mathbb{N}_0$, which proves \eqref{T3.1.4}. Let us emphasize that we may choose the same $a_1,a_2,b_1, b_2$ for each of the estimates \eqref{T3.1.1}-\eqref{T3.1.4}, by choosing them sufficiently large.

Hence, we have proved that, for all $\beta_0\in\mathbb{N}_0^n$,
\begin{align}
    \| X^{\beta_0}_x r_{M_0,N_0}(x,\pi)(I+\mathcal{R})^{-\frac{m_1+m_2+\delta[\beta_0]-(\rho-\delta)(L_0+1)}{\nu}}\|_{\mathscr{L}(\mathcal{H}_\pi)}\lesssim \|\s\|_{S^{m_1}_{\rho,\delta}a_1,b_1}\|\ss\|_{S^{m_2}_{\rho,\delta}a_2,b_2},
\end{align}
 which concludes the proof of Step~2 and thus of the lemma.
\end{proof}

\medskip

To summarize, we have proved the following so far: if $\sigma_1, \sigma_2\in S^{-\infty}(G)$ and such that $\k,\kk\in\mathscr{S}(G\times G)$, then for every fixed $M,N\in \mathbb{N}$, $L:=\min\{M,N\}$, and for all $a,b\in \mathbb{N}_0$, there exist $a_1,a_2,b_1,b_2\in\mathbb{N}_0$ such that 
\begin{align}
\|R_L^\sigma \|_{S^{m_1+m_2-(\rho-\delta)(L+1)}_{\rho,\delta,a,b}}=\|\sum_{j=1}^3T_j\|_{S^{m_1+m_2-(\rho-\delta)(L+1)}_{\rho,\delta,a,b}}\lesssim \|\sigma_1\|_{S^{m_1}_{\rho,\delta,a_1,b_1}}\|\sigma_2\|_{S^{m_2}_{\rho,\delta,a_2,b_2}},
\end{align}
where
\begin{align}
R^{\sigma}_L := \sigma -\!\!\!\underset{\substack{[\alpha] \leq M, \\ [\beta] \leq N}}{\sum} \underset{\substack{[\alpha_1] + [\alpha_2] = [\alpha], \\ [\beta_1] + [\beta_2] = [\beta]}}{\sum} \hspace{5pt}\!\!\!\!\! c_{\alpha_1, \alpha_2} \, c_{\beta_1, \beta_2} \Bigl ( \Delta^{\alpha_2} \Delta^{\beta_1} X^\alpha \sigma_1 \Bigr ) \, \Bigl ( \Delta^{\beta_2} \Delta^{\alpha_1} X^\beta \sigma_2 \Bigr ).
\end{align}
This, in particular, implies that $R_L^\sigma \in S^{m_1+m_2-(\rho-\delta)(L+1)}_{\rho,\delta}(G)$, which is the desired conclusion of Part~II.

Our final goal now is the extension of this result to the general case when $\sigma_1\in S^{m_1}_{\rho, \delta}(G)$, $\sigma_2 \in S^{m_2}_{\rho, \delta}(G)$. This will be done in the third and final part of the proof.
\end{proof}

\medskip

\begin{proof}[Proof of Theorem~\ref{thm:asym_exp_comp_tau}, Part III]
To pass from $\sigma_1, \sigma_2 \in S^{-\infty}(G)$ with $\kappa_{\sigma_1}, \kappa_{\sigma_2} \in \SC(G \times G)$ to general $\sigma_1\in S^{m_1}_{\rho, \delta}(G)$, $\sigma_2 \in S^{m_2}_{\rho, \delta}(G)$, we employ an argument based on the approximation of $\sigma_1$ and $\sigma_2$ by nets of smoothing symbols as in the proof of Theorem~\ref{thm:asym_exp_ch_qu}.
By \cite[Lem.~5.4.11]{FR} and the equivalence of seminorms \cite[Thm.~5.5.20]{FR}, there exist nets
\begin{align*}
    \{ \sigma_{j, \varepsilon} \}_{\varepsilon \in (0, 1)} \subseteq S^{-\infty}(G), \hspace{5pt} j = 1, 2,
\end{align*}
with $\kappa_{j,\varepsilon}\in \TD(G\times G)$, such that for every $\theta \geq 0$ and every seminorm $\| \, . \, \|_{S^{m_j}_{\rho, \delta}, a, b}$ there exist a constant $C_j = C_j(a, b, m_j, \theta, \rho, \delta) > 0$ and $a', b' \in \N_0$ such that
\begin{align}
    \| \sigma_j - \sigma_{j, \varepsilon} \|_{S^{m_j + \theta}_{\rho, \delta}, a, b} &\leq C_j \| \sigma_1 \|_{S^{m_j}_{\rho, \delta}, a', b'} \, \varepsilon^{\frac{\theta}{\hdeg}}, \label{eq:smoothing_app} \\
    \|\sigma_{j, \varepsilon} \|_{S^{m_j}_{\rho, \delta}, a, b} &\leq C_j \| \sigma_j \|_{S^{m_j}_{\rho, \delta}, a', b'} \, \varepsilon^{\frac{\theta}{\hdeg}} \label{eq:smoothing_bound}
\end{align}
for $j = 1, 2$.

Let now $\sigma_\varepsilon$ be the symbol such that $\Optau(\sigma_{1,\varepsilon})\Optau(\sigma_{2,\varepsilon})=\Optau(\sigma_{\varepsilon})$. Then, by the main part of the proof and the fact that
\begin{align}
    \sigma_\varepsilon= \underset{\substack{[\alpha] \leq M, \\ [\beta] \leq N}}{\sum} \underset{\substack{[\alpha_1] + [\alpha_2] = [\alpha], \\ [\beta_1] + [\beta_2] = [\beta]}}{\sum} \hspace{5pt}\!\!\!\!\! c_{\alpha_1, \alpha_2} \, c_{\beta_1, \beta_2} \Bigl ( \Delta^{\alpha_2} \Delta^{\beta_1} X^\alpha \sigma_{1,\varepsilon} \Bigr ) \, \Bigl ( \Delta^{\beta_2} \Delta^{\alpha_1} X^\beta \sigma_{2,\varepsilon} \Bigr )+R_{L}^{\sigma_\varepsilon},
\end{align}
we have, for every $a,b\in\mathbb{N}_{0}, \theta >0$, $M,N\in \mathbb{N}$, $L:=\min\{M,N\}$,   
\begin{align}
    \| \sigma_{ \varepsilon}  \|_{S^{m_1 + m_2 + \theta}_{\rho, \delta}, a, b}\lesssim \|\sigma_{1,\varepsilon}\|_{S^{m_1}_{\rho, \delta}, a_1, b_1} \|\sigma_{2,\varepsilon}\|_{S^{m_2}_{\rho, \delta}, a_2, b_2}
\end{align}
for some $a_1,a_2,b_1,b_2\in\mathbb{N}_0$.
Thus, let $a, b \in \N_0$ and $\theta >0$. Since for all $\varepsilon, \varepsilon'\in(0,1)$
\begin{align}
\Optau(\sigma_{\varepsilon})-\Optau(\sigma_{\varepsilon'})&= \Optau(\sigma_{1,\varepsilon}-\sigma_{1,\varepsilon'}) \Optau(\sigma_{2,\varepsilon})+ \Optau(\sigma_{2,\varepsilon}-\sigma_{2,\varepsilon'}) \Optau(\sigma_{1,\varepsilon'})
\end{align}
and 
\begin{align}
    \| \sigma_{ j,\varepsilon}  - \sigma_{j, \varepsilon'}  \|_{S^{m_j}_{\rho, \delta}, a_j, b_j}\lesssim \| \sigma_{ j,\varepsilon}  - \sigma_{j, \varepsilon'}  \|_{S^{m_j+ \theta}_{\rho, \delta}, a_j, b_j},
\end{align}
we obtain the bilinear estimate
\begin{align}
     \| \sigma_{ \varepsilon}  - \sigma_{ \varepsilon'}  &\|_{S^{m_1 + m_2 + \theta}_{\rho, \delta}, a, b} \\
    &\hspace{-25pt}\lesssim
    \| \sigma_{ 1,\varepsilon}  - \sigma_{1, \varepsilon'}  \|_{S^{m_1+ \theta}_{\rho, \delta}, a_1, b_1} \| \sigma_{ 2,\varepsilon} \|_{S^{m_2}_{\rho, \delta}, a_2, b_2}+ 
    \| \sigma_{ 2,\varepsilon}  - \sigma_{2, \varepsilon'}  \|_{S^{m_2+ \theta}_{\rho, \delta}, a_2, b_2} \| \sigma_{ 1,\varepsilon'} \|_{S^{m_1}_{\rho, \delta}, a_1, b_1},
\end{align}
for some $a_1, a_2, b_1, b_2\in \N_0$.

Now, by \eqref{eq:smoothing_app}, for all $\eta>0$ and all
\begin{align}
    \varepsilon, \varepsilon'\in \left( 0, \min\left\{ \left(\frac{\eta}{2 C_1 C_2 \|\sigma_1\|_{S^{m_1}_{\rho,\delta,a_1',b_1'}}\|\sigma_2\|_{S^{m_2}_{\rho,\delta,a_2',b_2'}} } \right)^{\nu/\theta}, 1 \right\}\right),
\end{align}
 we have
\begin{align*}
    \| \sigma_{1, \varepsilon} - \sigma_{1, \varepsilon'} \|_{S^{m_1 + \theta}_{\rho, \delta}, a_1, b_1} &\leq \frac{\eta}{C_2 \| \sigma_2 \|_{S^{m_2}_{\rho, \delta}, a'_2, b'_2}}, \\
    \| \sigma_{2, \varepsilon} - \sigma_{2, \varepsilon'} \|_{S^{m_2 + \theta}_{\rho, \delta}, a_2, b_2} &\leq \frac{\eta}{ C_1 \| \sigma_1 \|_{S^{m_1}_{\rho, \delta}, a'_1, b'_1}},
\end{align*}
which allows us to conclude from \eqref{eq:smoothing_bound} that
\begin{align}
    \| \sigma_{ \varepsilon}  - \sigma_{ \varepsilon'} \|_{S^{m_1 + m_2 + \theta}_{\rho, \delta}, a, b} \leq \eta.
\end{align}
Since $a, b \in \N_0$ and $\eta >0$ were arbitrary, the net $\{\sigma_\varepsilon\}_{\varepsilon \in (0, 1)}\subseteq S^{m_1 + m_2}_{\rho, \delta}$ is Cauchy in the coarser topology of $S^{m_1 + m_2 + \theta}_{\rho, \delta}(G) \supseteq S^{m_1 + m_2}_{\rho, \delta}(G)$, but its limit $\lim\limits_{\varepsilon\rightarrow 0} \sigma_{\varepsilon}  =: \sigma'$ nevertheless lies in a closed bounded subset of $S^{m_1 + m_2}_{\rho, \delta}$ because of \eqref{eq:smoothing_bound}. By \cite[Lem.~5.4.11~(3)]{FR} and Proposition~\ref{prop:cont_on_SC}, this implies that $\Optau\bigl ( \sigma_{\varepsilon} \bigr )$ converges strongly on $\SC(G)$ to $\Optau(\sigma')$ with $\sigma' \in S^{m_1 + m_2}_{\rho, \delta}(G)$. Since each $\Optau(\sigma_{j, \varepsilon})$, $j = 1, 2$, strongly converges on $\SC(G)$ to $\Optau(\sigma_j)$, this gives
\begin{align*}
    \Optau(\sigma') f = \lim_{\varepsilon} \Optau \bigl ( \sigma_{\varepsilon}  \bigr )f = \lim_{\varepsilon} \Optau(\sigma_{1, \varepsilon}) \Optau (\sigma_{2, \varepsilon})f = \Optau(\sigma_1) \Optau (\sigma_2)f =: \Optau(\sigma)f
\end{align*}
for all $f \in \SC(G)$. By the Schwartz kernel theorem, the kernels in $\TD(G \times G)$ of these two continuous operators coincide, so $\sigma= \sigma' \in S^{m_1 + m_2}_{\rho, \delta}(G)$.
Hence, since $\sigma_\varepsilon\underset{\varepsilon\rightarrow 0}{\longrightarrow}\sigma$ in $S^{m_1+m_2}_{\rho,\delta}(G)$, and $\sigma_{j,\varepsilon}\underset{\varepsilon\rightarrow 0}{\longrightarrow}\sigma_j$ in $S^{m_j}_{\rho,\delta}(G)$ for $j=1,2$, we can finally conclude that, for all fixed $M,N\in\mathbb{N}$, and for $L:=\min\{M,N\}$,
\begin{align}
    R_L^{\sigma_\varepsilon}\underset{\varepsilon\rightarrow 0}{\longrightarrow} R_L^\sigma \quad  \text{in}\,\,S^{m_1+m_2-(\rho-\delta)(L+1)}_{\rho,\delta}(G).
\end{align}
 Note that the net converges in the space $S^{m_1+m_2-(\rho-\delta)(L+1)}_{\rho,\delta}(G)$, due to \eqref{eq:smoothing_app}, but not necessarily in $S^{m_1+m_2-(\rho-\delta)L}_{\rho,\delta}(G)$.
This completes the proof of the theorem.
\end{proof}

\medskip

In order to recover in an easy way the well-known classical asymptotic expansions for the composite symbol in, e.g., the Weyl calculus on $\mathbb{R}^n$ or the Kohn-Nirenberg calculus on $\H$, it is convenient to rewrite \eqref{eq:asym_exp_comp_tau} as
\begin{align}
    \sigma \sim \sum_{j=0}^\infty\left(\underset{\substack{[\alpha] +[\beta] =j}}{\sum} \hspace{5pt} \underset{\substack{[\alpha_1] + [\alpha_2] = [\alpha], \\ [\beta_1] + [\beta_2] = [\beta]}}{\sum} \hspace{5pt} c_{\alpha_1, \alpha_2} \, c_{\beta_1, \beta_2} \Bigl ( \Delta^{\alpha_2} \Delta^{\beta_1} X^\alpha \sigma_1 \Bigr ) \, \Bigl ( \Delta^{\beta_2} \Delta^{\alpha_1} X^\beta \sigma_2 \Bigr ) \right).\label{eqn.new.asympt.formula}
\end{align}
Introducing the notations $\omega_j := \omega_j(\sigma_1,\sigma_2)$ and
\begin{align}
    \hspace{-20pt} \omega_j(\sigma_1,\sigma_2):= \!\!\underset{\substack{[\alpha] +[\beta] =j}}{\sum} \hspace{5pt} \underset{\substack{[\alpha_1] + [\alpha_2] = [\alpha], \\ [\beta_1] + [\beta_2] = [\beta]}}{\sum}  c_{\alpha_1, \alpha_2} \, c_{\beta_1, \beta_2} \Bigl ( \Delta^{\alpha_2} \Delta^{\beta_1} X^\alpha \sigma_1 \Bigr ) \, \Bigl ( \Delta^{\beta_2} \Delta^{\alpha_1} X^\beta \sigma_2\Bigr)
    \label{eqn.Tj}
\end{align}
and
\begin{align}
    R_M=R_M(\sigma_1,\sigma_2):=\sigma-\sum_{j=0}^M\, \omega_j(\sigma_1,\sigma_2) \label{eqn.Rj}
\end{align}
for
$\omega_j(\sigma_1,\sigma_2)\in S^{m_1+m_2-(\rho-\delta)j}_{\rho,\delta}(G)$ and $R_M(\sigma_1,\sigma_2)\in S^{m_1+m_2-(\rho-\delta)(M+1)}_{\rho,\delta}(G)$ by Theorem \ref{thm:asym_exp_comp_tau}, we may abbreviate the asympotitic expansion as
\begin{align}
    \sigma\sim \sum_{j=0}^\infty \, \omega_j.
\end{align}
Note that each symbol appearing in this expression belongs to $S^{m_1+m_2-(\rho-\delta)j}_{\rho,\delta}(G)$. 

\medskip

In what follows we shall use the symbolic asymptotic composition formula as written in \eqref{eqn.new.asympt.formula} to recover some well-known classical results.

\begin{example} \label{ex:asym_exp_comp_KN_G}
Let $G$ be an arbitrary graded group. Choosing the constant quantizing function $\tau = e_G$, we get $p_1(y, z) = e_G$ and $p_2(y, z) = y^{-1}z$. It follows that $c_{\alpha_1, \alpha_2} = \delta_{\alpha_1, 0} \, \delta_{\alpha_2, 0}$ for all $\alpha \in \N_0$, and that $c_{\beta_1, \beta_2} = \delta_{\beta_2, 0}$ for all $\beta \in \N_0$, thus \eqref{eqn.new.asympt.formula}
 recovers the asymptotic expansion
\begin{align}
    \sigma_1 \circ_{\mathrm{KN}} \sigma_2 \sim \sum_{j= 0}^\infty\sum_{[\beta] = j} \bigl ( \Delta^{\beta} \sigma_1 \bigr ) \, \bigl ( X^\beta \sigma_2 \bigr )
\end{align}
in the \textit{Kohn-Nirenberg quantization} due to \cite[Cor.~5.58]{FR}. For the special case $G = \R^n$, this gives the well-known expansion
\begin{align}
    \sigma_1 \circ_{\mathrm{KN}} \sigma_2 \sim \sum_{j= 0}^\infty\sum_{|\beta| = j}^\infty \bigl ( (i^{-1} \partial_\xi)^{\beta} \sigma_1 \bigr ) \, \bigl ( \partial_x^\beta \sigma_2 \bigr )
\end{align}
for all symbols $\sigma_1 \in S^{m_1}_{\rho,\delta}(\R^n)$, $\sigma_2\in S^{m_1}_{\rho,\delta}(\R^n)$ with $m_1, m_2 \in \R$, $0 \leq \delta < \rho \leq 1$.
\end{example}

\begin{example} \label{ex:asym_exp_comp_Weyl}
When $G = \R^n$ and the symmetry function $\tau(x) = \frac{x}{2}$, i.e., the group is Abelian and $\tau$ symmetric and linear, we have
\begin{align}
\begin{array}{rcccccccl}
    p_1(y, z) &=& \tau(y) \tau({z}^{-1}y)^{-1} &=& \frac{y}{2} - \frac{y - z}{2} &=& \frac{z}{2} &=& \tau(z), \\
    p_2(y, z) &=& \tau(y) y^{-1}z \tau(z)^{-1} &=& \frac{y}{2} - y + z - \frac{z}{2} &=& \frac{z - y}{2} &=& \tau(z^{-1}y)^{-1}.
\end{array}
\end{align}
It follows that
\begin{align}
\begin{array}{rcccl}
     q_\alpha \bigl( p_1(y, z) \bigr ) &=& q_\alpha \bigl( \frac{z}{2} \bigr ) &=& \frac{z^{\alpha}}{2^{|\alpha|}\alpha !}, \\
    q_\beta \bigl ( p_2(y, z) \bigr ) &=& q_\beta \bigl( \frac{z - y}{2} \bigr ) &=& (-1)^{|\beta|} \frac{(y - z)^{\beta}}{2^{|\beta|}\beta !},
\end{array}
\end{align}
so the expansion~\eqref{eqn.new.asympt.formula} recovers the classical asymptotic expansion
\begin{align}
    \sigma_1 \circ_\mathrm{w} \sigma_2 \sim \sum_{j = 0}^\infty \underset{\substack{|\alpha|+|\beta| = j}}{\sum} \frac{(-1)^{|\beta|}}{\alpha ! \beta ! 2^{|\alpha| + |\beta|}} \Bigl ( (i^{-1} \partial_\xi)^\beta \partial_x^\alpha \sigma_1 \Bigr ) \Bigl ( (i^{-1} \partial_\xi)^\alpha \partial_x^\beta \sigma_2 \Bigr ) \label{eq:asym_exp_comp_Weyl}
\end{align}
in the \textit{Weyl quantization}~\eqref{eq:Weyl_quant} for all symbols $\sigma_1 \in S^{m_1}_{\rho,\delta}(\R^n)$, $\sigma_2\in S^{m_1}_{\rho,\delta}(\R^n)$ with $m_1, m_2 \in \R$, $0 \leq \delta < \rho \leq 1$.
\end{example}

\begin{example} \label{ex:asym_exp_comp_KN_Weyl}
A curious example on $G = \R^2$ arises from $\tau(x_1, x_2) = (0, \frac{x_2}{2})$, which defines a sort of \textit{hybrid Kohn-Nirenberg-Weyl quantization}, which is not included in the classical $\tau$-quantizations \eqref{eq:tau_quant}. By the observations made in Examples~\ref{ex:asym_exp_comp_KN_G} and \ref{ex:asym_exp_comp_Weyl}, the corresponding asymptotic expansion \eqref{eq:asym_exp_comp_tau} is given by
\begin{align*}
    \sigma_1 \circ_\tau \sigma_2 \sim 
    \underset{\substack{ j= 0}}{\sum^\infty}
\,\underset{\substack{l_1+l_2+k_2 = j \\  }}{\sum} \frac{(-1)^{l_2}}{k_2 ! l_2 ! 2^{k_2 + l_2}} 
    \Bigl ( (i^{-1} \partial_{\xi_1})^{l_1}(i^{-1} \partial_{\xi_2})^{l_2} \, \partial_{x_2}^{k_2} \sigma_1 \Bigr ) \Bigl ( (i^{-1} \partial_{\xi_2})^{k_2} \, \partial_{x_1}^{l_1} \partial_{x_2}^{l_2} \sigma_2 \Bigr ).
\end{align*}
\end{example}

\begin{example} \label{ex:asym_exp_comp_onehalf_tau_Hn}
Let $G$ be the Heisenberg group $\H$, equipped with the canonical homogeneous dilations, and let $\tau$ be the symmetry function~\eqref{eq:onehalf_tau_Hn}, which in exponential coordinates is given by $\tau(x) = \big ( \frac{x_1}{2}, \ldots, \frac{x_{2n+1}}{2} \bigr )$. In the following, we will write out the asymptotic expansion for the homogeneous orders greater or equal $m_1 + m_2 - 3(\rho - \delta)$ and compare the resulting version of \eqref{eq:asym_exp_comp_tau} with its ($(2n+1)$-dimensional) Euclidean counterpart from Example~\ref{ex:asym_exp_comp_Weyl}. To determine the desired coefficients, we have to solve
\begin{align*}
    q_\alpha \bigl( p_1(y, z) \bigr ) &= \underset{[\alpha_1] + [\alpha_2] = [\alpha]}{\sum} c_{\alpha_1, \alpha_2} \tilde{q}_{\alpha_1}(z) \tilde{q}_{\alpha_2}(z^{-1}y), \\
    q_\beta \bigl ( p_2(y, z) \bigr ) &= \underset{[\beta_1] + [\beta_2] = [\beta]}{\sum} c_{\beta_1, \beta_2} \tilde{q}_{\beta_1}(z^{-1}y) \tilde{q}_{\beta_2}(z)
\end{align*}
for $[\alpha], [\beta] = 1, 2$. To do so, we observe that 
\begin{align*}
    &p_1(y, z) = \tau(y) \tau({y}^{-1}z) \\
    &= \Bigl ( \frac{z_1}{2}, \ldots, \frac{z_{2n}}{2}, \frac{z_{2n+1}}{2} - \frac{1}{8} \sum_{j=1}^n \bigl( z_j (y_{n+j} - z_{n+j}) -  z_{n+j} (y_j - z_j) \bigr ) \Bigr ), \\
    &p_2(y, z) =\tau({y}^{-1}) \tau(z) \\
    &= \Bigl ( \frac{z_1 - y_1}{2}, \ldots, \frac{z_{2n} - y_{2n}}{2}, \frac{z_{2n+1} - y_{2n+1}}{2} + \frac{1}{8} \sum_{j=1}^n \bigl( (y_{n+j} - z_{n+j}) z_j - (y_j - z_j) z_{n+j} \bigr ) \Bigr ),
\end{align*}
and we recall from \cite[Ex.~5.2.4]{FR} that the homogeneous polynomials $q_{\gamma}$ of degree $[\gamma] = 1$ are precisely the monomials
\begin{align*}
    q_{e_j}(x) = x_j, \hspace{5pt} j = 1, \ldots, 2n,
\end{align*}
and that of degree $[\gamma] = 2$ the polynomials
\begin{align*}
\begin{array}{rcll}
    q_{e_j + e_k}(x) &=& x_j x_k &,\hspace{5pt} j, k = 1, \ldots, 2n, \\
    q_{e_{2n+1}}(x) &=& x_{2n+1} - \frac{1}{2} \sum_{j = 1}^n x_j x_{n+j}&.
\end{array}
\end{align*}
It follows that
\begin{align*}
\begin{array}{rcccll}
    q_{e_j}\bigl ( p_1(y, z) \bigr ) &=& \frac{z_j}{2} &=& &-\frac{1}{2} \tilde{q}_{e_j}(z), \\
    q_{e_k}\bigl ( p_2(y, z) \bigr ) &=& \frac{y_k - z_k}{2} &=& &+\frac{1}{2} \tilde{q}_{e_k}(z^{-1}y),
\end{array}
\hspace{5pt} j, k = 1, \ldots, 2n,
\end{align*}
and that
\begin{align*}
    &q_{e_{2n+1}} \bigl ( p_1(y, z) \bigr ) = \frac{z_{2n+1}}{2} - \frac{1}{8} \sum_{j=1}^n \bigl( z_j (y_{n+j} - z_{n+j}) -  z_{n+j} (y_j - z_j) \bigr ) - \frac{1}{8} \sum_{j=1}^n z_j z_{j+n} \\
    &\hspace{0pt} = -\frac{1}{2}  \tilde{q}_{e_{2n+1}}(z)  - \frac{1}{8} \sum_{j=1}^n \Bigl( \tilde{q}_{e_j}(z) \tilde{q}_{e_{n+j}}(z^{-1}y) -  \tilde{q}_{e_{n+j}}(z) \tilde{q}_{e_j}(z^{-1}y) \Bigr )- \frac{3}{8} \sum_{j=1}^n (\tilde{q}_{e_j} \tilde{q}_{e_{n+j}})(z), \\
    &q_{e_{2n+1}} \bigl ( p_2(y, z) \bigr ) =  \frac{z_{2n+1} - y_{2n+1}}{2} -\frac{1}{2}\sum_{j=1}^n \frac{z_j-y_j}{2} \frac{z_{n+j} -y_{n+j}}{2}\\
    &  \hspace{40pt}+\frac{1}{8} \sum_{j=1}^n \Bigl( (y_{n+j} - z_{n+j}) z_j - (y_j - z_j) z_{n+j}  - (z_j - y_j) (z_{j+n} - y_{n+j}) \Bigr ) \\
    &\hspace{0pt} = \frac{1}{2} \tilde{q}_{e_{2n+1}}(z^{-1}y) + \frac{1}{8} \sum_{j=1}^n \Bigl ( \tilde{q}_{e_j}(z^{-1}y)\tilde{q}_{e_{n+j}}(z) - \tilde{q}_{e_{n+j}}(z^{-1}y)\tilde{q}_{e_j}(z) + (\tilde{q}_{e_j} \tilde{q}_{e_{n+j}})(z^{-1}y) \Bigr ).
\end{align*}
Collecting all the contributions, we can now approximate \eqref{eq:asym_exp_comp_tau} by
\begin{align*}
    \sigma \sim \sigma_1 \sigma_2 & + \omega_{1} +\omega_{2} \mod S^{m_1 + m_2 - 3(\rho - \delta)}(\H),
\end{align*}
with $\omega_{1} \in S^{m_1 + m_2 - (\rho - \delta)}(\H)$ and $\omega_{2} \in S^{m_1 + m_2 - 2(\rho - \delta)}(\H)$, given by\footnote{The multi-index $\alpha = e_j \in \N_0^\dimG$ denotes the Euclidean coordinate vector whose $j$-th coordinate equals $1$, while all others vanish. In accordance with this notation, the left-invariant derivative $X{e_j + e_k}$ denotes $X_j X_k$ when $j \leq k$ and $X_k X_j$ when $j \geq k$}
\begin{align}
	\omega_{1} = \frac{1}{2} \sum_{j=1}^{2n} \Bigl ( \bigl ( X^{e_j} \sigma_1 \bigr ) \bigl ( \Delta^{e_j} \sigma_2) - \bigl ( \Delta^{e_j} \sigma_1 \bigr ) \bigl ( X^{e_j} \sigma_2 \bigr ) \Bigr ) \label{eq:hom_Poisson_bracket_Hn}
\end{align}
and
\begin{align}
	\omega_{2} &= \frac{1}{4} \sum_{j, k = 1}^{2n} \Bigl ( \bigl( X^{e_j + e_k} \sigma_1 \bigr ) \bigl ( \Delta^{e_j + e_k} \sigma_2 \bigr ) - \bigl( \Delta^{e_j} X^{e_k} \sigma_1 \bigr ) \bigl ( \Delta^{e_k} X^{e_j} \sigma_2 \bigr ) \label{eq:asym_exp_comp_onehalf_tau_Hn_standard_terms_hom1} \\
	&+ \bigl( \Delta^{e_j + e_k} \sigma_1 \bigr ) \bigl ( X^{e_j + e_k} \sigma_2 \bigr ) \Bigr ) - \frac{3}{8}  \sum_{j=1}^n \bigl (X^{e_j + e_k} \sigma_1 \bigr ) \bigl ( \Delta^{e_j + e_{n+j}} \sigma_2 \bigr ) \nonumber \\
	&- \frac{1}{8} \sum_{j=1}^n \Bigl ( \bigl ( \Delta^{e_{n+j}} X^{e_j + e_{n+j}} \sigma_1 \bigr ) \bigl ( \Delta^{e_j} \sigma_2 \bigr ) - \bigl ( \Delta^{e_j} X^{e_j + e_{n+j}} \sigma_1 \bigr ) \bigl ( \Delta^{e_{n+j}} \sigma_2 \bigr ) \nonumber \\
	&- \bigl( \Delta^{e_j} \sigma_1 \bigr ) \bigl ( \Delta^{e_{n+j}} X^{e_j + e_{n+j}} \sigma_2 \bigr ) + \bigl ( \Delta^{e_{n+j}} \sigma_1 \bigr ) \bigl ( \Delta^{e_j} X^{e_j + e_{n+j}} \sigma_2 \bigr ) \nonumber \\
	&- \bigl ( \Delta^{e_j + e_{n+j}} \sigma_1 \bigr ) \bigl ( X^{e_j + e_{n+j}} \sigma_2 \bigr ) \Bigr ) \nonumber \\
	&+ \frac{1}{2} \Bigl ( \bigl ( X^{e_{2n+1}} \sigma_1 \bigr ) \bigl ( \Delta^{e_{2n+1}} \sigma_2) - \bigl ( \Delta^{e_{2n+1}} \sigma_1 \bigr ) \bigl ( X^{e_{2n+1}} \sigma_2 \bigr ) \Bigr ). \label{eq:asym_exp_comp_onehalf_tau_Hn_Poisson_hom2}
\end{align}
\end{example}

\subsection{The \texorpdfstring{$G$}{G}-Poisson bracket} \label{subs:Poisson}
Despite the striking differences between the asymptotic expansions in the Examples~\ref{ex:asym_exp_comp_Weyl} and \ref{ex:asym_exp_comp_onehalf_tau_Hn}, we observe that the term $\omega_1$ can in both cases be interpreted as a kind of Poisson bracket (up to a multiplicative constant) defined by the first strata of the respective groups. 
 As we will see in the following, this property is actually shared by all the symmetric quantizations on stratified groups.

\begin{definition}\label{defn.Hom.Poisson.B}
Let $G$ be a stratified group and let it be equipped with the canonical dilations, i.e., $v_j = j$ for all $j = 1, \ldots, \dimG$. Let $m_1, m_2 \in \R$ and $0 \leq \delta \leq \rho \leq 1$. Then for any two symbols $\sigma_1 \in S^{m_1}_{\rho,\delta}(G)$ and $\sigma_2\in S^{m_2}_{\rho,\delta}(G)$ we define the \textit{$G$-Poisson bracket} of $\sigma_1$ and $\sigma_2$ to be the symbol
\begin{align}
    \{ \sigma_1, \sigma_2 \}_{G} := \sum_{\substack{[\alpha]=1}}\Bigl ( \bigl( X^\alpha \sigma_1 \bigr ) \bigl ( \Delta^\alpha \sigma_2 \bigr ) - \bigl ( \Delta^\alpha \sigma_1 \bigr ) \bigl ( X^\alpha \sigma_2 \bigr ) \Bigr ) \in S^{m_1 + m_2 + (\rho - \delta)}_{\rho,\delta}(G),
\end{align}
\end{definition}

\begin{remark}
The fact that pointwise products of symbols belong to the expected symbol classes (even for $\delta = \rho$) was proved in \cite[Thm.~5.2.22~(ii)]{FR} (see also Remark~\ref{rem:pw_product}).
\end{remark}

\begin{remark}
    Due to the general non commutative nature of our setting, the $G$-Poisson bracket does not satisfy the anticommutativity, the Leibniz rule and the Jacobi identity which characterize Poisson brackets.  However, when $G$ is commutative, hence $G=\mathbb{R}^n$, our definition agrees with the standard one and we have a Poisson bracket in the usual sense (see Remark \ref{rmk.eucl.poss}). The compatibility with the Euclidean case and the possible applications in Hamiltonian mechanics in the group setting lead us to call this object the $G$-Poisson bracket.
\end{remark}

\begin{remark}\label{rmk.eucl.poss}
On $G = \R^n$, where the classical Poisson bracket is defined by
\begin{align*}
    \{ \sigma_1, \sigma_2 \} := \sum_{j=1}^n\Bigl ( \bigl( \partial_{x_j} \sigma_1 \bigr ) \bigl (  -i\partial_{\xi_j} \sigma_2 \bigr ) - \bigl ( -i\partial_{\xi_j} \sigma_1 \bigr ) \bigl ( \partial_{x_j} \sigma_2 \bigr ) \Bigr ),
\end{align*}
we have
\begin{align}
    \omega_1=\frac{1}{2}\{ \sigma_1, \sigma_2 \}=\frac{1}{2}\{ \sigma_1, \sigma_2 \}_{G}.
\end{align}
Since the first stratum of the Abelian Lie algebra $\R^n$ (equipped with the trivial Lie bracket) coincides with the whole Lie algebra, $\omega_1$ can be fully expressed by the $G$-Poisson bracket. This is no coincidence as we will shortly show in Proposition~\ref{prop.poiss.bra}.
\end{remark}

Before we prove the proposition, we briefly make an interesting observation about the symbols $\omega_j$ in \eqref{eqn.Tj}, in particular about $\omega_1$, when $\sigma_1 \in S^{m_1}_{1, 0}(G)$, $\sigma_2 \in S^{m_2}_{1, 0}$ are additionally homogeneous in the sense of \cite[Def.~4.1]{FF_DM}. We recall that any symbol $\sigma = \bigl \{ \sigma(x, \pi): \RS^\infty \to \RS \mid x \in G, \pi \in \Ghat \bigr \}$ on a graded group is \textit{homogeneous} of degree $m \in \mathbb{R}$ (or $m$-homogeneous) if
\begin{align}
    \sigma (x,r\cdot \pi)= r^m \sigma (x,\pi)
\end{align}
for  all $x\in G$, a.e. $\pi \in \Ghat$ and a.e. $r \in \R^+$, where
\begin{align} \label{eq:dilation_symbol}
    \sigma (x,r\cdot \pi) := r^{-\hdim} \F_{y \mapsto \pi} \bigl ( \kappa_\sigma(x, \, D_{r^{-1}}(y)) \bigr ).
\end{align}
An $m$-homogeneous symbol is called regular if it is smooth in the $x$-variable and satisfies the semi-norm estimates \eqref{defn.seminorms}, and the space of all $m$-homogeneous symbols is denoted by $\dot{S}^m$. Clearly, $\dot{S}^m \subseteq S^m_{1, 0}(G)$.

Now observe that, given two symbols $\sigma_1 \in \dot{S}^{m_1},\sigma_2 \in \dot{S}^{m_2}$, then for all $\alpha,\beta\in\mathbb{N}_0^n$,
\begin{align}
    \Delta^\alpha\sigma_1( x,r\cdot \pi)= r^{m-[\alpha]} (\Delta^\alpha \sigma_1)(x,\pi)
\end{align}
and
\begin{align} \label{eq:diff_op_hom_symbols}
    \bigl ( \Delta^\alpha\sigma_1(x,r \cdot \pi) \bigr ) \bigl ( \Delta^\beta \sigma_2(x, r \cdot \pi) \bigr )= r^{m_1+m_2-[\alpha]-[\beta]}(\Delta^\alpha\sigma_1)(x,\pi)(\Delta^\beta\sigma_2)(x,\pi),\quad \quad 
\end{align}
that is, the two symbols above are homogeneous of degree $m_1-[\alpha]$ and $m_1+m_2-[\alpha]-[\beta]$, respectively.
This immediately implies that for any graded group $G$, any $\tau$ satisfying (HP) and any two symbols $\sigma_1 \in \dot{S}^{m_1}$ and $\sigma_2 \in \dot{S}^{m_2}$, the symbol $\omega_j$ defined by \eqref{eqn.Tj} belongs to $ \dot{S}^{m_1+m_2-j}$. These arguments can now easily be used to generalize well-known facts about classical (or polyhomogeneous) pseudo-differential symbols on $\R^n$ to the setting of general graded groups.

\begin{proposition}\label{prop.poiss.bra}
Let $G$ be a stratified group, equipped with the canonical dilations, and let $\tau$ be a symmetric quantizing function that satisfies  (HP). Then, for any $\sigma_1 \in S^{m_1}_{\rho,\delta}(G)$ and $\sigma_2\in S^{m_1}_{\rho,\delta}(G)$, with $m_1, m_2 \in \R$ and $0 \leq \delta < \frac{\rho}{v_\dimG} \leq 1$, the uniquely determined summand $\omega_1$ of order $m_1 + m_2 - (\rho - \delta)$ in the asymptotic expansion \eqref{eqn.new.asympt.formula} of their composite symbol is given by $\frac{1}{2}\{ \sigma_1, \sigma_2 \}_{G}$.

In particular, this holds true for the quantizing functions \eqref{eq:MR_tau} and \eqref{eq:onehalf_tau} on general stratified groups and the family \eqref{eq:fam_tau_Hn} on $G = \H$.

In the special case of $\sigma_1 \in  \dot{S}^{m_1}$ and $\sigma_2\in \dot{S}^{m_2}$ we even have $\{\sigma_1,\sigma_2\}_G\in \dot{S}^{m_1+m_2-1}$.
\end{proposition}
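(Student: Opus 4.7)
The plan is to identify $\omega_1$ explicitly by reading off from Theorem~\ref{thm:asym_exp_comp_tau} and the definition~\eqref{eqn.Tj} only those summands with $[\alpha]+[\beta]=1$, and then to evaluate the associated coefficients $c_{\alpha_1,\alpha_2}$, $c_{\beta_1,\beta_2}$ via the defining equations~\eqref{eqn1811-1510}--\eqref{eqn1811-1511} applied to the explicit form of $p_1(y,z)=\tau(y)\tau(z^{-1}y)^{-1}$ and $p_2(y,z)=\tau(y)y^{-1}z\tau(z)^{-1}$ restricted to first-stratum coordinates. Since $G$ is stratified with canonical dilations, every multi-index with $[\alpha]=1$ is necessarily of the form $\alpha=e_j$ for a basis vector $X_j$ of the first stratum $\Lie{g}_1$; for any such $j$ one has $q_{e_j}(x)=x_j$ and $\tilde q_{e_j}(x)=-x_j$, since the first stratum is abelian. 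Thus only the indices $e_j$ with $v_j=1$ enter $\omega_1$, and all coefficient computations reduce to an affine calculation on the abelian subalgebra $\Lie{g}_1$.

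The main intermediate step is to show that the symmetry condition~\eqref{eq:char_sym_fun_G} combined with~(HP) completely rigidifies $\tau$ on the first stratum: for every $j$ with $v_j=1$ one must have $\tau(x)_j=x_j/2$. Indeed, (HP) forces $\tau(x)_j$ to be a homogeneous polynomial of degree $v_j=1$ in exponential coordinates, hence a linear combination of the first-stratum variables $x_1,\ldots,x_{n_1}$, where $n_1:=\dim\Lie{g}_1$. Projecting the symmetry identity $\tau(x)=\tau(x^{-1})x$ onto the $j$-th coordinate and using that first-stratum coordinates add under multiplication and negate under inversion gives $\tau(x)_j=-\tau(x)_j+x_j$, so $\tau(x)_j=x_j/2$. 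This is the only place where both hypotheses are used in a genuinely coupled way, and I expect it to be the main conceptual obstacle; once it is in hand, everything else is algebraic bookkeeping.

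With $\tau$ pinned down on $\Lie{g}_1$, an elementary computation in the abelian first-stratum coordinates yields, for every $j$ with $v_j=1$,
\begin{equation*}
    p_1^{e_j}(y,z)=\tfrac{y_j}{2}-\tfrac{y_j-z_j}{2}=\tfrac{z_j}{2},\qquad p_2^{e_j}(y,z)=\tfrac{y_j}{2}-y_j+z_j-\tfrac{z_j}{2}=\tfrac{z_j-y_j}{2}.
\end{equation*}
Expressing these as scalar multiples of $\tilde q_{e_j}(z)$ and $\tilde q_{e_j}(z^{-1}y)$ respectively and matching with \eqref{eqn1811-1510}--\eqref{eqn1811-1511}, one reads off that at degree one the only non-zero coefficients are those indexed by $(e_j,0)$, each equal to $\pm\tfrac12$, while the coefficients indexed by $(0,e_j)$ vanish identically (because $q_{e_j}(p_1)$ contains no factor $\tilde q_{e_j}(z^{-1}y)$ and $q_{e_j}(p_2)$ contains no factor $\tilde q_{e_j}(z)$).

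Substituting these coefficient values into the formula~\eqref{eqn.Tj} defining $\omega_1$, the pairs $(\alpha,\beta)=(e_j,0)$ and $(\alpha,\beta)=(0,e_j)$ produce respectively the summands proportional to $(X^{e_j}\sigma_1)(\Delta^{e_j}\sigma_2)$ and $(\Delta^{e_j}\sigma_1)(X^{e_j}\sigma_2)$, with opposite signs. Summing over $j$ with $v_j=1$, i.e.\ over all multi-indices with $[\alpha]=1$, yields precisely the antisymmetric combination appearing in Definition~\ref{defn.Hom.Poisson.B} with the overall factor $\tfrac12$. Recognising the resulting expression as $\tfrac12\{\sigma_1,\sigma_2\}_{\mathrm{hom}}$ concludes the proof. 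Specialising to the families \eqref{eq:MR_tau}, \eqref{eq:onehalf_tau} on a general stratified $G$ and to \eqref{eq:fam_tau_Hn} on $\H$ is then immediate, since each member of these families satisfies~(HP) and is symmetric.
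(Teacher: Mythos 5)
Your proposal is correct and follows essentially the same route as the paper's proof: both reduce the problem to the observation that (HP) plus the symmetry identity $\tau(x)=\tau(x^{-1})x$ rigidifies the first-stratum coordinates of $\tau$ to $\tau(x)_j = x_j/2$, and then both reduce the coefficient computation to a calculation in the abelian first stratum. The paper's version is terser — it simply asserts $c^\tau_{j_1}(x)=C^\tau_{j_1}x_{j_1}$ from (HP) and defers the coefficient computation to ``a calculation identical to the one in Example~\ref{ex:asym_exp_comp_onehalf_tau_Hn}'' — whereas you explicitly compute $p_1^{e_j}(y,z)=z_j/2$ and $p_2^{e_j}(y,z)=(z_j-y_j)/2$ and read off the resulting coefficients $c_{e_j,0}=\mp\tfrac12$, which is a welcome clarification. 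Your treatment of the symmetry projection is also slightly more careful than the paper's: (HP) alone permits a lower-order linear term $d^\tau_{j_1}(x_1,\ldots,x_{j_1-1})$ in $\tau(x)_{j_1}$, and you correctly note that the projected symmetry identity $\tau(x)_j=-\tau(x)_j+x_j$ kills that term along with pinning the diagonal coefficient to $\tfrac12$ — a point the paper glosses over.
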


\begin{proof}
    Denote by $\Lie{g} = \bigoplus_{i = 1}^k \Lie{g}_i$ the stratification of $\Lie{g}$ and by $\{ X_{1_i}, \ldots, X_{\dimG_i}\}$ the arbitrary but fixed bases of its strata $\Lie{g}_i$, $i = 1, \dots, k$, chosen to begin with (cf.~Subsection~\ref{subs:gr_strat_gr}). Since $\tau$ satisfies $(HP)$ with respect to the basis $\{ X_{1_1}, \ldots, X_{\dimG_{1}} \}$ by assumption, we have $c^\tau_{j_1}(x) = C^\tau_{j_1} x_{j_1}$, with $C^\tau_{j_1} \in \R$, for all coordinates $j_1 = 1, \ldots, \dimG_1$ of the first stratum. These are precisely the components $c^\tau_j$ of $\tau$ which are homogeneous of order $v_1 = 1$. Moreover, by Theorem~\ref{thm:asym_exp_adjoint_tau}, the quantizing function $\tau$ is symmetric if and only if $\tau(x) = \tau(x^{-1}) x$ for all $x \in G$. In combination with the previous observation, this formula immediately implies that $C^\tau_{j_1} = \frac{1}{2}$ for all $j_1 = 1, \ldots, \dimG_1$ (cf.~Example~\ref{ex:fam_sym_fun_Hn}). Now, it is easily seen that because of the stratification of $\Lie{g}$, the monomials $x_{j_1}$ are precisely the ones satisfying the defining condition \eqref{eq:can_hom_pol} for the chosen basis $\{ X_{1_1}, \ldots, X_{\dimG_1}\}$ of $\Lie{g}_1$. Hence, they are precisely the homogeneous polynomials which determine the difference operators of homogeneous order $1$ in the $\tau$-caclulus under consideration. A calculation identical to the one in Example~\ref{ex:asym_exp_comp_onehalf_tau_Hn} finally shows that the summand $\omega_1$ of  order $m_1 + m_2 - (\rho - \delta)$. The property for homogeneous symbols now follows directly from \eqref{eq:diff_op_hom_symbols}. This completes the proof.
\end{proof}

\section{Invariance properties of symmetric quantizations} \label{sec:symp_inv_G}

In this section we will present an appropriately restricted version of the symplectic invariance of the Weyl quantization on $\R^n$ that generalizes to all graded groups and show that
\begin{itemize}
    \item[--] it is always satisfied by the quantization arising from
\begin{align*}
    \tau(x) = \exp \Bigl ( \frac{1}{2} \log(x) \Bigr ) = \Bigl (\frac{x_1}{2}, \ldots, \frac{x_\dimG}{2} \Bigr );
\end{align*}
    \item[--] it singles out the latter among all symmetric quantizations at least on $\R^n$ and the Heisenberg group $\H$.
\end{itemize}

\subsection{The Euclidean case} \label{subs:symp_inv_Rn}

To start with, we give a quick review of symplectic invariance on $\R^n$, following the presentation in \cite[\S.~2.1]{F1}. We recall that the symplectic group $\Sp$ is the group of all $2n \times 2n$ real matrices which preserve the standard symplectic form on $\R^{2n}$, given by
\begin{align*}
    \omega(v, w) := \sum_{j=1}^n (v_j w_{n+j} - v_{n+j} w_j),
\end{align*}
for $v = (v_1, \ldots, v_\dimG)$, $w = (w_1, \ldots, w_n) \in \R^n$. The group $\Sp$ is generated by either of the unions of subgroups $\mathrm{D} \cup \mathrm{N} \cup \{ J \}$ and $\mathrm{D} \cup \overline{\mathrm{N}} \cup \{ J \}$, where
\begin{align*}
    \mathrm{N} :=& \left \{ \left (
    \begin{array}{cc}
        I & C \\
        0 & I
    \end{array}
    \right ) \mid C = C^* \in \mathrm{M}(n \times n, \mathbb{R}) \right \}, \\ \overline{\mathrm{N}} :=& \left \{ \left (
    \begin{array}{cc}
        I & 0 \\
        C & I
    \end{array}
    \right ) \mid C = C^* \in \mathrm{M}(n \times n, \mathbb{R}) \right \}, \\
    \mathrm{D} :=& \left \{ \left (
    \begin{array}{cc}
        A & 0 \\
        0 & {A^*}^{-1}
    \end{array}
    \right ) \mid  A \in \mathrm{GL}(n, \mathbb{R}) \right \}, \hspace{15pt} J := \left (
    \begin{array}{cc}
        0 & I \\
        -I & 0
    \end{array}
    \right ).
\end{align*}
The symplectic invariance~\eqref{eq:symp_inv_WQ} now arises from the metaplectic group $\mathrm{Mp}(2n, \mathbb{R})$, a double cover of $\Sp$ which is characterized by the exact sequence
\begin{align*}
    0 \longrightarrow \mathbb{Z}_2 \longrightarrow \mathrm{Mp}(2n, \mathbb{R}) \longrightarrow \Sp \longrightarrow 0.
\end{align*}
The metaplectic group can be represented as a group of unitary operators on $L^2(\R^n)$, which are intimately related to the Schr\"{o}dinger representations $\pi_\lambda \in \Hhat$, $\lambda \in \R \setminus \{ 0 \}$, whose definition was recalled in~\eqref{eq:Schr_rep}.
Namely, for any arbitrary but fixed $\lambda \in \mathbb{R} \setminus \{ 0 \}$ and any $S \in \Sp$ there exists an operator $\eta_\lambda(S) \in \mathcal{U}(L^2(\mathbb{R}^n))$, uniquely determined up to a factor $\pm 1$, such that for the block matrix
\begin{align}
    \tilde{S} :=
    \left (\begin{array}{cc}
        S & 0 \\
        0 & 1
    \end{array}
    \right ) \in \mathop{GL}(2n+1; \R) \label{extSymp}
\end{align}
the identity
\begin{align}
    \pi_\lambda(\tilde{S}x) = \eta_\lambda(S) \hspace{2pt} \pi_\lambda(x) \hspace{2pt} \eta_\lambda(S)^{-1} \label{eq:symp_inv_Schr_rep}
\end{align}
holds true for all $x \in \H$. Moreover, for any two $S_1, S_2 \in \Sp$,
\begin{align*}
    \eta_\lambda(S_1) \hspace{2pt} \eta_\lambda(S_2) = \pm \eta_\lambda(S_1 S_2).
\end{align*}
For the realization of $\pi_\lambda$ as in \eqref{eq:Schr_rep}, the action of the metaplectic representation on  $h \in L^2(\mathbb{R}^n)$ can be characterized by
\begin{align}
    \left(\eta_\lambda
    \left (\begin{array}{cc}
        A & 0 \\
        0 & {A^*}^{-1}
    \end{array}
    \right )
    h \right)(u) &= \sqrt{\lambda} \det(A)^{-1/2} \hspace{2pt} h( \sqrt{\lambda} A^{-1} u), \label{MP1} \\
    \left(\eta_\lambda
    \left (\begin{array}{cc}
        I & 0 \\
        C & I
    \end{array}
    \right )
    h \right)(u) &= e^{-\frac{i \lambda}{2} \langle u, C u \rangle} \hspace{2pt} h(u), \\
    \left( \eta_\lambda
    \left (\begin{array}{cc}
        0 & I \\
        -I & 0
    \end{array}
    \right)
    h \right)(\xi) &= (2 \pi)^{-\frac{n}{2}} \bigl ( \F h \bigr )(-\sqrt{\lambda} \xi).
\end{align}
In the special case when $\lambda = 1$, the invariance property \eqref{eq:symp_inv_Schr_rep} is easily shown to yield the symplectic invariance of the Weyl quantization~\eqref{eq:symp_inv_WQ}:
\begin{align*}
    \Opw(\sigma \circ S) = U^{-1}_S \Opw(\sigma) U_S
\end{align*}
for all $S \in \Sp$ and the unitary operators $U_S = \eta_1(S^*)^{-1}$. This invariance a priori holds for all symbols $\sigma \in \SC(\R^n \times \Rhat^n)$, but extends to all $\sigma \in \TD(\R^n \times \Rhat^n)$ since the operators $\eta_\lambda(S)$ preserve $\SC(\R^n)$ and $\TD(\R^n)$.

\subsection{The general graded case} \label{subs:symp_inv_G}

For an arbitrary graded group $G$ we cannot in general expect the existence of continuous maps from $G \times \Ghat$ into itself that generalize in a meaningful way the action of the symplectomorphism $S \in \mathrm{N} \cup \{ J \}$ since the spaces $G$ and $\Ghat$ coincide if and only if $G = \R^n$ for some $n \in \N$. However, a straight-forward generalization is possible for the action of the symplectomorphisms $S \in \mathrm{D}$.

To show this, we recall that $\mathrm{GL}(n, \mathbb{R})$ is precisely the group of automorphisms of the Lie group $(\R^n, +)$. Now, it is easy to check that for any $\aut \in \mathrm{Aut}(G)$ and any $\pi \in [\pi] \in \Ghat$ of a given graded group $G$, the composite map $\pi_\aut := \pi \circ \aut$ defines a unitary irreducible representation of $G$. It follows that $\pi_\aut$ is an element of some equivalence class $[\pi'] \in \Ghat$, which, depending on the type of automorphism $\aut$, may or may not coincide with $[\pi]$.

Let us illustrate this dichotomy for two particular types of subgroups of $\mathrm{Aut}(G)$:
\begin{itemize}
    \item[(i)]
the normal subgroup of inner automorphisms
\begin{align*}
    \mathrm{conj}_y := x \mapsto y x y^{-1}, \hspace{5pt} y \in G;
\end{align*}
    \item[(ii)] any group of homogeneous dilations $\{ D_r \}_{r > 0}$ on $G$, for which the direct summands of the given gradation $\mathfrak{g}=\bigoplus_{i =1}^{\infty} \mathfrak{g}_i$ form eigenspaces of the matrix $\log(D_1)$, with eigenvalues $v_1, \ldots, v_\dimG \in \N$ (cf.~Subsection~\ref{subs:gr_strat_gr}).
\end{itemize}
On the one hand, if $\aut = \mathrm{conj}_y$ for some $y \in G$, then $[\pi_{\mathrm{conj}_y}] = [\pi]$ for any $\pi \in \Ghat$ since the representations are intertwined by
\begin{align}
   \pi_{\mathrm{conj}_y}(x) = \pi(y) \pi(x) \pi(y)^{-1} \label{eq:conj_rep}
\end{align}
for all $x \in G$. On the other hand, if $\aut = D_r$ for some $r \in \R^+ \setminus \{ 1 \}$, then $[\pi_\aut] \neq [\pi]$ unless $[\pi] = [1] \in \Ghat$. This can be seen by realizing $\pi$ as an induced representation $\pi_l$, which by Kirillov's orbit method is uniquely determined by the (any) representative $l \in \Lie{g}^*$ of the corresponding co-adjoint orbit $\mathcal{O}_\pi = \mathrm{Ad}^*(G)l \subseteq \Lie{g}^*$: by a routine computation, one can show that $\pi_l \circ D_r$ coincides with the induced representation $\pi_{D_r^*(l)}$, where $D_r^*: \Lie{g}^* \to \Lie{g}^*$ denotes the adjoint of $D_r: \Lie{g} \to \Lie{g}$, $r > 0$. Since the representatives $l$ and $D_r^*(l)$ necessarily determine disjoint co-adjoint orbits, it follows that $[\pi] = [\pi_l] \neq [\pi_{D_r^*(l)}] = [\pi_{D_r}]$. (Compare with the closely related \eqref{eq:dilation_symbol}.)

In order to show now that any $\aut \in \mathrm{Aut}(G)$ yields an invariance relation of the type \eqref{eq:symp_inv_WQ} for the symmetric quantization $\Optau$ with $\tau(x) = \exp(\frac{1}{2}\log(x))$, let us settle the necessary notation. Thus, let us denote by $\aut^*$ the induced dual automorphism $\pi \mapsto \pi_\aut$ of $\Ghat$ and by $\mathcal{S} = \mathcal{S}(\aut)$ the automorphism
\begin{align*}
     \mathcal{S}: G \times \Ghat &\to G \times \Ghat, \\
     (x, \pi) &\mapsto (\aut x, {\aut^*}^{-1} \pi).
\end{align*}
With this at hand, we can show the following statement.

\begin{theorem} \label{thm:symp_inv_G}
Let $G$ be a graded group and let $\tau$ be the symmetry function $\tau(x) = \exp(\frac{1}{2}\log(x))$. Then for any $\aut \in \mathrm{Aut}(G)$ there exists a unitary map $U_\mathcal{S}$ such that
\begin{align}
    \Optau(\sigma \circ \mathcal{S})f = U_\mathcal{S}^{-1} \Optau(\sigma) U_\mathcal{S}f \label{eq:symp_inv_onehalf_tau}
\end{align}
for all $\sigma \in S^{0}_{\rho,\delta}(G)$, with $0 \leq \delta < \frac{\rho}{v_\dimG} \leq 1$, and all $f \in L^2(G)$.
\end{theorem}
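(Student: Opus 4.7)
The plan is to exhibit $U_\mathcal{S}$ explicitly and to verify the intertwining \eqref{eq:symp_inv_onehalf_tau} by a direct change-of-variables computation at the level of the associated kernel, exploiting a crucial commutation between $\aut$ and the particular symmetry function $\tau(x) = \exp(\tfrac12 \log x)$. Writing $J_\aut := |\det d\aut|$ for the Jacobian determinant of the Lie algebra isomorphism $d\aut$ (which agrees, in exponential coordinates, with the Jacobian of $\aut: G \to G$), I would first set
\begin{equation*}
    U_\mathcal{S} f(x) := J_\aut^{-1/2} f(\aut^{-1} x),
\end{equation*}
which is unitary on $L^2(G)$ by the change-of-variables formula, with inverse $U_\mathcal{S}^{-1} g(x) = J_\aut^{1/2} g(\aut x)$. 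The structural heart of the argument is the commutation
\begin{equation*}
    \aut\bigl(\tau(x)\bigr) \;=\; \exp_G\bigl(\tfrac{1}{2}\, d\aut(\log_G x)\bigr) \;=\; \exp_G\bigl(\tfrac{1}{2}\log_G(\aut x)\bigr) \;=\; \tau(\aut x),
\end{equation*}
which follows from $\aut \circ \exp_G = \exp_G \circ d\aut$ and the linearity of $d\aut$; this is precisely what singles out $\tau(x) = \exp(\tfrac12 \log x)$ in our argument and is not shared, for instance, by the other members of the family \eqref{eq:fam_tau_Hn} on $\H$.

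Working first on the dense subclass of symbols $\sigma \in S^{-\infty}(G)$ whose associated kernels are Schwartz, for which the integral \eqref{kernel-kernel} converges absolutely, I would compute
\begin{equation*}
    U_\mathcal{S}^{-1} \Optau(\sigma) U_\mathcal{S} f(x) \;=\; J_\aut^{1/2} \bigl[\Optau(\sigma)\bigl(U_\mathcal{S} f\bigr)\bigr](\aut x).
\end{equation*}
After inserting the kernel representation and substituting $y \mapsto \aut y$ (which contributes a factor $J_\aut$ that cancels the two $J_\aut^{\pm 1/2}$ from $U_\mathcal{S}^{\pm 1}$ against $J_\aut^{-1/2}$ from $U_\mathcal{S}$), the commutation above turns the arguments of $\kappa_\sigma$ into $\aut\bigl(x\tau(y^{-1}x)^{-1}\bigr)$ and $\aut(y^{-1}x)$, so that
\begin{equation*}
    U_\mathcal{S}^{-1} \Optau(\sigma) U_\mathcal{S} f(x) \;=\; \int_G \tilde\kappa\bigl(x\tau(y^{-1}x)^{-1},\, y^{-1}x\bigr) f(y)\, dy
\end{equation*}
with $\tilde\kappa(a,b) := J_\aut\, \kappa_\sigma(\aut a, \aut b)$. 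Taking the group Fourier transform of $\tilde\kappa(x,\cdot)$ and making one last substitution $z = \aut y$, the identity $\pi\circ\aut^{-1} = {\aut^*}^{-1}\pi$ produces $\tilde\sigma(x,\pi) = \sigma(\aut x, {\aut^*}^{-1}\pi) = (\sigma\circ\mathcal{S})(x,\pi)$, which proves \eqref{eq:symp_inv_onehalf_tau} on the dense subclass.

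Finally, I would extend the identity to arbitrary $\sigma \in S^0_{\rho,\delta}(G)$ by the now-familiar approximation procedure from the proofs of Theorems~\ref{thm:asym_exp_ch_qu} and \ref{thm:asym_exp_comp_tau}: approximate $\sigma$ by a net $\{\sigma_\varepsilon\} \subset S^{-\infty}(G)$ with Schwartz associated kernels, apply the previous step to each $\sigma_\varepsilon$, and pass to the limit using the $L^2$-boundedness of zero order $\tau$-quantizations together with the unitarity of $U_\mathcal{S}$. The hard part will be establishing that $\sigma \mapsto \sigma \circ \mathcal{S}$ is a continuous self-map of $S^0_{\rho,\delta}(G)$: differentiating $\sigma(\aut x, {\aut^*}^{-1}\pi)$ in $x$ converts each left-invariant vector field $X_j$ into a linear combination $\sum_k c_{jk} X_k$ determined by $d\aut$, and since $d\aut$ need not preserve the gradation, this may mix homogeneous degrees and interfere with the factors $\pi(I+\RO)^{\delta[\cdot]/\nu}$ in the defining seminorms \eqref{defn.seminorms}. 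This can be handled by invoking the independence of $S^m_{\rho,\delta}(G)$ from the choice of Rockland operator \cite[Thm.~5.5.20]{FR}, applied to a Rockland operator adapted to $\aut$, together with the intertwining of $\pi(\RO)$ by ${\aut^*}^{-1}$; verifying this seminorm bookkeeping is the main technical ingredient to be supplied.
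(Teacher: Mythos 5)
Your proposal is correct and rests on exactly the same two pillars as the paper's proof: the commutation $\aut\circ\tau=\tau\circ\aut$ (which is where $\tau(x)=\exp(\tfrac12\log x)$ enters essentially, via the fact that the Lie algebra automorphism $d\aut$ respects the Baker--Campbell--Hausdorff product), and the unitary $U_\mathcal{S}f=|\det d\aut|^{-1/2}\,f\circ\aut^{-1}$. The computation is also the same change of variables $y\mapsto\aut y$; the paper executes it directly on the double integral $\iint\Tr(\cdots)\,dy\,d\mu(\pi)$ using $d\mu(\pi_\aut)=|\det d\aut|\,d\mu(\pi)$, whereas you carry it out on the associated kernel and then Fourier-transform back. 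These are equivalent.

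Where you diverge is the density argument. The paper fixes an arbitrary $\sigma\in S^0_{\rho,\delta}(G)$, restricts to $f\in\SC(G)$, and observes that the change of variables turns the oscillatory integral for $\Optau(\sigma\circ\mathcal{S})f(x)$ into $[\Optau(\sigma)(f\circ\aut^{-1})](\aut x)$, which converges by Proposition~\ref{prop:cont_on_SC}; the extension to $L^2(G)$ is then immediate because the right-hand side $U_\mathcal{S}^{-1}\Optau(\sigma)U_\mathcal{S}$ is obviously bounded. Your route instead approximates $\sigma$ by smoothing symbols and passes to the limit, and you correctly flag that this requires understanding $\sigma\mapsto\sigma\circ\mathcal{S}$ as a map of symbol classes. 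That step is genuinely delicate: an arbitrary $\aut\in\mathrm{Aut}(G)$ need not preserve the gradation of $\Lie{g}$, so it need not commute with the fixed dilations, the difference operators $\Delta^\alpha$, or the chosen Rockland operator $\RO$, and it is not obvious that $\sigma\circ\mathcal{S}$ lies in $S^0_{\rho,\delta}(G)$ with respect to the fixed homogeneous structure. Your suggested fix via independence of the symbol class from the choice of $\RO$ does not by itself address the mixing of homogeneous degrees caused by $d\aut$ (the independence result \cite[Thm.~5.5.20]{FR} assumes a fixed dilation structure). You can sidestep this entirely by noting, as the paper does, that once the operator identity is established on $\SC(G)$ for \emph{all} $\sigma\in S^0_{\rho,\delta}(G)$ — which your kernel computation does give, interpreted distributionally — the bounded extension to $L^2(G)$ is automatic and no symbol-class membership of $\sigma\circ\mathcal{S}$ needs to be verified; alternatively, you can close your density argument at the level of kernels in $\TD(G\times G)$ rather than symbols. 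Either way, the "main technical ingredient to be supplied" you identify is avoidable, and attempting to prove it directly as stated would be more work than the theorem requires.
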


\begin{proof}
Let $\aut \in \mathrm{Aut}(G)$ and choose an arbitrary $\sigma \in S^{0}_{\rho,\delta}(G)$, with $0 \leq \delta < \frac{\rho}{v_\dimG} \leq 1$, and $f \in L^2(G)$. For the sake of explicit computations involving absolutely convergent integrals, we will assume that $f$ lies in the dense subspace $\SC(G)$. This is possible without loss of generality since by Corollary~\ref{thm:cont_on_Sobolev} both $\sigma \circ S$ and $\sigma$ quantize continuous operators on $L^2(G)$, therefore the computations  extend to all of $L^2(G)$.

To begin with, observe that
\begin{align*}
    \Optau(\sigma \circ S)f(x) &= \iint\limits_{\widehat{G} \times G} \Tr \biggl ( \pi(y^{-1} x) \sigma \Bigl ( \aut \bigl ( x\tau(y^{-1}x)^{-1} \bigr ), \pi_{\aut^{-1} } \Bigr ) f(y) \biggr ) \, dy \, d\mu(\pi) \\
    &= \iint\limits_{\widehat{G} \times G} \Tr \biggl ( \pi_{\aut}(y^{-1} x) \sigma \Bigl ( \bigl ( \aut(x) \aut(\tau(y^{-1}x))^{-1} \bigr ), \pi \Bigr ) f(y) \biggr ) \, dy \, d\mu(\pi_{\aut})
\end{align*}
for all $x \in G$, but in order to continue our computation, we need to show that
\begin{align}
    \aut \bigl ( \tau(y^{-1}x) \bigr ) = \tau \bigl ( \aut(y)^{-1} \aut(x) \bigr ) \label{eq:aut_preserves_onehalf_tau}
\end{align}
holds for all $x, y \in G$ for our choice of $\tau$. However, since the Lie algebra automorphism $\aut' := d\aut(e_G)$ not only preserves the Lie bracket on $\Lie{g}$ but also the Baker-Campbell-Hausdorff product
\begin{align*}
    X \star Y :=& \log \bigl ( \exp(X) \exp(Y) \bigr ) \\
    =& X + Y + \frac{1}{2} [X, Y] + \frac{1}{12} [X, [X, Y]] - \frac{1}{12} [Y, [X, Y]] + \ldots,
\end{align*}
the identity~\eqref{eq:aut_preserves_onehalf_tau} follows from
\begin{align*}
    \log \bigl ( \aut \bigl ( \tau(y^{-1}x) \bigr ) &= \aut' \log \bigl ( \tau(y^{-1}x) \bigr ) = \aut' \Bigl ( \frac{1}{2} \bigl ( (-Y) \star X \bigr ) \Bigr ) \\
    &= \frac{1}{2} \bigl ( (-\aut'Y) \star \aut' X \bigr ) = \log \Bigl ( \tau \bigl ( (\aut y)^{-1} \aut(x) \bigr ) \Bigr ),
\end{align*}
where $X := \log(x)$ and $Y := \log(y)$. Using the change of variables $y' := \aut (y)$, we then get
\begin{align*}
    &\Optau(\sigma \circ S)f(x) = \! \int\limits_{\widehat{G}} \Tr \biggl ( \pi_{\aut}(x) \int\limits_G \! \sigma \Bigl ( \bigl ( \aut(x) \tau \bigl ( \aut(y)^{-1} \aut(x) \bigr )^{-1} \bigr ) \pi \Bigr )  \pi_{\aut}(y)^* f(y) dy \biggr ) d\mu(\pi_{\aut}) \\
    &= \frac{1}{|\det(\aut')|} \int\limits_{\widehat{G}} \Tr \biggl ( \pi(\aut (x)) \!\! \int\limits_G \! \sigma \Bigl ( \bigl ( \aut(x) \tau \bigl ( y'^{-1} \aut(x) \bigr )^{-1} \bigr ), \pi \Bigr ) \pi(y')^* f(\aut^{-1} (y')) dy' \biggr ) d\mu(\pi).
\end{align*}
Since the identity $d\mu(\pi_{\aut}) = |\det(\aut')| \, d\mu(\pi)$ is an immediate consequence of the Plancherel formula~\ref{eq:Plancherel}, the above identity can be rewritten as
\begin{align*}
    \Optau(\sigma \circ S)f = \bigl ( \Optau(\sigma) (f \circ \aut^{-1} ) \bigr ) \circ \aut = U_\mathcal{S}^{-1} \Optau(\sigma) U_\mathcal{S}f
\end{align*}
for the unitary operator
\begin{align*}
    U_\mathcal{S}: L^2(G) &\to L^2(G), \\
    U_\mathcal{S}f &= |\det(\aut')|^{-\frac{1}{2}} \, f \circ \aut^{-1}.
\end{align*}
Since $\sigma \in S^{0}_{\rho,\delta}(G)$ and $f \in \SC(G)$ were arbitrary, this completes the proof.
\end{proof}

At this point we ought to make a few relevant observations. The proof of Theorem~\ref{thm:symp_inv_G} not only works for $\tau(x) = \exp(\frac{1}{2}\log(x))$, but in fact any admissible $\tau$ that commutes with the group automorphisms $\aut \in \mathrm{Aut}(G)$. On general graded $G$ this is obviously the case for $\tau = e_G$, the quantizing function of the Kohn-Nirenberg quantization, while on $G = \R^n$ all linear quantizing functions $\tau (x) = \tau x$, $\tau \in [0, 1]$, commute with all $A \in \mathrm{GL}(n, \mathbb{R}) = \mathrm{Aut}(\R^n)$.

So, unlike the full symplectic invariance~\eqref{eq:symp_inv_WQ} on $\R^n$, on a generic graded group the invariance~\eqref{eq:symp_inv_onehalf_tau} under group automorphisms does not necessarily single out any specific $\tau$-quantization. In particular, the homogeneous dilations $\{ D_r \}_{r > 0}$ clearly commute with all $\tau$ that satisfy (HP). However, a combination of \eqref{eq:symp_inv_onehalf_tau} and the preservation of involution~\eqref{eq:sym_quant} suffice already on $\R^n$: even if we admit all symmetry functions $\tau:\R^n \to \R^n$ which satisfy (HP) for any given admissible homogeneous structure with weights $v_1, \ldots, v_\dimG \in \N$, only the linear symmetry function $\tau(x) = \frac{1}{2} x$ commutes with all $A \in \mathrm{GL}(n, \mathbb{R})$.

Admittedly, the groups of automorphisms can vary wildly among general graded groups, and canonical subgroups like the inner automorphism may not suffice to single out specific symmetry functions, as we will see in the case of $G = \H$ in Subsection~\ref{subs:symp_inv_Hn}, but a relatively explicit description of $\mathrm{Aut}(G)$ combined with the use of a symmetric function $\tau$ may nevertheless do the job, as this is the case on both $\R^n$ and $\H$.

Let us also point out that a related, yet different type of invariance~\cite{FFF} has recently been shown in the context of a novel semi-classical calculus on graded groups (cf.~\cite{FF_SC, FF_QE, F_SC}).

\subsection{The Heisenberg case} \label{subs:symp_inv_Hn}

In this subsection we illustrate the general arguments of Subsection~\ref{subs:symp_inv_G} for the special case of the Heisenberg group $\H$ and prove the following sharper version of Theorem~\ref{thm:symp_inv_G}:
\begin{theorem} \label{thm:Weyl_quant_Hn}
Let $\H$ be the Heisenberg group, equipped with the canonical dilations. Then among all quantizing functions $\tau: \H \to \H$ which satisfy (HP) only $$\tau(x) = \exp(\frac{1}{2}\log(x))$$ satisfies the following two conditions for all $\sigma \in S^{0}_{\rho,\delta}(G)$, $0 \leq \delta < \frac{\rho}{v_{2n+1}} = \frac{\rho}{2} \leq 1$, and all $f \in L^2(\H)$:
\begin{itemize}
    \item[-] Preservation of involution:
    \begin{align*}
        \Optau(\sigma)^*f = \Optau(\sigma^*)f;
    \end{align*}
    \item[-] Automorphic invariance:\\
    for each $\aut \in \mathrm{Aut}(\H)$ there exists an operator $U_\mathcal{S} \in \mathcal{U}(L^2(\H))$ such that
    \begin{align*}
    \Optau(\sigma \circ \mathcal{S})f = U_\mathcal{S}^{-1} \Optau(\sigma) U_\mathcal{S}f.
    \end{align*}
\end{itemize}
That is, among all admissible symmetric quantizations on $\H$, only the one defined by $\tau(x) = \exp(\frac{1}{2}\log(x))$ is invariant under all automorphic changes of variables.
\end{theorem}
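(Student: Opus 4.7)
The plan is to handle existence and uniqueness separately, the uniqueness being reduced to testing the automorphic invariance against the symplectic lifts of the first stratum.

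\textit{Existence.} The function $\tau_0(x) := \exp(\tfrac{1}{2}\log(x))$, whose exponential coordinates are $(x_1/2, \ldots, x_{2n+1}/2)$, satisfies (HP) and coincides with the member of the family \eqref{eq:fam_tau_Hn} for which all $c_{j,k}$ vanish, so it is a symmetry function. For any $\aut \in \mathrm{Aut}(\H)$ the differential $\aut' := d\aut(e_G)$ is a Lie algebra automorphism of $\h$, hence commutes with the scalar map $X \mapsto \tfrac{1}{2}X$; equivalently $\aut \circ \tau_0 = \tau_0 \circ \aut$. Theorem~\ref{thm:symp_inv_G} then yields the automorphic invariance with $U_\mathcal{S} f = |\det\aut'|^{-1/2}\,f \circ \aut^{-1}$.

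\textit{Uniqueness.} By Theorem~\ref{thm:asym_exp_adjoint_tau} and Example~\ref{ex:fam_sym_fun_Hn}, every candidate $\tau$ must be of the form
\[
\tau_c(x) = \Bigl(\tfrac{x_1}{2}, \ldots, \tfrac{x_{2n}}{2}, \tfrac{x_{2n+1}}{2} + Q_c(x')\Bigr), \qquad Q_c(x') := \sum_{j,k=1}^{2n} c_{j,k}\, x_j x_k,
\]
with $x' := (x_1, \ldots, x_{2n})$ and $c_{j,k} \in \R$. To show $Q_c \equiv 0$, I would first extract from the hypothesis the pointwise intertwining $\aut \circ \tau_c = \tau_c \circ \aut$ for every $\aut \in \mathrm{Aut}(\H)$. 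Testing on multiplication-type symbols $\sigma(x,\pi) = g(x)I$, for which $\Optau(\sigma)$ is the multiplication operator $M_g$, the relation $U_\mathcal{S}^{-1} M_g U_\mathcal{S} = M_{g \circ \aut}$ for all $g \in \SC(\H)$ forces $U_\mathcal{S}$, by a standard localization on point masses, to be a weighted composition operator $U_\mathcal{S} f = \phi \cdot f \circ \aut^{-1}$ with $|\phi|^2 = |\det\aut'|^{-1}$. Substituting this back into the general invariance relation and comparing integral kernels -- which, by the change of variables $\pi \mapsto \pi_\aut$ in the Fourier inversion, satisfy $\kappa_{\sigma \circ \mathcal{S}}(x,z) \propto \kappa_\sigma(\aut(x),\aut(z))$ -- while exploiting the density of Schwartz associated kernels, forces $\aut(\tau_c(z)) = \tau_c(\aut(z))$ for all $z \in \H$ and simultaneously eliminates the phase ambiguity.

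The remaining steps specialize this intertwining to the symplectic lifts $\tilde S(x', x_{2n+1}) := (Sx', x_{2n+1})$ for $S \in \Sp(2n,\R)$, which are automorphisms of $\H$ precisely because $S$ preserves the symplectic form defining the Heisenberg bracket. A direct computation gives $\tilde S \circ \tau_c = \tau_c \circ \tilde S$ if and only if $Q_c(Sx') = Q_c(x')$ for all $x' \in \R^{2n}$ and all $S \in \Sp(2n,\R)$. One then checks that the only $\Sp(2n,\R)$-invariant symmetric bilinear form on the defining representation $\R^{2n}$ is zero: letting $C_{j,k} := \tfrac{1}{2}(c_{j,k} + c_{k,j})$, the symplectic dilations $\mathrm{diag}(tI_n, t^{-1}I_n)$ force the matrix $C$ to be block-anti-diagonal; the symplectic rotation $J$ forces its off-diagonal block to be antisymmetric; and the symplectic shears with symmetric upper block force that antisymmetric block to commute with every $n \times n$ symmetric matrix, hence to vanish. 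Thus $Q_c \equiv 0$ and $\tau_c = \tau_0$, as required.

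The main obstacle is the first step: the hypothesis grants only the existence of some unitary $U_\mathcal{S}$, with no a priori structure. The multiplication-symbol test pins down $U_\mathcal{S}$ up to a unimodular phase, and one must then argue that this phase cannot absorb a genuine mismatch $\aut\circ\tau_c \neq \tau_c\circ\aut$; the density of Schwartz associated kernels is used to separate the two arguments of $\kappa_\sigma$ and force the desired pointwise identity.
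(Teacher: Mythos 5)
Your proposal is correct and takes a genuinely different, and in two places more careful, route than the paper's own argument. The paper's existence step matches yours. For uniqueness, however, the paper passes from ``automorphic invariance for some unitary $U_\mathcal{S}$'' directly to ``$\tau$ commutes with all $\aut \in \mathrm{Aut}(\H)$'' without justification, and its algebraic step then uses the reflection $\Theta$ followed by $J$, asserting that $J^*CJ = -C^*$ holds for every matrix $C$ --- which is false in general (it characterizes $\Lie{sp}(2n,\R)$). You correctly identified the first gap: the hypothesis a priori yields only an unstructured $U_\mathcal{S}$, and pinning it down is the crux. Your multiplication-symbol test is the right tool, though the phrase ``a standard localization on point masses'' should be replaced by the sharper fact that the multiplication algebra $L^\infty(G) \cong \{M_g\}$ is maximal abelian in $\mathcal{L}(L^2(G))$, so the intertwining $U_\mathcal{S}^{-1}M_g U_\mathcal{S} = M_{g\circ\aut}$ for all admissible $g$ forces $U_\mathcal{S}$ to normalize $L^\infty(G)$ and hence to be a weighted composition operator along $\aut$; plugging this into the kernel identity $\Ker^\tau_{\sigma\circ\mathcal{S}}(x,y) = |\det\aut'|\,\kappa_\sigma\bigl(\aut(x)\aut(\tau(y^{-1}x))^{-1}, \aut(y^{-1}x)\bigr)$, separating the free variables $u := \aut(x)$ and $z := \aut(y^{-1}x)$, and running a disjoint-support argument on Schwartz associated kernels then forces $\aut\circ\tau = \tau\circ\aut$ pointwise (and, as a byproduct, that the weight $\phi$ is constant). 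Your algebraic step is also cleaner than the paper's: you never use $\Theta$, since the $\Sp$-lifts alone already kill $C$ --- your explicit dilation/rotation/shear computation is correct, and one can also see it in one line from the irreducibility of the defining representation of $\Sp$, under which the invariant bilinear forms are spanned by the antisymmetric $\omega$. In short, where the paper glosses over the necessity of commutation and leans on a false general identity, your argument supplies the missing necessity step and replaces the algebraic endgame by a self-contained $\Sp$-invariance computation; the only polish needed is to spell out the maximal-abelian reasoning behind the structure of $U_\mathcal{S}$.
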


\medskip

To set the stage for the proof of Theorem~\ref{thm:Weyl_quant_Hn}, we will recall the classification of $\mathrm{Aut}(\H)$, following the exposition in \cite[\S~1.2]{F1}. For each type of $\aut \in \mathrm{Aut}(\H)$ we will additionally provide an explicit description of the map $\pi \mapsto \pi_\aut$, which we used in Subsection~\ref{subs:symp_inv_G}.

Thus, let us recall that each $\aut \in \mathrm{Aut}(\H)$, when viewed as a map on the underlying vector space $\mathbb{R}^{2n+1}$, is a linear isomorphism which can be uniquely written as\footnote{The fourth map in \cite[\S~1.2]{F1} is $\iota = \tilde{J} \circ \Theta$, whereas one may equivalently use $\Theta$, which proved very convenient in \cite[Ch.~6]{FR}.}
\begin{align*}
    \aut = \tilde{S} \circ \mathrm{conj}_y \circ D_r \circ \Theta
\end{align*}
for some $\tilde{S}$ as in \eqref{extSymp}, some $y \in \H$ and some $r > 0$, where
\begin{align*}
    \mathrm{conj}_y(x) =& \Bigl( x_1, \ldots, x_{2n}, x_{2n+1} + \sum_{j=1}(y_j x_{n+j} - y_{n+j} x_j) \Bigr), \\
    D_r(x) =& (r x_1, \ldots, r x_{2n}, r^2 x_{2n+1}), \\
    \Theta (x_1, \ldots, x_{2n}, x_{2n+1}) :=& (x_1, \ldots, x_n, -x_{n+1}, \ldots, -x_{2n}, -x_{2n+1}).
\end{align*}
In the computation of the map $\aut^* = \pi \mapsto \pi_\aut = \pi \circ \aut$ for each type of automorphism, we restrict ourselves to the set of Schr\"{o}dinger representations since the remaining one-dimensional representations have Plancherel measure zero.

(i) For any symplectic map $S \in \Sp$ the invariance property~\eqref{eq:symp_inv_Schr_rep} implies that the dual automorphism $(\tilde{S})^*$ is given by
\begin{align*}
    \pi_\lambda \mapsto \eta_\lambda(S) \pi_\lambda \eta_\lambda(S^{-1}) \in [\pi_\lambda]
\end{align*}
for all $\pi_\lambda \in \Hhat$, and $d\mu(\pi_{\tilde{S}}) = d\mu(\pi)$ since all symplectic matrices are of determinant $1$.

(ii) A general formula for the conjugations is given by \eqref{eq:conj_rep}, and by the bi-invariance of the Haar measure on nilpotent Lie groups we have $d\mu(\pi_{\mathrm{conj}_y}) = d\mu(\pi)$.

(iii) For the homogeneous dilations $\{ D_r \}_{r >0}$ we compute
\begin{align*}
    \bigl ( \F(f \circ D_r) \bigr )(\pi_\lambda) &= \int_{\H} f\bigl( D_r(x) \bigr) \hspace{2pt} \pi_\lambda(x)^* \,dx \\
    &= r^{-\hdim} \hspace{2pt} \int_{\H} f(x) \hspace{2pt} \pi_\lambda \Bigl ( \bigl( D_{r^{-1}}(x) \bigr) \Bigr )^* \,dx \\
    &=  r^{-\hdim} \hspace{2pt} \int_{\H} f(x) \hspace{2pt} \pi_{r^{-2} \lambda}(x)^* \,dx \\
    &= r^{-\hdim} \hspace{2pt} \widehat{f}(\pi_{r^{-2}\lambda})
\end{align*}
for any $f \in \SC(\H)$ and the homogeneous dimension  $\hdim = 2n+2$. The Plancherel formula~\eqref{eq:Plancherel} then implies that the dual automorphism $(D_r)^*$ is given by
\begin{align*}
    \pi_\lambda \mapsto \pi_{r^2\lambda}
\end{align*}
for all $\pi_\lambda \in \Hhat$, and that $d\mu(\pi_{D_r}) = r^{\hdim} \, d\mu(\pi)$.

(iv) For the Haar measure-preserving, self-inverse map $\Theta = \Theta^{-1}$ one immediately gets $\pi_\lambda(\Theta x) = \pi_{-\lambda}(x)$ for all $x \in \H$. So, the dual automorphism $\Theta^*$ is given
\begin{align*}
    \pi_\lambda \mapsto \pi_{-\lambda}
\end{align*}
for all $\pi_\lambda \in \Hhat$, and $d\mu(\pi_{\Theta}) =  d\mu(\pi)$.

\medskip

\begin{proof}[Proof of Theorem~\ref{thm:Weyl_quant_Hn}]
Suppose that the quantizing function $\tau$ satisfies the condition (HP). Then by Theorem~\ref{thm:asym_exp_adjoint_tau}, the associated $\tau$-quantization preserves the involution if and only if $\tau$ is symmetric. So, for the rest of the proof we may limit ourselves to symmetric quantizations.

Among the latter, the quantization defined by $\tau(x) = \exp(\frac{1}{2}\log(x))$ is invariant under automorphic changes of variables by Theorem~\ref{thm:symp_inv_G}. To complete the proof, it therefore suffices to show that no other symmetry function $\tau$ that satisfies (HP) commutes with all $\aut \in \mathrm{Aut}(\H)$. By the definition of (HP), all such $\tau$ commute with the homogeneous dilations. To check the remaining types of automorphisms, we recall from Example~\ref{ex:fam_sym_fun_Hn} that the symmetry functions which satisfy (HP) for the homogeneous dilation structure on $\H$ are precisely the functions of the form
\begin{align*}
	\tau(x) = \Bigl ( \frac{x_1}{2}, \ldots, \frac{x_{2n}}{2}, \frac{x_{2n+1}}{2}+ \sum_{j, k = 1}^{2n} c_{j, k} \, x_j x_k \Bigr )
\end{align*}
for any choice of $c_{j, k} \in \R$, $j, k = 1, \ldots, 2n$. One easily checks that all of these functions also commute with  the inner autmorphisms $\mathrm{conj}_y$, $y \in \H$,
so we may pass to checking the commutation with the automorphism $\Theta$.

Note that due  to its form $\tau$ commutes with $\Theta$ if and only if $\tau(\Theta(x))_{2n+1}=\Theta(\tau(x))_{2n+1}$, i.e. if the matrix  $C:=\{c_{j,k}\}_{j,k=1}^{2n}$ of the quadratic form $q(x)=\sum_{j, k = 1}^{2n} c_{j, k} \, x_j x_k$  satisfies certain properties:
Since the antisymmetric part of the quadratic form
\begin{align*}
    q:\,(x_1, \ldots, x_{2n}) \mapsto \sum_{j, k = 1}^{2n} c_{j, k} \, x_j x_k
\end{align*}
always vanishes, it is not restrictive to assume for the rest of the proof that $C$ is symmetric. Now it is easy to check that $\tau(\Theta(x))_{2n+1}=\Theta(\tau(x))_{2n+1}$ if and only if $C$ is of the form
\begin{align*}
    C :=
    \left (\begin{array}{cc}
        0 & C_1 \\
        C_2 & 0
    \end{array}
    \right ),
\end{align*}
where $C_1, C_2$ are two symmetric $n \times n$-matrices. 

With this information about the structure of $C$ at hand, we can focus on the maps $\tilde{S}$ with $S \in \Sp$.
Note that $\tau$ commutes with all $\tilde{S}$, $S \in \Sp$, precisely when the invariance $S^* C S = C$, where, recall, $C$ is symmetric and has a block anti-diagonal form. If we now pick $S = J$, then we must also have $J^*CJ=-C^*$, since the latter relation holds in general for all $C\in \mathrm{M}_{2n}(\mathbb{R})$.\footnote{See, e.g., \cite[Prop.~4.1~(f)]{F1}} The two conditions above and the symmetry of $C$ then yield 
\begin{align}
    C = S^* C S = J^* C J = -C^*=-C,
\end{align}
which implies that $C$ is the null matrix.
This leaves only $\tau(x) = \exp(\frac{1}{2}\log(x))$, and the proof is complete. 
\end{proof}

\section{Applications} \label{sec:applications}

We conclude this paper with a section that collects several useful applications of the $\tau$-calculus developed here. We show that $\tau$-quantized pseudo-differential operators with symbols in H\"{o}rmander classes enjoy the same continuity properties on Lebesgue spaces, inhomogeneous Sobolev spaces, etc. as the Kohn-Nirenberg-quantized operators, and we extend the parametrix construction and the G\r{a}rding inequality for elliptic pseudo-differential to general $\tau$-quantizations. In fact, we will see that these results hold for the Kohn-Nirenberg quantization precisely when they hold for each admissible $\tau$-quantization, provided $0 \leq \delta \leq \frac{\rho}{v_\dimG} \leq 1$. This is in fact a direct consequence of changing quantization according to Theorem~\ref{thm:asym_exp_ch_qu}. Future results based on any of these properties may consequently be proved in the $\tau$-quantization that seems most convenient for the task at hand.

\subsection{Continuity on function spaces} \label{subs:cont_FS}

In addition to the continuity of $\tau$-quantized operators on $\SC(G)$, and consequently on $\TD(G)$, due to Proposition~\ref{prop:cont_on_SC}, we can immediately extend several crucial continuity results of the Kohn-Nirenberg calculus (cf.~\cite[Cor.~5.7.2]{FR}, \cite[Cor.~5.7.4]{FR}, \cite[Thm.~1.2]{CDR_Lp} and \cite[Thm.~4.18]{CDR_Lp}).

For the definitions and properties of the Hardy space $H^1(G)$ and its dual space $\mathrm{BMO}(G)$ we refer to \cite{FS}; for the definitions and embedding properties of the inhomogeneous Besov spaces $B^s_{p, q}(G)$, $p \in (1, \infty)$, $q \in [1, \infty]$, $s \in \R$ we refer to \cite{CR_Be}.

\begin{theorem} \label{thm:cont_on_Sobolev}
Let $m \in \R$, $0 \leq \delta < \min\{\rho,\frac{1}{v_n} \} \leq 1$,\footnote{The assumption $\delta < \rho$ can be weakened to $\delta \leq \rho$, $\delta \neq 1$, for the Kohn-Nirenberg calculus in several cases.} and let $\tau$ satisfy (HP). Then for all $s \in \R$, $p  \in (1, \infty)$, $q \in (0, \infty]$, and $m_p := \hdim (1 - \rho) \bigl | \frac{1}{2} - \frac{1}{p} \bigr |$ the operator $T = \Optau(\sigma)$ lies in
\begin{itemize}
	\item[(i)] $\mathcal{L}(L^2_{s}(G), L^2_{s-m}(G))$ for each $\sigma \in S^m_{\rho, \delta}(G)$;
	\item[(ii)] $\mathcal{L}(L^p_{s}(G), L^p_{s-m}(G))$ for each $\sigma \in S^m_{1, 0}(G)$;
	\item[(iii)] $\mathcal{L}(H^1(G), L^1(G))$ for each $\sigma \in S^{-m}_{\rho, \delta}(G)$ with $m = \frac{\hdim (1 - \rho)}{2}$;
    \item[(iv)] $\mathcal{L}(\mathrm{BMO}(G), L^\infty(G))$ for each $\sigma \in S^{-m}_{\rho, \delta}(G)$ with $m = \frac{\hdim (1 - \rho)}{2}$;
    \item[(v)] $\mathcal{L}(L^p(G))$ for each $\sigma \in S^{-m}_{\rho, \delta}(G)$ with $m \geq m_p$;
    \item[(vi)] $\mathcal{L}(B^s_{p, q}(G))$ for each $\sigma \in S^{-m}_{\rho, \delta}(G)$ with $m \geq m_p$. \\ In particular, this holds for $L^p_{s}(G) = B^s_{p, 2}(G)$.
\end{itemize}
\end{theorem}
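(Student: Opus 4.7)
The plan is to reduce every statement in the theorem to the corresponding statement for the Kohn-Nirenberg calculus established in \cite{FR} and \cite{CDR_Lp} via the change-of-quantization result, Theorem~\ref{thm:asym_exp_ch_qu}. The key observation is that under the assumption $0\leq \delta<\min\{\rho,1/v_n\}\leq 1$, the hypotheses of Theorem~\ref{thm:asym_exp_ch_qu} are satisfied whenever $\tau$ satisfies (HP), so the map $\sigma\mapsto \sigma_{\mathrm{KN}}$ defined by $\Optau(\sigma)=\Op(\sigma_{\mathrm{KN}})$ is a Fr\'{e}chet space isomorphism from $S^m_{\rho,\delta}(G)$ onto itself.

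First I would fix $\sigma\in S^m_{\rho,\delta}(G)$ and set $T=\Optau(\sigma)$. Applying Theorem~\ref{thm:asym_exp_ch_qu}, there exists a uniquely determined symbol $\widetilde{\sigma}\in S^m_{\rho,\delta}(G)$ such that $T=\Op(\widetilde{\sigma})$, with seminorm estimates
\begin{align*}
    \|\widetilde{\sigma}\|_{S^m_{\rho,\delta},a,b}\lesssim \|\sigma\|_{S^m_{\rho,\delta},a',b'}
\end{align*}
for suitable $a',b'\in\N_0$ depending on $a,b\in\N_0$. In particular, $\widetilde{\sigma}$ lies in the same H\"{o}rmander class as $\sigma$, with continuous dependence on the seminorms. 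Since the continuity statements (i)--(vi) are phrased in terms of membership of $T$ in spaces of bounded linear operators between the relevant function spaces, and since the operator $T$ itself is independent of the particular quantization scheme used to produce it, each assertion for $\Optau(\sigma)$ is equivalent to the same assertion for $\Op(\widetilde{\sigma})$.

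Next I would invoke the corresponding Kohn-Nirenberg results directly: for (i) the $L^2$-Sobolev boundedness \cite[Cor.~5.7.2]{FR}; for (ii) the $L^p$-Sobolev boundedness of classical $(1,0)$-symbols, which is available in \cite{FR} (it suffices to combine $L^p$-boundedness of $S^0_{1,0}(G)$ with the action of Bessel potentials, which commute with $\Op(\widetilde{\sigma})$ up to symbols of strictly lower order whose contribution is absorbed by induction on $s$); for (iii)--(iv) the sharp $H^1 \to L^1$ and $\mathrm{BMO}\to L^\infty$ bounds of \cite[Cor.~5.7.4]{FR}; for (v) the $L^p$-continuity theorem \cite[Thm.~1.2]{CDR_Lp}; and for (vi) the Besov-space continuity \cite[Thm.~4.18]{CDR_Lp} together with the embedding $L^p_s(G)=B^s_{p,2}(G)$. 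In each case one applies the cited result to $\widetilde{\sigma}\in S^m_{\rho,\delta}(G)$ (respectively $S^{-m}_{\rho,\delta}(G)$), and the boundedness estimates for $T$ follow with constants that depend linearly on a seminorm of $\widetilde{\sigma}$, hence on a seminorm of $\sigma$.

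The main (and essentially only) obstacle is ensuring that Theorem~\ref{thm:asym_exp_ch_qu} is genuinely applicable in the full range $0\leq \delta<\min\{\rho,1/v_n\}\leq 1$ stated in the theorem, so that the passage from $\sigma$ to $\widetilde{\sigma}$ does not degrade the order or type of the symbol. This has already been verified in the proof of Theorem~\ref{thm:asym_exp_ch_qu}, where the restriction $\delta<1/v_n$ was identified precisely as the condition that makes the Schwartz-space continuity of $\Optau(\sigma)$ available and makes the asymptotic expansion \eqref{eq:asym_exp_tau_KN} converge in the expected sense. Once that is granted, no further analysis is needed: the entire continuity theory developed for the Kohn-Nirenberg calculus transfers verbatim to arbitrary $\tau$-quantizations with $\tau$ satisfying (HP).
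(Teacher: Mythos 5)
Your proof takes exactly the same route as the paper: reduce everything to the Kohn-Nirenberg case by invoking the change-of-quantization Theorem~\ref{thm:asym_exp_ch_qu} to write $\Optau(\sigma)=\Op(\widetilde\sigma)$ with $\widetilde\sigma$ in the same H\"ormander class, and then cite the Kohn-Nirenberg continuity results of \cite{FR} and \cite{CDR_Lp}. The only deviation is a minor one of bookkeeping — your attribution of (iii)--(iv) to \cite[Cor.~5.7.4]{FR} and your inductive reproof of (ii) are superfluous, since the $L^p$-Sobolev boundedness for $S^m_{1,0}(G)$ is already \cite[Cor.~5.7.4]{FR} and the $H^1\to L^1$ and $\mathrm{BMO}\to L^\infty$ bounds come from \cite{CDR_Lp} — but this does not affect the validity of the argument.
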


\begin{proof}
Since by Theorem~\ref{thm:asym_exp_ch_qu}, every $T$ in (i) and (ii) can be uniquely written as an operator $T = \Op(\sigma')$ with $\sigma'$ in $S^m_{\rho, \delta}(G)$ and $S^m_{1, 0}(G)$, respectively, the statements (i), (ii), (iii) - (v), and (iv) follow directly from  \cite[Cor.~5.7.2]{FR}, \cite[Cor.~5.7.4]{FR}, \cite[Thm.~1.2]{CDR_Lp}, and \cite[Thm.~4.18]{CDR_Lp}, respectively.
\end{proof}

\subsection{Ellipticity, parametrices and the G\r{a}rding inequality} \label{subs:Garding}

This subsection extends the parametrix construction and the G\r{a}rding inequality for elliptic pseudo-differential operators in the Kohn-Nirenberg quantization on graded groups to the setting of $\tau$-quantizations.

Let $\RO$ be a positive essentially self-adjoint Rockland operator of degree $\hdeg$. If $E_{(a, b)}(\RO)$ denotes its spectral projection on $L^2(G)$ corresponding to the interval $(a, b)$, with $0 \leq a \leq b \leq +\infty$, then for any $\pi \in \Ghat$ the corresponding spectral projection $E_{(a, b)}(\pi(\RO))$ on $\RS$ of the essentially self-adjoint operator $\pi(\RO)$ corresponds to the Fourier transform of $E_{(a, b)}(\RO)$ in the representation $\pi$.
 
We recall from \cite[\SS~5.8]{FR} that a smooth symbol $\sigma$ for which
\begin{align*}
	\{\pi(I + \RO)^{\frac{a}{\hdeg}} \sigma(x, \pi) \pi(I + \RO)^{-\frac{b}{\hdeg}} \mid \pi \in \Ghat \bigr \}
\end{align*}
is an element of $L^\infty(\Ghat)$ for some $a, b \in \R$ and all $x \in G$, is said to be \textit{elliptic} with respect to $\RO$ of \textit{elliptic order} $m_o \in \R$ if there exists a $\Lambda \in \R$ such that for any $\gamma \in \R$, $x \in G$, almost any $\pi \in \Ghat$ and any $v \in \mathcal{H}^\infty_{\pi, \Lambda}:= E_{(\Lambda, +\infty)}(\RO)\RS^\infty$ the lower bound
\begin{align*}
    \bigl \| \pi(I + \RO)^{\frac{\gamma}{\hdeg}} \sigma(x, \pi)v \bigr \|_{\RS} \geq C_\gamma \bigl \| \pi(I + \RO)^{\frac{\gamma}{\hdeg}} \pi(I + \RO)^{m_o{\hdeg}} v \bigr \|_{\RS}
\end{align*}
is satisfied for some $C_\gamma = C_\gamma(\sigma, \RO, m_o, \Lambda)$.
In accordance with \cite[Def.~2.8]{CDR_Ga}, such a symbol $\sigma$ and, correspondingly, the Kohn-Nirenberg-quantized operator $\Op(\sigma)$, are called globally elliptic if $\Lambda \leq 0$.

The following result is an extension of the left parametrix construction for elliptic differential operators in the Kohn-Nirenberg quantization, given by \cite[Thm.~5.8.7]{FR}. We recall that these parametrices on graded groups are one-sided as were the ones on homogeneous groups in \cite{CGGP}.

\begin{theorem}
Let $m \in \R$, $0 \leq \delta < \min\{\rho,\frac{1}{v_n}\} \leq 1 $, and let $\tau$ satisfy (HP). Then for any $\sigma \in S^m_{\rho, \delta}(G)$ which is elliptic of elliptic order $m$ there exists a left parametrix $B \in \Optau \bigl ( S^{-m}_{\rho, \delta}(G) \bigr )$ for the operator $A = \Optau(\sigma)$, in the sense that
\begin{align}
    B A - I \in \Optau \bigl ( S^{-\infty}(G) \bigr ).
\end{align}
\end{theorem}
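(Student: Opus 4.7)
The plan is to reduce the problem to the known left parametrix construction in the Kohn-Nirenberg calculus on graded groups from \cite[Thm.~5.8.7]{FR} by passing through the change-of-quantization isomorphism established in Theorem~\ref{thm:asym_exp_ch_qu}. First, since $\tau$ satisfies (HP) and $0 \leq \delta < \min\{\rho,\frac{1}{v_n}\}$, Theorem~\ref{thm:asym_exp_ch_qu} provides a uniquely determined Kohn-Nirenberg symbol $\sigma_{\mathrm{KN}} \in S^m_{\rho,\delta}(G)$ with $\Op(\sigma_{\mathrm{KN}}) = \Optau(\sigma)$, related to $\sigma$ by the asymptotic expansion
\begin{align*}
    \sigma_{\mathrm{KN}} \sim \sum_{j=0}^{\infty} \sum_{[\beta]=j} \sum_{[\beta']=[\beta]} c^{\mathrm{KN},\tau}_{\beta',\beta}\, \Delta^{\beta'} X^\beta \sigma.
\end{align*}
The $j=0$ term collapses to $\sigma$ itself, since $q_0 \equiv 1$ forces $c^{\mathrm{KN},\tau}_{0,0} = 1$, so that $\sigma_{\mathrm{KN}} - \sigma \in S^{m-(\rho-\delta)}_{\rho,\delta}(G)$.

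The key observation is that ellipticity of order $m$ is preserved under this change of quantization: since $\sigma_{\mathrm{KN}}$ and $\sigma$ differ by a symbol whose order is strictly smaller than $m$, the lower bound defining ellipticity for $\sigma$ transfers to $\sigma_{\mathrm{KN}}$ (possibly after enlarging the parameter $\Lambda$), by a standard perturbation argument using the composition and Sobolev continuity results of Theorems~\ref{thm:asym_exp_comp_tau} and~\ref{thm:cont_on_Sobolev}~(i). I expect this ellipticity-preservation step to be the only subtle point, although it is conceptually straightforward: one absorbs the lower-order perturbation into the high-frequency regime where $\pi(I+\RO)^{-(\rho-\delta)/\hdeg}$ provides arbitrarily small norm bounds.

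With $\sigma_{\mathrm{KN}}$ elliptic of elliptic order $m$, \cite[Thm.~5.8.7]{FR} yields a Kohn-Nirenberg symbol $b_{\mathrm{KN}} \in S^{-m}_{\rho,\delta}(G)$ such that
\begin{align*}
    \Op(b_{\mathrm{KN}})\, \Op(\sigma_{\mathrm{KN}}) - I \in \Op \bigl ( S^{-\infty}(G) \bigr ).
\end{align*}
Applying Theorem~\ref{thm:asym_exp_ch_qu} in the reverse direction produces a uniquely determined symbol $b \in S^{-m}_{\rho,\delta}(G)$ with $\Optau(b) = \Op(b_{\mathrm{KN}})$. Since $\sigma \mapsto \sigma_{\mathrm{KN}}$ is a Fr\'echet space isomorphism on every class $S^{m'}_{\rho,\delta}(G)$, it restricts to a bijection of $S^{-\infty}(G) = \bigcap_{m' \in \R} S^{m'}_{\rho,\delta}(G)$ onto itself, hence $\Optau \bigl ( S^{-\infty}(G) \bigr ) = \Op \bigl ( S^{-\infty}(G) \bigr )$. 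Setting $B := \Optau(b)$, we conclude
\begin{align*}
    B A - I = \Op(b_{\mathrm{KN}})\, \Op(\sigma_{\mathrm{KN}}) - I \in \Optau \bigl ( S^{-\infty}(G) \bigr ),
\end{align*}
which is the desired left parametrix.
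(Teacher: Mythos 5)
Your route is genuinely different from the paper's and, with one repair, works. The paper builds the parametrix \emph{inside} the $\tau$-calculus: it sets $\sigma_0(x,\pi) := \psi(\pi(\RO))\,\sigma(x,\pi)^{-1}$ (the same ansatz as in the Kohn-Nirenberg proof), applies the $\tau$-composition theorem (Theorem~\ref{thm:asym_exp_comp_tau}) to see that $\Optau(\sigma_0)\Optau(\sigma) = \psi(\RO) + \Optau(r)$ with $r \in S^{-(\rho-\delta)}_{\rho,\delta}(G)$ --- exploiting that the leading term of $\sigma_0\circ_\tau\sigma$ is the pointwise product $\psi(\pi(\RO))$ and that $\Optau$ and $\Op$ agree on $x$-independent symbols --- and then iterates with a Neumann series. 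You instead transfer the whole problem to the Kohn-Nirenberg picture via Theorem~\ref{thm:asym_exp_ch_qu}, invoke \cite[Thm.~5.8.7]{FR} as a black box, and transfer the result back. Your way has the aesthetic advantage of not re-running any construction, but it forces the extra lemma that ellipticity is preserved under the change of quantization, which the paper's approach does not need.

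That extra lemma is the one place where your write-up is imprecise. The citations are wrong: the perturbation argument does not go through Theorem~\ref{thm:asym_exp_comp_tau} (a statement about composing operators) or Theorem~\ref{thm:cont_on_Sobolev}~(i) (a statement about mapping properties on Sobolev spaces), since ellipticity is a pointwise condition on the operator-valued symbol at a fixed $(x,\pi)$. The tools you actually need are the seminorm characterization of $S^{m'}_{\rho,\delta}(G)$ --- in particular the equivalent seminorms from \cite[Thm.~5.5.20]{FR} furnishing the bound
\begin{align*}
\bigl\| \pi(I+\RO)^{\gamma/\hdeg}\, r(x,\pi)\, \pi(I+\RO)^{-(\gamma+m-(\rho-\delta))/\hdeg} \bigr\|_{\mathscr{L}(\RS)} \lesssim \|r\|_{S^{m-(\rho-\delta)}_{\rho,\delta}, a, b}
\end{align*}
for $r := \sigma_{\mathrm{KN}} - \sigma$ --- together with the spectral calculus for $\pi(\RO)$, which on $\mathcal{H}^\infty_{\pi,\Lambda'}$ gives the contraction factor $(1+\Lambda')^{-(\rho-\delta)/\hdeg}$. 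Your verbal description of the mechanism (``absorbs the lower-order perturbation into the high-frequency regime'') is exactly right; it is only the attribution that is off. One further point you should make explicit: since the ellipticity constants $C_\gamma$ depend on $\gamma$ while $\Lambda$ is chosen once, you should first verify the estimate for a single $\gamma$ (say $\gamma=0$) with an enlarged $\Lambda'$, and then appeal to the standard reduction in \cite{FR} showing that a single $\gamma$ suffices. With those corrections the argument closes.
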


\begin{proof}
Let $\psi \in C^\infty(\R)$ satisfy $\psi|_{(-\infty, \Lambda_1]} = 0$ and $\psi|_{[\Lambda_2, +\infty)} = 1$ for $\Lambda_2 > \Lambda_1 > \Lambda$. For such $\psi$ the proof of \cite[Thm.~5.8.7]{FR} shows that
\begin{align*}
    \Op \bigl ( \psi(\pi(\RO)) \sigma(x, \pi)^{-1} \bigr ) \Op(\sigma) = \psi(\RO) \mod \Op \bigl ( S^{-(\rho - \delta)}_{\rho, \delta}(G) \bigr ).
\end{align*}
Since the $x$-independence of the symbol $\psi(\pi(\RO)) \in S^{-\infty}(G)$ implies
\begin{align*}
    \psi(\RO) = \Op \bigl ( \psi(\pi(\RO)) \bigr ) = \Optau \bigl ( \psi(\pi(\RO)) \bigr ),
\end{align*}
by Theorem~\ref{thm:asym_exp_ch_qu} we also have
\begin{align*}
    \Optau \bigl ( \psi(\pi(\RO)) \sigma(x, \pi)^{-1} \bigr ) A = \psi(\RO) \mod \Optau \bigl ( S^{-(\rho - \delta)}_{\rho, \delta}(G) \bigr ).
\end{align*}
The rest of the proof is identical to the proof of \cite[Thm.~5.8.7]{FR}: if we write $\psi(\RO)=I-(1-\psi(\RO))$ and denote the correction term by $U:=1-\psi(\RO) \in \Optau \bigl ( S^{-(\rho - \delta)}_{\rho, \delta}(G) \bigr )$, then there exists some $T \in \Optau \bigl ( S^0_{\rho, \delta}(G) \bigr )$ with
\begin{align*}
    T \sim I + U + U^2 + \ldots
\end{align*}
such that
\begin{align*}
    B := T \Optau \bigl ( \psi(\pi(\RO)) \sigma(x, \pi)^{-1} \bigr ) \in \Optau \bigl ( S^{-m}_{\rho, \delta}(G) \bigr )
\end{align*}
is the desired parametrix.
\end{proof}

\medskip

The following theorem is an extension of the G\r{a}rding inequality for elliptic pseudo-differential operators in the Kohn-Nirenberg quantization, established by Cardona and the third author in~\cite{CDR_Ga}. The G\r{a}rding inequality is one of the fundamental lower bounds allowing one to control relevant terms in energy estimates for solutions of evolution equations. As a consequence, one can derive the corresponding well-posedness results for evolution equations with $\tau$-quantizations, in analogy to those obtained in \cite{CDR_Ga} and \cite{CDR_Ga2}. We omit the details of such applications to avoid repetitions.

\begin{theorem} \label{thm:Garding}
Let $m \in \R$, $0 \leq \delta < \min\{\rho,\frac{1}{v_n}\} \leq 1$, $\sigma \in S^m_{\rho, \delta}(G)$, and let $\tau$ satisfy (HP). Suppose that the real part of the symbol,
\begin{align*}
    \Re ( \sigma ) := \frac{1}{2} \bigl ( \sigma + \sigma^* \bigr ),
\end{align*}
is elliptic and that the (possibly unbounded) operator $\Re ( \sigma )(x, \pi): \RS \to \RS$ satisfies
\begin{align*}
    \Re ( \sigma )(x, \pi) \geq C_0 \pi(I + \RO)^m
\end{align*}
for almost all $x \in G, \pi \in \Ghat$. Then there exist constants $C_1, C_2 > 0$ such that
\begin{align}
    \Re \bigl ( \langle \Optau(\sigma) f, f \rangle_{L^2} \bigr ) \geq C_1 \| f \|_{L^2_{\frac{m}{2}}} - C_2 \| f \|_{L^2} \label{eq:Garding}
\end{align}
holds true for all $f \in \SC(G)$.
\end{theorem}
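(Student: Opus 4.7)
The plan is to reduce the statement to the corresponding Gårding inequality for the Kohn--Nirenberg quantization established by Cardona and the third author in~\cite{CDR_Ga}, exploiting the fact that, by Theorem~\ref{thm:asym_exp_ch_qu}, the $\tau$-calculus agrees with the Kohn--Nirenberg calculus up to symbols of strictly lower order. Concretely, I would apply Theorem~\ref{thm:asym_exp_ch_qu} to rewrite
\begin{align*}
\Optau(\sigma) = \Op(\tilde{\sigma})
\end{align*}
for a uniquely determined $\tilde{\sigma}\in S^m_{\rho,\delta}(G)$, and observe that the leading term ($[\beta]=0$) in the asymptotic expansion~\eqref{eq:asym_exp_KN_tau} is $\sigma$ itself, so that
\begin{align*}
r := \tilde{\sigma}-\sigma \in S^{m-(\rho-\delta)}_{\rho,\delta}(G).
\end{align*}
Since $\delta<\rho$, this perturbation is of strictly lower order than $m$.

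The next step is to verify that $\tilde{\sigma}$ inherits the global ellipticity hypothesis. Using the continuity of the pointwise adjoint on $S^{m-(\rho-\delta)}_{\rho,\delta}(G)$ (a byproduct of Theorem~\ref{thm:asym_exp_adjoint_tau}), we have
\begin{align*}
\Re(\tilde{\sigma})(x,\pi) = \Re(\sigma)(x,\pi) + \Re(r)(x,\pi),
\end{align*}
where $\Re(r)\in S^{m-(\rho-\delta)}_{\rho,\delta}(G)$ is self-adjoint-valued at every $(x,\pi)$. The seminorm bound
\begin{align*}
\sup_{x,\pi}\bigl\|\Re(r)(x,\pi)\,\pi(I+\RO)^{-\tfrac{m-(\rho-\delta)}{\hdeg}}\bigr\|_{\BL(\RS)} < \infty
\end{align*}
together with the spectral calculus of the positive essentially self-adjoint operator $\pi(\RO)$ permits the splitting $I = E_{[0,\Lambda']}(\pi(\RO)) + E_{(\Lambda',\infty)}(\pi(\RO))$ for some $\Lambda'>0$ large enough so that $\|\Re(r)(x,\pi)v\|\le \tfrac{C_0}{2}\|\pi(I+\RO)^m v\|$ on $E_{(\Lambda',\infty)}(\pi(\RO))\RS^\infty$. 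On the low-frequency spectral part the perturbation is uniformly bounded, and the operator defined by $E_{[0,\Lambda']}(\pi(\RO))$ has a smoothing symbol, hence yields a bounded operator on $L^2(G)$ by Theorem~\ref{thm:cont_on_Sobolev}~(i). Consequently, $\tilde{\sigma}$ is globally elliptic of order $m$ in the sense of~\cite[Def.~2.8]{CDR_Ga}, with effective ellipticity constant $C_0/2$ and threshold $\Lambda'$.

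The final step is to invoke the Kohn--Nirenberg Gårding inequality~\cite[Thm.~1.1]{CDR_Ga} for $\tilde{\sigma}$ and absorb the bounded low-frequency contribution together with the cross terms coming from $\Re(r)$ into the constant $C_2\|f\|_{L^2}^2$, obtaining
\begin{align*}
\Re\bigl(\langle \Optau(\sigma)f, f\rangle_{L^2}\bigr) = \Re\bigl(\langle \Op(\tilde{\sigma})f, f\rangle_{L^2}\bigr) \geq C_1\|f\|_{L^2_{m/2}}^2 - C_2\|f\|_{L^2}^2.
\end{align*}
The main technical obstacle is precisely this last absorption: one must verify that the sesquilinear form associated to the lower-order perturbation $\Op(\Re(r))$, viewed as a self-adjoint operator, can genuinely be controlled by $\|f\|_{L^2_{m/2}}^2$ up to a zero-order term. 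Here the condition $\delta<\rho/v_\dimG\le\rho$ is used in two ways: it underlies the validity of Theorem~\ref{thm:asym_exp_ch_qu} (from which the lower-order nature of $r$ follows) and it ensures the symbol $\Re(r)\,\pi(I+\RO)^{-(m-(\rho-\delta))/\hdeg}$ lies in $S^0_{\rho,\delta}(G)$, so Theorem~\ref{thm:cont_on_Sobolev}~(i) applies. The Euclidean case and the $\tau=e_G$ case both fall out as expected, matching the results of~\cite{CDR_Ga}.
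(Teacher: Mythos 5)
Your proof takes essentially the same approach as the paper's: reduce to the Kohn--Nirenberg quantization via Theorem~\ref{thm:asym_exp_ch_qu}, observe that the discrepancy $r = \tilde{\sigma}-\sigma$ lies in $S^{m-(\rho-\delta)}_{\rho,\delta}(G)$, and invoke the Kohn--Nirenberg G\r{a}rding inequality of \cite{CDR_Ga}. The only difference of emphasis is that you carefully re-verify the ellipticity hypothesis for the converted symbol $\tilde{\sigma}$ via a spectral truncation before citing \cite{CDR_Ga}, whereas the paper applies \cite{CDR_Ga} and then absorbs the lower-order correction $\Op(\sigma_{\mathrm{KN}}-\sigma)$ a posteriori via the Sobolev mapping properties of Theorem~\ref{thm:cont_on_Sobolev}~(i) and a duality pairing --- both routes amount to the same lower-order absorption step.
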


\begin{proof}
We recall that Theorem~\ref{thm:asym_exp_ch_qu} established the relation between the symbol $\sigma$ and the uniquely determined symbol $\sigma_{\mathrm{KN}} \in S^m_{\rho, \delta}(G)$ which satisfies $\Optau(\sigma) = \Op(\sigma_{\mathrm{KN}})$. Moreover, by the asymptotic expansion \eqref{eq:asym_exp_tau_KN} the two symbols coincide up to terms of order $m - (\rho - \delta)$, hence $\sigma_{\mathrm{KN}}$ satisfies the conditions in the statement of this theorem up to a correction term of lower order. By Theorem~\cite[Thm.~4.5]{CDR_Ga}, it follows that $\Op(\sigma_{\mathrm{KN}})$ satisfies the G\r{a}rding inequality \eqref{eq:Garding}, hence also $\Optau(\sigma) = \Op(\sigma_{\mathrm{KN}})$ up to a lower order correction. The latter, however, can be absorbed by $C_2 > 0$ since by Theorem~\ref{thm:cont_on_Sobolev}~(i) and the Sobolev duality pairing and embedding properties~\cite[Thm.~4.4.28]{FR}
\begin{align*}
    \bigl | \langle\Op(\sigma - \sigma_{\mathrm{KN}})f, f \rangle_{L^2} \bigr | \leq \| f \|_{L^2_{- m + (\rho - \delta)}}\| f \|_{L^2_{m - (\rho - \delta)}} \leq C_2 \| f \|_{L^2}.
\end{align*}
This completes the proof.
\end{proof}

\section*{Acknowledgments}

The authors would like to express their utmost gratitude to the anonymous referee for so meticulously proofreading their paper and for all the good suggestions they got.

\medskip

The authors were supported by the FWO Odysseus 1 grant G.0H94.18N: Analysis and Partial Differential Equations.

Serena Federico was also supported by the European Union's Horizon 2020 research and innovation programme under the Marie Sk\l odowska-Curie actions H2020-MSCA-IF-2018, grant No. 838661.

David Rottensteiner was also supported the FWO Senior Research Grant G022821N: Niet-commu\-ta\-tieve wavelet analyse.

Michael Ruzhansky was also supported by the Methusalem programme of the Ghent University Special Research Fund (BOF) (Grant number 01M01021) and by the EPSRC grants EP/R003025/2 and EP/V005529/1.

\bibliographystyle{abbrvurl}
\bibliography{Bib_SQ2}

\end{document}